\newtheorem{theorem}{Theorem}
\newtheorem{corollary}[theorem]{Corollary}
\newtheorem{lemma}[theorem]{Lemma}
\newtheorem{conjecture}[theorem]{Conjecture}
\newtheorem{proposition}[theorem]{Proposition}
\theoremstyle{remark}
\newtheorem{remark}[theorem]{\bf Remark}
\newtheorem{definition}[theorem]{\bf Definition}
\numberwithin{theorem}{section}
\numberwithin{question}{section}
\numberwithin{figure}{section}
\numberwithin{equation}{section}
\begin{document}

\title{Connection Probabilities of Multiple FK-Ising Interfaces}
\bigskip{}
\author[1]{Yu Feng\thanks{yufeng\_proba@163.com}}
\author[2]{Eveliina Peltola\thanks{eveliina.peltola@hcm.uni-bonn.de}}
\author{Hao Wu\thanks{hao.wu.proba@gmail.com.}}
\affil[1]{Tsinghua University, China}
\affil[2]{Aalto University, Finland, and University of Bonn, Germany}

\date{}

%
%

\global\long\def\CR{\mathrm{CR}}
\global\long\def\ST{\mathrm{ST}}
\global\long\def\SF{\mathrm{SF}}
\global\long\def\cov{\mathrm{cov}}
\global\long\def\dist{\mathrm{dist}}
\global\long\def\SLE{\mathrm{SLE}}
\global\long\def\hSLE{\mathrm{hSLE}}
\global\long\def\CLE{\mathrm{CLE}}
\global\long\def\GFF{\mathrm{GFF}}
\global\long\def\inte{\mathrm{int}}
\global\long\def\ext{\mathrm{ext}}
\global\long\def\inrad{\mathrm{inrad}}
\global\long\def\outrad{\mathrm{outrad}}
\global\long\def\dimH{\mathrm{dim}}
\global\long\def\capa{\mathrm{cap}}
\global\long\def\diam{\mathrm{diam}}
\global\long\def\free{\mathrm{free}}
\global\long\def\hF{{}_2\mathrm{F}_1}
\global\long\def\simple{\mathrm{simple}}
\global\long\def\st{\mathrm{ST}}
\global\long\def\usf{\mathrm{USF}}
\global\long\def\Leb{\mathrm{Leb}}
\global\long\def\LP{\mathrm{LP}}
\global\long\def\coulomb{\LH}
\global\long\def\coulombnew{\LG}
\global\long\def\kfunc{p}
\global\long\def\OO{\mathcal{O}}

\global\long\def\eps{\epsilon}
\global\long\def\ov{\overline}
\global\long\def\U{\mathbb{U}}
\global\long\def\T{\mathbb{T}}
\global\long\def\HH{\mathbb{H}}
\global\long\def\LA{\mathcal{A}}
\global\long\def\LB{\mathcal{B}}
\global\long\def\LC{\mathcal{C}}
\global\long\def\LD{\mathcal{D}}
\global\long\def\LF{\mathcal{F}}
\global\long\def\LK{\mathcal{K}}
\global\long\def\LE{\mathcal{E}}
\global\long\def\LG{\mathcal{G}}
\global\long\def\LI{\mathcal{I}}
\global\long\def\LJ{\mathcal{J}}
\global\long\def\LL{\mathcal{L}}
\global\long\def\LM{\mathcal{M}}
\global\long\def\LN{\mathcal{N}}
\global\long\def\LQ{\mathcal{Q}}
\global\long\def\LR{\mathcal{R}}
\global\long\def\LT{\mathcal{T}}
\global\long\def\LS{\mathcal{S}}
\global\long\def\LU{\mathcal{U}}
\global\long\def\LV{\mathcal{V}}
\global\long\def\LW{\mathcal{W}}
\global\long\def\LX{\mathcal{X}}
\global\long\def\LY{\mathcal{Y}}
\global\long\def\PartF{\mathcal{Z}}
\global\long\def\LH{\mathcal{H}}
\global\long\def\LJ{\mathcal{J}}
\global\long\def\R{\mathbb{R}}
\global\long\def\C{\mathbb{C}}
\global\long\def\N{\mathbb{N}}
\global\long\def\Z{\mathbb{Z}}
\global\long\def\E{\mathbb{E}}
\global\long\def\PP{\mathbb{P}}
\global\long\def\QQ{\mathbb{Q}}
\global\long\def\A{\mathbb{A}}
\global\long\def\one{\mathbb{1}}
\global\long\def\bn{\mathbf{n}}
\global\long\def\MR{MR}
\global\long\def\cond{\,|\,}
\global\long\def\la{\langle}
\global\long\def\ra{\rangle}
\global\long\def\tree{\Upsilon}
\global\long\def\prob{\mathbb{P}}
\global\long\def\hm{\mathrm{Hm}}

\global\long\def\sf{\mathrm{SF}}
\global\long\def\wr{\varrho}

\global\long\def\Im{\operatorname{Im}}
\global\long\def\Re{\operatorname{Re}}

\global\long\def\ud{\mathrm{d}}
\global\long\def\pder#1{\frac{\partial}{\partial#1}}
\global\long\def\pdder#1{\frac{\partial^{2}}{\partial#1^{2}}}
\global\long\def\der#1{\frac{\ud}{\ud#1}}

\global\long\def\bZnn{\mathbb{Z}_{\geq 0}}

\global\long\def\Vfunc{\LG}
\global\long\def\gfunc{g^{(\rr)}}
\global\long\def\hfunc{h^{(\rr)}}

\global\long\def\SimplexInt{\rho}
\global\long\def\CubeInt{\widetilde{\rho}}

\global\long\def\ii{\mathfrak{i}}
\global\long\def\rr{\mathfrak{r}}
\global\long\def\chamber{\mathfrak{X}}
\global\long\def\Wchamber{\mathfrak{W}}

\global\long\def\SimplexIntKappa8{\SimplexInt}

\global\long\def\nested{\boldsymbol{\underline{\Cap}}}
\global\long\def\unnested{\boldsymbol{\underline{\cap\cap}}}

\global\long\def\acycle{\vartheta}
\global\long\def\bcycle{\tilde{\acycle}}
\global\long\def\Gloop{\Theta}

\global\long\def\metric{\mathrm{dist}}

\global\long\def\adj#1{\mathrm{adj}(#1)}

\global\long\def\bs{\boldsymbol}

\global\long\def\edge#1#2{\langle #1,#2 \rangle}
\global\long\def\graph{G}

\newcommand{\conn}{\vartheta_{\scaleobj{0.7}{\mathrm{RCM}}}}
\newcommand{\FKconn}{\vartheta_{\scaleobj{0.7}{\mathrm{FK}}}}
\newcommand{\hatconn}{\widehat{\vartheta}_{\mathrm{RCM}}}
\newcommand{\FKhatconn}{\widehat{\vartheta}_{\mathrm{FK}}}
\newcommand{\realpt}{\smash{\mathring{x}}}
\newcommand{\corrind}{\LC}
\newcommand{\bssymb}{\pi}
\newcommand{\PRCM}{\mu}
\newcommand{\coeff}{p}
\newcommand{\MainConst}{C}

\global\long\def\removeLink{/}
\maketitle

\begin{center}
\begin{minipage}{0.95\textwidth}
\abstract{
We find the scaling limits of a general class of boundary-to-boundary connection
probabilities and multiple interfaces in the critical planar FK-Ising model, thus verifying predictions from the physics literature.
We also discuss conjectural formulas using Coulomb gas integrals 
for the corresponding quantities in general critical planar random-cluster models with cluster-weight $q \in [1,4)$.
Thus far, proofs for convergence, including ours, rely on discrete complex analysis techniques and are beyond reach for other values of $q$ than the FK-Ising model ($q=2$). 
Given the convergence of interfaces, the conjectural formulas for other values of $q$ could be verified similarly with relatively minor technical work.
The limit interfaces are variants of $\SLE_\kappa$ curves  (with $\kappa = 16/3$ for $q=2$). 
Their partition functions, that give the connection probabilities, also satisfy properties predicted for correlation functions in conformal field theory (CFT),  expected to describe scaling limits of critical random-cluster models. 
We verify these properties for all $q \in [1,4)$, thus providing further evidence of the expected CFT description of these models.
}

\bigskip{}

\noindent\textbf{Keywords:} 
conformal field theory, correlation function, crossing probability, FK-Ising model, partition function, random-cluster model, Schramm-Loewner evolution  \\ 

\noindent\textbf{MSC:} 82B20, 60J67, 60K35 
\end{minipage}
\end{center}

\newpage

\setcounter{tocdepth}{2}
\tableofcontents

\newpage
\allowdisplaybreaks

\section{Introduction}
\label{sec::intro}
Fortuin and Kasteleyn introduced the \emph{random-cluster model} around the 1970s as a general family of discrete percolation models that combines together Bernoulli percolation, graphical representations of spin models (Ising \& Potts models), and polymer models (as a limiting case).
Generally in such models, edges are declared to be open or closed according to a given probability measure, the simplest being the independent product measure of Bernoulli percolation.
Of particular interest in such models are percolation properties, that is, whether various points in space are connected by paths of open edges. The present article is concerned with boundary-to-boundary connections 
in the planar case.
Such \emph{connection events}, or \emph{crossing events}, have been used for a convenient description of the large-scale properties of the Bernoulli percolation model in~\cite{Schramm-Smirnov:Scaling_limits_of_planar_percolation, GPS:Pivotal_cluster_and_interface_measures_for_critical_planar_percolation}, whereas for dependent percolation models such a description would be much more complex (cf.~\cite[Question~1.22]{Schramm-Smirnov:Scaling_limits_of_planar_percolation},
see also~\cite{DCMT:Planar_random-cluster_model_fractal_properties_of_the_critical_phase}). 

Random-cluster models have been under active research in the past decades, for instance due to their important feature of \emph{criticality}: for certain parameter values the model exhibits a continuous phase transition. 
Criticality can be practically identified as follows.
Consider on a lattice with small mesh, say $\delta \Z^2$, the probability that an open path connects two opposite sides of a topological rectangle. It is not hard to prove that this probability tends to zero as $\delta \to 0$ when the model is ``subcritical'',  
while it tends to one as $\delta \to 0$ when the model is ``supercritical''.  
At the critical point,  
the connection probability has a nontrivial limit, which is a real number in $(0,1)$ that depends on the shape (i.e.,~conformal modulus) of the topological rectangle.
This latter fact follows from Russo-Seymour-Welsh type estimates that are now ubiquitous tools for percolation models~\cite{CDCH:Crossing_probabilities_in_topological_rectangles_for_critical_planar_FK_Ising_model, DCHN:Connection_probabilities_and_RSW_type_bounds, DCST:Continuity_of_phase_transition_for_planar_random-cluster_and_Potts_models}. 
Exact identification of the limit of the connection probability, though, is highly non-trivial.
Motivated by numerical experiments by Langlands, Pouliot, and Saint-Aubin~\cite{LPS:Conformal_invariance_in_2d_percolation}, 
an answer in the physics level of rigor using conformal field theory predictions was given by Cardy for the case of Bernoulli percolation in~\cite{Cardy:Critical_percolation_in_finite_geometries}.
The first proof of Cardy's formula was established 
by Smirnov~\cite{Smirnov:Critical_percolation_in_the_plane} using miraculous discrete complex analysis tricks 
\`a la Kenyon~\cite{Kenyon:Conformal_invariance_of_domino_tiling} and Smirnov).
To date, analogues and generalizations of Cardy's formula have been proven only for a number of other models, all of which rely on some kind of  
specific exact solvability (or ``magic'', quoting Smirnov\footnote{``Since it used magic, it only works in situations where there is magic, and we weren't able to find magic in other situations.'' in Quanta Magazine (July 8, 2021)
\emph{Mathematicians Prove Symmetry of Phase Transitions} by Allison Whitten.}), 
mainly due to underlying free fermion or free boson structures: 
critical spin-Ising model and FK-Ising model, Gaussian free field, loop-erased random walks, and uniform spanning trees
(see~\cite{Kenyon-Wilson:Boundary_partitions_in_trees_and_dimers, Chelkak-Smirnov:Universality_in_2D_Ising_and_conformal_invariance_of_fermionic_observables, Izyurov:Smirnovs_observable_for_free_boundary_conditions_interfaces_and_crossing_probabilities, Peltola-Wu:Global_and_local_multiple_SLEs_and_connection_probabilities_for_level_lines_of_GFF, Karrila:UST_branches_martingales_and_multiple_SLE2,  KKP:Boundary_correlations_in_planar_LERW_and_UST, 
Izyurov:On_multiple_SLE_for_the_FK_Ising_model,
LPW:UST_in_topological_polygons_partition_functions_for_SLE8_and_correlations_in_logCFT} 
and references therein). 
In the continuum, some connection probabilities for $\CLE$ loops were found in~\cite{Miller-Werner:Connection_probabilities_for_conformal_loop_ensembles}, see also~\cite{Ang-Sun:Integrability_of_CLE} for recent results relating to Liouville theory.
Analogous numerical results and predictions for connectivity events in the bulk for the random-cluster and Potts models were found in~\cite{DPSV:Connectivities_of_Potts_Fortuin-Kasteleyn_clusters_and_time-like_Liouville_correlator}.

\bigskip

The phase transition in random-cluster models has been argued 
to result in \emph{conformal invariance} and \emph{universality} for the scaling limit $\delta \to 0$ of the model (see, e.g.,~\cite{Cardy:Scaling_and_renormalization_in_statistical_physics}).
Since then, tremendous progress has been established towards verifying this prediction. 
Recently, in~\cite{DKKMO:Rotational_invariance_in_critical_planar_lattice_models} 
it was shown that correlations in the critical random-cluster model with cluster-weight $q \in [1,4]$ 
do indeed become rotationally invariant in the scaling limit. This provides very strong evidence of conformal invariance, while still not being enough to prove it. 
For the special case of the FK-Ising model ($q=2$), 
conformal invariance has been established rigorously to a large extent, thanks to special integrability properties of the model that allow the use of discrete complex analysis in a fundamental way (the ``magic'' referred to above),
cf.~\cite{Smirnov:Conformal_invariance_in_random_cluster_models1, Chelkak-Smirnov:Universality_in_2D_Ising_and_conformal_invariance_of_fermionic_observables,  CDHKS:Convergence_of_Ising_interfaces_to_SLE, Izyurov:Smirnovs_observable_for_free_boundary_conditions_interfaces_and_crossing_probabilities,  Kemppainen-Smirnov:Conformal_invariance_in_random-cluster_models-II, Kemppainen-Smirnov:Conformal_invariance_of_boundary_touching_loops_of_FK_Ising_model,  Izyurov:On_multiple_SLE_for_the_FK_Ising_model}.

Crucially, in addition to proving conformal invariance, identifying the scaling limit objects with their corresponding counterparts in \emph{conformal field theory} (CFT) 
is necessary in order to get access to the full power of the CFT formalism applicable to critical lattice models. The purpose of this article is to provide such an identification for boundary-to-boundary connection probabilities in the FK-Ising model 
with various boundary conditions (Theorems~\ref{thm::FKIsing_Loewner} and~\ref{thm::FKIsing_crossingproba}). 
Analogous results remain conjectural for other values\footnote{Bernoulli site percolation on the triangular lattice ($q=1$,~a slightly different setup) is presented in~\cite{PW:Crossing_probabilities_of_critical_percolation_interfaces}.} 
of $q \in [1,4)$.  
We also provide formulas for the quantities of interest for all $q \in [1,4)$ in terms of solutions to PDE boundary value problems and Coulomb gas integrals, earlier appearing, e.g., in~\cite{Dubedat:Euler_integrals_for_commuting_SLEs, Flores-Kleban:Solution_space_for_system_of_null-state_PDE4, FSKZ:A_formula_for_crossing_probabilities_of_critical_systems_inside_polygons}. 
We also verify CFT predictions for all these formulas (Theorem~\ref{thm::CGI_property}),
thus providing further evidence for the CFT description of these critical planar models.

\smallbreak

Our main results are summarized in Sections~\ref{subsec::Results_FKIsing_Loewner}--\ref{subsec::CGI_properties}. 
We first discuss the general setup and common terminology for the random-cluster models and the conjectural formulas for the connection probabilities (Sections~\ref{subsec::RCM}--\ref{subsec::Conjectures}). 
Section~\ref{subsec::Results_FKIsing_Loewner} then focuses on results in the special case of the FK-Ising model, and 
Section~\ref{subsec::CGI_properties} gathers important properties of the Coulomb gas integral formulas in general.

\subsection{Random-cluster models in polygons}
\label{subsec::RCM}

Here, we summarize notation and terminology to be used throughout, and define the random-cluster model.
For more background and properties of these models, we recommend~\cite{Grimmett:Random_cluster_model, Duminil-Copin:PIMS_lectures}.

\begin{figure}[ht!]
\includegraphics[width=0.45\textwidth]{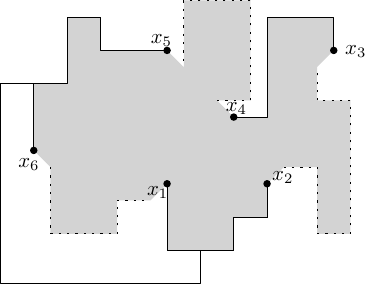}
$\quad$
\includegraphics[width=0.45\textwidth]{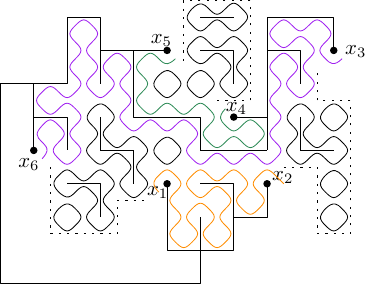}
\caption{\label{fig::loop_representation}
Consider discrete polygons (gray) with six marked boundary points. One possible boundary condition for the random-cluster model is illustrated in the left figure, where the arcs $(x_1 \, x_2), (x_3 \, x_4), (x_5 \, x_6)$ are wired, and the arcs $(x_1 \, x_2)$ and $(x_5 \, x_6)$ are further wired outside of the polygon. 
This boundary condition corresponds to the non-crossing partition $\{\{1,3\}, \{2\}\}$ of the three wired boundary arcs. 
One possible random-cluster configuration in terms of its loop representation is illustrated in the right figure.  
It comprises loops (black) and three interfaces inside the polygon: the orange curve connects $x_1^{\diamond}$ and $x_{2}^{\diamond}$; the purple curve connects $x_3^{\diamond}$ and $x_6^{\diamond}$; and the green curve connects $x_4^{\diamond}$ and $x_5^{\diamond}$. See Section~\ref{sec::FKIsing_Loewner} for details. }
\end{figure}

\paragraph*{Notation and terminology.}
For definiteness, we consider subgraphs $\graph = (V(\graph), E(\graph))$ of the square lattice $\Z^2$, which is the graph with vertex set $V(\Z^2):=\{ z = (m, n) \colon m, n\in \Z\}$ and edge set $E(\Z^2)$ given by edges between 
those vertices whose Euclidean distance equals one (called neighbors). 
This is our primal lattice. Its standard dual lattice is denoted by $(\Z^2)^{\bullet}$. 
The medial lattice $(\Z^2)^{\diamond}$ is the graph with centers of edges of $\Z^2$ as its vertex set and edges connecting  
neighbors. 
For a subgraph $\graph \subset \Z^2$ (resp.~of $(\Z^2)^{\bullet}$ or $(\Z^2)^{\diamond}$), we define its \emph{boundary} to be the following set of vertices:
\begin{align*}
\partial \graph = \{ z \in V(\graph) \, \colon \, \exists \; w \not\in V(\graph) \textnormal{ such that }\edge{z}{w}\in E(\Z^2)\} .
\end{align*}
When we add the subscript or superscript $\delta$, we mean that subgraphs of the lattices $\Z^2, (\Z^2)^{\bullet}, (\Z^2)^\diamond$ have been scaled by $\delta > 0$. 
We consider the models in the \emph{scaling limit} $\delta \to 0$. 
For a given medial graph $\Omega^{\delta, \diamond} \subset (\delta \Z^2)^{\diamond}$,
let $\Omega^{\delta}\subset\delta\Z^2$ be the graph on the primal lattice corresponding to $\Omega^{\delta, \diamond}$ (see details in Section~\ref{subsec::rcm_pre}). 
By a (discrete) \emph{polygon} we either refer to the medial graph $\Omega^{\delta, \diamond}$ endowed with given distinct boundary points $x_1^{\delta, \diamond}, \ldots, x_{2N}^{\delta, \diamond}$ in counterclockwise order, 
or to the corresponding primal graph  
$(\Omega^{\delta}; x_1^{\delta}, \ldots, x_{2N}^{\delta})$ 
with given boundary points $x_1^{\delta}, \ldots, x_{2N}^{\delta}$ in counterclockwise order.
We consider random-cluster models on such polygons, where the boundary behavior changes at the marked boundary points.

\paragraph*{Random-cluster model.}
Let $\graph = (V(\graph), E(\graph))$ be a finite subgraph of $\Z^2$.
A random-cluster \emph{configuration} 
$\omega=(\omega_e)_{e \in E(\graph)}$ is an element of $\{0,1\}^{E(\graph)}$.
An edge $e \in E(\graph)$ is said to be \emph{open} (resp.~\emph{closed}) if $\omega_e=1$ (resp.~$\omega_e=0$).
We view the configuration $\omega$ 
as a subgraph of $\graph$ with vertex set $V(\graph)$  and edge set $\{e\in E(\graph) \colon \omega_e=1\}$.
We denote by $o(\omega)$ (resp.~$c(\omega)$) the number of open (resp.~closed) edges in~$\omega$.

We are interested in the connectivity properties of the graph $\omega$ with various boundary conditions. 
The maximal connected\footnote{Two vertices $z$ and $w$ are said to be \emph{connected} by $\omega$ if there exists a sequence  $\{z_j \colon 0\le j\le l\}$  of vertices such that
$z_0 = z$ and $z_l = w$, and each edge $\edge{z_j}{z_{j+1}}$ is open in $\omega$ for $0 \le j < l$.} components of $\omega$ are called \emph{clusters}.
The boundary conditions encode how the vertices are connected outside of $\graph$.
Precisely, by a \emph{boundary condition} $\bssymb$ we refer to a partition $\bssymb_1 \sqcup \cdots \sqcup \bssymb_m$ of the boundary $\partial \graph$.
Two vertices $z,w \in \partial \graph$ are said to be \emph{wired} in $\bssymb$ if $z,w \in \bssymb_j$ for some common $j$. 
In contrast, \emph{free} boundary segments comprise vertices that are not wired with any other vertex (so the corresponding part $\pi_j$ is a singleton).  
We denote by $\omega^{\bssymb}$
the (quotient) graph obtained from the configuration $\omega$ by identifying the wired vertices in $\bssymb$.

Finally, the \emph{random-cluster model} on $\graph$ with edge-weight $p\in [0,1]$, cluster-weight $q>0$,
and boundary condition $\bssymb$, is the probability measure $\smash{\PRCM^{\bssymb}_{p,q,\graph}}$ on
the set $\{0,1\}^{E(\graph)}$ of configurations $\omega$  defined by
\begin{align*}
\PRCM^{\bssymb}_{p,q,\graph}[\omega] 
:= \; & \frac{p^{o(\omega)}(1-p)^{c(\omega)}q^{k(\omega^{\bssymb})}}{\underset{\varpi \in \{0,1\}^{E(\graph)}}{\sum} p^{o(\varpi)}(1-p)^{c(\varpi)}q^{k(\varpi^{\bssymb})} } ,
\end{align*}
where $k(\omega^{\bssymb})$ is the number of 
connected components of the graph $\omega^{\bssymb}$.
For $q=2$, this model is also known as the \emph{FK-Ising model}, while for $q=1$, it is simply the Bernoulli bond percolation (assigning independent values for each $\omega_e$). The random-cluster model combines together several important  models in the same family.
For integer values of $q$, it is very closely related to the $q$-Potts model, 
and by taking a suitable limit, the case of $q=0$ corresponds to the uniform spanning tree (see, e.g.,~\cite{Duminil-Copin:PIMS_lectures}).
It has been proven for the range $q \in [1,4]$ in~\cite{DCST:Continuity_of_phase_transition_for_planar_random-cluster_and_Potts_models} 
that when the edge-weight is chosen suitably, namely as (the critical, self-dual value)
\begin{align} \label{eq: pcrit}
p = p_c(q) := \frac{\sqrt{q}}{1+\sqrt{q}} ,
\end{align}
then the random-cluster model exhibits a \emph{continuous phase transition}
in the sense that after taking the infinite-volume (thermodynamic) limit,
for $p > p_c(q)$ there almost surely exists an infinite cluster, while for $p < p_c(q)$ there does not, 
and the limit $p \searrow p_c(q)$ is approached in a continuous way. 
(This is also expected to hold when $q \in (0,1)$, while it is known that the phase transition is discontinuous when $q > 4$ by~\cite{DCGHMT:Discontinuity_of_the_phase_transition_for_the_planar_random-cluster_and_Potts_with_q_larger_than_4}.)
Therefore, the scaling limit of the model at its critical point~\eqref{eq: pcrit}
is expected to be conformally invariant for all $q \in [0,4]$.
In the present article, we will consider multiple interfaces and boundary-to-boundary connection probabilities in the critical random-cluster model with $q \in [1,4)$.
See also~\cite{LPW:UST_in_topological_polygons_partition_functions_for_SLE8_and_correlations_in_logCFT} for the uniform spanning tree model corresponding to $q=0$.

\paragraph*{Markov property.}
At the heart of many geometric arguments concerning the random-cluster model is its (domain) \emph{Markov property}:
the restriction of the model to a smaller graph only depends on the boundary condition induced by such a restriction. 
To state this more precisely, fix any $p\in [0,1]$ and $q>0$, and 
suppose that $\graph \subset \graph'$ are two finite subgraphs of $\Z^2$ 
and that we have fixed a boundary condition $\bssymb$ for the model on the boundary $\partial \graph'$ of the larger graph.
Let $X$ be a random variable which is measurable with respect to the status of the edges in the smaller graph $\graph$.
Then, for all $\upsilon \in \{0,1\}^{E(\graph')\setminus E(\graph)}$,  we have
\begin{align*}
\PRCM^{\bssymb}_{p,q,\graph'} \big[X \; | \; \omega_e = \upsilon_e  \textnormal{ for all } e \in E(\graph')\setminus E(\graph)\big] 
= \PRCM^{\upsilon^{\bssymb}}_{p,q,\graph}[X],
\end{align*}
where $\upsilon^{\bssymb}$ is the partition on $\partial \graph$ obtained by wiring two vertices in $\partial \graph$  if they are connected in $\upsilon$.

For instance, taking $\graph$ to be a connected component of the complement of the purple curve 
in Figure~\ref{fig::loop_representation}, we obtain a random-cluster model on the smaller graph $\graph$ with modified boundary conditions.

\paragraph*{Boundary conditions.}

Consider now the random-cluster model on a polygon $(\Omega^{\delta}; x_1^{\delta}, \ldots, x_{2N}^{\delta})$ 
with the following boundary conditions: 
first, every other boundary arc is wired, 
\begin{align*}
(x_{2r-1}^{\delta} \, x_{2r}^{\delta}) \textnormal{ is wired,} \qquad \textnormal{ for all } r \in \{1,2,\ldots, N\} ,
\end{align*}
and second, these $N$ wired arcs are further wired together according to a \emph{non-crossing partition} $\bssymb$ outside of $\Omega^{\delta}$, as illustrated in Figures~\ref{fig::loop_representation} and~\ref{fig::6points}.
Note that there is a natural bijection $\beta \leftrightarrow \bssymb_\beta$ between non-crossing partitions $\bssymb_\beta$ of the $N$ wired boundary arcs and \emph{planar link patterns} $\beta$ with $N$ links,
\begin{align} \label{eqn::linkpatterns_ordering}
\begin{split}
& \beta = \{ \{a_1,b_1\},  \{a_2,b_2\},\ldots , \{a_N,b_N\}\} \\
& \textnormal{with link endpoints ordered as } \; 
a_1 < a_2 < \cdots < a_N \textnormal{ and } a_r < b_r , \textnormal{ for all } 1 \leq r \leq N ,  \\
& \textnormal{and such that there are no indices } 1 \leq r , s \leq N \textnormal{ with } a_r < a_s < b_r < b_s , 
\end{split}
\end{align} 
where $\{a_1, b_1, \ldots, a_N, b_N\}=\{1,2, \ldots, 2N\}$ and the pairs $\{a_j,b_j\}$ are called \emph{links}.
Hence, we encode the boundary condition $\bssymb_\beta$ in a label $\beta$. 
We denote by $\LP_N \ni \beta$ the set of planar link patterns of $N$ links. 

\begin{figure}[ht!]
\begin{subfigure}[b]{\textwidth}
\begin{center}
\includegraphics[width=0.14\textwidth]{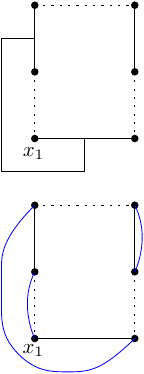}
\end{center}
\caption{This boundary condition is encoded in the planar link pattern $\beta=\{\{1,6\},\{2,5\},\{3,4\}\}$. }
\end{subfigure}\\
\vspace{0.5cm}
\begin{subfigure}[b]{\textwidth}
\begin{center}
\includegraphics[width=0.14\textwidth]{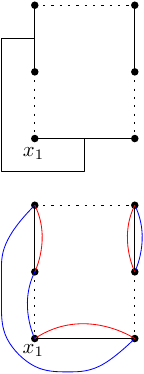}$\quad$
\includegraphics[width=0.14\textwidth]{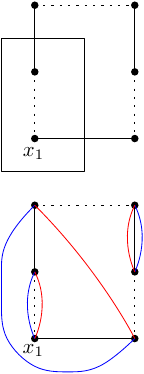}$\quad$
\includegraphics[width=0.14\textwidth]{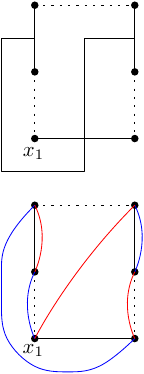}$\quad$
\includegraphics[width=0.14\textwidth]{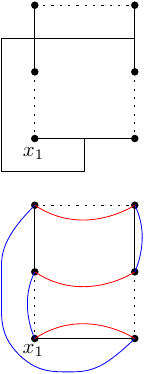}$\quad$
\includegraphics[width=0.14\textwidth]{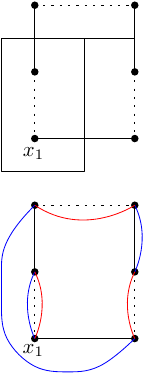}
\end{center}
\caption{For six marked boundary points, 
there are five possible planar \emph{internal} link patterns $\alpha$. From left to right, the meanders formed from $\alpha$ and $\beta$ have two loops, three loops, one loop, one loop, and two loops, respectively.}
\end{subfigure}
\caption{\label{fig::6points} 
Consider discrete polygons with six marked points on the boundary. 
One possible boundary condition  
for the random-cluster model is illustrated in (a). 
The corresponding possible planar link patterns $\alpha$ formed by the interfaces are depicted in red in (b)(bottom),
and they correspond to non-crossing partitions inside (b)(top). }
\end{figure}

Let $\omega$ be a critical random-cluster configuration on $\Omega^{\delta}$ with boundary condition $\beta$.
For notational ease, keeping $q \in [1,4)$ and $p = p_c(q)$ fixed, we denote its law by 
\begin{align*}
\PP_{\beta}^{\delta} := \PRCM^{\bssymb_{\beta}}_{p_c(q),q,\Omega^{\delta}} .
\end{align*}
We consider in particular the cluster boundaries of $\omega$ (that is, its loop representation, see Figure~\ref{fig::loop_representation} and Section~\ref{sec::FKIsing_Loewner}). 
By planarity, there exist $N$ curves, \emph{interfaces}, on the medial graph $\Omega^{\delta, \diamond}$ running along $\omega$ and connecting the marked points $\{x_1^{\delta, \diamond}, x_2^{\delta, \diamond},\ldots, x_{2N}^{\delta, \diamond}\}$ pairwise, as also illustrated in Figure~\ref{fig::loop_representation}.  
Let us denote by $\conn^{\delta}$ the random planar connectivity in $\LP_N$ formed by the $N$ discrete interfaces. 
In this article, we are particularly interested in the \emph{connection probabilities} 
$\smash{\PP_{\beta}^{\delta}}[\conn^{\delta}=\alpha]$ for $\alpha\in\LP_N$,
as functions of the marked boundary points ---
Figure~\ref{fig::6points} illustrates these crossing events.
The goal is to study conjectures for the scaling limits of the  interfaces and their 
connection probabilities, and prove these conjectures for the case of the critical FK-Ising model ($q=2$ and $p = p_c(2) = \frac{\sqrt{2}}{1+\sqrt{2}}$).

\paragraph*{Scaling limits.}
To specify in which sense the convergence as $\delta \to 0$ should take place, we need a notion of convergence of  polygons. In contrast to the commonly used Carath\'{e}odory convergence of planar sets, we need a slightly stronger notion termed \emph{close-Carath\'{e}odory convergence},
following Karrila~\cite{Karrila:Limits_of_conformal_images_and_conformal_images_of_limits_for_planar_random_curves}.
The precise definition will be given in Section~\ref{subsec::rcm_pre} (Definition~\ref{def:closeCara}). 
Roughly speaking, the usual Carath\'{e}odory convergence allows wild behavior of the boundary approximations, 
while in order to obtain \emph{tightness} of the random interfaces (i.e.,~\emph{precompactness} needed to find convergent subsequences), a slightly stronger convergence which guarantees good approximations around the marked boundary points is required.

We also need a topology for the interfaces, which we regard as (images of) continuous mappings from $[0,1]$ to $\C$ modulo reparameterization (i.e., planar oriented curves). 
For a simply connected domain $\Omega \subsetneq \C$, we will consider curves in $\overline{\Omega}$.
For definiteness, we map $\Omega$ onto the unit disc 
$\U := \{ z \in \C \colon |z| < 1 \}$: 
for this we shall fix\footnote{The metric~\eqref{eq::curve_metric} depends on the choice of the conformal map $\Phi$, but the induced topology does not.} 
any conformal map $\Phi$ from $\Omega$ onto $\U$. 
Then, we endow the curves with the metric
\begin{align} \label{eq::curve_metric} 
\metric(\eta_1, \eta_2) 
:= \inf_{\psi_1, \psi_2} \sup_{t\in [0,1]} |\Phi(\eta_1(\psi_1(t)))-\Phi(\eta_2(\psi_2(t)))| ,
\end{align}
where the infimum is taken over all increasing homeomorphisms $\psi_1, \psi_2 \colon [0,1]\to[0,1]$.
The space of continuous curves on $\overline{\Omega}$ modulo reparameterizations then becomes a complete separable metric space.

\paragraph*{Loewner chains.}
To describe scaling limits of interfaces, we recall that
planar chordal curves can be dynamically generated by Loewner evolution. In general, any continuous real-valued function, called the \emph{driving function}  $W_t \colon [0,\infty) \to \R$, gives rise to a growing family of sets via the following recipe
(see~\cite{Lawler:Conformally_invariant_processes_in_the_plane, Rohde-Schramm:Basic_properties_of_SLE} for background).
The \emph{Loewner equation} 
\begin{align} \label{eqn:LE}
\partial_{t}{g}_{t}(z) 
= \frac{2}{g_{t}(z)-W_{t}} ,
\qquad \textnormal{with initial condition} \qquad g_{0}(z)=z ,
\end{align}
is an ordinary differential equation in time $t \geq 0$, for each fixed point $z \in \HH := \{ z \in \C \colon \Im(z) > 0 \}$ in the upper half-plane. 
It has a unique solution $(g_{t}, t\ge 0)$ up to 
$T_{z} := \sup\{t\ge 0 \colon \min_{s\in[0,t]} |g_{s}(z)-W_{s}|>0\}$,
called the \emph{swallowing time} of $z$. 
The Loewner chain is a dynamical family of conformal bijections\footnote{In fact, $g_t \colon \HH\setminus K_{t}  \to \HH$ is the unique conformal map such that $|g_K(z)-z| \to 0$ as $z \to \infty$.} 
$g_{t} \colon \HH\setminus K_{t} \to \HH$, where the hull of swallowed points is $K_{t}:=\overline{\{z\in\HH \colon T_{z}\le t\}}$. 
We also say that the Loewner chain is parameterized by half-plane capacity, which refers to the property that for each time $t \geq 0$, the coefficient of $z^{-1}$ in the series expansion of $g_t$ at infinity equals $2t$ (this coefficient is, by definition, the half-plane capacity of the hull $K_{t}$, measuring its size as seen from infinity).

The family $(K_{t}, t\ge 0)$ of hulls is also often called a \emph{Loewner chain}, and it is said to be generated by a continuous curve $\eta \colon [0,T) \to \overline{\HH}$ if for each $t \in [0,T)$, the set $\HH\setminus K_{t}$ is the unbounded connected component of $\HH\setminus \eta[0,t]$. We also refer to the curve $\eta$ as a Loewner chain.
An example of a Loewner chain generated by a continuous curve is the
chordal \emph{Schramm-Loewner evolution}, $\SLE_{\kappa}$, 
that is the random Loewner chain driven by $W = \sqrt{\kappa} \, B$,
a standard one-dimensional Brownian motion $B$ of speed $\kappa > 0$. 
This family indexed by $\kappa$ is uniquely determined by the following two properties. 
\begin{itemize}[leftmargin=2em]
\item \emph{Conformal invariance:} 
The law of the $\SLE_{\kappa}$ curve $\eta$ in any simply connected domain $\Omega$ is the pushforward of the law of the $\SLE_{\kappa}$ curve in $\HH$ by a conformal map
$\varphi \colon \HH \to \Omega$ which maps the two points $0, \infty$ to the two endpoints 
of $\eta$.

\item \emph{Domain Markov property:} 
given a stopping time $\tau$ and initial segment $\eta[0,\tau]$ of
the $\SLE_\kappa$ curve in $\HH$, 
the conditional law of the remaining piece 
$\eta[\tau,\infty)$ is the law of the $\SLE_\kappa$ curve from the tip $\eta(\tau)$ to $\infty$
in the unbounded connected component of $\HH \setminus \eta[0,\tau]$. 
\end{itemize}

The standard $\SLE_\kappa$ curve in $\HH$ connects the boundary points $0 = \eta(0)$ and $\infty = \lim_{t \to \infty} |\eta(t)|$. 
One can change the target point by adding a specific drift to the driving Brownian motion 
(corresponding to the case $N=1$ in Theorem~\ref{thm::FKIsing_Loewner} when $\kappa = 16/3$). 
The parameter $\kappa > 0$ describes the behavior and the fractal dimension of the $\SLE_\kappa$ curve. 
For instance, it is almost surely a simple curve when $\kappa \leq 4$, while for $\kappa \geq 8$, the $\SLE_\kappa$ curve is almost surely space-filling.
In the intermediate parameter range $\kappa \in (4,8)$, 
including the parameter range considered in the present article, 
the $\SLE_\kappa$ curve almost surely has self-touchings, but is not space-filling.
See~\cite{Lawler:Conformally_invariant_processes_in_the_plane, Rohde-Schramm:Basic_properties_of_SLE} for background and further properties of this process. 

\subsection{Conjectures for random-cluster models}
\label{subsec::Conjectures}

Let us now fix parameters 
\begin{align*}
\kappa\in (4,8) , \qquad 
h(\kappa) := \frac{6-\kappa}{2\kappa} , \qquad \textnormal{and} \qquad 
q(\kappa) := 4\cos^2(4\pi/\kappa) .
\end{align*}
Note that when $\kappa\in (4,6]$, we have $q = q(\kappa) \in [1,4)$ corresponding to the critical random-cluster model with $p = p_c(q)$. 
(The case of $\kappa = 4$ corresponds to $q = 4$, which is still critical. We comment on this case in Remark~\ref{rem:q=4}.)
To state the expected formulas describing the scaling limits of multiple interfaces and  
connection probabilities in the critical random-cluster models, we define for each $\beta \in \LP_N$ the basis \emph{Coulomb gas integral} functions\footnote{Since $\kappa > 4$, these integrals are convergent, for their singularities at the endpoints of the contours are mild enough.} as 
\begin{align}
\nonumber
\coulombnew_{\beta} \colon \; & \chamber_{2N} \to \R , 
\qquad \textnormal{where}\qquad 
\chamber_{2N} := \big\{ \bs{x} := (x_{1},\ldots,x_{2N}) \in \R^{2N} \colon x_{1} < \cdots < x_{2N} \big\} , \\
\label{eqn::coulombgasintegral}
\coulombnew_\beta (x_1, \ldots, x_{2N}) :=  \; &
\bigg( \frac{\sqrt{q(\kappa)} \, \Gamma(2-8/\kappa)}{\Gamma(1-4/\kappa)^2} \bigg)^N
\landupint_{x_{a_1}}^{x_{b_1}} 
\cdots \landupint_{x_{a_N}}^{x_{b_N}}
f (\bs{x};u_1,\ldots,u_N) \, \ud u_1 \cdots \ud u_N ,
\end{align}
where the integration contours are pairwise non-intersecting paths in the upper half-plane connecting the marked points pairwise according to the connectivity $\beta$, and 
the integrand is 
\begin{align} \label{eq: integrand}
f (\bs{x};u_1,\ldots,u_N) := \; &
\prod_{1\leq i<j\leq 2N}(x_{j}-x_{i})^{2/\kappa} 
\prod_{1\leq r<s\leq N}(u_{s}-u_{r})^{8/\kappa} 
\prod_{\substack{1\leq i\leq 2N \\ 1\leq r\leq N}}
(u_{r}-x_{i})^{-4/\kappa} , 
\end{align} 
and the branch of this multivalued integrand is chosen to be real and positive when
\begin{align*}
x_{a_r} < \Re(u_r) < x_{a_r+1} , \qquad \textnormal{ for all } 1 \leq r \leq N .
\end{align*}
In~\eqref{eqn::coulombgasintegral}, we use the integration symbols $\smash{\landupint_{x_{a_r}}^{x_{b_r}}} \ud u_r$ to indicate that the integration  
of the variable $u_r$ is performed from $x_{a_r}$ to $x_{b_r}$ in the upper half-plane.
Formulas of type~\eqref{eqn::coulombgasintegral}, while originating from the Coulomb gas formalism of conformal field theory~\cite{Dotsenko-Fateev:Conformal_algebra_and_multipoint_correlation_functions_in_2D_statistical_models,  Kytola-Peltola:Conformally_covariant_boundary_correlation_functions_with_quantum_group}, 
have appeared in the $\SLE$ literature~\cite{Dubedat:Euler_integrals_for_commuting_SLEs, Dubedat:Commutation_relations_for_SLE, Kytola-Peltola:Pure_partition_functions_of_multiple_SLEs} as partition functions for $\SLE_\kappa$ variants, 
and have then been used in the physics literature~\cite{Flores-Kleban:Solution_space_for_system_of_null-state_PDE4, FSKZ:A_formula_for_crossing_probabilities_of_critical_systems_inside_polygons} pertaining to Conjecture~\ref{conj::rcm_crossingproba}. 
Our formulas are motivated by their properties listed in Theorem~\ref{thm::CGI_property}.
In particular, $\coulombnew_{\beta}$ are indeed partition functions of multiple $\SLE_\kappa$ curves.

\smallbreak

For fixed $N\ge 1$, by a \emph{polygon} $(\Omega; x_1, \ldots, x_{2N})$ we refer to a bounded simply connected domain $\Omega\subset\C$ with distinct marked boundary points $x_1, \ldots, x_{2N} \in \partial\Omega$ in counterclockwise order, such that $\partial\Omega$ is locally connected. 
We extend the definition of $\coulombnew_{\beta}$ to a general polygon 
$(\Omega; x_1, \ldots, x_{2N})$ whose marked boundary points 
$x_1, \ldots, x_{2N}$ lie on sufficiently regular boundary segments (e.g., $C^{1+\eps}$ for some $\eps>0$) as
\begin{align}\label{eqn::coulombgasintegral_general}
\coulombnew_{\beta}(\Omega; x_1, \ldots, x_{2N}) 
:= \prod_{j=1}^{2N} |\varphi'(x_j)|^{h(\kappa)} \times \coulombnew_{\beta}(\varphi(x_1), \ldots, \varphi(x_{2N})) ,
\end{align}
where $\varphi$ is any conformal map from $\Omega$ onto $\HH$ with $\varphi(x_1)<\cdots<\varphi(x_{2N})$. 
It follows from the M\"obius covariance~\eqref{eqn::COV} in Theorem~\ref{thm::CGI_property} that this definition is independent of the choice of the map $\varphi$.

\smallbreak

We formulate the next Conjectures~\ref{conj::rcm_Loewner} and~\ref{conj::rcm_crossingproba} in the case of square-lattice approximations, which is the setup that we use to give detailed proofs of these conjectures for the critical FK-Ising model in Theorems~\ref{thm::FKIsing_Loewner} and~\ref{thm::FKIsing_crossingproba}. By universality, we expect the same results to hold with any approximations.
In fact, one should be able to readily extend Theorems~\ref{thm::FKIsing_Loewner} and~\ref{thm::FKIsing_crossingproba} to more general discrete approximations following the lines of~\cite{Chelkak-Smirnov:Universality_in_2D_Ising_and_conformal_invariance_of_fermionic_observables, Chelkak:Ising_model_and_s-embeddings_of_planar_graphs}. 
For the sake of presentation, we content ourselves in the present work to the simplest setup.

\begin{conjecture} \label{conj::rcm_Loewner}
Fix a polygon $(\Omega; x_1, \ldots, x_{2N})$ and a link pattern $\beta \in \LP_N$. 
Suppose that a sequence $(\Omega^{\delta, \diamond}; x_1^{\delta, \diamond}, \ldots, x_{2N}^{\delta, \diamond})$ of medial polygons converges to $(\Omega; x_1, \ldots, x_{2N})$ in the close-Carath\'{e}odory sense \textnormal{(}as detailed in Definition~\ref{def:closeCara}\textnormal{)}. 
Consider the critical random-cluster model with cluster-weight $q \in [1,4)$ on the primal polygon $(\Omega^{\delta}; x_1^{\delta}, \ldots, x_{2N}^{\delta})$ with boundary condition $\beta$. 
For each $i\in\{1,2,\ldots, 2N\}$, let $\eta_i^{\delta}$ be the interface starting from the boundary point $x_i^{\delta, \diamond}$. Let $\varphi$ be any conformal map from $\Omega$ onto $\HH$ such that $\varphi(x_1)<\cdots<\varphi(x_{2N})$. 
Then, $\eta_i^{\delta}$ converges weakly to the image under $\varphi^{-1}$ of the Loewner chain with the following driving function, up to the first time when $\varphi(x_{i-1})$ or $\varphi(x_{i+1})$ is swallowed: 
\begin{align} \label{eqn::rcm_Loewner_chain}
\begin{cases}
\ud W_t = \sqrt{\kappa} \, \ud B_t + \kappa(\partial_i\log \coulombnew_{\beta})(V_t^{1}, \ldots, V_t^{i-1}, W_t, V_t^{i+1}, \ldots, V_t^{2N}) \, \ud t, \\
\ud V_t^j =\frac{2 \, \ud t}{V_t^j-W_t},\\ 
W_0=\varphi(x_i),\\
V_0^j=\varphi(x_j),\quad j\in\{1,\ldots, i-1, i+1, \ldots, 2N\}, 
\end{cases}
\end{align} 
where $\coulombnew_{\beta}$ is defined in~\eqref{eqn::coulombgasintegral}. 
\end{conjecture}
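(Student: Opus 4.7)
The strategy is the standard Kemppainen--Smirnov-type program for identifying scaling limits of lattice interfaces as $\SLE_\kappa$-type processes, driven here by the partition function $\coulombnew_\beta$ from \eqref{eqn::coulombgasintegral}. First, I would establish precompactness of the family $(\eta_i^\delta)_{\delta>0}$ in the curve metric \eqref{eq::curve_metric} via the Kemppainen--Smirnov criterion, which reduces to uniform discrete RSW-type crossing estimates. These are available for the critical random-cluster model with $q \in [1,4)$ thanks to the works of Duminil-Copin--Sidoravicius--Tassion and follow-ups. The close-Carath\'eodory hypothesis on the medial polygons $(\Omega^{\delta, \diamond}; x_1^{\delta, \diamond}, \ldots, x_{2N}^{\delta, \diamond})$ is used to localize these estimates near each marked boundary point, which is precisely what is needed to obtain tightness of interfaces with prescribed endpoints rather than only of their unparameterized traces. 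Once tightness is secured, the same criterion guarantees that every subsequential limit $\eta_i$ admits a parameterization by half-plane capacity as a Loewner chain with continuous driving function $W_t$.

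The crux is identifying the driving function with \eqref{eqn::rcm_Loewner_chain}. Write $V_t^j := g_t(\varphi(x_j))$ for $j \neq i$, where $g_t$ is the Loewner flow of $\varphi(\eta_i)$ in $\HH$. The idea is to exhibit a discrete observable $M_t^\delta$, measurable with respect to $\eta_i^\delta[0,t]$, which is a $\PP_\beta^\delta$-martingale and which converges, as $\delta \to 0$, to $\coulombnew_\beta(V_t^1, \ldots, V_t^{i-1}, W_t, V_t^{i+1}, \ldots, V_t^{2N})$ (up to the explicit conformal prefactor of \eqref{eqn::coulombgasintegral_general} accounting for the boundary distortion at the slit tip). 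By the random-cluster domain Markov property, such an observable is naturally expressed as a ratio of partition functions of the model in the slit domain $\Omega^\delta \setminus \eta_i^\delta[0,t]$. Given convergence of $M_t^\delta$ together with tightness of $W_t$, an application of It\^o's formula to $\log \coulombnew_\beta$ along the Loewner flow, combined with the second-order BPZ null-state PDE satisfied by $\coulombnew_\beta$ in its $i$-th variable (part of Theorem~\ref{thm::CGI_property}), forces the quadratic variation of $W_t$ to equal $\kappa t$ and the drift to equal $\kappa \, \partial_i \log \coulombnew_\beta \, \ud t$, which is exactly \eqref{eqn::rcm_Loewner_chain}. Strong uniqueness of solutions to this SDE up to the stated swallowing time then upgrades subsequential convergence to full weak convergence.

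The main obstacle --- and the reason the statement remains a conjecture for $q \neq 2$ --- is the construction of the discrete martingale observable $M_t^\delta$ and the proof that it converges to $\coulombnew_\beta$ on the scaling-limit side. For $q = 2$, Smirnov's discrete-holomorphic fermionic observable and its multi-point extensions (Chelkak--Smirnov, Hongler, Izyurov, and others) provide precisely such a tool and underpin the FK-Ising theorems of this paper. For other $q \in [1,4)$ the corresponding parafermionic observable exists only in a formal sense and is not known to have a conformally covariant scaling limit, so the plan above cannot be executed rigorously at present. Everything else --- the analytic and algebraic inputs from Theorem~\ref{thm::CGI_property}, the Kemppainen--Smirnov machinery, the RSW estimates, and the SDE analysis --- is model-independent and applies uniformly across $q \in [1,4)$, consistent with the paper's remark that the conjectural formulas for other $q$ could be verified with relatively minor technical work once interface convergence is established.
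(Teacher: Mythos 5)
The statement you set out to prove is stated in the paper as an open conjecture; the paper proves it only for $q=2$ (Theorem~\ref{thm::FKIsing_Loewner}), and you correctly identify the absence of a conformally covariant discrete observable as the obstruction for $q\neq 2$, so your outline is a fair description of the state of the art. But your sketch of the identification step for $q=2$ departs from the paper's in a way worth flagging. The paper's discrete observable is not a single time-indexed martingale built from ratios of partition functions in the slit domain; it is a fermionic spinor observable $F_\beta^\delta(z)$ of an \emph{extra spatial variable} $z$, weighted by winding (Definition~\ref{def::observables}, generalizing Smirnov's observable to arbitrary $\beta$). Its scaling limit $\phi_\beta(z)$ is a holomorphic spinor on a double cover (Propositions~\ref{prop::observable_cvg} and~\ref{prop::holo_limiting}), and the driving SDE is extracted from the Laurent expansion, near the tip, of the two-variable martingale $M_t(z)=(g_t'(z))^{1/2}\,\phi_\beta(g_t(z);W_t,V_t^2,\ldots,V_t^{2N})$ (Lemma~\ref{lem::holo_expansion} and Section~\ref{subsec::FKIsing_Loewner}). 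The connection to $\coulombnew_\beta$ is then made indirectly: Proposition~\ref{prop::totalpartition_observable} gives $\partial_1\log\LF_\beta=\frac{\sqrt{\pi}}{4}\LK_\beta$, and Theorem~\ref{thm::totalpartition} gives $\LF_\beta=\coulombnew_\beta$ by a separate PDE-uniqueness argument.

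The genuine gap is in the assertion that ``an application of It\^o's formula to $\log\coulombnew_\beta$ \ldots combined with the \ldots BPZ null-state PDE \ldots forces the quadratic variation of $W_t$ to equal $\kappa t$ and the drift to equal $\kappa\,\partial_i\log\coulombnew_\beta$.'' With only a scalar martingale $t\mapsto\prod_{j\neq i}g_t'(\varphi(x_j))^{h(\kappa)}\,\coulombnew_\beta(V_t^1,\ldots,W_t,\ldots,V_t^{2N})$, the drift-vanishing condition from It\^o, after subtracting the BPZ identity, reduces to a single relation $\tfrac{1}{2}(\sigma_t^2-\kappa)\,\partial_i^2\coulombnew_\beta + D_t\,\partial_i\coulombnew_\beta=0$ in the two unknowns $\sigma_t^2=\ud\langle W\rangle_t/\ud t$ and $D_t$; it does not determine both. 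One must either import the diffusion constant from elsewhere (e.g.\ the $N=1$ case) or, as the paper does, use the two-variable observable: the drift of $M_t(z)$ is a Laurent series in $(g_t(z)-W_t)^{-1/2}$, and vanishing of the $(g_t(z)-W_t)^{-5/2}$ coefficient gives $\ud\langle W\rangle_t=\frac{16}{3}\,\ud t$, while the $(g_t(z)-W_t)^{-3/2}$ coefficient then determines the drift. Your sketch, as written, supplies only the one scalar constraint and therefore underdetermines the SDE. (As a minor point, the conformal prefactor in your continuum martingale should omit the factor at the swallowed point $x_i$, as above, lest it blow up at the tip.)
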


\smallbreak

We prove Conjecture~\ref{conj::rcm_Loewner}
for $q=2$ in Theorem~\ref{thm::FKIsing_Loewner}.

\smallbreak

\begin{definition} \label{def::meander}
A \emph{meander} formed from two link patterns $\alpha,\beta\in\LP_N$ is the planar diagram obtained by placing $\alpha$ and the horizontal reflection $\beta$ on top of each other. We denote by $\LL_{\alpha,\beta}$ the number of loops in the meander formed from $\alpha$ and $\beta$.  We define the \emph{meander matrix} $\{\LM_{\alpha, \beta}(q(\kappa)) \colon \alpha,\beta\in\LP_N\}$ via 
\begin{align} \label{eqn::meandermatrix_def_general}
\LM_{\alpha,\beta}(q(\kappa)) := \sqrt{q(\kappa)}^{\; \LL_{\alpha,\beta}}. 
\end{align}
An example of a meander is
\begin{align*} 
\alpha \quad = \quad \vcenter{\hbox{\includegraphics[scale=0.275]{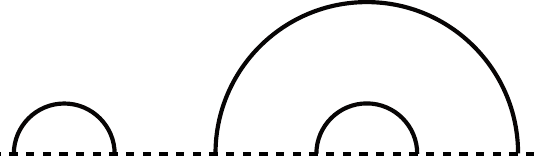}}} 
\quad  , \quad  
\beta \quad = \quad\vcenter{\hbox{\includegraphics[scale=0.275]{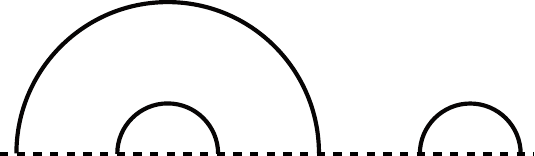}}} 
\quad\quad\quad \Longrightarrow \quad\quad\quad
\vcenter{\hbox{\includegraphics[scale=0.275]{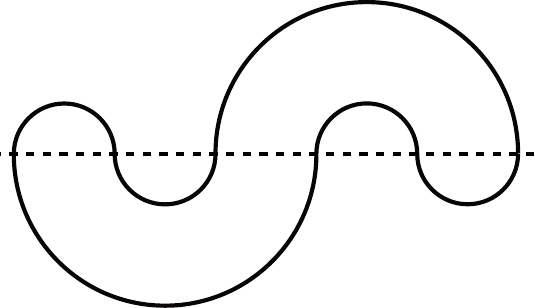}}} 
\end{align*}
\end{definition}

\begin{conjecture} \label{conj::rcm_crossingproba}
Assume the same setup as in Conjecture~\ref{conj::rcm_Loewner}. The endpoints of the $N$ interfaces give rise to a random planar link pattern $\conn^{\delta}$ in $\LP_N$. For any $\alpha\in\LP_N$, we have 
\begin{align} \label{eqn::rcm_crossingproba}
\lim_{\delta\to 0}\PP_{\beta}^{\delta}[\conn^{\delta}=\alpha] 
\quad = \quad \LM_{\alpha,\beta}(q(\kappa)) \; \frac{\PartF_{\alpha}(\Omega; x_1, \ldots, x_{2N})}{\coulombnew_{\beta}(\Omega; x_1, \ldots, x_{2N})}, 
\end{align}
where $\coulombnew_{\beta}$ and $\LM_{\alpha,\beta}$ are defined in~\textnormal{(\ref{eqn::coulombgasintegral},~\ref{eqn::coulombgasintegral_general})}  and~\eqref{eqn::meandermatrix_def_general}, respectively, and $\{\PartF_{\alpha} \colon \alpha\in\LP_N\}$ is the collection of pure partition functions for multiple $\SLE_{\kappa}$ described 
in Definition~\ref{def::PPF_general} below. 
\end{conjecture}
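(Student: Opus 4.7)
My strategy is to combine the interface convergence from Conjecture~\ref{conj::rcm_Loewner} with a deterministic identity for partition functions. Granting Conjecture~\ref{conj::rcm_Loewner}, I would first upgrade the marginal convergence of each interface $\eta_i^\delta$ to the joint convergence of all $N$ interfaces. The key tool is the domain Markov property from Section~\ref{subsec::RCM}: after tracing the first interface up to any stopping time, each component of the complement carries a random-cluster model with an induced boundary condition, so Conjecture~\ref{conj::rcm_Loewner} can be applied inductively to the remaining interfaces. This produces a commuting multiple $\SLE_\kappa$ system whose total partition function is $\coulombnew_\beta$. Tightness of the full collection of interfaces, needed to extract the joint limit, should follow from RSW-type bounds~\cite{DCHN:Connection_probabilities_and_RSW_type_bounds} together with the standard Kemppainen--Smirnov criterion.

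Next, by the general theory of pure partition functions (Peltola--Wu, Kyt{\"o}l{\"a}--Peltola), the family $\{\PartF_\alpha\}_{\alpha \in \LP_N}$ in Definition~\ref{def::PPF_general} forms a basis of non-negative solutions of the null-state PDEs with the appropriate boundary asymptotics. Since $\coulombnew_\beta$ satisfies the same system by Theorem~\ref{thm::CGI_property}, there exist unique constants $c_{\alpha,\beta} \geq 0$ with
\begin{align*}
\coulombnew_\beta = \sum_{\alpha \in \LP_N} c_{\alpha,\beta} \, \PartF_\alpha .
\end{align*}
Under the commuting multiple $\SLE_\kappa$ identified above, the conditional law of the connectivity $\alpha$ equals $c_{\alpha,\beta} \PartF_\alpha / \coulombnew_\beta$. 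Thus Conjecture~\ref{conj::rcm_crossingproba} reduces to the deterministic identity $c_{\alpha,\beta} = \LM_{\alpha,\beta}(q(\kappa))$.

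To identify the coefficients, I would perform iterated fusion asymptotics on adjacent pairs $(x_j, x_{j+1})$. Pure partition functions satisfy $\PartF_\alpha \sim (x_{j+1}-x_j)^{-2h(\kappa)} \PartF_{\hat\alpha}$ as $x_{j+1} \to x_j$ precisely when $\{j,j+1\} \in \alpha$ (with a strictly lower-order expansion otherwise), where $\hat\alpha$ is $\alpha$ with the link $\{j,j+1\}$ removed. One then produces an analogous expansion of $\coulombnew_\beta$ directly from~\eqref{eqn::coulombgasintegral} via contour deformation together with a Selberg-type evaluation of the affected innermost integral. The outcome is the same power-law singularity, multiplied by a factor of $\sqrt{q(\kappa)} = 2\cos(4\pi/\kappa)$ when the fusion closes a meander loop of $\alpha$ against $\beta$, and by $1$ otherwise. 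Iterating on $N$ and matching against the recursive structure of $\LL_{\alpha,\beta}$ yields $c_{\alpha,\beta} = \sqrt{q(\kappa)}^{\,\LL_{\alpha,\beta}} = \LM_{\alpha,\beta}(q(\kappa))$.

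The main obstacle lies in the fusion asymptotics for $\coulombnew_\beta$: several integration contours in~\eqref{eqn::coulombgasintegral} may be affected simultaneously by a single fusion, and the monodromy factors produced by the contour deformations must conspire to yield exactly $\sqrt{q(\kappa)}$ per newly closed loop. This combinatorial bookkeeping is delicate but purely analytic in nature and independent of $q$; it is essentially the content of the Coulomb gas recipe of conformal field theory. Once it is in place, the entire argument proceeds uniformly across $q \in [1,4)$, so the only truly model-specific obstruction to proving Conjecture~\ref{conj::rcm_crossingproba} beyond $q=2$ is the interface convergence assumed via Conjecture~\ref{conj::rcm_Loewner}.
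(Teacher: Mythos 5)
Your high-level outline captures some of the right ingredients, but two things are off: one step contains a genuine gap, and the paper's proof for $q=2$ in fact takes a different (and simpler) route to the general boundary condition $\beta$ that your proposal misses entirely.

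\textbf{The gap.} After writing $\coulombnew_\beta = \sum_\alpha c_{\alpha,\beta} \PartF_\alpha$ you assert that ``under the commuting multiple $\SLE_\kappa$ identified above, the conditional law of the connectivity $\alpha$ equals $c_{\alpha,\beta} \PartF_\alpha / \coulombnew_\beta$.'' This is presented as automatic, but it is exactly the hard probabilistic content of the theorem and does not follow from the decomposition alone. For $\kappa \in (4,6]$ the curves touch themselves and the boundary, and one must control the terminal behavior of the local martingale $M_t = \PartF_\alpha / \coulombnew_\beta$ as the Loewner chain swallows marked points: one needs a cascade identity at ``correct'' hitting arcs and the convergence $M_t \to 0$ at ``wrong'' ones. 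In the paper this is carried out in Lemmas~\ref{lem::ppf_cascade_mart}--\ref{lem::mart_cascade} together with the a priori estimates of Proposition~\ref{prop::mart_vanish_combined}, which compare $\LB_\alpha$ against $\LF_\unnested^{(N)}$ and exploit the explicit lower bound in Lemma~\ref{lem::bounds_remaining_term}. None of this is purely ``general theory''; for $\kappa>4$ the local-to-global identification of multiple SLEs is precisely what must be proved, not invoked.

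\textbf{The route to general $\beta$.} Your proposal tries to run the full martingale/fusion argument for each boundary condition $\beta$. The paper instead proves the connectivity formula only for the completely unnested $\beta = \unnested$ (Proposition~\ref{prop::crossingproba_unnested}), and then transfers to arbitrary $\beta$ by a purely \emph{discrete} combinatorial identity, Proposition~\ref{prop::crossingproba_comparison}: the loop-representation weights of the random-cluster model yield
$\PP_\beta[\conn=\alpha] = \dfrac{\tfrac{\LM_{\alpha,\beta}(q)}{\LM_{\alpha,\unnested}(q)}\PP_\unnested[\conn=\alpha]}{\sum_\gamma \tfrac{\LM_{\gamma,\beta}(q)}{\LM_{\gamma,\unnested}(q)}\PP_\unnested[\conn=\gamma]}$
for all $q>0$ at the self-dual point, before any scaling limit is taken. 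This discrete comparison is cheap, avoids rerunning the continuum argument for every $\beta$, and automatically produces the meander-matrix factor. Your fusion/monodromy analysis of~\eqref{eqn::coulombgasintegral} does recover the identity $\coulombnew_\beta = \sum_\alpha \LM_{\alpha,\beta}(q(\kappa)) \PartF_\alpha$, which the paper proves via the Flores--Kleban uniqueness theorem (Theorem~\ref{thm::purepartition_unique}) by matching asymptotics rather than by Selberg-type evaluation of contour deformations --- these are comparable in effort --- but that identity alone does not give you the probability without the martingale analysis sketched above.
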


We prove Conjecture~\ref{conj::rcm_crossingproba}
for $q=2$ in Theorem~\ref{thm::FKIsing_crossingproba}.

\smallbreak

The content of Conjectures~\ref{conj::rcm_Loewner} and~\ref{conj::rcm_crossingproba} has been predicted in the physics literature and also numerically verified in some cases with high precision, see~\cite{SKFZ:Cluster_densities_at_2D_critical_points_in_rectangular_geometries, FSKZ:A_formula_for_crossing_probabilities_of_critical_systems_inside_polygons} and references therein.
Via a similar strategy as in the proof of Theorem~\ref{thm::totalpartition}, by using Theorem~\ref{thm::purepartition_unique} one can verify that 
our formula~\eqref{eqn::coulombgasintegral} for $\coulombnew_\beta$ is consistent with the prediction in~\cite[Eq.~(11)]{FSKZ:A_formula_for_crossing_probabilities_of_critical_systems_inside_polygons}.

\smallbreak

``Pure partition functions'' refer to a family of smooth functions defined as solutions to a system of partial differential equations (PDEs) important in both CFT and SLE theory, with certain recursive asymptotic boundary conditions.
Uniqueness results for solutions to PDEs are usually not available. However, it was proven by Flores \&~Kleban~\cite{Flores-Kleban:Solution_space_for_system_of_null-state_PDE2, Flores-Kleban:Solution_space_for_system_of_null-state_PDE3} that 
in this particular case, we do have a classification if we impose certain additional requirements (covariance (COV) and growth bound (PLB)).
The PDEs appear in the pioneering CFT articles~\cite{BPZ:Infinite_conformal_symmetry_in_2D_QFT, BPZ:Infinite_conformal_symmetry_of_critical_fluctuations_in_2D} 
of Belavin, Polyakov, and Zamolodchikov (BPZ) as a feature of the algebraic structure of conformal symmetry for certain fields, 
and in early articles in SLE theory by 
Bauer, Bernard, and Kyt{\"o}l{\"a}~\cite{BBK:Multiple_SLEs_and_statistical_mechanics_martingales},
and 
Dub{\'e}dat~\cite{Dubedat:Euler_integrals_for_commuting_SLEs, Dubedat:Commutation_relations_for_SLE},
as a manifestation of certain martingales. 
\begin{itemize}[leftmargin=3em]
\item[\textnormal{(PDE)}] 
\textnormal{\bf BPZ equations:} 
\begin{align}\label{eqn::PDE}
\hspace*{-5mm}
\bigg[ 
\frac{\kappa}{2}\pdder{x_{j}}
+ \sum_{i \neq j} \Big( \frac{2}{x_{i}-x_{j}} \pder{x_{i}} 
- \frac{2h(\kappa)}{(x_{i}-x_{j})^{2}} \Big) \bigg]
F(x_1,\ldots,x_{2N}) =  0 , \qquad \textnormal{for all } j \in \{1,\ldots,2N\} .
\end{align}
\end{itemize}

The covariance gives a version of global conformal symmetry for the functions. 
\begin{itemize}[leftmargin=3em]
\item[\textnormal{(COV)}] 
\textnormal{\bf M\"{o}bius covariance:} 
\begin{align}\label{eqn::COV}
F(x_{1},\ldots,x_{2N}) = 
\prod_{i=1}^{2N} \varphi'(x_{i})^{h(\kappa)} 
\times F(\varphi(x_{1}),\ldots,\varphi(x_{2N})) ,
\end{align}
for all M\"obius maps $\varphi$ of the upper half-plane $\HH$ 
such that $\varphi(x_{1}) < \cdots < \varphi(x_{2N})$.
\end{itemize}

\begin{definition} \label{def::PPF_general}
Fix $\kappa\in (0,6]$. 
The \emph{pure partition functions} of multiple $\SLE_{\kappa}$ are the recursive collection $\{\PartF_{\alpha} \colon \alpha \in \bigsqcup_{N\geq 0} \LP_N\}$ 
of functions $\PartF_{\alpha} \colon \chamber_{2N}\to\R_{>0}$ 
uniquely determined by the following properties. 
They satisfy the PDE system~\eqref{eqn::PDE}, 
M\"{o}bius covariance~\eqref{eqn::COV}, 
as well as~\textnormal{(ASY)} and~\textnormal{(PLB)} given below. 
\begin{itemize}[leftmargin=3em]
\item[\textnormal{(ASY)}] 
\textnormal{\bf Asymptotics:} 
With $\PartF_{\emptyset} \equiv 1$ for the empty link pattern $\emptyset \in \LP_0$, the collection $\{\PartF_{\alpha} \colon \alpha\in\LP_N\}$ satisfies the following recursive 
asymptotics property.
Fix $N \ge 1$ and $j \in \{1,2, \ldots, 2N-1 \}$. 
Then, we have
\begin{align}
\label{eqn::PPFASY_general} 
\; & \lim_{x_j,x_{j+1}\to\xi} \frac{\PartF_{\alpha}(\bs{x})}{ (x_{j+1}-x_j)^{-2h(\kappa)} }
= 
\begin{cases}
\PartF_{\alpha/\{j,j+1\}}(\bs{\ddot{x}}_j), 
& \textnormal{if }\{j, j+1\}\in\alpha , \\
0 ,
& \textnormal{if }\{j, j+1\} \not\in \alpha ,
\end{cases}
\end{align}
where
\begin{align} \label{eqn::bs_notation}
\begin{split}
\bs{x} = \; & (x_1, \ldots, x_{2N}) \in \chamber_{2N} , 
\\
\bs{\ddot{x}}_j = \; & (x_1, \ldots, x_{j-1}, x_{j+2}, \ldots, x_{2N}) \in \chamber_{2N-2} ,
\end{split}
\end{align}
and $\xi \in (x_{j-1}, x_{j+2})$
\textnormal{(}with the convention that $x_0 = -\infty$ and  $x_{2N+1} = +\infty$\textnormal{)}. 

\item[\textnormal{(PLB)}] 
The functions are positive and satisfy the power-law bound 
\begin{align} \label{eqn::PPFBounds_general}
0<\PartF_{\alpha}(x_1, \ldots, x_{2N})\le\prod_{\{a,b\}\in\alpha}|x_b-x_a|^{-2h(\kappa)}, \qquad \textnormal{for all }(x_1, \ldots, x_{2N})\in\chamber_{2N} . 
\end{align}
\end{itemize}
We extend the definition of $\PartF_{\alpha}$ to more general
polygons $(\Omega; x_1, \ldots, x_{2N})$ as in~\eqref{eqn::coulombgasintegral_general} 
\textnormal{(}replacing $\coulombnew_{\beta}$ by $\PartF_{\alpha}$\textnormal{)}.
\end{definition}

With a weaker power-law bound and relaxing the positivity requirement in~\eqref{eqn::PPFBounds_general}, the collection $\{\PartF_{\alpha} \colon \alpha\in\LP_N\}$ was first constructed in~\cite{Flores-Kleban:Solution_space_for_system_of_null-state_PDE3} indirectly by using Coulomb gas integrals for all $\kappa\in (0,8)$, and explicitly for all $\kappa \in (0,8) \setminus \QQ$ in~\cite{Kytola-Peltola:Pure_partition_functions_of_multiple_SLEs}, following the conjectures from~\cite{BBK:Multiple_SLEs_and_statistical_mechanics_martingales}.
It is believed that these functions satisfy~\eqref{eqn::PPFBounds_general} for all $\kappa\in (0,8)$. 
In general, for the range $\kappa \in (0,8]$, to our knowledge there are explicit formulas for $\PartF_{\alpha}$ only when $\kappa \notin \QQ$ (cf.~\cite{Kytola-Peltola:Pure_partition_functions_of_multiple_SLEs}) and for a few special rational cases: $\kappa = 2$~\cite{KKP:Boundary_correlations_in_planar_LERW_and_UST}; $\kappa=4$~\cite{Peltola-Wu:Global_and_local_multiple_SLEs_and_connection_probabilities_for_level_lines_of_GFF}; and $\kappa=8$~\cite{LPW:UST_in_topological_polygons_partition_functions_for_SLE8_and_correlations_in_logCFT}.
For $\kappa\in (0,6]$, an explicit probabilistic construction was given in~\cite[Theorem~1.7]{Wu:Convergence_of_the_critical_planar_ising_interfaces_to_hypergeometric_SLE}, which immediately implies~\eqref{eqn::PPFBounds_general}. 
See also Remark~\ref{rem::ppf}.

\subsection{Results: Multiple interfaces and connection probabilities for the FK-Ising model}
\label{subsec::Results_FKIsing_Loewner}

Our first main result concerns the scaling limit of the FK-Ising interfaces.
\begin{theorem} \label{thm::FKIsing_Loewner}
Conjecture~\ref{conj::rcm_Loewner} holds for $q=2$ and $\kappa=16/3$. In this case, we have
\begin{align} \label{eqn::totalpartition_def}
\begin{split} 
\coulombnew_\beta (x_1, \ldots, x_{2N}) 
= \; & \LF_{\beta} (x_1, \ldots, x_{2N}) \\
:= \; & \prod_{s=1}^N |x_{b_s}-x_{a_s}|^{-1/8}
\bigg(\sum_{\bs{\sigma} \in \{\pm 1\}^N}\prod_{1\le s < t \le N}\chi(x_{a_s}, x_{a_t}, x_{b_t}, x_{b_s})^{\sigma_s \sigma_t / 4}\bigg)^{1/2} ,
\end{split} 
\end{align}
where $\bs{\sigma} = (\sigma_1, \sigma_2, \ldots, \sigma_N)\in \{\pm 1\}^N$
and $\chi \colon \R^4\to \R$ is the cross-ratio
\begin{align}\label{eqn::crossratio}
\chi(y_1, y_2, y_3,  y_4):= \frac{|y_2-y_1| \, |y_4-y_3|}{|y_3-y_1| \, |y_4-y_2|} .
\end{align}
\end{theorem}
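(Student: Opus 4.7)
The plan is to split the statement into two tasks: (i) identifying the closed-form Pfaffian-type expression $\LF_\beta$ in~\eqref{eqn::totalpartition_def} with the Coulomb gas integral $\coulombnew_\beta$ at $\kappa=16/3$, and (ii) establishing convergence of the FK-Ising interface $\eta_i^\delta$ to the Loewner chain driven by~\eqref{eqn::rcm_Loewner_chain}.

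For task (i), the key is to verify that $\LF_\beta$ satisfies the characterizing properties of $\coulombnew_\beta$ listed in Theorem~\ref{thm::CGI_property}: the BPZ system~\eqref{eqn::PDE} at $\kappa = 16/3$ (so $h(\kappa)=1/16$), M\"obius covariance~\eqref{eqn::COV}, positivity, the power-law bound, and the recursive leading boundary asymptotics. The BPZ equations can be verified directly from the explicit expression, exploiting the observation that $\LF_\beta^2$ is a sum over $\bs{\sigma} \in \{\pm 1\}^N$ of products of cross-ratios --- a structure reflecting the free-fermion (Pfaffian) nature of the Ising model. The equality $\coulombnew_\beta = \LF_\beta$ then follows by invoking a uniqueness result analogous to the strategy used for Theorem~\ref{thm::totalpartition}, since both sides solve the same PDE/covariance/asymptotics system within the relevant solution space.

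For task (ii), I would follow the Kemppainen--Smirnov strategy. First, I would establish tightness of $\{\eta_i^\delta\}_{\delta>0}$ via a Kemppainen--Smirnov geometric condition, which reduces to uniform RSW-type crossing estimates for the critical FK-Ising in polygons~\cite{CDCH:Crossing_probabilities_in_topological_rectangles_for_critical_planar_FK_Ising_model, DCHN:Connection_probabilities_and_RSW_type_bounds}. Second, to identify any subsequential limit, I would construct a discrete martingale observable by suitably modifying the Smirnov--Chelkak fermionic observable of the FK-Ising model for the $2N$-point polygon with boundary condition $\beta$, normalizing it by a discrete counterpart of $\LF_\beta$ built from the configuration in the slit domain $\Omega^\delta \setminus \eta_i^\delta[0,\tau]$. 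Using the s-holomorphicity and boundary values of this observable (in the spirit of~\cite{Chelkak-Smirnov:Universality_in_2D_Ising_and_conformal_invariance_of_fermionic_observables, CDHKS:Convergence_of_Ising_interfaces_to_SLE, Izyurov:On_multiple_SLE_for_the_FK_Ising_model}), I would pass to the scaling limit and apply It\^{o}'s formula to $\log \LF_\beta$ evaluated along the Loewner uniformizations to read off the driving function, which must then take the form~\eqref{eqn::rcm_Loewner_chain} with drift $\kappa \, \partial_i \log \coulombnew_\beta$.

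The main obstacle will be the construction and convergence of the martingale observable in the multi-interface setting. While single-interface FK-Ising fermionic observables are by now classical, the $2N$-point boundary condition $\beta$ demands an observable whose normalization accounts for all remaining interfaces and whose ratio with the discrete analogue of $\LF_\beta$ converges uniformly up to (and including near) the tip. Controlling this ratio as the tip approaches the other marked points $x_j^\delta$, and ensuring the observable is a genuine martingale for the interface law rather than a weighted version, is the step that essentially relies on the exact solvability (``magic'') of the FK-Ising model; this is precisely what is not available for other $q \in [1,4)$, which is why Conjecture~\ref{conj::rcm_Loewner} remains open in those cases.
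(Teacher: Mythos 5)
Your high-level decomposition (identify $\LF_\beta$ with $\coulombnew_\beta$; prove interface convergence via tightness plus a discrete holomorphic observable) is the same as the paper's, but the execution differs in two substantive ways, and one of them hides a genuine gap in your sketch.

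On task~(i): you propose to verify the BPZ equations~\eqref{eqn::PDE} for $\LF_\beta$ by direct computation from the Pfaffian-type formula and then invoke the Flores--Kleban uniqueness theorem. The paper instead verifies the PDEs \emph{indirectly}: it first proves the interface convergence in Section~\ref{sec::FKIsing_Loewner} (with drift written in terms of $\LF_\beta$, not yet identified with $\coulombnew_\beta$), and then observes that the resulting family of commuting SLE$_{16/3}$ curves forces $\LF_\beta$ to satisfy~\eqref{eqn::PDE} by Dub\'edat's commutation relations (Proposition~\ref{prop::totalpartition_PDE}). Your route is plausible and has been carried out in related contexts, but it trades a conceptually clean argument for a potentially heavy multi-variable computation; you would need to actually execute it, and it is not ``direct'' in any easy sense. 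Möbius covariance and the asymptotics are indeed verified directly in the paper (Propositions~\ref{prop::totalpartition_COV},~\ref{prop::totalpartition_ASY}), and your remark about the quadratic structure of $\LF_\beta^2$ is in the right spirit for those.

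On task~(ii), the gap is in your observable construction. You say you would take ``the Smirnov--Chelkak fermionic observable \ldots normalizing it by a discrete counterpart of $\LF_\beta$.'' This is not how the paper (or Izyurov) proceeds, and as stated it is circular: you do not have a discrete $\LF_\beta$ to normalize by, and a martingale normalized by its own partition function would just trivialize. The key idea you are missing is the \emph{exploration path} $\xi^\delta_\beta$ of Definition~\ref{def: exploration path}, which traces a single loop through the meander formed by the boundary link pattern $\beta$ and the random internal connectivity $\conn^\delta$. The spinor observable is defined \emph{intrinsically} in terms of this path and its winding (Definition~\ref{def::observables}); no normalization is imposed. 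The function $\LF_\beta$ then \emph{emerges} from the Laurent-type expansion of the scaling-limit observable $\phi_\beta$ at the launch point: the limit $\phi_\beta = \ii P_\beta S_{\bs{x}}$ is determined (Proposition~\ref{prop::holo_limiting}) by linear conditions~(\ref{eqn::holo_restriction1},~\ref{eqn::holo_restriction2}), and solving them via Cramer's rule and a Vandermonde identity produces exactly the expansion coefficient $\LK_\beta = \frac{4}{\sqrt\pi}\partial_1\log\LF_\beta$ (Proposition~\ref{prop::totalpartition_observable}, plus Lemmas~\ref{lem::phase_factor}--\ref{lem::connection_chi}). Without this determinantal identification, there is no reason the drift you read off from the observable would match $\kappa\,\partial_i\log\coulombnew_\beta$; that match is precisely the content of the hardest part of the proof. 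Your sketch asserts the conclusion of that step rather than supplying it.

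Finally, your comment that controlling the observable near the other marked points and ``ensuring the observable is a genuine martingale'' is the essential obstruction is not quite where the real difficulty lies. The s-holomorphicity and the martingale property are verified essentially as in the two-point case (Lemmas~\ref{lem::discrete_obser}--\ref{lem::s_holomo}, standard); the real work is (a)~the new exploration-path construction that makes the observable well-defined for general $\beta$, (b)~showing the Riemann--Hilbert boundary value problem for the limit has a solution of the specific form $\ii P_\beta S_{\bs{x}}$ with $\det R_\beta \ne 0$, and (c)~the algebraic reduction of the Cramer's-rule solution to the closed form $\LF_\beta$. These are the steps absent from your proposal.
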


\begin{remark}
The square of this formula also appears in moments of the real part of an imaginary Gaussian multiplicative chaos distribution~\textnormal{\cite[Theorem~1.5]{JSW:IGMC_Moments_regularity_and_connections_to_Ising_model}}.
\end{remark}

The case $N=1$ of one curve in Theorem~\ref{thm::FKIsing_Loewner} was proven in a celebrated group effort summarized in~\cite{CDHKS:Convergence_of_Ising_interfaces_to_SLE}. 
The scaling limit curve is the chordal \emph{Schramm-Loewner evolution}. 
The proof in the case of $N=1$ involves two main steps.
The first step is to show that the sequence  $\{\eta_1^{\delta}\}_{\delta>0}$ of interfaces is \emph{tight}, which implies \emph{precompactness} by Prokhorov's theorem, and thus
enables finding convergent subsequences
$\eta_1^{\delta_n} \to \eta_1$ with some limit curve $\eta_1$.
Second, one has to show that all of these subsequences actually \emph{converge} to the same limit, identified in this case with the chordal $\SLE_{16/3}$. 
The precompactness
step is established by refined crossing estimates~\cite{DCHN:Connection_probabilities_and_RSW_type_bounds, Kemppainen-Smirnov:Random_curves_scaling_limits_and_Loewner_evolutions}, 
while the identification of the limit curve involves an ingenious usage of a discrete holomorphic spinor observable 
(devised by Smirnov~\cite{Smirnov:Conformal_invariance_in_random_cluster_models1} and further developed by Chelkak, Smirnov, and others, cf.~\cite{Chelkak-Smirnov:Universality_in_2D_Ising_and_conformal_invariance_of_fermionic_observables, CHI:Conformal_invariance_of_spin_correlations_in_planar_Ising_model, CHI:Correlations_of_primary_fields_in_the_critical_planar_Ising_model}) 
converging to its continuum counterpart, which 
gives the sought driving function $W_t = \sqrt{16/3} \, B_t$ via a suitable series expansion.

In the case $N=2$ of two curves $(\eta_1,\eta_2)$, 
Theorem~\ref{thm::FKIsing_Loewner} was proven in~\cite{Chelkak-Smirnov:Universality_in_2D_Ising_and_conformal_invariance_of_fermionic_observables, Kemppainen-Smirnov:Configurations_of_FK_Ising_interfaces}.  
Since the conformal invariance fixes three real degrees of freedom, while the polygon $(\Omega; x_1, x_2, x_3, x_4)$ has four real degrees of freedom, 
a similar strategy as in the case of one curve gives the result, and
the driving function of one curve, say $\eta_1$ (in its marginal law), is given by Brownian motion with a drift involving the hypergeometric function. 
Essentially, the only additional input compared to the case of $N=1$ is that one has to solve an ordinary differential equation for the drift term, which results in the hypergeometric equation.

The case of $N \geq 3$ is significantly more involved. Because there are several degrees of freedom,  
the identification of the scaling limit requires finding a suitable multi-point discrete holomorphic {spinor}  observable, or alternatively, some other proof strategy. 
For the special case where the boundary condition is the totally unnested link pattern
\begin{align} \label{eqn::unnested}
\beta = \unnested := \{\{1,2\} , \{3,4\} , \ldots, \{2N-1, 2N\}\} ,
\end{align}
Theorem~\ref{thm::FKIsing_Loewner} was proven recently by Izyurov~\cite{Izyurov:On_multiple_SLE_for_the_FK_Ising_model} and earlier implicitly conjectured by Flores, Simmons, Kleban, and Ziff~\cite{FSKZ:A_formula_for_crossing_probabilities_of_critical_systems_inside_polygons}. 
In Section~\ref{sec::FKIsing_Loewner}, we will prove Theorem~\ref{thm::FKIsing_Loewner} with general boundary conditions $\beta$. The main addition compared to the earlier results is the identification of the drift term for general $\beta$, given by~\eqref{eqn::totalpartition_def}, and finding a suitably general multi-point observable. 
The rough strategy is the following. 
\begin{itemize}[leftmargin=*]

\item We construct a discrete holomorphic observable 
 with general boundary conditions in Section~\ref{subsec::holo_observable} and
identify its scaling limit observable $\phi_{\beta}$ in Section~\ref{subsec::observable_cvg}. 
This is a generalization of the previous observables constructed in~\cite{Chelkak-Smirnov:Universality_in_2D_Ising_and_conformal_invariance_of_fermionic_observables, Izyurov:Smirnovs_observable_for_free_boundary_conditions_interfaces_and_crossing_probabilities, Izyurov:On_multiple_SLE_for_the_FK_Ising_model}.
Some key ideas for the proof in Section~\ref{subsec::observable_cvg} are learned from~\cite{Izyurov:Smirnovs_observable_for_free_boundary_conditions_interfaces_and_crossing_probabilities}.

\item We analyze the observable $\phi_{\beta}$, expand it to certain precision, and relate its expansion coefficients to $\LF_{\beta}$ in Section~\ref{subsec::holo_limiting}. This step is rather technical, but contains the gist of the proof of Theorem~\ref{thm::FKIsing_Loewner}: identification of the scaling limit~\eqref{eqn::rcm_Loewner_chain} with the \emph{explicit} drift given by the function $\LF_{\beta}$ in formula~\eqref{eqn::totalpartition_def}.
The form of the function $\LF_{\beta}$ is very similar to~\cite[Theorem~1.1]{Izyurov:On_multiple_SLE_for_the_FK_Ising_model}, 
but we allow a general external connectivity that gives the boundary condition $\beta$.

\item Most importantly, in Section~\ref{sec::totalpartition_analysis} (Theorem~\ref{thm::totalpartition}) 
we also show that the function $\LF_{\beta}$ coincides with the prediction $\coulombnew_\beta$ from the Coulomb gas formalism of CFT 
related to~\cite[Eq.~(C.14)]{FSKZ:A_formula_for_crossing_probabilities_of_critical_systems_inside_polygons}.

\item Finally, we derive the Loewner equation~\eqref{eqn::rcm_Loewner_chain} for $\kappa = 16/3$ from the observable $\phi_{\beta}$ in Section~\ref{subsec::FKIsing_Loewner} using its properties derived in Section~\ref{subsec::holo_limiting}. This step is relatively standard.  
\end{itemize}

\begin{remark} \label{rem::spin_correlation_function}
Note that formula~\eqref{eqn::totalpartition_def} has the form of a bulk spin correlation function in the Ising model~\textnormal{\cite[Eq.~(1.4)]{CHI:Conformal_invariance_of_spin_correlations_in_planar_Ising_model}}, 
but with the spins put on the real line instead, in such way that each pair $\{ x_{a_r}, x_{b_r} \}$ 
corresponds to a bulk point $z_r$ and its complex conjugate $\overline{z}_r$
\textnormal{(}see also~\textnormal{\cite[Eq.~(C.14)]{FSKZ:A_formula_for_crossing_probabilities_of_critical_systems_inside_polygons}~and 
\cite[Theorem~1.1]{Izyurov:On_multiple_SLE_for_the_FK_Ising_model}} for the special case where $\beta = \unnested$~\eqref{eqn::unnested}\textnormal{)}. 
This observation, or ``reflection trick'', was used by Flores, Simmons, Kleban, and Ziff~\textnormal{\cite[Figure~3]{SKFZ:Cluster_densities_at_2D_critical_points_in_rectangular_geometries}} 
and later in~\textnormal{\cite{FSKZ:A_formula_for_crossing_probabilities_of_critical_systems_inside_polygons}}
to predict formulas\footnote{Our formula~\eqref{eqn::coulombgasintegral} for $\coulombnew_\beta$ is seemingly different from~\textnormal{\cite[Eq.~(11)]{FSKZ:A_formula_for_crossing_probabilities_of_critical_systems_inside_polygons}}, but they actually coincide.} 
for $\coulombnew_\beta$ in~\textnormal{\cite[Eq.~(11)]{FSKZ:A_formula_for_crossing_probabilities_of_critical_systems_inside_polygons}}. 
The idea is, to our knowledge, originally due to Cardy~\textnormal{\cite{Cardy:Conformal_invariance_and_surface_critical_behavior}}, 
who observed that via the reflection trick, 
bulk correlations satisfying so-called BPZ differential equations~\textnormal{\cite{BPZ:Infinite_conformal_symmetry_in_2D_QFT, BPZ:Infinite_conformal_symmetry_of_critical_fluctuations_in_2D}} 
can be related to boundary correlations also satisfying similar equations\footnote{Note that the reflection trick only indicates that certain formulas satisfy certain partial differential equations, and does not give much physical interpretation of this relationship.}. 
We show in Theorem~\ref{thm::CGI_property} that $\coulombnew_\beta$ indeed satisfies these equations, along with specific asymptotic boundary conditions that heuristically give the ``fusion rules'' for the corresponding CFT primary fields.
See also~\textnormal{\cite[Theorem~8]{Flores-Kleban:Solution_space_for_system_of_null-state_PDE3}} and~\textnormal{\cite[Theorem~2]{Flores-Kleban:Solution_space_for_system_of_null-state_PDE4}}. 
\end{remark}

\begin{theorem} \label{thm::FKIsing_crossingproba}
Conjecture~\ref{conj::rcm_crossingproba} holds for $q=2$ and $\kappa=16/3$, with $\coulombnew_\beta = \LF_{\beta}$ as in~\eqref{eqn::totalpartition_def}.
\end{theorem}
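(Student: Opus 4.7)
My plan is to proceed by induction on $N$, combining the convergence of interfaces from Theorem~\ref{thm::FKIsing_Loewner} with the decomposition $\LF_\beta = \sum_{\alpha \in \LP_N} \LM_{\alpha,\beta}(2) \, \PartF_\alpha$ (which must be verified, and is essentially the content of Theorem~\ref{thm::totalpartition}), and the domain Markov property of the FK-Ising model. The base case $N=1$ is trivial since $\LP_1$ has only one element and $\LM_{\alpha,\beta}(2)\PartF_\alpha / \LF_\beta \equiv 1$. The substance is in the inductive step.

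For the inductive step, I focus on the first interface $\eta_1^\delta$ from $x_1^\delta$. By Theorem~\ref{thm::FKIsing_Loewner}, $\eta_1^\delta$ converges weakly (in the close-Carath\'eodory sense) to a curve $\eta_1$ whose driving function satisfies~\eqref{eqn::rcm_Loewner_chain} with drift $\kappa\,\partial_1 \log \LF_\beta$. I first identify the marginal probability that $\eta_1$ terminates at each boundary point $x_j$. For each $\alpha \in \LP_N$ with $\{1,j\}\in\alpha$, applying It\^o's formula together with the BPZ PDEs~\eqref{eqn::PDE} satisfied by both $\PartF_\alpha$ and $\LF_\beta$ shows that the ratio $\LM_{\alpha,\beta}\PartF_\alpha / \LF_\beta$, evaluated along the Loewner chain driven by $\LF_\beta$, is a bounded nonnegative local martingale (hence a true martingale), and the asymptotics~\textnormal{(ASY)} near the tip hitting $x_j$ pin down its terminal value. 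Summing over those $\alpha$ with $\{1,j\}\in\alpha$ gives the limiting probability that $\eta_1$ ends at $x_j$.

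Conditionally on $\eta_1^\delta$ ending at $x_j^\delta$ and on its trajectory, the domain Markov property of the FK-Ising model implies that the configurations in the two sub-polygons cut out by $\eta_1^\delta$ are independent critical FK-Ising models, with boundary conditions encoded by link patterns on fewer marked points (determined by $\beta$ and $j$). Passing to the limit $\delta \to 0$ via the close-Carath\'eodory convergence of the sub-polygons as in~\cite{Karrila:Limits_of_conformal_images_and_conformal_images_of_limits_for_planar_random_curves}, applying the inductive hypothesis to each sub-polygon, and multiplying out using the recursive asymptotics~\textnormal{(ASY)} of $\PartF_\alpha$ to match the product structure, I obtain formula~\eqref{eqn::rcm_crossingproba}. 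Here the combinatorics of the meander matrix enters through the fact that the loops in the meander formed from $\alpha$ and $\beta$ factor into loops in the sub-polygons plus those created by the pairing $\{1,j\}$, which matches the recursive weights produced by $\LM_{\alpha,\beta}$.

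The main obstacle is the joint control of all interfaces as one passes to the limit: I need not only the weak convergence of $\eta_1^\delta$ from Theorem~\ref{thm::FKIsing_Loewner}, but also precompactness for the remaining interfaces in the random sub-polygons and close-Carath\'eodory convergence of these sub-polygons up to all marked points, so that the inductive hypothesis can actually be applied. These issues are standard and can be handled via Kemppainen--Smirnov-type crossing estimates together with Karrila's framework for conformal images of limits. A second, more bookkeeping-type point is the correct identification of the induced link pattern on each sub-polygon from $\beta$ and the endpoint $x_j$; this is precisely the non-crossing partition surgery which makes the meander-matrix weight $\LM_{\alpha,\beta}(2)$ appear after expanding with respect to $\alpha$.
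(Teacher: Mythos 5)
Your proposal is a genuinely different route from the paper's, and it contains a concrete gap.

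The paper's proof is a two-stage argument. In the first stage (Section~4.1), they run a martingale/cascade induction \emph{only} for the totally unnested boundary condition $\beta=\unnested$: the local martingale is $M_t=\PartF_\alpha/\LF_{\unnested}^{(N)}$ along the Loewner chain of the interface started at $x_2$, its terminal value is identified via a cascade relation for $\PartF_\alpha$ from~\cite{Wu:Convergence_of_the_critical_planar_ising_interfaces_to_hypergeometric_SLE}, and --- crucially --- its vanishing at ``wrong'' swallowed indices is established by the a~priori estimates of Appendix~\ref{appendix_technical} (Proposition~\ref{prop::mart_vanish_combined}), all written specifically for the functions $\LB_\alpha$, $\LB_{\unnested}^{(N)}$, $\LY_{\unnested}^{(N)}$ whose explicit product form $\LB_{\unnested}^{(N)}(\bs{x})=\prod_r|x_{2r}-x_{2r-1}|^{-1/8}$ is used heavily. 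In the second stage (Section~4.2), they pass from $\beta=\unnested$ to general $\beta$ not by a martingale argument but by a purely discrete algebraic identity (Proposition~\ref{prop::crossingproba_comparison}), valid already at the level of the finite random-cluster model at the self-dual point, which expresses $\PP_\beta[\conn=\alpha]$ as a rational function of the $\PP_{\unnested}[\conn=\gamma]$ with the meander-matrix entries as coefficients. This identity then carries the $\unnested$ scaling limit to all $\beta$ at once.

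You instead attempt a single-stage martingale induction for general $\beta$, with martingale $\LM_{\alpha,\beta}\PartF_\alpha/\LF_\beta$. Here are the concrete issues. First, the statement ``the asymptotics (ASY) near the tip hitting $x_j$ pin down its terminal value'' glosses over what in the paper is the entire Appendix~\ref{appendix_technical}: (ASY) describes what happens as two marked points collide in the interior, but at the swallowing time $T$ the driving point also collides with $g_t(x_j)$, several marked points may cluster simultaneously, and one needs quantitative estimates to prove both that $M_T=0$ at ``wrong'' indices and that the cascade identity~\eqref{eqn::mart_cascade} holds at ``correct'' ones. The estimates Lemma~\ref{lem::technical_oddj}, Lemma~\ref{lem::technical_evenj} and Lemma~\ref{lem::bounds_remaining_term} compare $\LB_\alpha$ against $\LB_{\unnested}^{(N)}$, and the count $\#\LI_{\unnested}^{j}$ versus $\#\LI_\alpha^{j}$ is what makes the odd-/even-$j$ dichotomy work; for a general $\LF_\beta$ the normalization $\LB_\beta$ involves $|x_{b_r}-x_{a_r}|$ for \emph{nested} pairs, and there is no reason the same inequalities hold without a fresh analysis. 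You would need an analogue of Proposition~\ref{prop::mart_vanish_combined} with $\LF_\beta$ in the denominator, and you have not supplied one; labeling this as ``standard Kemppainen--Smirnov plus Karrila'' is a mislabeling, since those references supply tightness and close-Carath\'eodory convergence of sub-polygons but not the quantitative control of ratios of partition functions near the swallowing time. Second, the meander-matrix factorization you invoke (``loops ... factor into loops in the sub-polygons plus those created by the pairing $\{1,j\}$'') is the identity $\LM_{\alpha_j,\unnested}(2)\,\LM_{\alpha/\alpha_j,\unnested}(2)=\LM_{\alpha,\unnested}(2)$ (Eq.~\eqref{eqn::meander_cascade}), which is transparent exactly because $\unnested$ has no external nesting; for general $\beta$ the interface from $x_1$ to $x_j$ may cross the external arc of $\beta$ joining $1$ to its $\beta$-partner, the induced boundary conditions on the two pieces are a nontrivial surgery of $\beta$, and a factorization of $\LM_{\alpha,\beta}$ along the cut would need to be stated and proved as a separate lemma. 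The paper sidesteps both of these difficulties by proving the $\unnested$ case and then appealing to Proposition~\ref{prop::crossingproba_comparison}. If you want to salvage your single-stage approach, the minimal fix is to restrict the martingale induction to $\beta=\unnested$ and to add the algebraic reduction; otherwise, you owe both a generalized Appendix~\ref{appendix_technical} and a meander-matrix factorization lemma.
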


Our formula~\eqref{eqn::rcm_crossingproba} with $N=2$ and $\kappa=16/3$
is consistent with~\cite[Eq.~(117)]{FSKZ:A_formula_for_crossing_probabilities_of_critical_systems_inside_polygons};
see also~\cite[Eq.~(1.1)]{Chelkak-Smirnov:Universality_in_2D_Ising_and_conformal_invariance_of_fermionic_observables} for a formula with 
different boundary conditions.
Izyurov proved the conformal invariance of some further probabilities of (unions of) 
connection events~\cite{Izyurov:Smirnovs_observable_for_free_boundary_conditions_interfaces_and_crossing_probabilities, Izyurov:On_multiple_SLE_for_the_FK_Ising_model} 
--- see in particular~\cite[Corollary~1.3]{Izyurov:On_multiple_SLE_for_the_FK_Ising_model}. 
Our result settles the general case for any $\alpha, \beta \in \LP_N$. 
We prove Theorem~\ref{thm::FKIsing_crossingproba} in Section~\ref{sec::crossingproba} via the following strategy: 
\begin{itemize}[leftmargin=*]
\item We first prove~\eqref{eqn::rcm_crossingproba} for $\kappa = 16/3$ with $\beta=\unnested$ (Section~\ref{subsec::crossingproba_unnested}) via a martingale argument using the convergence of the interfaces.
This step depends on fine analysis of the martingale observable given by the ratio $\PartF_{\alpha}/\LF_{\unnested}$ (which is a local martingale with respect to growing any of the interfaces thanks to the PDEs~\eqref{eqn::PDE}). 
There are two key ingredients: a cascade relation for the pure partition functions $\PartF_{\alpha}$ from~\cite{Wu:Convergence_of_the_critical_planar_ising_interfaces_to_hypergeometric_SLE}, 
and technical work that we defer to Appendix~\ref{appendix_technical}. 

\item We then derive~\eqref{eqn::rcm_crossingproba} for $\kappa = 16/3$ 
and for general boundary condition $\beta$ (Section~\ref{subsec::crossingproba_general}), 
by using the conclusion for $\beta = \unnested$. 
Indeed, we can relate the case of general $\beta$ to the case of $\unnested$ for any random-cluster model directly in the discrete setup --- see Proposition~\ref{prop::crossingproba_comparison} for such a useful formula. 
\end{itemize}

\subsection{Results: Properties of the Coulomb gas integrals}
\label{subsec::CGI_properties}

Lastly, we show that the functions appearing in  Conjectures~\ref{conj::rcm_Loewner} and~\ref{conj::rcm_crossingproba} do indeed satisfy important properties predicted by conformal field theory. These properties are also needed for the identification of $\coulombnew_{\beta}$ with $\LF_{\beta}$ for the case of $\kappa = 16/3$ in Theorem~\ref{thm::totalpartition}.

\begin{theorem} \label{thm::CGI_property}
Fix $\kappa\in (4,8)$. The functions $\coulombnew_{\beta}$ defined in~\eqref{eqn::coulombgasintegral} satisfy the following properties. 
\begin{itemize}[leftmargin=3em]
\item[\textnormal{(PDE)}] 
The BPZ equations~\eqref{eqn::PDE}.

\item[\textnormal{(COV)}] 
The M\"{o}bius covariance~\eqref{eqn::COV}.

\item[\textnormal{(ASY)}] 
\textnormal{\bf Asymptotics:} 
With $\coulombnew_{\emptyset} \equiv 1$ for the empty link pattern $\emptyset \in \LP_0$, the collection $\{\coulombnew_{\beta} \colon \beta\in\LP_N\}$ satisfies the following recursive 
asymptotics property.
Fix $N \ge 1$ and $j \in \{1,2, \ldots, 2N-1 \}$. 
Then, for all $\xi \in (x_{j-1}, x_{j+2})$, using the notation~\eqref{eqn::bs_notation}, we have
\begin{align}
\label{eqn::ASY} 
\; & \lim_{x_j,x_{j+1}\to\xi} \frac{\coulombnew_{\beta}(\bs{x})}{ (x_{j+1}-x_j)^{-2h(\kappa)} }
= 
\begin{cases}
\sqrt{q(\kappa)} \, \coulombnew_{\beta/\{j,j+1\}}(\bs{\ddot{x}}_j),
& \textnormal{if }\{j, j+1\}\in\beta , \\
\coulombnew_{\wp_j(\beta)/\{j,j+1\}}(\bs{\ddot{x}}_j),
& \textnormal{if }\{j, j+1\} \not\in \beta , 
\end{cases}
\end{align}
where $\beta/\{j,j+1\} \in \LP_{N-1}$ denotes the link pattern obtained from $\beta$ by removing the link $\{j,j+1\}$ and relabeling the remaining indices by $1, 2, \ldots, 2N-2$, 
and $\wp_j$ 
is the ``tying operation'' defined by 
\begin{align*}
\wp_j \colon \LP_N\to \LP_N , \qquad
\wp_j(\beta) = 
\big(\beta\setminus(\{j,k_1\}, \{j+1, k_2\})\big)\cup \{j,j+1\}\cup \{k_1, k_2\} , 
\end{align*}  
where $k_1$ \textnormal{(}resp.~$k_2$\textnormal{)} 
is the pair of $j$ \textnormal{(}resp.~$j+1$\textnormal{)} in $\beta$ \textnormal{(}and $\{j,k_1\}, \{j+1, k_2\}, \{k_1, k_2\}$ are unordered\textnormal{)}.
\begin{align*}
\vcenter{\hbox{\includegraphics[scale=0.25]{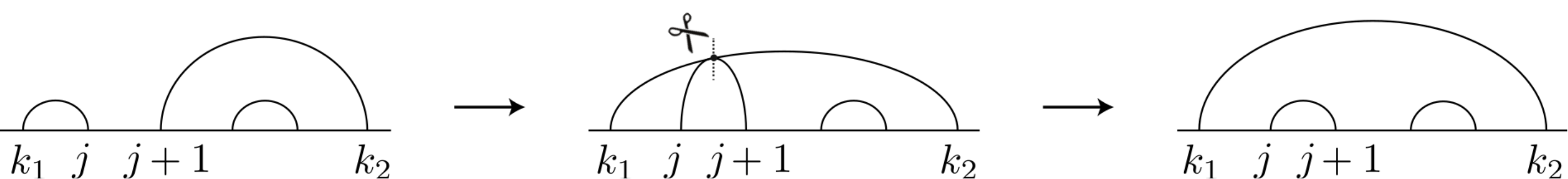}}} 
\end{align*}
\end{itemize}
\end{theorem}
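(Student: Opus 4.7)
The plan is to verify (PDE), (COV), and (ASY) separately. Properties (PDE) and (COV) follow by standard Coulomb gas manipulations of the integrand, while the main technical content lies in (ASY), which requires careful asymptotic analysis as two marked points merge. For (PDE), I would apply the operator $\mathcal{D}_j := \frac{\kappa}{2}\partial_{x_j}^2 + \sum_{i \neq j}\bigl(\frac{2}{x_i - x_j}\partial_{x_i} - \frac{2h(\kappa)}{(x_i-x_j)^2}\bigr)$ directly to the integrand $f(\bs{x};\bs{u})$ and use the Dotsenko--Fateev screening identity to rewrite $\mathcal{D}_j f$ as a total divergence $\sum_r \partial_{u_r}[G_{r,j}(\bs{x};\bs{u})\,f(\bs{x};\bs{u})]$ for explicit rational prefactors $G_{r,j}$ chosen so that $G_{r,j} f$ has vanishing boundary values at $u_r = x_{a_r}, x_{b_r}$. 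The vanishing is possible precisely when $\kappa > 4$, so that antiderivatives of $(u_r - x_{a_r})^{-4/\kappa}$ vanish at the endpoint with power $1 - 4/\kappa > 0$; integration then yields $\mathcal{D}_j \coulombnew_\beta = 0$. For (COV), I would change variables $u_r \mapsto \varphi(u_r)$ in $\coulombnew_\beta(\varphi(\bs{x}))$ and use $\varphi(z)-\varphi(w) = (z-w)\sqrt{\varphi'(z)\varphi'(w)}$ to rewrite each factor of the integrand; exponent bookkeeping, combined with the Jacobian $\prod_r \varphi'(u_r)$ from the measure, produces the covariance factor $\prod_i \varphi'(x_i)^{h(\kappa)}$, with the residual $u_r$-dependent factor absorbed by the screening structure of the Coulomb gas.

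For (ASY), let $\epsilon := x_{j+1} - x_j$ and split into two cases. When $\{j,j+1\} \in \beta$, exactly one contour $u_s$ runs from $x_j$ to $x_{j+1}$ and becomes short. Substituting $u_s = x_j + \epsilon t$ with $t \in [0,1]$, the $\epsilon$-dependence factors out as $\epsilon^{2/\kappa - 8/\kappa + 1} = \epsilon^{-2h(\kappa)}$, the $t$-integral gives $\Gamma(1-4/\kappa)^2/\Gamma(2-8/\kappa)$, and all remaining $\xi$-dependent factors cancel cleanly: $(x_j-x_i)^{2/\kappa}(x_{j+1}-x_i)^{2/\kappa} \to (\xi-x_i)^{4/\kappa}$ cancels with $(u_s-x_i)^{-4/\kappa}|_{u_s=\xi}$ (which appears doubled), and analogously $(u_r-x_j)(u_r-x_{j+1}) \to (u_r-\xi)^2$ cancels with $(u_s-u_r)^{8/\kappa}|_{u_s=\xi}$. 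The Beta integral exactly cancels one factor of the prefactor $C_N$, leaving the coefficient $\sqrt{q(\kappa)}$ times the integrand of $\coulombnew_{\beta/\{j,j+1\}}(\bs{\ddot{x}}_j)$.

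When $\{j,j+1\} \notin \beta$, two distinct contours $u_s, u_{s'}$ (associated to the links $\{j,k_1\}, \{j+1,k_2\} \in \beta$) have endpoints at $x_j$ and $x_{j+1}$, respectively. The leading $\epsilon^{-2h(\kappa)}$ behavior comes from ``single-pinching'' regions where exactly one of $u_s, u_{s'}$ approaches $\xi$; the ``double-pinching'' region (both near $\xi$) contributes only $O(\epsilon^{2 - 6/\kappa})$ and is subleading. Via a substitution such as $u_s = x_j - \epsilon w$, each single-pinching yields a local integral $\int_0^\infty w^{-4/\kappa}(1+w)^{-4/\kappa}\,dw = B(1-4/\kappa, 8/\kappa-1)$, together with the same $\xi$-cancellations as before. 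Crucially, the two single-pinching contributions combine by contour deformation in the upper half-plane into one contour integral from $x_{k_1}$ to $x_{k_2}$, precisely reproducing $\coulombnew_{\wp_j(\beta)/\{j,j+1\}}(\bs{\ddot{x}}_j)$. The coefficient simplifies to $1$ via the identity $\sqrt{q(\kappa)}\,B(1-4/\kappa,8/\kappa-1)/C_1 = 1$, which follows from Euler's reflection $\Gamma(z)\Gamma(1-z) = \pi/\sin(\pi z)$ and the definition $q(\kappa) = 4\cos^2(4\pi/\kappa)$. The principal obstacle is this unlinked case: rigorously justifying the contour recombination (which depends on the specific non-crossing configuration of $\beta$, whether nested or separated) and establishing the subleading status of the double-pinching region requires delicate monodromy and branch-cut analysis of the multivalued integrand.
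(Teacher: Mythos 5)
Your plan for (ASY) is essentially the same as the paper's Lemma~\ref{lem: neighbor asy} and Proposition~\ref{prop::non-neighbor asy} (proved in Appendix~\ref{appendix_asy}): in the neighbor case the single short contour $u_s\in[x_j,x_{j+1}]$ rescales to a Beta integral, and in the non-neighbor case one sees that the two contours touching $x_j$ and $x_{j+1}$ have subleading ``double pinching'' and the surviving contributions concatenate into one contour from $x_{k_1}$ to $x_{k_2}$. Your power counting $\epsilon^{2/\kappa-8/\kappa+1}=\epsilon^{-2h(\kappa)}$ and $\epsilon^{2-6/\kappa}$ for the double-pinching region is correct. One small slip: the identity you invoke for the constant, $\sqrt{q(\kappa)}\,B(1-4/\kappa,8/\kappa-1)/C_1=1$, is not right; the relevant identity is $B(1-4/\kappa,8/\kappa-1)=\Gamma(1-4/\kappa)^2/\bigl(\sqrt{q(\kappa)}\,\Gamma(2-8/\kappa)\bigr)=1/C_1$, equivalent to $\sqrt{q(\kappa)}=-\sin(8\pi/\kappa)/\sin(4\pi/\kappa)$ via Euler reflection; with $\coulombnew_\beta=C_N\,\coulomb_\beta$ this produces the coefficient $C_N/(C_{N-1}C_1)=1$. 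The paper also works harder than your sketch suggests to justify dominated convergence in the non-neighbor case (the grid of regions $R_{i,j}$ and hypergeometric asymptotics in Lemma~\ref{lem::Vbeta_ASY2_aux-caseA_aux}), but your high-level picture is the same.

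For (PDE) and (COV), however, your argument has a genuine gap. You propose integrating $\mathcal{D}^{(j)}f=\sum_r\partial_{u_r}\bigl(G_{r,j}f\bigr)$ over the interval contours $\bigl[x_{a_r},x_{b_r}\bigr]$ directly and asserting that the boundary terms $G_{r,j}f$ vanish at the endpoints ``with power $1-4/\kappa>0$.'' But $1-4/\kappa>0$ controls the vanishing of the antiderivative of $(u_r-x_{a_r})^{-4/\kappa}$, whereas the boundary term here is $G_{r,j}\cdot f$, and $f$ itself diverges like $(u_r-x_{a_r})^{-4/\kappa}$ at the endpoint. For $G_{r,j}f\to 0$ you would need $G_{r,j}$ to have at least a simple zero at each of $u_r=x_{a_r},x_{b_r}$, and you have neither written down $G_{r,j}$ nor established this; in fact, the function $g$ in the Dotsenko--Fateev/Kyt\"ol\"a--Peltola total-derivative identity is rational with poles precisely where arguments coincide, so the needed zeros are not automatic and must cancel across different $r$. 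The paper sidesteps this subtlety entirely by first passing to the Pochhammer-contour integrals $\coulomb^\circ_\beta$ (Lemma~\ref{lem::related_two_Gs}), where the contours are closed and homologically trivial so integration by parts has no boundary terms at all (Propositions~\ref{prop: full Mobius covariance Fcirc} and~\ref{prop: PDEs Fcirc}), and only then returns to the interval integrals via a constant multiplicative factor. Without either the Pochhammer trick or a precise identification of $G_{r,j}$ proving the requisite endpoint zeros (or a pole-cancellation argument across the sum over $r$), your derivation of (PDE) and (COV) is incomplete.
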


One can also relate these Coulomb gas integral functions directly to the pure partition functions by using the meander matrix. Such a relation appears implicitly in~\cite[Theorem~8]{Flores-Kleban:Solution_space_for_system_of_null-state_PDE3} for all $\kappa \in (0,8)$.

\begin{proposition} \label{prop::linearcombination}
Fix $\kappa\in (4,6]$. 
For all $\bs{x} = (x_1, \ldots, x_{2N})\in\chamber_{2N}$, we have
\begin{align} \label{eqn::linearcombination}
\coulombnew_{\beta}(\bs{x}) 
= \sum_{\alpha\in\LP_N} \LM_{\alpha,\beta}(q(\kappa)) \, \PartF_{\alpha}(\bs{x}) \; > 0 , \qquad \textnormal{for all }\beta\in\LP_N, 
\end{align}
where $\coulombnew_{\beta}$ and $\LM_{\alpha, \beta}(q(\kappa))$ are defined in~\eqref{eqn::coulombgasintegral} and~\eqref{eqn::meandermatrix_def_general}, respectively, and $\{\PartF_{\alpha} \colon \alpha\in\LP_N\}$ is the collection of pure partition functions for multiple $\SLE_{\kappa}$ described 
in Definition~\ref{def::PPF_general}. 
\end{proposition}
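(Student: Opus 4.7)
The plan is to proceed by induction on $N$, comparing $\coulombnew_\beta$ with the candidate combination
\begin{align*}
G_\beta(\bs x) := \sum_{\alpha \in \LP_N} \LM_{\alpha,\beta}(q(\kappa)) \, \PartF_\alpha(\bs x)
\end{align*}
via their shared BPZ system, M\"obius covariance and asymptotic recursion, and then invoking a uniqueness theorem to conclude $G_\beta \equiv \coulombnew_\beta$. The base case $N=0$ is immediate. In the inductive step, I would first note that $G_\beta$ satisfies (PDE) and (COV) by linearity, using Definition~\ref{def::PPF_general} for each $\PartF_\alpha$, while $\coulombnew_\beta$ does so by Theorem~\ref{thm::CGI_property}. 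Strict positivity of $G_\beta$ on $\chamber_{2N}$ then follows from (PLB) together with the positivity $\LM_{\alpha,\beta}(q(\kappa)) = \sqrt{q(\kappa)}^{\LL_{\alpha,\beta}} > 0$, so the strict inequality in~\eqref{eqn::linearcombination} will be automatic as soon as the equality is established.

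The main task is to match the leading asymptotics as $x_j, x_{j+1} \to \xi$ for every $j \in \{1, \ldots, 2N-1\}$. By (ASY) of Definition~\ref{def::PPF_general}, only those $\alpha$ with $\{j,j+1\} \in \alpha$ contribute to the $(x_{j+1}-x_j)^{-2h(\kappa)}$ leading behavior of $G_\beta$, giving
\begin{align*}
\lim_{x_j,x_{j+1}\to\xi} \frac{G_\beta(\bs{x})}{(x_{j+1}-x_j)^{-2h(\kappa)}}
= \sum_{\substack{\alpha \in \LP_N \\ \{j,j+1\} \in \alpha}} \LM_{\alpha,\beta}(q(\kappa)) \, \PartF_{\alpha/\{j,j+1\}}(\bs{\ddot{x}}_j).
\end{align*}
I would then verify the combinatorial meander identity
\begin{align*}
\LL_{\alpha,\beta} =
\begin{cases}
\LL_{\alpha/\{j,j+1\},\, \beta/\{j,j+1\}} + 1, & \{j,j+1\} \in \beta, \\
\LL_{\alpha/\{j,j+1\},\, \wp_j(\beta)/\{j,j+1\}}, & \{j,j+1\} \notin \beta,
\end{cases}
\end{align*}
valid for every $\alpha$ containing $\{j,j+1\}$, by direct graphical inspection: in the first case the matched $\alpha$- and $\beta$-arcs at $j, j+1$ cobound an extra closed loop; in the second case the tying operation $\wp_j$ reroutes the meander so that deleting the $\alpha$-arc $\{j,j+1\}$ together with replacing $\{j,k_1\}, \{j+1,k_2\}$ by $\{k_1,k_2\}$ preserves the loop count. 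Combined with the inductive hypothesis applied to $\coulombnew_{\beta/\{j,j+1\}}$ or $\coulombnew_{\wp_j(\beta)/\{j,j+1\}}$ in $\LP_{N-1}$, this exactly reproduces the asymptotics of $\coulombnew_\beta$ given by (ASY) in Theorem~\ref{thm::CGI_property}.

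At this stage $G_\beta$ and $\coulombnew_\beta$ are two solutions of~\eqref{eqn::PDE} with (COV) sharing the same recursive double-limit asymptotics, together with a polynomial growth bound that follows for $G_\beta$ from applying (PLB) summand-wise. The main obstacle is to upgrade this matching into genuine equality: one needs a uniqueness statement strong enough to accommodate the mixed asymptotic data (both the diagonal $\sqrt{q(\kappa)}\coulombnew_{\beta/\{j,j+1\}}$ and the off-diagonal $\coulombnew_{\wp_j(\beta)/\{j,j+1\}}$ contributions), rather than only the pure-diagonal asymptotics used to characterize the $\PartF_\alpha$ in Definition~\ref{def::PPF_general}. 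I would handle this by expanding both functions in the $\{\PartF_\alpha\}$-basis of the solution space of (PDE)+(COV) under suitable growth bounds (\`a la Flores--Kleban) and matching coefficients using the inductive asymptotics, reducing the claim to the already-known uniqueness of the pure partition functions. An alternative route, specific to $q(\kappa) \in [1,4)$, is to exploit the invertibility of the meander matrix $\LM(q(\kappa))$ on $\LP_N$ to invert the sought identity and again reduce to the characterization of $\PartF_\alpha$.
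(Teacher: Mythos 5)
Your structure mirrors the paper's closely: define $G_\beta := \sum_\alpha \LM_{\alpha,\beta}(q(\kappa))\, \PartF_\alpha$, note that it inherits (PDE) and (COV) by linearity and a power-law bound summand-wise from (PLB), compute its double-limit asymptotics via (ASY) and the meander loop-count identities $\LL_{\alpha,\beta} = \LL_{\alpha/\{j,j+1\},\,\beta/\{j,j+1\}}+1$ when $\{j,j+1\}\in\beta$ and $\LL_{\alpha,\beta} = \LL_{\alpha/\{j,j+1\},\,\wp_j(\beta)/\{j,j+1\}}$ otherwise, and compare with Theorem~\ref{thm::CGI_property}; the positivity argument is also correct. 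Where you diverge is the closing step: you recognize the need for a uniqueness input but reach for heavier machinery than the paper uses. The paper's device is to apply Theorem~\ref{thm::purepartition_unique} (the Flores--Kleban uniqueness lemma) \emph{to the difference} $D_\beta := \coulombnew_\beta - G_\beta$. That lemma requires only (PDE), (COV), a polynomial growth bound, and \emph{vanishing} of the double-limit asymptotics; it does not require knowing that $\{\PartF_\alpha\}$ spans the full solution space, which is the extra dimension result your basis-expansion route would need. The vanishing follows by induction on $N$: since $D_\gamma \equiv 0$ for $\gamma\in\LP_{N-1}$, the asymptotic matching you established gives $\lim_{x_j,x_{j+1}\to\xi} (x_{j+1}-x_j)^{2h(\kappa)} D_\beta(\bs{x}) = 0$ for every $j$, and the lemma yields $D_\beta \equiv 0$. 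This also dissolves your worry about the ``mixed'' diagonal/off-diagonal asymptotic data: after subtraction everything is zero, so only the trivial case of the uniqueness statement is ever invoked.

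Your proposed alternative---inverting the meander matrix---would not go through. As Remark~\ref{rem::ppf} explains, $\LM^{(N)}(q(\kappa))$ fails to be invertible precisely at the exceptional values $\kappa_{r,s}=4r/s$, and $\kappa=16/3$ (the FK-Ising case $q=2$, the central application) is such a value once $N\geq 3$. So that route collapses exactly where it matters most; the uniqueness-via-vanishing-asymptotics argument is the one that survives for all $\kappa\in(4,6]$.
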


We prove Proposition~\ref{prop::linearcombination} in Section~\ref{subsec::linearcombination}.
The idea is that both sides of Eq.~\eqref{eqn::linearcombination} satisfy the same PDE boundary value problem, 
which uniquely determines them. 

\begin{remark} \label{rem::ppf}
The relation~\eqref{eqn::linearcombination} in Proposition~\ref{prop::linearcombination} only allows to solve for $\PartF_{\alpha}$ explicitly when the meander matrix
$\LM^{(N)}(q(\kappa)) := \{\LM_{\alpha, \beta}(q(\kappa)) \colon \alpha,\beta\in\LP_N\}$ is invertible. 
By~\textnormal{\cite[Eq.~(5.6)]{DGG:Meanders_and_TL_algebra}}, we know that $\LM^{(N)}(q(\kappa))$ is invertible if and only if $\kappa$ is not one of the exceptional values 
\begin{align*}
\kappa_{r,s} := \frac{4r}{s} , \qquad r,s \in \Z_{>0} \, \textnormal{ coprime and } \, 1 \leq s < r < N+2 .
\end{align*}
We see that, for example, the value $\kappa = 16/3$ belongs to this set with $r=4$ and $s=3$, when $N \geq 3$. 
Indeed, in the case where $\kappa = 16/3$ and $N = 3$, the following element belongs to the kernel of $\LM^{(N)}(2)$:
\begin{align*}
\LG_{\vcenter{\hbox{\includegraphics[scale=0.2]{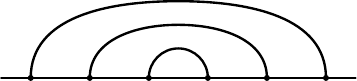}}}}
+ \LG_{\vcenter{\hbox{\includegraphics[scale=0.2]{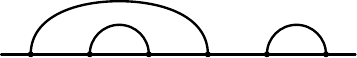}}}}
+ \LG_{\vcenter{\hbox{\includegraphics[scale=0.2]{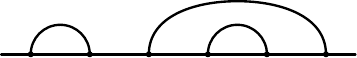}}}}
- \sqrt{2} \, \LG_{\vcenter{\hbox{\includegraphics[scale=0.2]{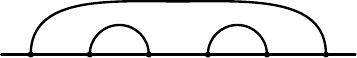}}}}
- \sqrt{2} \, \LG_{\vcenter{\hbox{\includegraphics[scale=0.2]{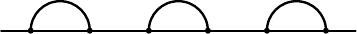}}}} .
\end{align*}
One can find the kernel explicitly also in general (cf.~\cite{Flores-Peltola:Standard_modules_radicals_and_the_valenced_TL_algebra}), but this does not immediately give means to solve for $\PartF_{\alpha}$ from~\eqref{eqn::linearcombination}.
Let us also remark that we know from~\textnormal{\cite[Theorem~8]{Flores-Kleban:Solution_space_for_system_of_null-state_PDE3}} 
that $\{\PartF_{\alpha} \colon \alpha\in\LP_N\}$ are linearly independent, but $\{\coulombnew_{\beta} \colon \beta\in\LP_N\}$ are not unless the matrix $\LM^{(N)}(q(\kappa))$ is invertible. 
\end{remark}

\begin{remark} \label{rem:q=4}
The case of $\kappa = 4$, that is, $q(\kappa)=4$, is excluded. 
Here, we believe that one can take the limit $\kappa \searrow 4$ to obtain formulas for this case, and Conjectures~\ref{conj::rcm_Loewner} and~\ref{conj::rcm_crossingproba} will still hold. 
Note that while the integrals in~\eqref{eqn::coulombgasintegral} are not convergent if $\kappa = 4$, one can get convergent integrals easily by replacing the contours in $\smash{\landupint_{x_{a_r}}^{x_{b_r}}} \ud u_r$, that we have chosen for simplicity of the presentation, by Pochhammer type contours as illustrated in
Section~\ref{subsec::Pochhammer} \textnormal{(}Eq.~\eqref{eq::Pochhammer_contour}\textnormal{)}.
Also, the multiplicative constant in~\eqref{eqn::coulombgasintegral} equals zero when $\kappa = 4$, so a slightly different normalization is needed \textnormal{(}also chosen in accordance with Appendix~\ref{appendix_asy}\textnormal{)}. 
\end{remark}

\paragraph*{Organization of this article.}
Section~\ref{sec::partitionfunctions} and Appendix~\ref{appendix_asy} concern the Coulomb gas integral functions (Theorem~\ref{thm::CGI_property}) and their relation to the function $\LF_\beta$ when $\kappa = 16/3$ (Proposition~\ref{prop::linearcombination}). 
Section~\ref{sec::FKIsing_Loewner} and Appendix~\ref{appendix_aux} together prove the convergence of the FK-Ising interfaces (Theorem~\ref{thm::FKIsing_Loewner}), and 
Section~\ref{sec::crossingproba} and Appendix~\ref{appendix_technical} contain the proof of our scaling limit result for the connection probabilities (Theorem~\ref{thm::FKIsing_crossingproba}).

\medbreak
\paragraph*{Acknowledgments.}
\begin{itemize}
\item This material is part of a project that has received funding from the  European Research Council (ERC) under the European Union's Horizon 2020 research and innovation programme (101042460): 
ERC Starting grant ``Interplay of structures in conformal and universal random geometry'' (ISCoURaGe) 
and from the Academy of Finland grant number 340461 ``Conformal invariance in planar random geometry.''
E.P.~is also supported by 
the Academy of Finland Centre of Excellence Programme grant number 346315 ``Finnish centre of excellence in Randomness and STructures (FiRST)'' 
and by the Deutsche Forschungsgemeinschaft (DFG, German Research Foundation) under Germany's Excellence Strategy EXC-2047/1-390685813, 
as well as the DFG collaborative research centre ``The mathematics of emerging effects'' CRC-1060/211504053.

\item Part of this work was carried out while E.P. participated in a program hosted by the Mathematical Sciences Research Institute (MSRI) in Berkeley, California, during Spring 2022, 
thereby being supported by the National Science Foundation under grant number DMS-1928930. 

\item H.W. is supported by Beijing Natural Science Foundation (JQ20001). H.W. is partly affiliated at Yanqi Lake Beijing Institute of Mathematical Sciences and Applications, Beijing, China.

\item We thank Konstantin Izyurov for several discussions related to this work. 
We thank the anonymous referee for careful reading and helpful suggestions to improve this article.
\end{itemize}

\newpage


\section{Properties of partition functions}
\label{sec::partitionfunctions}
Throughout, we consider link patterns 
$\beta \in \LP_N$ with link endpoints ordered as in~\eqref{eqn::linkpatterns_ordering}.

\subsection{Coulomb gas integrals and the proof of Theorem~\ref{thm::CGI_property}}
\label{subsec::Pochhammer}

In this section, we consider the functions $\coulombnew_{\beta}$, for $\beta \in \LP_N$, defined in Coulomb gas integral form via~\eqref{eqn::coulombgasintegral}. 
Coulomb gas integrals~\cite{Dotsenko-Fateev:Conformal_algebra_and_multipoint_correlation_functions_in_2D_statistical_models,  Dubedat:Euler_integrals_for_commuting_SLEs, Kytola-Peltola:Conformally_covariant_boundary_correlation_functions_with_quantum_group} 
stem from conformal field theory (CFT), where they have been used as a general ansatz to find formulas for correlation functions. 
Specifically to our case, 
we seek correlation functions 
satisfying a system of PDEs~\eqref{eqn::PDE} 
known as Belavin-Polyakov-Zamolodchikov (BPZ) differential equations~\cite{BPZ:Infinite_conformal_symmetry_in_2D_QFT}, 
and a specific M\"obius covariance property~\eqref{eqn::COV}. 
The latter is just a manifestation of the global conformal invariance, while the former is a peculiarity in our case: the integrals $\coulombnew_{\beta}$ represent correlation functions of so-called degenerate fields at level two in a CFT. It is by now well-known that such correlation functions have a close relationship with $\SLE_\kappa$ curves: they are examples of partition functions of multiple $\SLE_\kappa$ (they are, in fact, linear combinations of the pure partition functions in Definition~\ref{def::PPF_general} --- see Proposition~\ref{prop::linearcombination}).

To understand the definition of $\coulombnew_{\beta}$ in~\eqref{eqn::coulombgasintegral},
note that as a function of the integration variables 
\begin{align*}
\bs{u} = (u_1, \ldots, u_N) \in 
\Wchamber^{(N)} = \; \Wchamber_{x_{1},\ldots,x_{2N}}^{(N)}
:= \big( \C\setminus \{x_{1},\ldots,x_{2N}\} \big)^{N} ,
\end{align*}
the integrand function $f(\bs{x};\cdot)$ given in~\eqref{eq: integrand} has ramification points $u_r = x_j$ and $u_{r} = u_{s}$ for $r \neq s$.
To define a branch for it on a simply connected subset of $\Wchamber^{(N)}$, we impose $f(\bs{x};\cdot)$  to be real and positive on 
\begin{align} \label{eq:: branch choice set}
\LR_\beta :=  \big\{\bs{u} \in \Wchamber^{(N)}  \colon x_{a_r} < \Re(u_r) < x_{a_r+1} \; \textnormal{ for all } 1 \leq r \leq N \big\} ,
\end{align}
and for definiteness, we denote this branch choice as
$f_\beta(\bs{x};\cdot) \colon \LR_\beta \to \R_{>0}$.
Then, its values elsewhere in $\Wchamber^{(N)}$ are completely determined by analytic continuation.

\smallbreak

The goal of this section is to give a proof of Theorem~\ref{thm::CGI_property} via establishing a relation between 
$\coulombnew_{\beta}$
with similar integrals $\smash{\coulomb_\beta^\circ}$ involving Pochhammer contours, which are easier to analyze.
The latter only involve integrations avoiding the marked points $x_1, \ldots, x_{2N}$ and are thus convergent for all $\kappa > 0$. 
Our choice in~\eqref{eqn::coulombgasintegral} for the integration contours touching the marked points is merely a notational simplification (for $\kappa \in (4,8)$).
The proof of Theorem~\ref{thm::CGI_property} comprises several auxiliary results presented in this section.

\begin{proof}[Proof of Theorem~\ref{thm::CGI_property}]
The proof is a collection of the following results. 
\begin{itemize}[leftmargin=2em] 
\item $\coulombnew_{\beta}$ satisfies the BPZ PDEs~\eqref{eqn::PDE} due to Eq.~\eqref{eqn::rela_G_and_H}, Lemma~\ref{lem::related_two_Gs},  and Proposition~\ref{prop: PDEs Fcirc}.

\item $\coulombnew_{\beta}$ satisfies M\"obius covariance~\eqref{eqn::COV} due to Eq.~\eqref{eqn::rela_G_and_H}, Lemma~\ref{lem::related_two_Gs}, and Proposition~\ref{prop: full Mobius covariance Fcirc}.

\item $\coulombnew_{\beta}$ satisfies the asymptotics~\eqref{eqn::ASY}
due to Lemma~\ref{lem: neighbor asy} and Proposition~\ref{prop::non-neighbor asy}. 
\qedhere
\end{itemize}
\end{proof}

\smallbreak

For the auxiliary results, we define the function $\smash{\coulomb_\beta^\circ} \colon \chamber_{2N} \to \C$ on the configuration space~\eqref{eqn::coulombgasintegral} as
\begin{align}
	\label{eqn::coulombgasintegral_loops}
	\coulomb_\beta^\circ (\bs{x}) 
	:= \; & \ointclockwise_{\acycle^\beta_1}  \ud u_1 \ointclockwise_{\acycle^\beta_2}  \ud u_2 \cdots \ointclockwise_{\acycle^\beta_N} \ud u_N 
	\; f_\beta(\bs{x};\bs{u}) , \qquad \bs{x} \in \chamber_{2N} ,
\end{align}
where each $\acycle^\beta_r$ is a Pochhammer contour 
which encircles each of the points $x_{a_r}, x_{b_r}$ once in the positive direction and once in the negative direction:
\begin{align} \label{eq::Pochhammer_contour}
\acycle^\beta_r \quad = \quad \vcenter{\hbox{\includegraphics[scale=0.275]{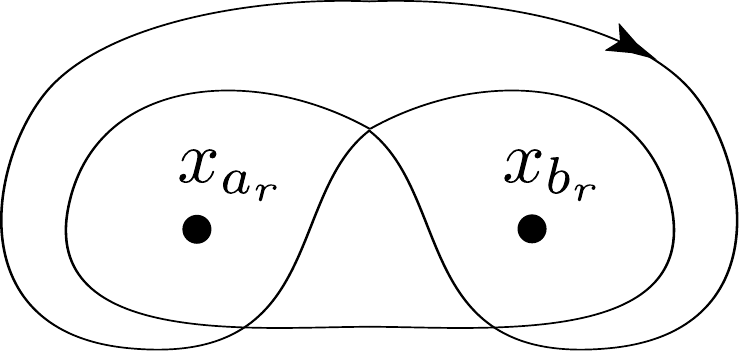}}} 
\end{align} 
and which does not encircle any other marked point among $\{x_1, \ldots, x_{2N}\}$
(cf.~illustrations in~\cite[Figure~2]{Flores-Kleban:Solution_space_for_system_of_null-state_PDE3} 
and~\cite[Figure~6]{Dubedat:Euler_integrals_for_commuting_SLEs}). 
Note that since the integration contours $\acycle^\beta_j$ avoid the marked points $x_{1},\ldots,x_{2N}$, the integral $\smash{\coulomb_\beta^\circ} (\bs{x})$ is convergent for all $\kappa > 0$. 
We also extend $\smash{\coulomb_\beta^\circ}$ to a multivalued function on the larger set
$\mathfrak{Y}_{2N} := \{ \bs{x} = (x_{1},\ldots,x_{2N}) \in \C^{2N} \colon x_{i} \neq x_{j} \textnormal{ for all } i \neq j \} $.

\begin{lemma} \label{lem::related_two_Gs}
	Fix $\kappa > 4$. 
	Writing $\bs{u}=(u_1, \ldots, u_N)$, we have 
	\begin{align} \label{eq::related_two_Gs}
		\coulomb_\beta (\bs{x}) 
		:= \landupint_{x_{a_1}}^{x_{b_1}} \ud u_1 \cdots \landupint_{x_{a_N}}^{x_{b_N}} \ud u_N 
		\; f_\beta(\bs{x};\bs{u}) 
		= \big( 4 \sin^2(4 \pi / \kappa) \big)^{-N} \,
		\coulomb_\beta^\circ(\bs{x}) .
	\end{align}
\end{lemma}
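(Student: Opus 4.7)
The plan is to prove the identity by reducing the $N$-dimensional Pochhammer integral to the real-segment integral one variable at a time, via the classical one-variable Pochhammer contour identity. First, I would exploit the non-crossing structure of $\beta$ to arrange the contours $\{\acycle^\beta_r\}$ so that they are pairwise disjoint small tubular neighborhoods of the segments $[x_{a_r}, x_{b_r}]$ in a region of $\mathbb{C}$ avoiding all other marked points; this is possible precisely because planarity of $\beta$ means the $N$ intervals on the real line are either nested or disjoint. This arrangement is needed so that for each fixed $r$, when the other $u_s$ ($s\neq r$) are viewed as parameters on $\acycle^\beta_s$, the contour $\acycle^\beta_r$ only encircles the two branch points $x_{a_r}$ and $x_{b_r}$ of the integrand, even though the latter has many other branch points in $u_r$ (at the remaining $x_j$'s and at the other $u_s$'s).

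Next, I would integrate out the variables $u_1, u_2, \ldots, u_N$ successively. For a single variable, near $x_{a_r}$ and $x_{b_r}$ the integrand behaves like $(u_r-x_{a_r})^{-4/\kappa}$ and $(u_r-x_{b_r})^{-4/\kappa}$ times a holomorphic factor, so the local monodromy around each of these branch points equals $e^{-8\pi i/\kappa}$. The classical Pochhammer identity (writing $\acycle^\beta_r$ as a commutator of two small simple loops around $x_{a_r}$ and $x_{b_r}$, and unfolding using the monodromy representation) gives
\begin{align*}
\ointclockwise_{\acycle^\beta_r} f_\beta \, \mathrm{d} u_r
\; = \; \MainConst_\kappa \landupint_{x_{a_r}}^{x_{b_r}} f_\beta \, \mathrm{d} u_r ,
\end{align*}
where the constant $\MainConst_\kappa$ comes from the two factors $(1-e^{-8\pi i/\kappa})$ picked up when crossing branch cuts, combined with the phase one accumulates when comparing the branch of $f_\beta$ on $\acycle^\beta_r$ to the branch specified by~\eqref{eq:: branch choice set} on the real segment, and with the clockwise orientation conveyed by $\ointclockwise$. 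A direct calculation using $1-e^{-i\theta} = 2\sin(\theta/2)e^{i(\pi/2-\theta/2)}$ shows that these phases combine to yield the real positive number $\MainConst_\kappa = 4\sin^2(4\pi/\kappa)$. Iterating over $r=1,\ldots,N$, using Fubini and the fact that at each step the newly real-axis variable $u_r$ is approached from the upper half-plane in agreement with the original branch choice (so that the remaining integrals keep their prescribed branches), produces the global factor $(4\sin^2(4\pi/\kappa))^N$.

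The main obstacle, as always with such Coulomb gas identities, is the phase bookkeeping: one must check carefully that the accumulated phases $e^{i\pi\cdot(\cdots)}$ coming from the two encircled branch points combine into a real positive constant rather than a complex unit. The symmetric form of the exponents, equal to $-4/\kappa$ at both endpoints of every integration, together with the branch normalization on $\LR_\beta$ specified after~\eqref{eq:: branch choice set} and the clockwise orientation of $\acycle^\beta_r$, is exactly what makes the residual phase collapse. A secondary technical point is to justify interchanging the order of the remaining contour integrals at each step, which follows from the uniform integrability of $f_\beta$ on the chosen compact contours (the integrand is continuous once the branch is fixed, and the contours are bounded and bounded away from all branch points in the remaining variables).
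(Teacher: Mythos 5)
Your general strategy is the same as the paper's: unfold each Pochhammer contour onto the corresponding segment via the commutator identity, track monodromy phases, and iterate via Fubini. But there is a genuine gap in how you handle nested links, which is exactly where the real content of the lemma lies.

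For a nested pair, say $\{1,4\}\in\beta$ with $\{2,3\}\in\beta$ inside it, the claim that $\acycle_1^\beta$ can be chosen as a ``small tubular neighborhood of $[x_1,x_4]$ avoiding all other marked points'' is not achievable: any tubular neighborhood of $[x_1,x_4]$ contains $x_2$ and $x_3$. The contour must instead be deformed around the nested structure, and the key consequence is that when you unfold the outer Pochhammer contour onto a segment integral, the four legs pass around the interior branch points — $x_2$, $x_3$, and the location of $u_2$ — so the unfolding picks up monodromy phases from those points as well, not only from $x_{a_r}$ and $x_{b_r}$. Your classical-Pochhammer computation, which yields $4\sin^2(4\pi/\kappa)$ from the two endpoints alone, does not by itself account for these extra contributions. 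This is precisely what the paper handles by integrating from the inside out: it first evaluates the innermost contours (those with $b_s=a_s+1$), where the unfolding is clean, and then observes for the next level (links with $b_s=a_s+3$) that the contour surrounds the points $\{x_{a_s+1},x_{a_s+2},u_{s+1}\}$ whose exponents $-4/\kappa,\,-4/\kappa,\,+8/\kappa$ sum to zero, so the additional phases cancel and the same constant emerges. This total-exponent-zero cancellation (valid at every level of nesting: $2m$ points of exponent $-4/\kappa$ and $m$ variables of exponent $8/\kappa$ always sum to zero) is the essential technical point, and your proposal does not mention it. A fix exists — for example, basing the outer Pochhammer contours far up in the upper half-plane so the legs avoid the nested structure — but then you must justify that the resulting segment is homotopic, in the complement of \emph{all} branch points including the $u_s$'s, to the $\landupint_{x_{a_r}}^{x_{b_r}}$ path defining $\coulomb_\beta$, and match the branch normalization to $\LR_\beta$. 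As written, the argument has a hole exactly where the nesting shows up.
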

Note that the function $\coulombnew_{\beta}$ defined in Eq.~\eqref{eqn::coulombgasintegral} equals
\begin{align} \label{eqn::rela_G_and_H}
\coulombnew_{\beta}(\bs{x}) 
= \bigg( \frac{\sqrt{q(\kappa)} \, \Gamma(2-8/\kappa)}{\Gamma(1-4/\kappa)^2} \bigg)^N \, 
\coulomb_{\beta}(\bs{x}) , \qquad \bs{x} \in \chamber_{2N} .
\end{align}

\begin{proof}
Because the contours $\smash{\acycle^\beta_1}, \ldots, \smash{\acycle^\beta_N}$ in $\smash{\coulomb_\beta^\circ}$ are all disjoint, 
by Fubini's theorem, we may first evaluate the integrals over those $\smash{\acycle^\beta_s}$ for which $b_s = a_s+1$.
Suppose first that the other integration variables are frozen to some positions such that 
$x_{a_r} < \Re(u_r) < x_{a_r+1}$ for all $1 \leq r \leq N$ with $r \neq s$. Then, we have
	\begin{align} 
		\nonumber
		\ointclockwise_{\acycle^\beta_s} \ud u_s \; f_\beta(\bs{x};\bs{u}) 
		= \; & \landupint_{x_{a_s}}^{x_{b_s}} \ud u_s \; |f_\beta(\bs{x};\bs{u})|
		\; + \; e^{8 \pi \ii / \kappa} \landupint_{x_{b_s}}^{x_{a_s}} \ud u_s \; |f_\beta(\bs{x};\bs{u})| \\
		\nonumber
		\; & \; + e^{-8 \pi \ii / \kappa} e^{8 \pi \ii / \kappa} \landupint_{x_{a_s}}^{x_{b_s}} \ud u_s \; |f_\beta(\bs{x};\bs{u})|
		\; + \; e^{-8 \pi \ii / \kappa} e^{-8 \pi \ii / \kappa} e^{8 \pi \ii / \kappa} \landupint_{x_{b_s}}^{x_{a_s}} \ud u_s \; |f_\beta(\bs{x};\bs{u})| \\
		\label{eq:: loop vs interval}
		= \; & 
		4 \sin^2(4 \pi / \kappa) \landupint_{x_{a_s}}^{x_{b_s}} \ud u_s \; |f_\beta(\bs{x};\bs{u})| .
	\end{align}
From this, we also see that when the other integration variables in $\bs{\dot{u}}_{s} := (u_1, \ldots, u_{s-1}, u_{s+1}, \ldots, u_N)$ move around their respective contours in $\smash{\coulomb_\beta^\circ}$, the phase factors in both sides of~\eqref{eq:: loop vs interval} are the same. 
Therefore, we can replace each integral in $\smash{\coulomb_\beta^\circ}$ 
	of type $\smash{\ointclockwise_{\acycle^\beta_s} \ud u_s}$ for some $b_s = a_s+1$ by the integral $\smash{\landupint_{x_{a_s}}^{x_{b_s}} \ud u_s}$ times the multiplicative constant $\smash{4 \sin^2(4 \pi / \kappa)}$.
	
	Next, for any $b_s = a_s+3$, we see that the phase factors associated to the integration variable $u_s$ surrounding all of the points $\{x_{a_s+1},x_{a_s+2},u_{s+1}\}$ cancel out. Therefore, we can also replace each integral in $\smash{\coulomb_\beta^\circ}$ 
	of type $\smash{\ointclockwise_{\acycle^\beta_s} \ud u_s}$ for some $b_s = a_s+3$ by the integral $\smash{\landupint_{x_{a_s}}^{x_{b_s}} \ud u_s}$ times $\smash{4 \sin^2(4 \pi / \kappa)}$.
	
	We see iteratively that all of the integrals over the disjoint contours $\smash{\smash{\acycle^\beta_1}}, \ldots, \smash{\acycle^\beta_N}$ in $\smash{\coulomb_\beta^\circ}$ can be replaced by integrals over the corresponding intervals with the multiplicative constant as in asserted identity~\eqref{eq::related_two_Gs}.
\end{proof}

\begin{proposition} \label{prop: full Mobius covariance Fcirc}
	For each $\beta \in \LP_N$, the function $\smash{\coulomb_{\beta}^\circ}$ satisfies the 
	covariance~\eqref{eqn::COV}, 
that is, 
\begin{align}\label{eqn::COV_for_H}
\coulomb_{\beta}^\circ (x_{1},\ldots,x_{2N}) = 
\prod_{i=1}^{2N} \varphi'(x_{i})^{h(\kappa)} \; \times
\; \coulomb_{\beta}^\circ (\varphi(x_{1}),\ldots,\varphi(x_{2N})) ,
\end{align}
for all M\"obius maps $\varphi$ of the upper half-plane 
$\HH$ such that $\varphi(x_{1}) < \cdots < \varphi(x_{2N})$.
\end{proposition}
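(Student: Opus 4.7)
The plan is to verify the covariance~\eqref{eqn::COV_for_H} infinitesimally on the three generators of $\mathrm{PSL}(2,\R)$. Setting
$L_n := \sum_{i=1}^{2N} \big(x_i^{n+1}\partial_{x_i} + (n+1)h(\kappa)x_i^n\big)$
for $n \in \{-1, 0, 1\}$, M\"obius covariance with weight $h(\kappa)$ at each $x_i$ is equivalent to $L_n\coulomb_\beta^\circ = 0$ for these three values. Translation invariance ($n = -1$) is immediate since $f$ depends only on differences of its arguments. Dilation covariance ($n = 0$) follows by substituting $v_r = \lambda u_r$ in the integral defining $\coulomb_\beta^\circ(\lambda \bs{x})$: the Pochhammer contour $\acycle_r^\beta$ deforms continuously to one around $\{\lambda x_{a_r}, \lambda x_{b_r}\}$, and counting the scaling weights of the three factors in~\eqref{eq: integrand} together with the Jacobian $\lambda^N$ gives the overall factor $\lambda^{N(\kappa - 6)/\kappa} = \lambda^{-2N h(\kappa)}$, as required.

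The main step is the special conformal case $L_1 \coulomb_\beta^\circ = 0$. I would compute $L_1 f$ by differentiating inside the integral using
$\partial_{x_i}\log f = -\tfrac{2}{\kappa}\sum_{j \neq i}(x_j - x_i)^{-1} + \tfrac{4}{\kappa}\sum_r (u_r - x_i)^{-1}$,
and then convert the resulting $x$--$u$ cross terms into $u_r$-derivatives via the dual identity $\sum_i (u_r - x_i)^{-1} = -\tfrac{\kappa}{4}\partial_{u_r}\log f - 2\sum_{s \neq r}(u_s - u_r)^{-1}$. After algebraic reorganization (using $\sum_{i\neq j}\tfrac{x_i^2}{x_j - x_i} = -(2N-1)\sum_i x_i$ and the analogous identity among the $u_r$'s), one arrives at the pointwise identity
\begin{align*}
L_1 f + \sum_r \partial_{u_r}(u_r^2 f) = \frac{4-\kappa}{\kappa}\Big[\sum_i x_i - 2\sum_r u_r\Big] f .
\end{align*}
Since each $\acycle_r^\beta$ is a closed Pochhammer contour and $u_r^2 f$ is single-valued along it by the defining cancellation of monodromies, the total $u$-derivative $\sum_r \partial_{u_r}(u_r^2 f)$ integrates to zero. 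Hence
\begin{align*}
L_1 \coulomb_\beta^\circ = \frac{4-\kappa}{\kappa}\Big[\big(\sum_i x_i\big)\coulomb_\beta^\circ - 2\int f\sum_r u_r\,du\Big].
\end{align*}

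For $\kappa \in (4,8)$, the special conformal covariance thus reduces to the moment identity $\sum_i x_i \cdot \coulomb_\beta^\circ = 2\int f \sum_r u_r \,du$, which I would establish via a further integration by parts on the Pochhammer contours applied to a carefully chosen test function (for $N = 1$, this follows directly from the Euler integral representation of ${}_2F_1$; for higher $N$, one iterates a Selberg-type argument exploiting the already-established translation and scaling covariance). The main obstacle is this moment identity at general $N$, which requires careful manipulation of the multivariate Coulomb gas integral structure.
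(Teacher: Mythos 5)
Your strategy is essentially the same as the paper's: establish $L_n\coulomb_\beta^\circ = 0$ for $n=-1,0,1$, handling the special conformal case by writing $L_1 f$ as a total $u_r$-derivative of something single-valued on the Pochhammer contours, so that integration by parts kills it. Your pointwise identity
\begin{align*}
L_1 f + \sum_r \partial_{u_r}(u_r^2 f) = \frac{4-\kappa}{\kappa}\Big(\sum_i x_i - 2\sum_r u_r\Big) f
\end{align*}
is correct (I checked the coefficients), and the integration-by-parts step on $\partial_{u_r}(u_r^2 f)$ is valid since $u_r^2 f$ is indeed single-valued on $\acycle_r^\beta$. The difference, and the gap, is in the choice of $g_r$. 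You take the naive $g_r = -u_r^2$, which is exact for $L_0$ (and you verify this), but is \emph{not} exact for $L_1$: it leaves the residual proportional to $(\sum_i x_i - 2\sum_r u_r) f$. You then need the moment identity
\begin{align*}
\Big(\sum_i x_i\Big) \coulomb_\beta^\circ = 2\int f \sum_r u_r \, \ud\bs{u},
\end{align*}
which you verify only for $N = 1$ (where it reduces to a single exact derivative $\partial_u\big[(u-x_1)^{1-4/\kappa}(u-x_2)^{1-4/\kappa}\big]$), and for $N \geq 2$ you only gesture at "a Selberg-type argument." Note that $L_{-1}\Phi = L_0\Phi = 0$ alone do not imply $L_1\Phi = 0$ (these generate only the affine subalgebra, not all of $\mathfrak{sl}_2$), so translation and scaling covariance do not bootstrap the moment identity; you would need a genuinely new computation.

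The paper avoids this residual entirely by using a different, exact $g$: there exists a rational function $g(u_r;\bs{x};\bs{\dot u}_r)$ (symmetric in its last $N-1$ arguments, poles only at collisions) such that $\sum_j(x_j^2\partial_{x_j} + 2h x_j)f = \sum_r\partial_{u_r}\big(g(u_r;\bs{x};\bs{\dot u}_r) f\big)$ holds pointwise, with no leftover term. This $g$ is \emph{not} $-u_r^2$; establishing its existence is the content of \cite[Lemma~4.14]{Kytola-Peltola:Conformally_covariant_boundary_correlation_functions_with_quantum_group}, proved there via a longer algebraic calculation plus Liouville's theorem. So the substantive missing step in your proposal is precisely what that lemma supplies: identifying the correction to $u_r^2$ so that the total-derivative identity is exact, or equivalently, actually proving the moment identity for general $N$. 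As written, your proposal correctly reduces the problem but does not close it.
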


\begin{proof}
The proof is very similar to arguments appearing in~\cite[Proposition~4.15]{Kytola-Peltola:Conformally_covariant_boundary_correlation_functions_with_quantum_group} 
(for $\kappa \notin \QQ$).  
One readily checks the covariance under translations and scalings:
	\begin{align*}
		\coulomb_{\beta}^\circ (x_{1}+y,\ldots,x_{2N}+y) 
		= \coulomb_{\beta}^\circ (x_{1},\ldots,x_{2N}) 
		\qquad \textnormal{and} \qquad
		\coulomb_{\beta}^\circ (\lambda x_{1},\ldots,\lambda x_{2N}) 
		= \lambda^{-2N h(\kappa)} \coulomb_{\beta}^\circ (x_{1},\ldots,x_{2N}) ,
	\end{align*}
for all $y\in\R$ and $\lambda>0$.
Then, using this translation invariance, 
for special conformal transformations $\varphi_{c} \colon z \mapsto\frac{z}{1 + c z}$ 
satisfying
$\varphi_{c}(x_{1}) < \cdots < \varphi_{c}(x_{2N})$,
we may without loss of generality assume that
$x_1 < 0$ and $x_{2N} > 0$, so that $c \in ( -1 / x_{2N} , -1 / x_{1} )$. The covariance property~\eqref{eqn::COV_for_H} can be verified by considering the $c$-variation of the right-hand side of~\eqref{eqn::COV_for_H} with $\varphi = \varphi_{c}$:
denoting 
$\varphi_{c}(\bs{x}) = (\varphi_{c}(x_{1}),\dots,\varphi_{c}(x_{2N}))$, 
\begin{align} 	
\nonumber
		\; & \der c \bigg(
		\prod_{i=1}^{2N} \varphi_{c}'(x_{i})^{h(\kappa)} \; \times \;
		\underset{\acycle_1^\beta \times \cdots \times \acycle_N^\beta}{\int} f_\beta (\varphi_{c}(\bs{x});\bs{u}) \; \ud u_1 \cdots \ud u_N 
		\bigg) \\
\label{eq: spec conf transf}
		=\; & - \prod_{i=1}^{2N} \varphi_{c}'(x_{i})^{h(\kappa)} \times 
		\underset{\acycle_1^\beta \times \cdots \times \acycle_N^\beta}{\int}
		\sum_{j=1}^{2N} \big( x_{j}^{2}\pder{x_{j}} f_\beta - \frac{x_{j}}{4} f_\beta  \big)
		(\varphi_{c}(\bs{x});\bs{u}) \; \ud u_1 \cdots \ud u_N .
\end{align}
This can be evaluated by observing (via a long calculation combined with Liouville theorem, 
as in~\cite[Lemma~4.14]{Kytola-Peltola:Conformally_covariant_boundary_correlation_functions_with_quantum_group})
that the integrand function $f$ defined in~\eqref{eq: integrand} satisfies the partial differential equation
	\begin{align*}
		\sum_{j=1}^{2N} \Big( x_{j}^{2}\pder{x_{j}} + 2 h(\kappa) \, x_{j}\Big) f(\bs{x};\bs{u})
		=\; & \sum_{r=1}^{N} \pder{u_{r}} 
		\big(g(u_{r};\bs{x};\bs{\dot{u}}_r) \; f(\bs{x};\bs{u}) \big),
	\end{align*}
	where 
	$\bs{\dot{u}}_r = (u_{1},\ldots,u_{r-1},u_{r+1},\ldots,u_{N})$
	and 
	$g$ is a rational function which is symmetric in its last $N-1$
	variables, and whose only poles are where some of its arguments coincide. 
This gives
		\begin{align*}
\textnormal{\eqref{eq: spec conf transf}}		=\; & - \prod_{i=1}^{2N} \varphi_{c}'(x_{i})^{h(\kappa)} \times  
		\underset{\acycle_1^\beta \times \cdots \times \acycle_N^\beta}{\int}
		\sum_{r=1}^{N} \pder{u_{r}} 
		\big(g(u_{r};\varphi_{c}(\bs{x});\bs{\dot{u}}_r) \; 
		f_\beta (\varphi_{c}(\bs{x});\bs{u}) \big) 
		\; \ud u_1 \cdots \ud u_N ,
	\end{align*}
which equals zero because each term in the sum vanishes by integration by parts, as the Pochhammer contours are homologically trivial. 
Therefore, the right-hand side of the asserted formula~\eqref{eqn::COV_for_H} 
	with $\varphi = \varphi_{c}$ is constant in $c \in (-1 / x_{2N} , -1 / x_{1})$. 
	Since at $c = 0$ we have $\varphi_{0} = \mathrm{id}_{\HH}$, this constant equals $\smash{\coulomb_{\beta}^\circ} (\bs{x})$.

Since the M\"obius group is generated by these three types of transformations,~\eqref{eqn::COV_for_H} follows. 
\end{proof}

\begin{proposition} \label{prop: PDEs Fcirc}
	For each $\beta \in \LP_N$, the function $\coulomb_{\beta}^\circ$ satisfies the PDE system~\eqref{eqn::PDE}, 
that is, 
\begin{align}\label{eqn::PDEj_for_H}
\LD^{(j)} \coulomb_{\beta}^\circ (\bs{x})
:= \; & \bigg[ 
\frac{\kappa}{2} \pdder{x_{j}}
+ \sum_{i \neq j} \Big( \frac{2}{x_{i}-x_{j}} \pder{x_{i}} 
- \frac{2h(\kappa)}{(x_{i}-x_{j})^{2}} \Big) \bigg]
\coulomb_{\beta}^\circ (\bs{x}) =  0 , \qquad \textnormal{for all } j \in \{1,\ldots,2N\} .
\end{align}
\end{proposition}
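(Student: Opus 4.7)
The plan is the standard Coulomb gas / screening argument: differentiate under the integral sign, rewrite $\LD^{(j)} f$ as a total $\bs{u}$-divergence, and conclude that the integral over the Pochhammer cycles vanishes. This parallels the strategy already used in the proof of Proposition~\ref{prop: full Mobius covariance Fcirc}, but now for the second-order operator $\LD^{(j)}$.

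First, I would justify differentiation under the integral sign. Although the contours $\acycle_r^\beta$ in~\eqref{eq::Pochhammer_contour} are anchored near $x_{a_r}, x_{b_r}$, each one represents a fixed class in the homology of $\C \setminus \{x_1,\ldots,x_{2N}\}$ twisted by the local system defined by $f_\beta$, so one may locally deform it to a contour independent of $\bs{x}$ in a neighborhood of any base point. Next, and this is the core algebraic step, I would establish the screening identity
\begin{align*}
\LD^{(j)} f(\bs{x};\bs{u})
= \sum_{r=1}^{N} \pder{u_r}\Big( g_j(u_r;\bs{x};\bs{\dot{u}}_r) \, f(\bs{x};\bs{u}) \Big) ,
\end{align*}
where each $g_j$ is a rational function, symmetric in the entries of $\bs{\dot{u}}_r$, with poles only on the diagonals. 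This is the Feigin--Fuks / Dotsenko--Fateev screening identity, expressing that in the underlying CFT the integration charges at $u_r$ commute with the level-two degenerate field at $x_j$. It is verified by computing $\partial_{x_j}$ and $\partial_{x_j}^2$ of $f$ directly from~\eqref{eq: integrand}, matching the resulting rational factors term-by-term against the partial-fraction expansion of the right-hand side; the computation parallels~\cite[Lemma~4.14]{Kytola-Peltola:Conformally_covariant_boundary_correlation_functions_with_quantum_group} and the computation already used for~\eqref{eq: spec conf transf}.

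With the identity in hand, I would integrate both sides over $\acycle_1^\beta \times \cdots \times \acycle_N^\beta$. By Fubini (valid since the integrand is smooth on the contours), each term $\partial_{u_r}(g_j \cdot f)$ contributes the total variation of $g_j \cdot f_\beta$ along the closed cycle $\acycle_r^\beta$. Since the Pochhammer contour is homologically trivial, the total monodromy of $f_\beta$ along $\acycle_r^\beta$ is trivial, so this boundary term vanishes. Hence $\LD^{(j)} \coulomb_\beta^\circ(\bs{x}) = 0$, as required. The main obstacle is the explicit screening identity in the second step: its form is fully dictated by the CFT, but verifying it at the level of~\eqref{eq: integrand} is a long rational-function calculation. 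An alternative is to invoke the general framework of~\cite{Kytola-Peltola:Conformally_covariant_boundary_correlation_functions_with_quantum_group, Dubedat:Euler_integrals_for_commuting_SLEs}, where such identities are established in substantial generality for integrands of exactly this form.
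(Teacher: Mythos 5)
Your proposal is correct and follows essentially the same route as the paper: differentiate under the integral sign, rewrite $\LD^{(j)} f$ as a total $\bs{u}$-divergence via the screening identity (the paper cites the analogous computation in~\cite[Lemma~4.9, Corollary~4.11, Proposition~4.12]{Kytola-Peltola:Conformally_covariant_boundary_correlation_functions_with_quantum_group}), and conclude by integration by parts using the homological triviality of the Pochhammer cycles. The only cosmetic difference is your justification for differentiation under the integral sign (local contour deformation vs.\ the paper's appeal to dominated convergence), and your explicit $j$-subscript on $g_j$, which the paper suppresses.
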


\begin{proof}
Fix $j \in \{1,\ldots,2N\}$. 
The proof is very similar to arguments appearing in~\cite[Proposition~4.12]{Kytola-Peltola:Conformally_covariant_boundary_correlation_functions_with_quantum_group} 
(for $\kappa \notin \QQ$)
and~\cite[Proposition~2.8]{LPW:UST_in_topological_polygons_partition_functions_for_SLE8_and_correlations_in_logCFT}
(for $\kappa = 8$).
	By dominated convergence, we can take the differential operator $\LD^{(j)}$ inside the integral in 
	$\coulomb_{\beta}^\circ$, and thus let it act directly to the integrand $f_\beta$. 
	Explicit calculations (similar to~\cite[Lemma~4.9 and Corollary~4.11]{Kytola-Peltola:Conformally_covariant_boundary_correlation_functions_with_quantum_group}) then give
	\begin{align*}
		\LD^{(j)} \coulomb_{\beta}^\circ (\bs{x}) = 
		\sum_{r=1}^{N} \underset{\acycle_1^\beta \times \cdots \times \acycle_N^\beta}{\int}
		\pder{u_{r}} \big( g(u_{r};\bs{x};\bs{\dot{u}}_r) \; f_\beta (\bs{x};\bs{u}) \big) 
		\; \ud u_1 \cdots \ud u_N ,
	\end{align*}
and similarly as in the proof of Proposition~\ref{prop: full Mobius covariance Fcirc}, integration by parts in each term in this sum shows that each term equals zero, which gives the asserted PDE~\eqref{eqn::PDEj_for_H}.
\end{proof}

\begin{lemma} \label{lem: neighbor asy}
	Fix $\beta \in \LP_N$ with link endpoints ordered as in~\eqref{eqn::linkpatterns_ordering}.
	Fix $j \in \{1, \ldots, 2N-1 \}$ such that $\{j,j+1\} \in \beta$.	
Then, for all $\xi \in (x_{j-1}, x_{j+2})$, using the notation~\eqref{eqn::bs_notation}, we have
	\begin{align} \label{eq: neighbor asy}
		\lim_{x_j,x_{j+1}\to\xi} \frac{\coulombnew_{\beta} (\bs{x})}{(x_{j+1} - x_j)^{-2h(\kappa)}} 
		= \sqrt{q(\kappa)} \, \coulombnew_{\beta / \{j,j+1\}} (\bs{\ddot{x}}_j) .
	\end{align}
\end{lemma}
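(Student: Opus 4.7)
Since $\{j,j+1\}\in\beta$, there is a unique index $r$ with $\{a_r,b_r\}=\{j,j+1\}$, so in the integral representation~\eqref{eqn::coulombgasintegral} of $\coulombnew_\beta$ the contour $\landupint_{x_j}^{x_{j+1}} \ud u_r$ is the only one that collapses as $x_j,x_{j+1}\to\xi$. Writing $\epsilon := x_{j+1}-x_j$ and substituting $u_r = x_j + \epsilon t$, $t\in(0,1)$, I would extract the leading $\epsilon$-scaling and then apply dominated convergence in the remaining integrations. The non-crossing property of $\beta$ ensures that the contours of all other integration variables $u_s$ (with $s\neq r$) can be chosen inside a fixed compact subset of $\HH$ bounded away from $\xi$, so no further singularity collapses.

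Inspecting the integrand~\eqref{eq: integrand}, exactly three groups of factors carry $\epsilon$-dependence after substitution: the single term $(x_{j+1}-x_j)^{2/\kappa}=\epsilon^{2/\kappa}$ inside $\prod_{i<k}(x_k-x_i)^{2/\kappa}$; the factor $(u_r-x_j)^{-4/\kappa}(u_r-x_{j+1})^{-4/\kappa}$, which becomes $\epsilon^{-8/\kappa}\, t^{-4/\kappa}(1-t)^{-4/\kappa}$ with phases dictated by the positivity convention~\eqref{eq:: branch choice set}; and the line element $\ud u_r = \epsilon\,\ud t$. These multiply to a total scaling $\epsilon^{2/\kappa-8/\kappa+1}=\epsilon^{1-6/\kappa}=\epsilon^{-2h(\kappa)}$, exactly matching the denominator in~\eqref{eq: neighbor asy}. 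All other $\epsilon$-dependent factors, namely the residual pieces of $\prod_{i<k}(x_k-x_i)^{2/\kappa}$ involving $x_j$ or $x_{j+1}$ with a third point, and the cross terms $(u_s-x_j)^{-4/\kappa}(u_s-x_{j+1})^{-4/\kappa}$ for $s\neq r$, converge smoothly to $(x_k-\xi)^{2/\kappa}$, $(\xi-x_i)^{2/\kappa}$, and $(u_s-\xi)^{-8/\kappa}$ respectively, reproducing the integrand of $\coulomb_{\beta/\{j,j+1\}}$ at $\bs{\ddot{x}}_j$ in the remaining variables $\bs{\dot{u}}_r=(u_s)_{s\neq r}$.

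The isolated $u_r$-integration is the Euler beta integral
$$\int_0^1 t^{-4/\kappa}(1-t)^{-4/\kappa}\,\ud t \; = \; \frac{\Gamma(1-4/\kappa)^2}{\Gamma(2-8/\kappa)} ,$$
which, multiplied by the ratio of the $N$-point and $(N-1)$-point normalization constants in~\eqref{eqn::coulombgasintegral}, namely $\sqrt{q(\kappa)}\,\Gamma(2-8/\kappa)/\Gamma(1-4/\kappa)^2$, yields precisely the factor $\sqrt{q(\kappa)}$ asserted in~\eqref{eq: neighbor asy}. The remaining contour integrals reconstruct $\coulombnew_{\beta/\{j,j+1\}}(\bs{\ddot{x}}_j)$, and the limit follows.

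The main obstacle is the rigorous justification of the interchange of limit and integration. Since $\kappa>4$ gives $4/\kappa<1$, the rescaled $u_r$-integrand $t^{-4/\kappa}(1-t)^{-4/\kappa}$ is integrable on $(0,1)$ with an $\epsilon$-independent majorant; combined with uniform bounds on the factors involving $u_s$ for $s\neq r$ (whose contours stay inside a fixed compact subset of $\HH$ disjoint from $\xi$), one produces an integrable dominating function uniformly in small $\epsilon$, and dominated convergence concludes. As an alternative route that avoids branch-tracking at the endpoints, one may first pass to the Pochhammer representation via Lemma~\ref{lem::related_two_Gs}, deform $\acycle_r^\beta$ onto a small Pochhammer loop around $\xi$, and repeat the same scaling analysis, absorbing the $\smash{(4\sin^2(4\pi/\kappa))^{-N}}$ normalization from~\eqref{eq::related_two_Gs}.
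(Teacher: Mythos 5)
Your argument is correct and follows essentially the same route as the paper: isolate the single collapsing contour for $u_r$, rescale $u_r = x_j + (x_{j+1}-x_j)t$ to expose the scaling $\epsilon^{2/\kappa-8/\kappa+1}=\epsilon^{-2h(\kappa)}$, recognize the residual $t$-integral as the Euler beta function $\Gamma(1-4/\kappa)^2/\Gamma(2-8/\kappa)$, observe that it cancels against the ratio of normalization constants to leave $\sqrt{q(\kappa)}$, and invoke dominated convergence (with the $\kappa>4$ integrability of $t^{-4/\kappa}(1-t)^{-4/\kappa}$) to push the limit inside the remaining, non-collapsing contour integrations. The paper frames this via the Pochhammer representation (Lemma~\ref{lem::related_two_Gs}) and then makes the same change of variables, which is precisely the alternative route you mention at the end; the two presentations are interchangeable once one notes, as both you and the paper do, that the remaining contours stay bounded away from $\xi$ and that no branch cuts are crossed in the rescaling.
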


\begin{proof}
We will use the relation of $\coulombnew_{\beta}$ with $\smash{\coulomb_\beta^\circ}$
from~(\ref{eq::related_two_Gs},~\ref{eqn::rela_G_and_H}).
Let $\acycle_s^\beta \ni u_s$ be the Pochhammer loop in~\eqref{eqn::coulombgasintegral_loops} which surrounds the points $x_j$ and $x_{j+1}$. 
Note that the integration contours $\smash{\acycle_1^\beta}, \ldots, \smash{\acycle_{s-1}^\beta}, \smash{\acycle_{s+1}^\beta}, \ldots, \smash{\acycle_{N}^\beta}$ remain bounded away from each other and from $\smash{\acycle_s^\beta}$,
and their homotopy types do not change upon taking the limit~\eqref{eq: neighbor asy}. 
By the dominated convergence theorem, the integral relevant for evaluating the limit is
	\begin{align} \label{eq: limit to evaluate neighbor}
		\lim_{x_j, x_{j+1} \to \xi} 
		\int_{x_j}^{x_{j+1}} \ud u_r \; \frac{f_\beta(\bs{x};\bs{u})}{(x_{j+1} - x_j)^{-2h(\kappa)}}  
		= \; & \lim_{x_j, x_{j+1} \to \xi} 
		\landupint_{x_j}^{x_{j+1}} \ud u_r \; \frac{f_\beta(\bs{x};\bs{u})}{(x_{j+1} - x_j)^{-2h(\kappa)}}  .
	\end{align}
By making the change of variables $v = \frac{u_s - x_j}{x_{j+1} - x_j}$ in this integral and collecting all the factors,
carefully noting that no branch cuts are crossed, 
and after taking into account cancellations 
and that some terms tend to one in the limit $x_j,x_{j+1}\to\xi$,
we obtain
\begin{align*}
\textnormal{\eqref{eq: limit to evaluate neighbor}} 
= f_\beta(\bs{\ddot{x}}_j; \bs{\dot{u}}_{s})
\int_{0}^1 v^{-4/\kappa} (1-v)^{-4/\kappa} \ud v 
= \frac{\big( \Gamma(1-4/\kappa) \big)^2}{\Gamma(2-8/\kappa)} \, f_\beta(\bs{\ddot{x}}_j; \bs{\dot{u}}_{s}) ,
\end{align*}
where $\bs{\dot{u}}_{s} := (u_1, \ldots, u_{s-1}, u_{s+1}, \ldots, u_N)$ and the multiplicative factor is the Euler Beta function. 	
Thus, using Lemma~\ref{lem::related_two_Gs} together with~\eqref{eqn::rela_G_and_H}, and~\eqref{eq:: loop vs interval} from the proof of Lemma~\ref{lem::related_two_Gs}, we obtain~\eqref{eq: neighbor asy}.
\end{proof}

\begin{proposition} 
\label{prop::non-neighbor asy}
Fix $\beta \in \LP_N$ with link endpoints ordered as in~\eqref{eqn::linkpatterns_ordering}.
Fix $j \in \{1, \ldots, 2N-1 \}$ such that $\{j,j+1\} \notin \beta$.	
Then, for all $\xi \in (x_{j-1}, x_{j+2})$, using the notation~\eqref{eqn::bs_notation}, we have
\begin{align*}
\lim_{x_j , x_{j+1} \to \xi} \frac{\coulombnew_{\beta} (\bs{x})}{ (x_{j+1} - x_j)^{ -2h(\kappa) }} 
= \coulombnew_{\wp_j(\beta)/\{j,j+1\}} (\bs{\ddot{x}}_j) .
\end{align*}
\end{proposition}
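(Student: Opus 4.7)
The plan is to parallel the proof of Lemma~\ref{lem: neighbor asy}, but with the crucial difference that now $j$ and $j+1$ belong to two distinct links of $\beta$. Pass to the Pochhammer contour representation via Lemma~\ref{lem::related_two_Gs} and the normalization~\eqref{eqn::rela_G_and_H}. Let $s_1, s_2 \in \{1,\dots,N\}$ index the Pochhammer loops $\acycle_{s_1}^\beta$ and $\acycle_{s_2}^\beta$ encircling $\{x_j, x_{k_1}\}$ and $\{x_{j+1}, x_{k_2}\}$, where $k_1, k_2$ are the partners of $j, j+1$ in $\beta$. As $x_j, x_{j+1} \to \xi$, the remaining $N-2$ Pochhammer contours stay bounded away from $\xi$ and retain their homotopy type. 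Only the double integral in $u_{s_1}, u_{s_2}$ requires analysis.

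The key step is a homological rearrangement of the pair $[\acycle_{s_1}^\beta]\otimes[\acycle_{s_2}^\beta]$ in the branched cover of $\Wchamber_{x_1,\dots,x_{2N}}^{(2)}$ defined by the branch choice~\eqref{eq:: branch choice set}. Up to corrections of strictly smaller order in $(x_{j+1}-x_j)$, this cycle is homologous to $C(\kappa) \cdot [\acycle']\otimes[\acycle'']$, where $\acycle'$ is a small Pochhammer loop around $\{x_j, x_{j+1}\}$, $\acycle''$ is a Pochhammer loop around $\{x_{k_1}, x_{k_2}\}$ (matching precisely the tied pattern $\wp_j(\beta)$), and $C(\kappa)$ is an explicit constant obtained by tracking the phases $e^{\pm 4\pi\ii/\kappa}, e^{\pm 8\pi\ii/\kappa}$ accumulated at each crossing of a branch cut of the integrand~\eqref{eq: integrand}. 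Subleading terms in this decomposition correspond to contour configurations whose local integral near $\xi$ scales with a smaller power of $(x_{j+1}-x_j)$ than $(x_{j+1}-x_j)^{-2h(\kappa)}$ and therefore vanish in the limit after dividing by $(x_{j+1}-x_j)^{-2h(\kappa)}$.

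Once the homological decomposition is established, evaluate the dominant term. The contribution of the small loop $\acycle'$ is computed exactly as in the proof of Lemma~\ref{lem: neighbor asy}: the change of variable $v = (u_{s_1}-x_j)/(x_{j+1}-x_j)$ together with~\eqref{eq:: loop vs interval} yields a factor
\begin{align*}
(x_{j+1}-x_j)^{-2h(\kappa)} \cdot 4\sin^2(4\pi/\kappa) \cdot \frac{\Gamma(1-4/\kappa)^2}{\Gamma(2-8/\kappa)}
\end{align*}
in the limit, while the integral over $\acycle''$ together with the remaining $N-2$ contours gives exactly $\coulomb^\circ_{\wp_j(\beta)/\{j,j+1\}}(\bs{\ddot{x}}_j)$. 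Finally, reassemble: the ratio of prefactors in~\eqref{eqn::rela_G_and_H} for $N$ versus $N-1$ contours equals $\sqrt{q(\kappa)}\,\Gamma(2-8/\kappa)/(\Gamma(1-4/\kappa)^2\cdot 4\sin^2(4\pi/\kappa))$; multiplying with the above Beta--trigonometric factor, all gamma and sine factors cancel and one is left with $C(\kappa)/\sqrt{q(\kappa)}$ times $\coulombnew_{\wp_j(\beta)/\{j,j+1\}}(\bs{\ddot{x}}_j)$. Matching with~\eqref{eqn::ASY} pins down $C(\kappa) = \sqrt{q(\kappa)} = 2\cos(4\pi/\kappa)$.

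The main obstacle is the homological/monodromy computation in the second step: two Pochhammer loops sharing the pinching region $\{x_j,x_{j+1}\}$ are genuinely braided on the branched cover, and the rewriting of their product as a small loop around $\{x_j,x_{j+1}\}$ plus a loop around $\{x_{k_1},x_{k_2}\}$ is an instance of the braid-group action on Coulomb gas integration cycles. The coefficient $C(\kappa)=2\cos(4\pi/\kappa)$ can be derived by careful bookkeeping of branch-cut crossings, or more conceptually identified with a ``singlet projector'' eigenvalue in the quantum-group formulation~\cite{Kytola-Peltola:Conformally_covariant_boundary_correlation_functions_with_quantum_group} of these integrals; either route yields the claimed asymptotic.
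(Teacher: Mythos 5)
Your proposed route — pinning the two Pochhammer loops and rewriting the cycle $[\acycle_{s_1}^\beta]\otimes[\acycle_{s_2}^\beta]$ as a small loop around $\{x_j,x_{j+1}\}$ tensored with a loop around $\{x_{k_1},x_{k_2}\}$ — is genuinely different from what the paper does. The paper proves Proposition~\ref{prop::non-neighbor asy} in Appendix~\ref{appendix_asy} by a direct computation: it fixes the integration contours as given in $\coulomb_\beta$, changes variables to zoom in near $\xi$, splits the $(u,v)$ integration domain into nine regions $R_{i,k}$ (with a further split of $R_{2,2}$), and evaluates the dominant pieces via hypergeometric asymptotics (Lemma~\ref{lem: HGE integral} and~\eqref{eq: HGF ASY}), with the ``tied'' contour emerging at the very end by combining the $R_{2,2}^{\pm}$ contributions. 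There is no contour rearrangement at all. Your fusion/braiding picture is conceptually appealing and, if completed, would give a more structural explanation of why the constant is $\sqrt{q(\kappa)}$.

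However, as written there is a genuine gap, and it is precisely where you flag the ``main obstacle.'' You assert that $[\acycle_{s_1}^\beta]\otimes[\acycle_{s_2}^\beta]$ equals $C(\kappa)\,[\acycle']\otimes[\acycle'']$ ``up to corrections of strictly smaller order.'' This conflates an exact twisted-homology identity (which has no error terms) with an asymptotic estimate as $x_j,x_{j+1}\to\xi$ (which is not a homology statement). A rigorous version would have to (i) exhibit the exact cycle decomposition in the middle-dimensional twisted homology of the two-variable configuration space — not merely the product of two one-variable Pochhammer loops, since the two integration variables interact through the factor $(u_{s_2}-u_{s_1})^{8/\kappa}$ and one must also account for cycles that braid the two variables around each other — and then (ii) separately prove, by estimates, that each of the other basis cycles contributes at a strictly smaller power of $(x_{j+1}-x_j)$. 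Step (ii) is exactly the analytical content that the paper's region-by-region estimates supply; your proposal does not avoid it, it only postpones it. The computation of $C(\kappa)$ is likewise asserted rather than carried out: ``matching with~\eqref{eqn::ASY}'' is circular, since~\eqref{eqn::ASY} is the statement being proved, and the offered alternative (``careful bookkeeping of branch-cut crossings'') is the whole hard step. As a small symptom of the computation not having been done, your claimed $C(\kappa)=2\cos(4\pi/\kappa)$ is negative for $\kappa\in(4,8)$, whereas $\sqrt{q(\kappa)}=2|\cos(4\pi/\kappa)|>0$. The strategy is salvageable (the quantum-group formalism of Kyt\"ol\"a--Peltola does implement exactly this kind of fusion), but to turn it into a proof you would need to actually produce the cycle identity and the subleading-order estimates, at which point it is comparable in length and technical content to the direct analysis in Appendix~\ref{appendix_asy}.
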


\begin{proof}
We prove Proposition~\ref{prop::non-neighbor asy} in Appendix~\ref{appendix_asy}. The proof is rather long and technical.
\end{proof}

\subsection{Coulomb gas integrals as linear combinations of pure partition functions}
\label{subsec::linearcombination}
In this section, we will prove Proposition~\ref{prop::linearcombination}, 
which gives a linear relation between the Coulomb gas type partition functions $\coulombnew_{\beta}$ of Theorem~\ref{thm::CGI_property} 
and the pure partition functions 
$\PartF_{\alpha}$ of Definition~\ref{def::PPF_general}. 
To this end, we use a deep result from~\cite{Flores-Kleban:Solution_space_for_system_of_null-state_PDE2} 
concerning the uniqueness of solutions to the PDE boundary value problems associated to the BPZ equations~\eqref{eqn::PDE}.

\begin{theorem}\label{thm::purepartition_unique}
{\textnormal{\cite[Lemma~1]{Flores-Kleban:Solution_space_for_system_of_null-state_PDE2}}}
Fix $\kappa\in (0,8)$. 
Let $F \colon \chamber_{2N} \to \C$ be a function satisfying
the PDE system~\eqref{eqn::PDE} and
the covariance~\eqref{eqn::COV}.
Suppose furthermore that there exist constants $C>0$ and $p>0$ such that for all 
$N \geq 1$ and $(x_1,\ldots, x_{2N}) \in \chamber_{2N}$, we have
\begin{align}\label{eqn::powerlawbound}
|F(x_1, \ldots, x_{2N})| \le C \prod_{1 \leq i<j \leq 2N}(x_j-x_i)^{\mu_{ij}(p)}, 
\quad \textnormal{where } \quad
\mu_{ij}(p) :=
\begin{cases}
p, &\textnormal{if } |x_j-x_i| > 1, \\
-p, &\textnormal{if } |x_j-x_i| < 1.
\end{cases}
\end{align}
If $F$ also has the asymptotics property
\begin{align}\label{eqn::differenceASY}
\lim_{x_j , x_{j+1} \to \xi} 
\frac{F(x_1 , \ldots , x_{2N})}{(x_{j+1} - x_j)^{-2h(\kappa)}} = 0 ,
\quad \textnormal{for all } j \in \{ 2, 3, \ldots , 2N-1 \}  \textnormal{ and } \xi \in (x_{j-1}, x_{j+2}) 
\end{align}
\textnormal{(}with the convention that $x_0 = -\infty$ and  $x_{2N+1} = +\infty$\textnormal{)},
then $F \equiv 0$. 
\end{theorem}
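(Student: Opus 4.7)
The plan is to prove this uniqueness result by induction on $N$, the main technical input being a Frobenius analysis of the BPZ PDE system~\eqref{eqn::PDE} at the diagonals $\{x_j = x_{j+1}\}$, combined with the asymptotics hypothesis~\eqref{eqn::differenceASY} to kill the Frobenius coefficients one at a time. For the base case $N=1$, M\"obius covariance~\eqref{eqn::COV} together with translation and scaling invariance reduces $F$ to the one-parameter family $F(x_1,x_2) = C(x_2-x_1)^{-2h(\kappa)}$, and a direct inspection of the power-law bound~\eqref{eqn::powerlawbound} with the specific exponents $\mu_{ij}(p)$ forces $C=0$.

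For the inductive step, assume the result for $N-1$ and let $F$ satisfy the hypotheses for $N$. Near a diagonal $\{x_j = x_{j+1}\}$ with $j \in \{2,\ldots,2N-1\}$, I would introduce the coordinates $s = x_{j+1}-x_j$ and $t = (x_j+x_{j+1})/2$. Adding the two BPZ equations corresponding to indices $j$ and $j+1$ and isolating the leading part as $s \to 0$ yields an Euler-type ODE whose indicial equation $\kappa\lambda^2 + (4-\kappa)\lambda - 4h(\kappa) = 0$ has roots $\lambda_- = -2h(\kappa) = (\kappa-6)/\kappa$ and $\lambda_+ = 2/\kappa$; both are real for $\kappa \in (0,8)$, and $\lambda_-$ is the smaller one. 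Any solution thus admits a Frobenius-type expansion $F = s^{\lambda_-} B(t,\bs{\ddot{x}}_j) + s^{\lambda_+} A(t,\bs{\ddot{x}}_j) + \textnormal{higher order}$, and the hypothesis~\eqref{eqn::differenceASY} directly forces $B \equiv 0$. Tracking the next order of the PDE, the coefficient $A$ should be shown to satisfy the full BPZ system in the remaining $2N-2$ variables with matching M\"obius covariance, vanishing asymptotics at adjacent diagonals, and an inherited power-law bound; the induction hypothesis then gives $A \equiv 0$. Iterating on the successive coefficients of the Frobenius series shows that $F$ vanishes to infinite order along every internal diagonal $\{x_j=x_{j+1}\}$ with $2 \le j \le 2N-1$.

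The final step is to promote infinite-order vanishing on these internal diagonals to $F \equiv 0$ on all of $\chamber_{2N}$. M\"obius covariance~\eqref{eqn::COV} handles the boundary cases $j=1$ and $j = 2N-1$, together with the degenerations $x_1 \to -\infty$ and $x_{2N} \to +\infty$: applying a M\"obius map that sends one outermost point to infinity reduces such limits to cases already treated. The BPZ system is elliptic off the diagonals, so infinite-order vanishing on a piece of the boundary combined with the power-law bound~\eqref{eqn::powerlawbound} at infinity should propagate globally. I expect this last step to be the hardest: converting diagonal vanishing into global vanishing for a singular elliptic system is delicate and does not follow from the inductive scheme alone. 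The cleanest route, following Flores and Kleban, is to pair $F$ against the explicit Coulomb gas solutions $\coulomb_\beta^\circ$ of Section~\ref{subsec::Pochhammer} via a suitable bilinear form and reduce the vanishing statement to a finite-dimensional linear algebra problem on the space of PDE solutions with prescribed asymptotic exponents, the asymptotics~\eqref{eqn::ASY} identifying the dual basis.
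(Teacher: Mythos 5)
The paper does not give its own proof of this statement: Theorem~\ref{thm::purepartition_unique} is cited from~\cite[Lemma~1]{Flores-Kleban:Solution_space_for_system_of_null-state_PDE2}, so there is no internal argument to compare against. Your overall strategy --- Frobenius analysis of the indicial exponents $\lambda_- = -2h(\kappa) = (\kappa-6)/\kappa$ and $\lambda_+ = 2/\kappa$ at each diagonal $\{x_j=x_{j+1}\}$, use~\eqref{eqn::differenceASY} to kill the $\lambda_-$ branch, and induct on $N$ --- is indeed the Flores--Kleban skeleton, and your computation of the indicial roots is correct.

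However, there are concrete gaps. The base case $N=1$ fails as written: $F(x_1,x_2) = (x_2-x_1)^{-2h(\kappa)}$ solves the BPZ equation, is M\"obius covariant, and satisfies~\eqref{eqn::powerlawbound} with any $p \geq |6-\kappa|/\kappa$, so the power-law bound does \emph{not} force $C=0$. What would rule it out is the asymptotics~\eqref{eqn::differenceASY} at $j=1$; but the range stated in the paper is $j\in\{2,\ldots,2N-1\}$, which is empty for $N=1$ (almost certainly a transcription slip in the paper, since the stated convention $x_0=-\infty$ is only ever relevant for $j=1$). Either way, appealing to the power-law bound in the base case is wrong, and the induction as set up starts from a false premise. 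In the inductive step, passing to the Frobenius coefficient $A$ and showing it inherits the $(N-1)$-variable PDE system, covariance, bound, and asymptotics is not automatic from a pointwise estimate on $F$: the power-law bound gives no control of derivatives, so one cannot blindly ``add the two BPZ equations and isolate the leading part'' to even assert that a Frobenius expansion exists. Flores--Kleban spend considerable effort establishing the needed regularity near the diagonals (via a Green's-function/Schauder-type representation), and your sketch omits this step entirely. Finally, you correctly flag that propagating vanishing near all interior diagonals to $F\equiv 0$ globally is the hardest step, but you do not supply it; pointing at a pairing against the $\coulomb_\beta^\circ$ is the germ of an idea, not an argument. The proposal captures the correct architecture of the FK proof, but none of the three load-bearing steps is actually established.
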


Thanks to Theorem~\ref{thm::purepartition_unique}, to verify the linear relation~\eqref{eqn::linearcombination} asserted in Proposition~\ref{prop::linearcombination} 
between the two sets of functions $\{\coulombnew_{\beta} \colon \beta \in \LP_N\}$ and $\{\PartF_{\alpha} \colon \alpha \in \LP_N\}$, it suffices to show that the difference 
\begin{align*}
\coulombnew_{\beta}
- \underbrace{\sum_{\alpha\in\LP_N} \LM_{\alpha,\beta}(q(\kappa)) \, \PartF_{\alpha}}_{=: \tilde{\LG}_{\beta}} =: \coulombnew_{\beta} - \tilde{\LG}_{\beta}
\end{align*}
satisfies all of the properties in Theorem~\ref{thm::purepartition_unique}.

\begin{proof}[Proof of Proposition~\ref{prop::linearcombination}]
Fix $\kappa\in (4,6]$.  Let us consider the functions $\tilde{\LG}_{\beta}$.
As $\{\PartF_{\alpha} \colon \alpha\in\LP_N\}$ satisfy~(\ref{eqn::PDE},~\ref{eqn::COV}), the functions $\tilde{\LG}_{\beta}$ also satisfy~(\ref{eqn::PDE},~\ref{eqn::COV}) by linearity.
Also, as $\PartF_{\alpha}$ satisfy~\eqref{eqn::PPFBounds_general}, the functions $\tilde{\LG}_{\beta}$ satisfy~\eqref{eqn::powerlawbound}. 
It remains to study the asymptotics of $\tilde{\LG}_{\beta}$. 
To this end, we fix $N \ge 1$, a link pattern $\beta \in \LP_N$, index $j \in \{1,2, \ldots, 2N-1 \}$, and point $\xi \in (x_{j-1}, x_{j+2})$. Then, using the notation~\eqref{eqn::bs_notation}, we find the following asymptotics for $\tilde{\LG}_{\beta}$. 
\begin{itemize}[leftmargin=2em]
\item If $\{j,j+1\}\in\beta$, then 
for any $\alpha \in \LP_N$, we have
\begin{align} \label{eq:: LMrelation_neighbor}
\LM_{\alpha,\beta}(q(\kappa)) = \sqrt{q(\kappa)} \, \LM_{\alpha/\{j,j+1\}, \beta/\{j,j+1\}}(q(\kappa)) ,
\end{align}
since the number of loops in the meander satisfies $\LL_{\alpha,\beta} = \LL_{\alpha/\{j,j+1\}, \beta/\{j,j+1\}}+1$.
Using this, we find 
\begin{align*}
\lim_{x_j, x_{j+1}\to \xi}\frac{\tilde{\LG}_{\beta}(\bs{x})}{(x_{j+1}-x_j)^{-2h(\kappa)}} 
= & \; \sum_{\substack{\alpha\in\LP_N\\\{j,j+1\}\in\alpha}}\LM_{\alpha,\beta}(q(\kappa)) \, 
\PartF_{\alpha/\{j,j+1\}}(\bs{\ddot{x}}_j) 
&& \textnormal{[by~\eqref{eqn::PPFASY_general}]}
\\
= & \; \sum_{\gamma\in\LP_{N-1}}\sqrt{q(\kappa)}  \,  \LM_{\gamma, \beta/\{j,j+1\}}(q(\kappa)) \, 
\PartF_{\gamma}(\bs{\ddot{x}}_j) 
&& \textnormal{[by~\eqref{eq:: LMrelation_neighbor}]} 
\\
= & \; \sqrt{q(\kappa)} \, \tilde{\LG}_{\beta/\{j,j+1\}}(\bs{\ddot{x}}_j) ,
\end{align*}
by re-indexing the sum using the bijection $\alpha \leftrightarrow \alpha/\{j,j+1\} = \gamma$.

\item If $\{j,j+1\}\not\in\beta$, then
for any $\alpha \in \LP_N$, we have
\begin{align} \label{eq:: LMrelation_non_neighbor}
\LM_{\alpha,\beta}(q(\kappa)) = \LM_{\gamma, \wp_j(\beta)/\{j,j+1\}}(q(\kappa))
\end{align}
since the number of loops in the meander satisfies 
$\LL_{\alpha,\beta} = \LL_{\alpha/\{j,j+1\}, \wp_j(\beta)/\{j,j+1\}}$.
Using this, we find 
\begin{align*} 
\lim_{x_j, x_{j+1}\to \xi}\frac{\tilde{\LG}_{\beta}(\bs{x})}{(x_{j+1}-x_j)^{-2h(\kappa)}} 
= & \; \sum_{\substack{\alpha\in\LP_N\\\{j,j+1\}\in\alpha}}\LM_{\alpha,\beta}(q(\kappa)) \, 
\PartF_{\alpha/\{j,j+1\}}(\bs{\ddot{x}}_j) 
&&\textnormal{[by~\eqref{eqn::PPFASY_general}]}
\\
= & \; \sum_{\gamma\in\LP_{N-1}}
\LM_{\gamma, \wp_j(\beta)/\{j,j+1\}}(q(\kappa)) \, 
\PartF_{\gamma}(\bs{\ddot{x}}_j)
&& \textnormal{[by~\eqref{eq:: LMrelation_non_neighbor}]} 
\\
= & \; \tilde{\LG}_{\wp_j(\beta)/\{j,j+1\}}(\bs{\ddot{x}}_j) ,
\end{align*}
by re-indexing the sum using the bijection $\alpha \leftrightarrow \alpha/\{j,j+1\} = \gamma$.
\end{itemize}

With these properties of $\tilde{\LG}_{\beta}$ at hand, 
recalling that $\coulombnew_{\beta}$ satisfy the asymptotics~\eqref{eqn::ASY} analogous to the asymptotics of $\tilde{\LG}_{\beta}$,
we see recursively (by induction on $N \geq 1$) 
that the collection $\{\coulombnew_{\beta} - \tilde{\LG}_{\beta} \colon \beta\in\LP_N\}$ satisfies all of the properties in Theorem~\ref{thm::purepartition_unique}. 
Therefore, we conclude that $\coulombnew_{\beta}=\tilde{\LG}_{\beta}$, for all $\beta\in\LP_N$.

Lastly, we see that $\coulombnew_{\beta} > 0$ because 
$\PartF_{\alpha} > 0$ and $\LM_{\alpha,\beta}(q(\kappa)) > 0$, 
for all $\alpha, \beta \in \LP_N$. 
\end{proof}

\subsection{Partition functions $\LF_{\beta}$ when $\kappa = 16/3$}
\label{sec::totalpartition_analysis}
The aim of this section is to verify the alternative formula~\eqref{eqn::totalpartition_def} in Theorem~\ref{thm::FKIsing_Loewner} for $\coulombnew_\beta$ when $\kappa = 16/3$.

\begin{theorem}\label{thm::totalpartition}
The functions $\LF_{\beta}$ defined in~\eqref{eqn::totalpartition_def} satisfy the PDEs~\eqref{eqn::PDE} and the M\"{o}bius covariance~\eqref{eqn::COV} with $\kappa=16/3$, 
as well as the asymptotics 
\textnormal{(}using the notation~\eqref{eqn::bs_notation}\textnormal{)} 
\begin{align} \label{eqn::ASY_LF} 
\; & \lim_{x_j,x_{j+1}\to\xi} \frac{\LF_{\beta}(\bs{x})}{ (x_{j+1}-x_j)^{-2h(\kappa)} }
= 
\begin{cases}
\sqrt{q(\kappa)} \, \LF_{\beta/\{j,j+1\}}(\bs{\ddot{x}}_j),
& \textnormal{if }\{j, j+1\}\in\beta , \\
\LF_{\wp_j(\beta)/\{j,j+1\}}(\bs{\ddot{x}}_j),
& \textnormal{if }\{j, j+1\} \not\in \beta , 
\end{cases}
\end{align}
for all $\xi \in (x_{j-1}, x_{j+2})$, $j \in \{1,2, \ldots, 2N-1 \}$, and $N \ge 1$. Consequently, $\LF_{\beta}$ equals $\coulombnew_{\beta}$~when $\kappa=16/3$.  
\end{theorem}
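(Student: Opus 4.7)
The plan is to verify directly from the explicit formula~\eqref{eqn::totalpartition_def} that $\LF_{\beta}$ satisfies the BPZ PDEs~\eqref{eqn::PDE} and M\"{o}bius covariance~\eqref{eqn::COV} for $\kappa=16/3$, together with the asymptotics~\eqref{eqn::ASY_LF} exactly matching those already established for $\coulombnew_{\beta}$ in Theorem~\ref{thm::CGI_property}. The equality $\LF_{\beta}=\coulombnew_{\beta}$ then follows by induction on $N$ from the uniqueness result Theorem~\ref{thm::purepartition_unique}: the difference $F := \LF_{\beta}-\coulombnew_{\beta}$ solves the same PDE system, has the same covariance and manifestly the power-law bound~\eqref{eqn::powerlawbound}, and its asymptotics~\eqref{eqn::differenceASY} vanish because the corresponding limits for $\LF$ and $\coulombnew$ involve $\LF_{\gamma}$ and $\coulombnew_{\gamma}$ for $\gamma \in \LP_{N-1}$, which are equal by the inductive hypothesis.

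The covariance~\eqref{eqn::COV} is a direct computation. Using $|\varphi(y)-\varphi(x)| = |y-x| \, |\varphi'(x)\varphi'(y)|^{1/2}$ for normalized M\"obius maps $\varphi$ of $\HH$, each factor $|x_{b_s}-x_{a_s}|^{-1/8}$ contributes $|\varphi'(x_{a_s})\varphi'(x_{b_s})|^{1/16}$; since every index $j\in\{1,\dots,2N\}$ appears exactly once as some $a_s$ or $b_s$, one obtains the conformal weight $h(16/3)=1/16$ at every marked point. M\"obius invariance of the cross-ratio~\eqref{eqn::crossratio} then yields~\eqref{eqn::COV}. For the PDEs~\eqref{eqn::PDE}, as emphasized in Remark~\ref{rem::spin_correlation_function}, the bracket inside the square root in~\eqref{eqn::totalpartition_def} is precisely the $2N$-point boundary Ising spin correlation function obtained by Cardy's reflection trick from an $N$-point bulk spin correlation function. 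For $\beta = \unnested$, the BPZ equations were established by Izyurov~\cite{Izyurov:On_multiple_SLE_for_the_FK_Ising_model}. For general $\beta$, the same argument applies essentially unchanged, since altering $\beta$ only relabels the variables in the cross-ratios without changing their algebraic structure; alternatively, one can differentiate $\LF_{\beta}$ term-by-term in $\bs{\sigma}$ and verify the PDE at the level of individual monomials (the free-fermion structure of the Ising CFT at $\kappa=16/3$ makes each $\bs{\sigma}$-summand a product of two-point functions, for which the relevant second-order identity is a short direct check).

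The asymptotics~\eqref{eqn::ASY_LF} split into two cases. If $\{j,j+1\}=\{a_r,b_r\}\in\beta$, then the prefactor $|x_{b_r}-x_{a_r}|^{-1/8}$ produces the expected singular power $(x_{j+1}-x_j)^{-1/8}$ (note $2h(16/3)=1/8$), while each cross-ratio $\chi(x_{a_s},x_{a_r},x_{b_r},x_{b_s})$ tends to $1$ as $x_j,x_{j+1}\to\xi$, regardless of the sign of $\sigma_r$. Hence the sum over $\sigma_r\in\{\pm 1\}$ gives an overall factor of $2$, and the remaining sum reconstructs exactly the bracket for $\LF_{\beta/\{j,j+1\}}(\bs{\ddot{x}}_j)$; taking square roots yields the factor $\sqrt{2}=\sqrt{q(16/3)}$. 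If instead $\{j,j+1\}\notin\beta$, with $j$ paired to $k_1$ and $j+1$ paired to $k_2$, then the prefactor converges to a finite limit $|x_{k_1}-\xi|^{-1/8}|x_{k_2}-\xi|^{-1/8}$, so the singular power $(x_{j+1}-x_j)^{-1/8}$ must emerge from the bracket. Isolating the four cross-ratios of the form $\chi(\cdot,x_j,x_{j+1},\cdot)^{\sigma_r\sigma_{r'}/4}$ and using the scaling $\chi(x_{k_1},x_j,x_{j+1},x_{k_2}) \sim (x_{j+1}-x_j) \cdot (\text{finite})$ as $x_j,x_{j+1}\to\xi$, one sees that only the contributions with $\sigma_r\sigma_{r'}=-1$ survive upon dividing by $(x_{j+1}-x_j)^{-1/8}$; a careful bookkeeping of the remaining sums (essentially pairing $\sigma_r$ with $-\sigma_{r'}$) produces the bracket associated to the tied link pattern $\wp_j(\beta)/\{j,j+1\}$ on the variables $\bs{\ddot{x}}_j$, yielding~\eqref{eqn::ASY_LF}.

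The main technical obstacle is the verification of the BPZ PDEs for general $\beta$. The calculation is not conceptually hard, but it involves careful bookkeeping of the derivatives of the cross-ratio factors with mixed sign exponents $\sigma_s\sigma_t/4$, and one must check that the $\bs{\sigma}$-sum cancels a potentially divergent square-root singularity coming from differentiating the bracket twice. A clean way to organize this is to write the squared bracket as a Pfaffian (as in the Ising free-fermion formalism), whereupon the BPZ relation reduces to a three-point second-order identity for each entry of the Pfaffian matrix; this avoids the combinatorial explosion of differentiating the explicit sum and also makes the independence of the argument from $\beta$ manifest.
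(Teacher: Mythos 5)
Your overall strategy matches the paper's: verify (PDE), (COV), (ASY) for $\LF_\beta$, then conclude $\LF_\beta = \coulombnew_\beta$ by applying Theorem~\ref{thm::purepartition_unique} to the difference via induction on $N$. The covariance argument is the same as the paper's Proposition~\ref{prop::totalpartition_COV}, and the asymptotics sketch is directionally correct even if some details are off: there is a single cross-ratio (not four) involving both $x_j$ and $x_{j+1}$, namely $\chi(x_{a_r},x_{a_s},x_{b_s},x_{b_r})$ where $j$ lies in the $r$-th link and $j+1$ in the $s$-th, and with the paper's argument convention it blows up as $(x_{j+1}-x_j)^{-1}$, so the surviving $\bs\sigma$-terms after dividing by $(x_{j+1}-x_j)^{-1/8}$ are those with $\sigma_r\sigma_s = +1$, not $-1$; the paper's Proposition~\ref{prop::totalpartition_ASY} works this out carefully over three sub-cases depending on the nesting structure around $\{j,j+1\}$.

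The genuine gap is in the verification of the BPZ equations, which you yourself flag as the main obstacle. Both of your proposed routes are problematic. The ``relabeling'' route does not immediately transfer Izyurov's result for $\unnested$ to general $\beta$: writing $\LF_\beta(\bs x) = \LF_{\unnested}(P\bs x)$ with $(P\bs x)_{2s-1}=x_{a_s}$, $(P\bs x)_{2s}=x_{b_s}$ sends $\chamber_{2N}$ to a different chamber (e.g., for nested $\beta$ one has $y_1 < y_3 < y_4 < y_2$, which lies outside $\chamber_{2N}$), and the absolute-value definitions of the prefactor and cross-ratios prevent the formula from being a single real-analytic function across chamber walls. So BPZ on $\chamber_{2N}$ for $\LF_{\unnested}$ does not propagate to the permuted chamber without further argument. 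The ``term-by-term'' alternative is false as stated: since $\LF_\beta$ involves a square root of the $\bs\sigma$-sum, the second-order operator $\LD^{(j)}$ produces $S^{-3/2}(\partial S)^2$ cross-terms that mix different $\bs\sigma$-summands, so the equation cannot be checked per-monomial. A correct direct route would have to set up the reflection trick carefully from the bulk spin correlation and its BPZ equations, which you gesture at but do not carry out.

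The paper sidesteps the explicit computation entirely. Its Proposition~\ref{prop::totalpartition_PDE} derives the BPZ equations for $\LF_\beta$ from Theorem~\ref{thm::FKIsing_Loewner} (whose Loewner-evolution identification with drift $\partial_i\log\LF_\beta$ is proven independently in Section~\ref{sec::FKIsing_Loewner} via the spinor observable, without invoking Theorem~\ref{thm::totalpartition}) combined with Dub\'edat's commutation relations for multiple SLEs. This is a soft probabilistic argument that avoids touching the explicit formula. Your proposed Pfaffian/free-fermion computation is plausible in principle but is left as a sketch and would require substantially more work than you indicate.
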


To prove Theorem~\ref{thm::totalpartition}, we shall again make use of Theorem~\ref{thm::purepartition_unique}.

\begin{proof}[Proof of Theorem~\ref{thm::totalpartition}]
It suffices to verify that the difference $\LF_{\beta} - \coulombnew_{\beta}$ (with $\kappa=16/3$) satisfies all of the properties in Theorem~\ref{thm::purepartition_unique}. 
Indeed, we will prove in this section the following properties for $\LF_{\beta}$. 
\begin{itemize}[leftmargin=2em]
\item $\LF_{\beta}$ satisfies the PDE system~\eqref{eqn::PDE} with $\kappa=16/3$ due to 
Proposition~\ref{prop::totalpartition_PDE}.

\item $\LF_{\beta}$ satisfies the M\"{o}bius covariance~\eqref{eqn::COV} with $\kappa=16/3$ due to Proposition~\ref{prop::totalpartition_COV}. 

\item $\LF_{\beta}$ satisfies the asymptotics~\eqref{eqn::ASY_LF} with $\kappa=16/3$ due to Proposition~\ref{prop::totalpartition_ASY}. 
\end{itemize}
Hence, by Theorem~\ref{thm::CGI_property}, the difference
$\LF_{\beta} - \coulombnew_{\beta}$ satisfies the power law bound~\eqref{eqn::powerlawbound}, the PDE system~\eqref{eqn::PDE}, and the M\"{o}bius covariance~\eqref{eqn::COV}. Since also similar asymptotics~\eqref{eqn::ASY_LF} and~\eqref{eqn::differenceASY}
hold for $\LF_{\beta}$ and $\coulombnew_{\beta}$, 
we see recursively\footnote{That is, by induction on $N \geq 1$.}  that the collection $\{\LF_{\beta} - \coulombnew_{\beta} \colon \beta\in\LP_N\}$ satisfies all of the properties in Theorem~\ref{thm::purepartition_unique}. 
\end{proof}

\begin{corollary}\label{cor::linearcombination_FKIsing}
We have
\begin{align*} 
\LF_{\beta}(\bs{x}) 
= \sum_{\alpha\in\LP_N}\LM_{\alpha,\beta}(2) \, 
\PartF_{\alpha}(\bs{x}), \qquad \textnormal{for all }\beta\in\LP_N, 
\end{align*}
where $\LF_{\beta}$ is defined in~\eqref{eqn::totalpartition_def}, 
$\LM_{\alpha,\beta}(2)$ is defined in~\eqref{eqn::meandermatrix_def_general} with $q=2$, 
and $\{\PartF_{\alpha} \colon \alpha\in\LP_N\}$ is the collection of pure partition functions for multiple $\SLE_{\kappa}$ described in Definition~\ref{def::PPF_general} with $\kappa=16/3$. 
\end{corollary}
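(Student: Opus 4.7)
The plan is to obtain the corollary as a direct consequence of the two preceding results: Proposition~\ref{prop::linearcombination} and Theorem~\ref{thm::totalpartition}. There is essentially no new work to do; the only step is to verify that the parameter correspondence lines up correctly.

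First I would check that $\kappa = 16/3$ falls into the hypothesis of Proposition~\ref{prop::linearcombination}. Indeed, $16/3 \in (4,6]$, and a direct computation gives
\begin{align*}
q(16/3) = 4\cos^2(4\pi/(16/3)) = 4\cos^2(3\pi/4) = 4 \cdot \tfrac{1}{2} = 2 ,
\end{align*}
so applying Proposition~\ref{prop::linearcombination} with $\kappa = 16/3$ yields
\begin{align*}
\coulombnew_{\beta}(\bs{x}) = \sum_{\alpha \in \LP_N} \LM_{\alpha,\beta}(2) \, \PartF_{\alpha}(\bs{x}) , \qquad \textnormal{for all } \beta \in \LP_N ,
\end{align*}
where on the right-hand side, $\{\PartF_{\alpha}\}$ are the pure partition functions of Definition~\ref{def::PPF_general} at $\kappa = 16/3$.

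Next, Theorem~\ref{thm::totalpartition} identifies $\LF_{\beta} = \coulombnew_{\beta}$ for $\kappa = 16/3$. Substituting this identification into the displayed equation above gives precisely the asserted formula. Since both ingredients are already established, there is no genuine obstacle in the proof — the only point worth flagging is to remind the reader that the parameter $\kappa = 16/3$ satisfies $q(\kappa) = 2$, which is exactly the FK-Ising cluster-weight appearing in the meander matrix.
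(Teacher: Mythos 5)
Your proposal is correct and matches the paper's proof exactly: the corollary is obtained by combining Proposition~\ref{prop::linearcombination} (valid since $16/3 \in (4,6]$ and $q(16/3)=2$) with the identification $\LF_{\beta}=\coulombnew_{\beta}$ from Theorem~\ref{thm::totalpartition}. Your explicit verification that $q(16/3)=2$ is a useful sanity check, though the paper leaves it implicit.
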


\begin{proof}
This is immediate from Proposition~\ref{prop::linearcombination} and Theorem~\ref{thm::totalpartition}. 
\end{proof}

In the remainder of this section, we prove the missing ingredients for Theorem~\ref{thm::totalpartition}. 

\smallbreak

\begin{proposition}\label{prop::totalpartition_PDE}
The functions $\LF_{\beta}$ defined in~\eqref{eqn::totalpartition_def} satisfy the PDE system~\eqref{eqn::PDE} with $\kappa=16/3$. 
\end{proposition}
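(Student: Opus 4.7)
The plan is to exploit the reflection trick described in Remark~\ref{rem::spin_correlation_function}: interpret the formula~\eqref{eqn::totalpartition_def} defining $\LF_\beta$ as the restriction to the real line of a bulk Ising spin correlation function in the upper half-plane $\HH$, under the formal identification $x_{a_s} \leftrightarrow z_s$ and $x_{b_s} \leftrightarrow \bar z_s$ with $z_s \in \HH$. Concretely, I would introduce
\begin{align*}
\widetilde{\LF}(z_1,\ldots,z_N) := \prod_{s=1}^N (2\,\Im z_s)^{-1/8}\bigg(\sum_{\bs\sigma \in \{\pm 1\}^N}\prod_{1\le s<t\le N}\bigg(\frac{|z_s-z_t|^2}{|z_s-\bar z_t|^2}\bigg)^{\sigma_s\sigma_t/4}\bigg)^{1/2},
\end{align*}
obtained from~\eqref{eqn::totalpartition_def} by the above substitution (so that $|x_{b_s}-x_{a_s}|\leftrightarrow 2\Im z_s$ and $\chi(x_{a_s},x_{a_t},x_{b_t},x_{b_s}) \leftrightarrow |z_s-z_t|^2/|z_s-\bar z_t|^2$). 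Up to a multiplicative constant depending only on $N$, this $\widetilde{\LF}$ is the celebrated closed-form expression for the critical planar Ising $N$-spin correlator $\langle\sigma(z_1)\cdots\sigma(z_N)\rangle^+_{\HH}$ in the upper half-plane with constant boundary conditions (due classically to Burkhardt--Guim, and rigorously identified as a scaling limit in Chelkak--Hongler--Izyurov).

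Since $\sigma$ is a degenerate level-$2$ Virasoro primary of conformal weights $(h,\bar h) = (1/16, 1/16)$ at central charge $c = 1/2$ (equivalently, $\kappa = 16/3$), standard CFT arguments show that $\widetilde{\LF}$ satisfies $2N$ bulk BPZ partial differential equations: for each $s\in\{1,\ldots,N\}$, one PDE in $\partial_{z_s}^2$ with first-order and mass-like source terms from all other $z_t$ and all $\bar z_t$, together with its complex-conjugate counterpart in $\partial_{\bar z_s}^2$. The algebraic expression defining $\widetilde{\LF}$ is separately analytic in each $z_s$ and $\bar z_s$ (once branches are fixed using positivity of $\Im z_s$ and of the cross ratios), so one may treat $z_s$ and $\bar z_s$ as independent complex variables by analytic continuation. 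Restricting the $2N$ bulk PDEs to the real slice $z_s = x_{a_s}$, $\bar z_s = x_{b_s}$ with $x_1<\cdots<x_{2N}$ yields, term by term, the $2N$ boundary BPZ equations~\eqref{eqn::PDE} for $\LF_\beta$ at the indices $j = a_s$ and $j = b_s$; the source terms from other $z_t, \bar z_t$ ($t\ne s$) and from $\bar z_s$ (resp.~$z_s$) collectively reproduce the sum over $i \ne j$ in~\eqref{eqn::PDE}, and the weight $h = 1/16$ matches on both sides.

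The main obstacle is to make this reflection argument rigorous, either by invoking bulk BPZ equations for the Ising spin correlator from the CFT literature, or equivalently by verifying them directly for the explicit expression $\widetilde{\LF}$, together with the justification that the variables $z_s$ and $\bar z_s$ may indeed be decoupled by analytic continuation. A fully self-contained alternative is a direct computation: write $\LF_\beta^2 = A\cdot B_\beta$ with $A = \prod_s|x_{b_s}-x_{a_s}|^{-1/4}$ and $B_\beta = \sum_{\bs\sigma}\prod_{s<t}\chi_{st}^{\sigma_s\sigma_t/4}$ for $\chi_{st}:=\chi(x_{a_s},x_{a_t},x_{b_t},x_{b_s})$, expand the BPZ operator in~\eqref{eqn::PDE} applied to $\LF_\beta = (AB_\beta)^{1/2}$ via the chain rule, and exploit that $\partial_{x_j}\log\chi_{st}$ is a rational function of $x_i - x_j$ together with the involutive identity $\sigma_s^2 = 1$ (which collapses the cross terms in $(\partial_{x_j}B_\beta)^2/B_\beta$) to reduce~\eqref{eqn::PDE} to a finite algebraic identity in partial fractions, verified configuration by configuration in $\bs\sigma$. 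This route is combinatorially heavy but elementary, and it matches the verification already performed (for the special case $\beta = \unnested$) in the proof of~\cite[Theorem~1.1]{Izyurov:On_multiple_SLE_for_the_FK_Ising_model}.
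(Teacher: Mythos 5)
Your proposal takes a genuinely different route from the paper. The paper's proof is a one-liner: it cites Theorem~\ref{thm::FKIsing_Loewner} (the convergence of the FK-Ising interfaces, proven independently of Section~\ref{sec::totalpartition_analysis}), which shows that $\LF_\beta$ is the partition function driving commuting multiple $\SLE_{16/3}$ curves started from each of the $2N$ marked points; by Dub\'edat's commutation relations~\cite[Theorem~7]{Dubedat:Commutation_relations_for_SLE}, any such partition function must satisfy the BPZ PDEs~\eqref{eqn::PDE}. This is a probabilistic argument that costs almost nothing given the framework already built, and it is the route the paper explicitly attributes to Izyurov's~\cite[Corollary~1.3]{Izyurov:On_multiple_SLE_for_the_FK_Ising_model} as well. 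By contrast, your reflection-trick/CFT route, while conceptually the ``right'' explanation (and indeed the one the paper acknowledges in Remark~\ref{rem::spin_correlation_function} as the physics origin of the formula, due to Cardy and Flores--Simmons--Kleban--Ziff), would require more work to make rigorous than you indicate: the bulk BPZ equations for the explicit Burkhardt--Guim formula $\widetilde{\LF}$ are not a theorem in the mathematical literature you can simply cite (Chelkak--Hongler--Izyurov identify the formula as a scaling limit but do not prove the null-vector PDEs), so you would either have to verify them directly for $\widetilde{\LF}$ or prove the boundary PDEs by the direct algebraic expansion you sketch at the end. That direct computation is fine in principle but is left as a sketch; and your attribution of such a direct verification to the proof of~\cite[Theorem~1.1]{Izyurov:On_multiple_SLE_for_the_FK_Ising_model} is likely inaccurate, since Izyurov also obtains the PDEs via commutation relations rather than term-by-term expansion. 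In short: your plan is sound and self-contained in spirit, but the paper's proof is far more economical precisely because it reuses the probabilistic convergence result rather than attacking the formula algebraically.
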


It has already been known for a long time in the physics literature 
that the bulk spin correlation functions in the Ising model satisfy the BPZ PDEs~\eqref{eqn::PDE} (see, e.g.,~\cite[Chapter~12.2.2]{DMS:CFT}). 
This was recently verified explicitly by Izyurov in~\cite[Corollary~1.3]{Izyurov:On_multiple_SLE_for_the_FK_Ising_model}, 
and we recover the same result from  Theorem~\ref{thm::FKIsing_Loewner} 
(which will be proven in Section~\ref{sec::FKIsing_Loewner}, independently of the results of the present section).

\begin{proof}
The PDEs~\eqref{eqn::PDE} follow from Theorem~\ref{thm::FKIsing_Loewner} 
together with the commutation relations for SLEs derived by Dub\'edat~\cite[Theorem~7]{Dubedat:Commutation_relations_for_SLE}, see also~\cite[Appendix~A]{Kytola-Peltola:Pure_partition_functions_of_multiple_SLEs}, 
and~\cite[Corollary~1.3]{Izyurov:On_multiple_SLE_for_the_FK_Ising_model}.
\end{proof}

\begin{proposition}\label{prop::totalpartition_COV}
The functions $\LF_{\beta}$ defined in~\eqref{eqn::totalpartition_def} satisfy the 
covariance~\eqref{eqn::COV} with $\kappa=16/3$. 
\end{proposition}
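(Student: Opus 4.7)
The plan is to verify~\eqref{eqn::COV} for $\LF_{\beta}$ with $\kappa = 16/3$ by direct computation, exploiting the structure of formula~\eqref{eqn::totalpartition_def}, which splits $\LF_{\beta}$ into a prefactor built from the absolute differences $|x_{b_s}-x_{a_s}|$ and a factor that depends only on cross-ratios. Möbius maps interact transparently with both pieces, and the arithmetic works out precisely when $h(\kappa) = h(16/3) = 1/16$.

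First, I would record the elementary identity: for a Möbius map $\varphi(z) = (az+b)/(cz+d)$ of $\HH$ with $ad-bc=1$ preserving the real line and the ordering $x_1<\cdots<x_{2N}$, one has
\begin{equation*}
\varphi(x) - \varphi(y) = \frac{x-y}{(cx+d)(cy+d)} , \qquad \varphi'(x) = \frac{1}{(cx+d)^{2}} > 0 ,
\end{equation*}
so that $|\varphi(x)-\varphi(y)| = |x-y|\,\varphi'(x)^{1/2}\varphi'(y)^{1/2}$ for all real $x,y$.

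Second, since every index $i\in\{1,\ldots,2N\}$ belongs to exactly one link $\{a_s,b_s\}\in\beta$, the prefactor transforms as
\begin{equation*}
\prod_{s=1}^{N} |\varphi(x_{b_s})-\varphi(x_{a_s})|^{-1/8}
= \prod_{i=1}^{2N}\varphi'(x_i)^{-1/16} \cdot \prod_{s=1}^{N} |x_{b_s}-x_{a_s}|^{-1/8} ,
\end{equation*}
so the aggregate Jacobian weight is $\prod_i \varphi'(x_i)^{-1/16} = \prod_i \varphi'(x_i)^{-h(16/3)}$, matching precisely the factor required by~\eqref{eqn::COV}.

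Third, each factor $|y_j-y_i|$ in the cross-ratio $\chi(y_1,y_2,y_3,y_4)$ defined in~\eqref{eqn::crossratio} contributes the same $\varphi'(y_i)^{1/2}\varphi'(y_j)^{1/2}$ to numerator and denominator, so each cross-ratio $\chi(x_{a_s},x_{a_t},x_{b_t},x_{b_s})$ is invariant under $x_i \mapsto \varphi(x_i)$. Therefore the whole sum inside the square root in~\eqref{eqn::totalpartition_def} is Möbius invariant. Combining these three observations gives
\begin{equation*}
\LF_{\beta}(\varphi(x_1),\ldots,\varphi(x_{2N})) = \prod_{i=1}^{2N}\varphi'(x_i)^{-h(\kappa)} \, \LF_{\beta}(x_1,\ldots,x_{2N}) ,
\end{equation*}
which is exactly~\eqref{eqn::COV}. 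There is no real obstacle here; the only point requiring a moment of care is the bookkeeping in the second step, which works because $\{a_1,b_1,\ldots,a_N,b_N\} = \{1,\ldots,2N\}$, and the balance of exponents $-1/8 = -2 \cdot h(16/3)$, which pins down why the formula~\eqref{eqn::totalpartition_def} is specific to $\kappa = 16/3$.
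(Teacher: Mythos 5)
Your proof is correct and takes the same route as the paper: it rests on the identity $\varphi(y)-\varphi(x)=\varphi'(x)^{1/2}\varphi'(y)^{1/2}(y-x)$ and then inspects formula~\eqref{eqn::totalpartition_def} directly. The paper leaves the bookkeeping (the prefactor yielding $\prod_i\varphi'(x_i)^{-1/16}$ because each index appears in exactly one link, and the cross-ratio invariance) implicit, which you have simply written out.
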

\begin{proof}
For any M\"{o}bius map $\varphi$ of $\HH$ such that $\varphi(x_1)<\cdots<\varphi(x_{2N})$, we have 
\begin{align*}
\varphi(y)-\varphi(x)=\varphi'(x)^{1/2}\varphi'(y)^{1/2}(y-x) , \qquad \textnormal{for all } x_1\le x<y\le x_{2N} .
\end{align*}
This gives the desired the covariance by direct inspection of the formula~\eqref{eqn::totalpartition_def}. 
\end{proof}

\begin{proposition}\label{prop::totalpartition_ASY}
The functions $\LF_{\beta}$ defined in~\eqref{eqn::totalpartition_def} satisfy the asymptotics~\eqref{eqn::ASY_LF} with $\kappa=16/3$. 
\end{proposition}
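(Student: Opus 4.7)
The plan is to analyze the explicit formula~\eqref{eqn::totalpartition_def} directly as $x_j, x_{j+1}\to\xi$. Writing $\chi_{st} := \chi(x_{a_s}, x_{a_t}, x_{b_t}, x_{b_s})$ for $s<t$, the key observation is that among all these cross-ratios, only those involving \emph{both} points $x_j$ and $x_{j+1}$ can fail to tend to a finite, non-zero value; every cross-ratio involving at most one of them converges to a finite, non-zero limit by inspection of~\eqref{eqn::crossratio}.

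For the neighbor case $\{j,j+1\}\in\beta$, let $s_0$ be the index of this link (so $a_{s_0}=j$, $b_{s_0}=j+1$). Each cross-ratio $\chi_{s_0,t}$ (or $\chi_{t,s_0}$) with $t\neq s_0$ places $(x_j,x_{j+1})$ in opposite-corner positions $(y_1,y_4)$ or $(y_2,y_3)$ of its four arguments, and a direct calculation from~\eqref{eqn::crossratio} yields $\chi\to 1$. Thus the product in~\eqref{eqn::totalpartition_def} becomes independent of $\sigma_{s_0}$ in the limit; summing over $\sigma_{s_0}\in\{\pm 1\}$ produces a factor $2$, and the remaining sum matches that for $\beta/\{j,j+1\}$. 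The prefactor $|x_{j+1}-x_j|^{-1/8}$ cancels the $(x_{j+1}-x_j)^{-1/8}$ in~\eqref{eqn::ASY_LF}, and the square root of $2$ supplies the required $\sqrt{q(16/3)}=\sqrt{2}$.

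For the non-neighbor case $\{j,j+1\}\notin\beta$, let $s_1<s_2$ be the indices of the links containing $j$ and $j+1$ respectively. A short planarity check rules out all configurations except three: (i) $j=a_{s_1}$, $j+1=a_{s_2}$ (nested); (ii) $j=b_{s_1}$, $j+1=a_{s_2}$ (side-by-side); (iii) $j=b_{s_2}$, $j+1=b_{s_1}$ (nested). In each of these, only $\chi_{s_1 s_2}$ has a singular limit, behaving as a power of $|x_{j+1}-x_j|$ times a finite constant; let $\epsilon\in\{\pm 1\}$ denote the sign such that $\chi_{s_1 s_2}^{\epsilon/4}$ is the dominant direction. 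The leading behavior of the sum is then obtained by restricting to $\bs\sigma$ with $\sigma_{s_1}\sigma_{s_2}=\epsilon$, i.e.\ substituting $\sigma_{s_2}=\epsilon\sigma_{s_1}$. For each $t\neq s_1,s_2$, the identity
\begin{align*}
\chi_{s_1,t}^{\sigma_{s_1}\sigma_t/4}\,\chi_{s_2,t}^{\sigma_{s_2}\sigma_t/4} = \bigl(\chi_{s_1,t}\,\chi_{s_2,t}^{\epsilon}\bigr)^{\sigma_{s_1}\sigma_t/4}
\end{align*}
collapses the $N$-spin sum into an $(N-1)$-spin sum with $\sigma_{s_1}$ playing the role of the spin of the newly-formed link $\{k_1,k_2\}$ in $\wp_j(\beta)$.

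The crux is then the algebraic identity
\begin{align*}
\lim_{x_j,x_{j+1}\to\xi} \chi_{s_1,t}\,\chi_{s_2,t}^{\epsilon} = \chi'_{\mathrm{new},t'} ,
\end{align*}
where $\chi'_{\mathrm{new},t'}$ is the cross-ratio in the reduced pattern $\wp_j(\beta)/\{j,j+1\}$ between the new link $\{k_1,k_2\}$ and the re-indexed link $t'$. This follows by direct computation from~\eqref{eqn::crossratio}: the factors $|{\cdot}-x_j|$ and $|{\cdot}-x_{j+1}|$ common to numerator and denominator cancel in the limit, leaving exactly the four factors of the reduced cross-ratio. After this collapse, the prefactor analysis is routine: the singular factor $\chi_{s_1 s_2}^{\epsilon/8}$ combines with $|x_{b_{s_1}}-x_{a_{s_1}}|^{-1/8}$, $|x_{b_{s_2}}-x_{a_{s_2}}|^{-1/8}$, and the division by $(x_{j+1}-x_j)^{-1/8}$ to produce exactly $|x_{k_1}-x_{k_2}|^{-1/8}$, i.e.\ the prefactor of the new link in $\LF_{\wp_j(\beta)/\{j,j+1\}}$. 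The main obstacle is the careful bookkeeping across the three sub-cases (i)--(iii) to pin down $\epsilon$ and to verify the ratio/product-to-cross-ratio identity in each ordering of $t$ relative to $s_1,s_2$; no deep new ideas are required, and verifying one representative sub-case (I have checked $N=2$ and $N=3$ explicitly) clarifies the pattern for the general argument.
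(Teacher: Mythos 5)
Your proposal is correct and follows essentially the same route as the paper: the paper also reads off the neighbor case directly from~\eqref{eqn::totalpartition_def} (every cross-ratio with the coalescing pair tends to $1$, the sum over the spin of that link factors out a $2$, giving $\sqrt{2}$), and for the non-neighbor case it splits into the same three planarity sub-cases as your (ii), (i), (iii), restricts to the surviving sign $\sigma_r\sigma_s=1$, and then uses precisely your cross-ratio collapse identity, written out for the three orderings of $t$ relative to $r,s$ in~(\ref{eqn::ASY2A2aux2},~\ref{eqn::ASY2A2aux3},~\ref{eqn::ASY2A2aux4}).
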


\begin{proof}
We use the notation~\eqref{eqn::bs_notation}.
We first treat the case where $\{j,j+1\}\in\beta$. 
Write $a_{r}=j$ and $b_{r}=j+1$ for some $r\in\{1, \ldots, N\}$. Then, we 
easily find the desired asymptotics~\eqref{eqn::ASY_LF} from formula~\eqref{eqn::totalpartition_def}:
\begin{align*}
\lim_{x_j, x_{j+1}\to \xi}\frac{\LF_{\beta}(\bs{x})}{|x_{j+1}-x_j|^{-1/8}} 
= & \; \prod_{\substack{1\le s \le N\\ s \neq r}} |x_{b_s}-x_{a_s}|^{-1/8} 
\bigg(\sum_{\bs{\sigma} \in \{\pm 1\}^N}\prod_{\substack{1\le s < t \le N\\ s,t\neq r}} 
\chi(x_{a_s}, x_{a_t}, x_{b_t}, x_{b_s})^{\sigma_s \sigma_t /4}\bigg)^{1/2}\\
= & \; \sqrt{2} \, \LF_{\beta/\{j,j+1\}}(\bs{\ddot{x}}_j) .
\end{align*}

\smallbreak

Next, we treat the more complicated case where $\{j,j+1\}\not\in\beta$.  We consider three cases separately.
\begin{enumerate}[label=(\Alph*):, ref=\Alph*]
\item \label{asy-1}
Suppose there exist $1\le r<s\le N$ such that $a_r<b_r=j<j+1=a_s<b_s$. First, we have 
\begin{align}\label{eqn::ASY2A1}
\lim_{x_j,x_{j+1}\to \xi}\prod_{1\le t \le N} |x_{b_t}-x_{a_t}|^{-1/8} 
= |\xi-x_{a_r}|^{-1/8} \, |x_{b_s}-\xi|^{-1/8} \,
\prod_{\substack{1\le t \le N\\ t \neq r, s}} |x_{b_t}-x_{a_t}|^{-1/8}  ;
\end{align}
and second,  for fixed $\bs{\sigma} = (\sigma_1, \ldots, \sigma_N) \in \{\pm 1\}^N$, we have 
\begin{align*}
\hspace*{-5mm}
\prod_{1\le t < u \le N}\chi(x_{a_t}, x_{a_u}, x_{b_u}, x_{b_t})^{\sigma_t \sigma_u /4} 
\; = \; \bigg| \frac{(x_{j+1}-x_{a_r})(x_{b_s}-x_j)}{(x_{b_s}-x_{a_r})(x_{j+1}-x_j)}\bigg|^{\sigma_r \sigma_s/4} \, 
\prod_{\substack{1\le t < u \le N\\ \{t , u\}\neq\{r,s\}}}\chi(x_{a_t}, x_{a_u}, x_{b_u}, x_{b_t})^{\sigma_t \sigma_u/4} . 
\end{align*} 
After normalizing by $|x_{j+1}-x_j|^{-1/4}$ and letting $x_j, x_{j+1}\to \xi$, only the terms with $\sigma_r \sigma_s=1$ survive. 
Thus, for fixed $\bs{\sigma}\in\{\pm 1\}^N$ with $\sigma_r\sigma_s=1$, we have 
\begin{align}\label{eqn::ASY2A2aux1}
& \; \lim_{x_j, x_{j+1}\to \xi} \frac{1}{|x_{j+1}-x_j|^{-1/4}} 
\prod_{1\le t < u \le N}\chi(x_{a_t}, x_{a_u}, x_{b_u}, x_{b_t})^{\sigma_t \sigma_u / 4} \\
= & \; \prod_{\substack{1\le t < u \le N\\ \{t,u\}\cap\{r,s\}=\emptyset}} \chi(x_{a_t}, x_{a_u}, x_{b_u}, x_{b_t})^{\sigma_t \sigma_u / 4} 
\prod_{1\le t<r}\chi(x_{a_t}, x_{a_r}, \xi, x_{b_t})^{\sigma_t \sigma_r/4} 
\prod_{\substack{1\le t < s\\ t \neq r}}\chi(x_{a_t}, \xi, x_{b_s}, x_{b_t})^{\sigma_t \sigma_s / 4}
\nonumber \\
& \; \times \bigg| \frac{(\xi-x_{a_r})(x_{b_s}-\xi)}{(x_{b_s}-x_{a_r})}\bigg|^{1/4}  \; 
\prod_{\substack{r < u \le N\\ u \neq s}}\chi(x_{a_r}, x_{a_u}, x_{b_u}, \xi)^{\sigma_r\sigma_u/4} 
\prod_{s < u \le N}\chi(\xi, x_{a_u}, x_{b_u}, x_{b_s})^{\sigma_s\sigma_u/4} . 
\nonumber 
\end{align}
Let us consider the terms on the right-hand side of~\eqref{eqn::ASY2A2aux1}. For $1\le t<r$, we have $\sigma_t \sigma_r = \sigma_t \sigma_s$, and 
\begin{align}\label{eqn::ASY2A2aux2}
\chi(x_{a_t}, x_{a_r}, \xi, x_{b_t}) \; \chi(x_{a_t}, \xi, x_{b_s}, x_{b_t}) = \chi(x_{a_t}, x_{a_r}, x_{b_s}, x_{b_t}) ;
\end{align}
while for $s < u \le N$, we have $\sigma_r \sigma_u = \sigma_s \sigma_u$, and 
\begin{align}\label{eqn::ASY2A2aux3}
\chi(x_{a_r}, x_{a_u}, x_{b_u}, \xi) \; \chi(\xi, x_{a_u}, x_{b_u}, x_{b_s}) = \chi(x_{a_u}, x_{a_r}, x_{b_s}, x_{b_u}) ;
\end{align}
while for $r<t<s$, we have $\sigma_t \sigma_s = \sigma_t \sigma_r$, and 
\begin{align}\label{eqn::ASY2A2aux4}
\chi(x_{a_t}, \xi, x_{b_s}, x_{b_t}) \; \chi(x_{a_r}, x_{a_t}, x_{b_t}, \xi) = \chi(x_{a_t}, x_{a_r},  x_{b_s}, x_{b_t}) . 
\end{align}
Thus, after plugging all of~(\ref{eqn::ASY2A2aux2},~\ref{eqn::ASY2A2aux3},~\ref{eqn::ASY2A2aux4})
into~\eqref{eqn::ASY2A2aux1}, for each $\bs{\sigma}\in\{\pm 1\}^N$ with $\sigma_r\sigma_s=1$, we find 
\begin{align}\label{eqn::ASY2A2aux5}
& \; \lim_{x_j, x_{j+1}\to \xi}\frac{1}{|x_{j+1}-x_j|^{-1/4}} 
\prod_{1\le t < u \le N}\chi(x_{a_t}, x_{a_u}, x_{b_u}, x_{b_t})^{\sigma_t \sigma_u / 4} 
\\
= & \; \bigg| \frac{(\xi-x_{a_r})(x_{b_s}-\xi)}{(x_{b_s}-x_{a_r})}\bigg|^{1/4} 
\; \prod_{\substack{1\le t < u \le N\\ \{t,u\}\cap\{r,s\}=\emptyset}} 
\chi(x_{a_t}, x_{a_u}, x_{b_u}, x_{b_t})^{\sigma_t \sigma_u/4}
\; 
\prod_{\substack{1\le t \le N\\ t \neq r, s}}\chi(x_{a_t}, x_{a_r}, x_{b_s}, x_{b_t})^{\sigma_t \sigma_r/4} .
\nonumber 
\end{align}
Finally, by combining~\eqref{eqn::ASY2A1} and~\eqref{eqn::ASY2A2aux5}, we find the desired asymptotics~\eqref{eqn::ASY_LF}:
\begin{align*}
& \; \lim_{x_j, x_{j+1}\to \xi}\frac{\LF_{\beta}(\bs{x})}{|x_{j+1}-x_j|^{-1/8}} \\
= & \; \; |\xi-x_{a_r}|^{-1/8} |x_{b_s}-\xi|^{-1/8}
\; \bigg| \frac{(\xi-x_{a_r})(x_{b_s}-\xi)}{(x_{b_s}-x_{a_r})}\bigg|^{1/8}  \;\prod_{\substack{1\le t \le N\\ t \neq r, s}}|x_{b_t}-x_{a_t}|^{-1/8}
\\
& \; \times 
\bigg(\sum_{\substack{\bs{\sigma}\in\{\pm 1\}^N\\\sigma_r\sigma_s=1}}\prod_{\substack{1\le t < u \le N\\ \{t,u\}\cap\{r,s\}=\emptyset}} \chi(x_{a_t}, x_{a_u}, x_{b_u}, x_{b_t})^{\sigma_t \sigma_u/4}
\; \prod_{\substack{1\le t \le N\\ t \neq r, s}}\chi(x_{a_t}, x_{a_r}, x_{b_s}, x_{b_t})^{\sigma_t \sigma_r/4}\bigg)^{1/2}  \\
= & \; \; |x_{b_s}-x_{a_r}|^{-1/8}
\prod_{\substack{1\le t \le N\\ t \neq r, s}}(x_{b_t}-x_{a_t})^{-1/8} \\
& \; 
\times \bigg(\sum_{\substack{\bs{\sigma}\in\{\pm 1\}^N\\\sigma_r\sigma_s=1}}\prod_{\substack{1\le t < u \le N\\ \{t,u\}\cap\{r,s\}=\emptyset}} \chi(x_{a_t}, x_{a_u}, x_{b_u}, x_{b_t})^{\sigma_t \sigma_u/4}
\; \prod_{\substack{1\le t \le N\\ t \neq r, s}}\chi(x_{a_t}, x_{a_r}, x_{b_s}, x_{b_t})^{\sigma_t \sigma_r/4}\bigg)^{1/2}\\
= & \; \; \LF_{\wp_j(\beta)/\{j,j+1\}}(\bs{\ddot{x}}_j) . 
\end{align*}
This completes the proof of Case~\ref{asy-1}.

\item \label{asy-2}
Suppose there exist $1\le r<s\le N$ such that $a_r=j<j+1=a_s<b_s<b_r$. 
This case can be derived in a similar way as Case~\ref{asy-1}. 

\item \label{asy-3}
Suppose there exist $1\le r<s\le N$ such that $a_r<a_s<b_s=j<j+1=b_r$. 
This case can be derived in a similar way as Case~\ref{asy-1}.
\end{enumerate}
This completes the proof. 
\end{proof}

\section{Interfaces in the FK-Ising model: Proof of Theorem~\ref{thm::FKIsing_Loewner}}
\label{sec::FKIsing_Loewner}
In this section, we consider the FK-Ising model on finite subgraphs of the square lattice $\Z^2$, or rather, of the square lattice $\delta \Z^2$ scaled by $\delta > 0$. We take $\delta \to 0$, which we call the \emph{scaling limit} of the model.
In this article, we only consider the \emph{critical} model, which has the following edge-weight~\cite{Beffara-Duminil-Copin:The_self-dual_point_of_the_two-dimensional_random-cluster_model_is_critical_for_q_bigger_than_one}: 
\begin{align*}
p = p_c(2) := \frac{\sqrt{2}}{1+\sqrt{2}} .
\end{align*} 
We endow the model with various boundary conditions and prove the convergence of multiple interfaces to multiple $\SLE_{16/3}$ curves in the scaling limit (Theorem~\ref{thm::FKIsing_Loewner}, whose proof is completed in Section~\ref{subsec::FKIsing_Loewner}).
In the next Section~\ref{sec::crossingproba}, we prove
the convergence of connection probabilities of the interfaces
(Theorem~\ref{thm::FKIsing_crossingproba}).

\subsection{Preliminaries on random-cluster models}
\label{subsec::rcm_pre}
In this section, we use the notation and terminology specified in Section~\ref{subsec::RCM}.
We also recommend~\cite{Grimmett:Random_cluster_model, Duminil-Copin:PIMS_lectures} for more background and details on the discrete models, and~\cite{DCS:Conformal_invariance_of_lattice_models} for methods addressing the scaling limit.

\paragraph*{Discrete polygons.}
A \emph{discrete (topological) polygon}, 
whose precise definition is given below, 
is a finite simply connected subgraph 
of $\Z^2$, or $\delta \Z^2$, 
with $2N$ marked boundary points in counterclockwise order. 

\begin{enumerate}[leftmargin=*]
\item  
First, we define the \emph{medial polygon}. We give orientation to edges of the medial lattice $(\Z^2)^\diamond$ as follows: edges of each face containing a vertex of $\Z^2$ are oriented clockwise, and edges of each face containing a vertex of $(\Z^2)^{\bullet}$ are oriented counterclockwise. 
Let $x_1^\diamond,\ldots, x_{2N}^\diamond$ be $2N$ distinct medial vertices. Let $(x_1^\diamond \, x_2^\diamond), (x_2^\diamond \, x_3^\diamond), \ldots , (x_{2N}^\diamond  \, x_{1}^\diamond)$ be $2N$ oriented paths on $(\Z^2)^\diamond$ satisfying the following conditions\footnote{Throughout, we use the convention that $x_{2N+1}^\diamond := x_{1}^\diamond$.}: 
\begin{itemize}[leftmargin=1.0em]
\item 
each path $(x_{2r-1}^\diamond \, x_{2r}^\diamond)$ has counterclockwise oriented edges for $1\leq r \leq N$; 

\item  
each path $(x_{2r}^\diamond \, x_{2r+1}^\diamond)$ has clockwise oriented edges for $1\leq r \leq N$; 

\item  
all paths are edge-avoiding and satisfy $(x_{i-1}^\diamond \, x_i^\diamond) \cap (x_i^\diamond \, x_{i+1}^\diamond) = \{x_i^\diamond\}$ for $1\leq i \leq 2N$;

\item  
if $j\notin \{i+1,i-1\}$, then $(x_{i-1}^\diamond \, x_{i}^\diamond) \cap (x_{j-1}^\diamond \, x_j^\diamond) = \emptyset$; 

\item  
the infinite connected component of 
$(\Z^2)^\diamond\setminus \smash{\bigcup_{i=1}^{2N}} (x_i^\diamond \, x_{i+1}^\diamond)$ 
lies to the right of the oriented path~$(x_1^\diamond \, x_2^\diamond)$. 
\end{itemize}
Given $\{(x_i^\diamond \, x_{i+1}^\diamond) \colon 1\leq i\leq 2N\}$, the medial polygon $(\Omega^\diamond; x_1^\diamond,\ldots, x_{2N}^\diamond)$ is defined 
as the subgraph of $(\Z^2)^\diamond$ induced by the vertices lying on or enclosed by the non-oriented loop obtained by concatenating all of $(x_i^\diamond \, x_{i+1}^\diamond)$. 
For each $i \in \{1,2,\ldots,2N\}$, the \emph{outer corner}  $y_{i}^{\diamond}\in (\mathbb{Z}^2)^\diamond\setminus\Omega^\diamond$ is defined to be a medial vertex adjacent to $x_i^\diamond$, and the \emph{outer corner edge} $e_i^\diamond$ is defined to be the medial edge connecting them.

\item  
Second, we define the \emph{primal polygon} 
$(\Omega;x_1,\ldots,x_{2N})$ induced by $(\Omega^\diamond;x_1^\diamond,\ldots,x_{2N}^\diamond)$ as follows: 
\begin{itemize}[leftmargin=1.0em]
\item 
its edge set $E(\Omega)$ comprises edges passing through endpoints of medial edges in 
$E(\Omega^\diamond)\setminus \smash{\bigcup_{r=1}^N} (x_{2r}^\diamond \, x_{2r+1}^\diamond)$; 

\item  
its vertex set $V(\Omega)$ consists of endpoints of edges in $E(\Omega)$; 

\item  
the marked boundary vertex $x_i$ is defined to be the vertex in $\Omega$ nearest to $x_i^\diamond$ for each $1\leq i\leq 2N$; 

\item  
the arc $(x_{2r-1} \, x_{2r})$ is the set of edges whose midpoints are vertices in $(x_{2r-1}^\diamond \, x_{2r}^\diamond)\cap \partial \Omega^\diamond$ for $1\leq r \leq N$.
\end{itemize}

\item  
Third, we define the \emph{dual polygon} $(\Omega^{\bullet};x_1^{\bullet},\ldots,x_{2N}^{\bullet})$ induced by $(\Omega^\diamond; x_1^\diamond,\ldots,x_{2N}^\diamond)$ in a similar way. 
More precisely, $\Omega^{\bullet}$ is the subgraph of $(\Z^2)^{\bullet}$ with 
\begin{itemize}[leftmargin=1.0em]
\item 
edge set consisting of edges passing through endpoints of medial edges in $E(\Omega^\diamond)\setminus \smash{\bigcup_{r=1}^{N}} (x_{2r-1}^\diamond \, x_{2r}^\diamond)$; 
\item and vertex set consisting of the endpoints of these edges. 
\end{itemize}
For each $i \in \{1,2,\ldots,2N\}$, 
the marked boundary vertex $x_i^{\bullet}$ is defined to be the vertex in $\Omega^{\bullet}$ nearest to $x_i^\diamond$; and 
for each $r \in \{1,2,\ldots,N\}$, 
the boundary arc $(x_{2r}^{\bullet} \, x_{2r+1}^{\bullet})$ is defined to be 
the set of edges whose midpoints are vertices in $(x_{2r}^\diamond \, x_{2r+1}^\diamond)\cap \Omega^\diamond$. 
\end{enumerate}

\paragraph*{Boundary conditions.}
In this work, 
we shall focus on the critical FK-Ising model on the primal polygon $(\Omega;x_1,\ldots,x_{2N}) = (\Omega^\delta; x_1^\delta,\ldots,x_{2N}^\delta)$, with the following boundary conditions: 
first, every other boundary arc is wired, 
\begin{align*}
(x_{2r-1}^{\delta} \, x_{2r}^{\delta}) \textnormal{ is wired,} \qquad \textnormal{ for all } r \in\{1,2,\ldots, N\} ,
\end{align*}
and second, these $N$ wired arcs are further wired together 
outside of $\Omega^{\delta}$ according to a planar link pattern $\beta\in\LP_N$ as in~\eqref{eqn::linkpatterns_ordering} 
--- see Figure~\ref{fig::6points} in Section~\ref{sec::intro}.
In this setup, we say that the model has \emph{boundary condition} (b.c.) $\beta$. We denote by $\PP_{\beta}^{\delta}$ the law, and by $\mathbb{E}_{\beta}^{\delta}$ the expectation, of the critical model on $(\Omega^{\delta}; x_{1}^{\delta},\ldots,x_{2N}^{\delta})$ with b.c.~$\beta$, where the cluster-weight has the fixed value $q=2$ in this section.

\paragraph*{Loop representation and interfaces.}
Let $\omega \in \{0,1\}^{E(\Omega^\delta)}$ be a configuration 
with b.c.~$\beta\in \LP_N$ on the primal polygon $(\Omega^\delta; x_1^\delta,\ldots,x_{2N}^\delta)$, as defined in Section~\ref{subsec::RCM}. 
Note that $\omega$ induces a dual configuration $\omega^{\bullet}$ on $\Omega^{\bullet}$ via $\omega^{\bullet}_e = 1 - \omega_e$. An edge $e \in E(\Omega^{\bullet})$ is said to be \emph{dual-open} (resp.~\emph{dual-closed}) if $\omega^{\bullet}_e=1$ (resp.~$\omega^{\bullet}_e=0$).
Given $\omega$, we can draw self-avoiding 
paths on the medial graph $\Omega^{\delta, \diamond}$ between $\omega$ and $\omega^{\bullet}$ as follows: 
a path arriving at a vertex of $\Omega^{\delta,\diamond }$ always makes a turn of $\pm\pi/2$, so as not to cross the open or dual-open edges through this vertex. 
The \emph{loop representation} of $\omega$ contains a number of loops and $N$ pairwise-disjoint and self-avoiding \emph{interfaces} connecting the $2N$ outer corners $y_{1}^{\delta,\diamond}, \ldots,y_{2N}^{\delta,\diamond}$ of  the medial polygon $(\Omega^{\delta,\diamond};x_1^{\delta,\diamond},\ldots,x_{2N}^{\delta,\diamond})$. For each $i\in \{1,2,\ldots,2N\}$, we shall denote by $\eta_i^\delta$ the interface starting from the medial vertex $y_{i}^{\delta,\diamond}$
(and we also refer to it as the interface starting from the boundary point $x_{i}^{\delta,\diamond}$).
See~Figure~\ref{fig::loop_representation} in Section~\ref{sec::intro}. 

\paragraph*{Convergence of polygons.}
To investigate the scaling limit, we use the following notion of convergence of domains~\cite{Pommerenke:Boundary_behaviour_of_conformal_maps}.  
Abusing notation, for a discrete polygon, we will occasionally denote by $\Omega^{\delta}$ also the open simply connected subset of $\C$ defined as the interior of the set $\overline{\Omega}^{\delta}$ comprising all vertices, edges, and faces of the polygon $\Omega^{\delta}$. 

\smallbreak

Let $\{\Omega^{\delta}\}_{\delta>0}$ and $\Omega$ be simply connected open sets $\Omega^{\delta}, \Omega\subsetneq\C$, all containing a common point $u$. 
We say that $\Omega^{\delta}$ converges to $\Omega$ in the sense of \emph{kernel convergence with respect to} $u$, and 
denote $\Omega^{\delta}\to\Omega$, if 
\begin{enumerate} 
\item
every $z\in\Omega$ has some neighborhood $U_z$ such that $U_z\subset\Omega^{\delta}$, for all small enough $\delta > 0$; and 

\item 
for every boundary point $p\in\partial\Omega$, there exists a sequence $p^{\delta}\in\partial\Omega^{\delta}$ such that $p^{\delta}\to p$ as $\delta\to 0$. 
\end{enumerate}
If $\Omega^{\delta}\to\Omega$ in the sense of kernel convergence with respect to $u$, then the same convergence holds with respect to any $\tilde{u}\in\Omega$. 
We say that $\Omega^{\delta}\to\Omega$ in the Carath\'{e}odory sense as $\delta\to 0$.  
By~\cite[Theorem~1.8]{Pommerenke:Boundary_behaviour_of_conformal_maps}, $\Omega^{\delta}\to \Omega$ in the Carath\'{e}odory sense if and only if there exist conformal maps $\varphi_{\delta}$ from $\Omega^{\delta}$ onto the unit disc 
$\U := \{ z \in \C \colon |z| < 1 \}$, and a conformal map $\varphi$ from $\Omega$ onto $\U$, such that $\varphi_{\delta}^{-1}\to\varphi^{-1}$ locally uniformly on $\U$ as $\delta\to 0$, see~\cite[Theorem~1.8]{Pommerenke:Boundary_behaviour_of_conformal_maps}.

For polygons, we say that a sequence of discrete polygons $(\Omega^{\delta}; x_1^{\delta}, \ldots, x_{2N}^{\delta})$
converges as $\delta \to 0$ to a polygon $(\Omega; x_1, \ldots, x_{2N})$ in the \emph{Carath\'{e}odory sense}
if there exist conformal maps $\varphi_{\delta}$ from $\Omega^{\delta}$ onto $\U$,
and a conformal map $\varphi$ from $\Omega$ onto $\U$,
such that $\varphi_{\delta}^{-1} \to \varphi^{-1}$ locally uniformly on $\U$,
and $\varphi_{\delta}(x_j^{\delta}) \to \varphi(x_j)$ for all $1\le j\le 2N$. 
Note that Carath\'{e}odory convergence allows wild behavior of the boundaries around the marked points. 
In order to ensure precompactness of the interfaces in Theorem~\ref{thm::FKIsing_Loewner}, we need a convergence of polygons stronger than the above Carath\'{e}odory convergence. 
The following notion was introduced by Karrila, see in particular~\cite[Theorem~4.2]{Karrila:Limits_of_conformal_images_and_conformal_images_of_limits_for_planar_random_curves}. (See also~\cite{Karrila:Multiple_SLE_local_to_global} and~\cite{Chelkak-Wan:On_the_convergence_of_massive_loop-erased_random_walks_to_massive_SLE2_curves}.) 

\begin{definition} \label{def:closeCara}
We say that a sequence of discrete polygons $(\Omega^{\delta}; x_1^{\delta}, \ldots, x_{2N}^{\delta})$  
converges as $\delta \to 0$ to a polygon $(\Omega; x_1, \ldots, x_{2N})$ in the \emph{close-Carath\'{e}odory sense} if it converges in the Carath\'{e}odory sense and in addition, for all $1\le j\le 2N$, we have $x_j^{\delta}\to x_j$ as $\delta\to 0$ and the following is fulfilled. 
Given a reference point $u\in\Omega$ and 
$r>0$ small enough, let $S_r$ be the arc of $\partial B(x_j,r)\cap\Omega$ disconnecting \textnormal{(}in $\Omega$\textnormal{)} $x_j$ from $u$ and from all other arcs of this set. We require that, for each $r$ small enough and for all sufficiently small $\delta$ \textnormal{(}depending on $r$\textnormal{)}, the boundary point $x_j^{\delta}$ is connected to the midpoint of $S_r$ inside $\Omega^{\delta}\cap B(x_j,r)$. 
\end{definition}

In this setup, the FK-Ising interfaces, and more generally, the random-cluster interfaces for any parameter $q\in [1,4)$, always have a convergent subsequence in the curve space with metric~\eqref{eq::curve_metric}. 

\begin{lemma}\label{lem::FKIsing_tightness}
Assume the same setup as in 
Conjecture~\ref{conj::rcm_Loewner}.
Fix $i\in\{1,2, \ldots, 2N\}$. The family of laws of $\{\eta_i^{\delta}\}_{\delta>0}$ is  
precompact in the space of curves with metric~\eqref{eq::curve_metric}.
Furthermore, any subsequential limit $\eta_i$ does not hit any other point in $\{x_1, x_2, \ldots, x_{2N}\}$ than its two endpoints, almost surely.
\end{lemma}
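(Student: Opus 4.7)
The plan is to invoke the Kemppainen--Smirnov tightness framework~\cite{Kemppainen-Smirnov:Random_curves_scaling_limits_and_Loewner_evolutions}, which reduces precompactness of the family of interfaces in the metric~\eqref{eq::curve_metric} to verifying a geometric unforced-crossing condition: uniformly in $\delta>0$ and in stopping times $\tau$, the probability that $\eta_i^{\delta}$ makes an unforced crossing of a conformal annulus of large modulus separating its tip at time $\tau$ from its target should decay geometrically in the modulus. The required input is Russo--Seymour--Welsh type estimates for the critical random-cluster model with $q\in[1,4)$, available from~\cite{CDCH:Crossing_probabilities_in_topological_rectangles_for_critical_planar_FK_Ising_model, DCST:Continuity_of_phase_transition_for_planar_random-cluster_and_Potts_models, DCHN:Connection_probabilities_and_RSW_type_bounds}. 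Combined with the domain Markov property and a standard exploration argument, these estimates furnish the geometric decay, so Prokhorov's theorem then yields relative compactness.

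The second step is to promote the bulk estimates to uniform control near the marked boundary points. This is where the close-Carath\'eodory hypothesis (Definition~\ref{def:closeCara}) enters: it guarantees that the boundary of $\Omega^{\delta,\diamond}$ near each $x_i^{\delta,\diamond}$ is well-approximated, so that half-plane versions of the RSW bounds apply in each small semidisk around $x_i$. Following the strategy of Karrila~\cite{Karrila:Limits_of_conformal_images_and_conformal_images_of_limits_for_planar_random_curves}, this upgrades the tightness to hold uniformly up to the starting point and identifies any subsequential limit as a continuous curve in $\overline{\Omega}$ that starts at $x_i$ and ends at another marked boundary point.

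For the non-hitting claim, fix $j\ne i$ such that $x_j$ is not the other endpoint of $\eta_i$, and consider for small $r>0$ the event that $\eta_i^{\delta}$ enters the ball $B(x_j,r)$. On this event, the interface must traverse each of the $O(\log(r/\delta))$ dyadic semi-annuli around $x_j$, and at each scale the appropriate RSW bound (applied to the primal or dual model, according to the b.c.~on the arcs adjacent to $x_j$) says that this happens only with a probability bounded away from one. Multiplying along the scales and letting first $\delta\to 0$ along the convergent subsequence and then $r\to 0$ shows that the limiting curve $\eta_i$ almost surely avoids $x_j$. The main technical obstacle is that the boundary condition induced by the link pattern $\beta$ around each $x_j$ is not a uniform wired/free prescription but a mixture dictated by $\beta$; consequently one must carefully combine primal and dual RSW estimates and exploit monotonicity in the b.c.~(for $q\in[1,4)$) to dominate the relevant crossing events by ones that can be handled by the already cited RSW tools. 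This step is by now routine but must be spelled out carefully to cover all possibilities of $\beta$.
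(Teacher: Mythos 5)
Your proposal is correct and follows essentially the same route as the paper, which simply cites the standard RSW-based tightness machinery (Kemppainen--Smirnov for the unforced-crossing condition, Karrila for the close-Carath\'eodory aspects, Izyurov for $q=2$, and DCST/DCMT for general $q\in[1,4)$) rather than writing out the argument. Your sketch is a faithful expansion of what those references establish, including the scale-by-scale semi-annulus argument for avoiding marked points and the need to handle the mixed boundary conditions induced by~$\beta$.
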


\begin{proof}
The proof is standard nowadays. 
For instance, the case where $q=2$ is treated in~\cite[Lemmas~4.1 and~5.4]{Izyurov:On_multiple_SLE_for_the_FK_Ising_model}. 
The main tools are the so-called RSW bounds from~\cite{DCHN:Connection_probabilities_and_RSW_type_bounds, Kemppainen-Smirnov:Random_curves_scaling_limits_and_Loewner_evolutions} 
--- see also~\cite{Karrila:Limits_of_conformal_images_and_conformal_images_of_limits_for_planar_random_curves, Karrila:Multiple_SLE_local_to_global}. 
The case of general $q\in [1,4)$ follows from~\cite[Theorem~6]{DCST:Continuity_of_phase_transition_for_planar_random-cluster_and_Potts_models} and~\cite[Section~1.4]{DCMT:Planar_random-cluster_model_fractal_properties_of_the_critical_phase}. 
\end{proof}

In the rest of this section, we fix $q=2$ and thus focus on the critical FK-Ising model. 
\subsection{Exploration process and holomorphic spinor observable}
\label{subsec::holo_observable}
Fix $N\geq 1$
and a boundary condition $\beta\in \LP_N$ for the FK-Ising model as in~\eqref{eqn::linkpatterns_ordering}. By planarity, the pair of $1 = a_1$ in $\beta$ is some even index $2\ell = b_1$, that is, we have
$\beta=\{\{1,2\ell\}, \{a_2,b_2\},\ldots,\{a_{N},b_N\}\}$ with
\begin{align} \label{eq::pair_of_1}
\{1,2\ell\} \in \beta \qquad \textnormal{for some} \qquad \ell=\ell(\beta) \in \{1, 2, \ldots, N\} .
\end{align}

Consider a configuration $\omega$ of the critical FK-Ising model on the primal polygon $(\Omega^\delta;x_1^\delta,\ldots,x_{2N}^\delta)$ with b.c.~$\beta$. 
Its loop representation contains 
$N$ interfaces $\smash{\eta_{2r-1}^\delta}$ starting from $\smash{y_{2r-1}^{\delta,\diamond}}$, with $1 \leq r \leq N$, terminating among the medial vertices $\smash{\{y_{2r}^{\delta,\diamond} \colon 1 \leq r \leq N\}}$. 
Inspired by~\cite{LPW:UST_in_topological_polygons_partition_functions_for_SLE8_and_correlations_in_logCFT} (see also \cite[Figure~2]{Izyurov:Smirnovs_observable_for_free_boundary_conditions_interfaces_and_crossing_probabilities}), 
we define an exploration path $\smash{\xi_{\beta}^{\delta}}$ starting from the outer corner $\smash{y_{1}^{\delta,\diamond}}$ and terminating at the outer corner $\smash{y_{2\ell}^{\delta,\diamond}}$ via the following procedure (see Figure~\ref{fig::6points_curve}).  
The idea is that $\smash{\xi_{\beta}^{\delta}}$ traces a loop in the meander formed by the b.c.~$\beta$ and the random internal connectivity $\conn^{\delta}$ of the interfaces in the loop representation of $\omega$.

\begin{definition} \label{def: exploration path}
The following rules uniquely determine 
$\xi_{\beta}^{\delta}$, called the \emph{exploration path} associated to the configuration $\omega$ with b.c.~$\beta$. 
\begin{enumerate}
\item \label{item::explore-a}
$\xi_{\beta}^{\delta}$ starts from $y_{1}^{\delta,\diamond}$ and follows $\eta_1^\delta$ until it reaches some point in $ \{y_{2r}^{\delta,\diamond} \colon 1 \leq r \leq N\}$.
	
\item \label{item::explore-b}
When $\xi_{\beta}^{\delta}$ arrives at some point in $\{y_{2r}^{\delta,\diamond} \colon 1 \leq r \leq N\}$, it follows the contour given by $\beta$ outside of $\Omega^\delta$ until it reaches some point in $\{y_{2r-1}^{\delta,\diamond} \colon 1 \leq r \leq N\}$. 

\item \label{item::explore-c}
When $\xi_{\beta}^{\delta}$ arrives at some point in $\{y_{2r-1}^{\delta,\diamond} \colon 1 \leq r \leq N\}$, it follows the corresponding interface until it reaches some point in $\{y_{2r}^{\delta,\diamond} \colon 1 \leq r \leq N\}$.
	
\item \label{item::explore-d}
After repeating the steps~\ref{item::explore-b}--\ref{item::explore-c} 
sufficiently many times, $\xi_{\beta}^{\delta}$ arrives at $y_{2\ell}^{\delta,\diamond}$ and it then stops.
\end{enumerate}
\end{definition}

The path $\smash{\xi_{\beta}^{\delta}}$ also gives information about the connectivity of the interfaces, see~\eqref{eqn::const_jump} in Lemma~\ref{lem::discrete_H}. 
Note, however, that if the meander associated to $\beta$ and $\conn^{\delta}$ has more than one loop, then the exploration path $\smash{\xi_{\beta}^{\delta}}$ does not fully reveal $\conn^{\delta}$, and further exploration would be needed.

\begin{figure}[ht!]
\begin{subfigure}[b]{0.3\textwidth}
\begin{center}
\includegraphics[width=0.47\textwidth]{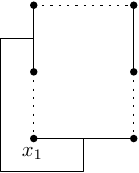}
\end{center}
\caption{A boundary condition $\beta$.}
\end{subfigure}
\begin{subfigure}[b]{0.675\textwidth}
\begin{center}
\includegraphics[width=0.21\textwidth]{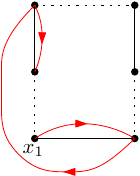}$\quad$
\includegraphics[width=0.21\textwidth]{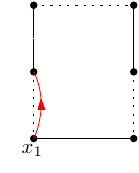}$\quad$
\includegraphics[width=0.21\textwidth]{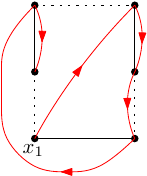}$\quad$
\includegraphics[width=0.21\textwidth]{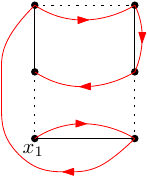}
\end{center}
\caption{There are four possibilities for the exploration path from $x_1$ to $x_6$.}
\end{subfigure}
\caption{\label{fig::6points_curve} 
Consider discrete polygons with six marked points on the boundary. 
One possible boundary condition $\beta=\{\{1,6\},\{2,5\},\{3,4\}\}$ is depicted in (a). 
The corresponding exploration path from $x_1$ to $x_6$ is depicted in (b).
Note that the second possibility in (b) does not fully reveal the internal connectivity pattern of the interfaces. 
}
\end{figure}

\smallbreak

Recall that for each medial edge, we have defined its orientation. 
For each medial edge $e^\diamond$, we also associate a direction $\nu(e^\diamond)$ as follows: 
we view the oriented edge $e^\diamond$ as a complex number and define 
\begin{align*}
\nu(e^{\diamond}) := \Big(\frac{e^\diamond}{|e^\diamond|}\Big)^{-1/2} .
\end{align*}
Note that $\nu(e^\diamond)$ is defined up to sign, which we will specify when necessary.

\begin{definition} \label{def::observables}
For the critical FK-Ising model on the primal polygon $(\Omega^\delta;x_1^\delta,\ldots,x_{2N}^\delta)$ with b.c.~$\beta$, we define the following discrete observables, inspired by~\textnormal{\cite[Section~4]{Kemppainen-Smirnov:Conformal_invariance_of_boundary_touching_loops_of_FK_Ising_model}.} 
\textnormal{(}We use the notation~\eqref{eq::pair_of_1}.\textnormal{)}
\begin{itemize}[leftmargin=2em]
	\item We define the \emph{edge observable} on edges and outer corner edges $e$ of $\smash{\Omega^{\delta,\diamond}}$ as 
\begin{align*}
F^{\delta}_{\beta}(e) :=  
\nu(e_{2\ell}^{\delta,\diamond}) 
\; \E^{\delta}_{\beta} \Big[ \one \{e\in\xi_{\beta}^{\delta}\} 
\exp \big(-\tfrac{\ii}{2} W_{\xi_{\beta}^{\delta}} \big( e_{2\ell}^{\delta,\diamond} ,e \big) \big) \Big] ,
\end{align*}
where 
\begin{itemize}
\item 
$\smash{\xi_{\beta}^{\delta}}$ is the exploration path from Definition~\ref{def: exploration path}; 

\item 
$\smash{e_{2\ell}^{\delta,\diamond}}$ is the oriented outer corner edge connecting to $\smash{y_{2\ell}^{\delta,\diamond}}$ 
 \textnormal{(}oriented to have $\smash{y_{2\ell}^{\delta,\diamond}}$ as its end vertex\textnormal{)};
 
\item 
$\smash{W_{\xi_{\beta}^\delta}(e_{2\ell}^{\delta,\diamond}, e)} \in \R$ is the winding number from $\smash{y_{2\ell}^{\delta,\diamond}}$ to $e$ 
along the reversal of $\smash{\xi_{\beta}^{\delta}}$; and 

\item the value of $\smash{\nu(e_{2\ell}^{\delta,\diamond})}$ will be specified in Proposition~\ref{prop::observable_cvg} and its proof.
\end{itemize}

Note that $\smash{F^{\delta}_{\beta}}$ is only defined up to sign \textnormal{(}hence, it is a so-called ``spinor'' observable\textnormal{)}.

	\item 
	We define the \emph{vertex observable} on interior vertices $z^{\diamond}$ of $\Omega^{\delta,\diamond}$ as
	\begin{align*}
F^{\delta}_{\beta}(z^{\diamond}) := \frac{1}{2}\sum_{e^{\diamond} \sim z^{\diamond}} F^{\delta}_{\beta}(e^{\diamond}) ,	
	\end{align*}
where the sum is over the four medial edges $e^{\diamond} \sim z^{\diamond}$ having $z^{\diamond}$ as an endpoint. 

	\item 
	We define the \emph{vertex observable} on vertices $z^{\diamond} \in \partial\Omega^{\delta,\diamond} \setminus \{x_1^{\delta,\diamond},x_2^{\delta,\diamond},\ldots,x_{2N}^{\delta,\diamond}\}$ as follows. 
Suppose that $z^{\diamond} \in (x_{i}^{\delta,\diamond} \, x_{i+1}^{\delta,\diamond})$ and let $e_-^{\diamond}, e_+^{\diamond} \in (x_{i}^{\delta,\diamond} \, x_{i+1}^{\delta,\diamond})$ be the oriented medial edges having $z^{\diamond}$ as their end vertex and beginning vertex, respectively. Set 
\begin{align} \label{eqn::boundary_verte_obser}
F_{\beta}^{\delta}(z^{\diamond}) :=
\begin{cases}
\sqrt{2} \exp(-\ii\frac{\pi}{4})F_{\beta}^{\delta}(e_+^{\diamond}) + \sqrt{2} \exp(\ii\frac{\pi}{4})F_{\beta}^{\delta}(e_-^{\diamond}), & \textnormal{if $i$ is odd} , \\[.5em]
\sqrt{2} \exp(-\ii\frac{\pi}{4})F_{\beta}^{\delta}(e_-^{\diamond}) + \sqrt{2} \exp(\ii\frac{\pi}{4})F_{\beta}^{\delta}(e_+^{\diamond}), & \textnormal{if $i$ is even} .
\end{cases}
\end{align} 
\end{itemize}
\end{definition}

A key result of this section is the convergence of the observable $\smash{F_{\beta}^{\delta}}$ as $\delta \to 0$ (Propositions~\ref{prop::observable_cvg} and~\ref{prop::holo_limiting}, which are slight generalizations of~\cite[Theorem~2.6]{Izyurov:Smirnovs_observable_for_free_boundary_conditions_interfaces_and_crossing_probabilities},
{see also~\cite[Theorem~4.3]{Chelkak-Smirnov:Universality_in_2D_Ising_and_conformal_invariance_of_fermionic_observables}}). 
We later relate the limit of $\smash{F_{\beta}^{\delta}}$ to the partition function $\LF_\beta$ in Proposition~\ref{prop::totalpartition_observable} in Section~\ref{subsec::holo_limiting} (which generalizes~\cite[Proposition~3.5]{Izyurov:On_multiple_SLE_for_the_FK_Ising_model}, cf.~\cite{CHI:Conformal_invariance_of_spin_correlations_in_planar_Ising_model}). 
Note that, as a function on $\Omega$, the scaling limit $\phi_{\beta}$ of $\smash{F_{\beta}^{\delta}}$ is a priori only determined up to a sign, while it is a holomorphic function on a double-cover $\Sigma_{x_1,\ldots,x_{2N}}$ of $(\Omega; x_1,\ldots,x_{2N})$. 
Usually, we shall not be concerned with the choice of branch (i.e., sign) for this ``spinor'' observable $\phi_{\beta}$.

\begin{proposition} \label{prop::observable_cvg}
Fix a polygon $(\Omega; x_1, \ldots, x_{2N})$. 
If a sequence $(\Omega^{\delta, \diamond}; x_1^{\delta, \diamond}, \ldots, x_{2N}^{\delta, \diamond})$ 
of medial polygons converges to $(\Omega; x_1, \ldots, x_{2N})$ in the Carath\'{e}odory sense, 
then the scaled vertex observables converge as 
\begin{align*}
2^{-1/4}\delta^{-1/2} F^{\delta}_{\beta} (\cdot) 
\qquad \overset{\delta\to 0}{\longrightarrow} \qquad
\phi_{\beta}(\cdot \, ; \Omega; x_1, \ldots, x_{2N}) 
\qquad \textnormal{locally uniformly},
\end{align*}
where both sides are determined up to a common sign, 
$\phi_{\beta}$ is a holomorphic function on the Riemann surface $\Sigma_{x_1,\ldots,x_{2N}}$ as detailed in Proposition~\ref{prop::holo_limiting} 
and Remark~\ref{rem::holo_limiting_polygon},  
and where the vertex observable $F_\beta^\delta$ is extended continuously to the planar domain corresponding to $\Omega^{\delta,\diamond}$ via linear interpolation.
\end{proposition}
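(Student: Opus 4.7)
The plan is to adapt the standard analysis of Smirnov's FK-Ising fermionic observable, as carried out in~\cite{Chelkak-Smirnov:Universality_in_2D_Ising_and_conformal_invariance_of_fermionic_observables, Izyurov:Smirnovs_observable_for_free_boundary_conditions_interfaces_and_crossing_probabilities, Izyurov:On_multiple_SLE_for_the_FK_Ising_model}, to the present setup of general exterior connectivity $\beta$. The four ingredients are: (i) s-holomorphicity of $F_\beta^\delta$ at interior medial vertices; (ii) explicit Riemann--Hilbert boundary conditions on $\partial\Omega^{\delta,\diamond}$ determined by the orientation of each boundary arc; (iii) uniform a priori bounds on compact subsets of $\Omega\setminus\{x_1,\ldots,x_{2N}\}$, yielding precompactness of $2^{-1/4}\delta^{-1/2}F_\beta^\delta$; and (iv) uniqueness of the continuum boundary value problem characterizing the limit. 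Together, (i)--(iii) produce subsequential limits that are spinor-holomorphic on a double cover of $\Omega$ ramified at the marked points, and (iv) identifies any such limit with the function $\phi_\beta$ of Proposition~\ref{prop::holo_limiting}, thereby upgrading subsequential convergence to full convergence. The sign ambiguity in $F_\beta^\delta$ (inherited from the spinor nature of the observable) carries through the argument, consistently with the statement of the proposition.

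For (i), the edge observable is s-holomorphic at interior medial vertices via a classical weight-preserving involution on FK-Ising configurations that flips the status of the single primal/dual edge through the vertex; the change of FK weight is exactly compensated by the $\pm \pi/2$ change in the winding of $\xi_\beta^\delta$, and this combinatorial step is insensitive to the external wiring encoded by $\beta$. For (ii), each wired arc $(x_{2r-1}^{\delta,\diamond}\, x_{2r}^{\delta,\diamond})$ and each non-wired arc $(x_{2r}^{\delta,\diamond}\, x_{2r+1}^{\delta,\diamond})$ forces $\xi_\beta^\delta$ to traverse certain boundary edges with a determined local winding, which via~\eqref{eqn::boundary_verte_obser} places $F_\beta^\delta(z^\diamond)$ on an explicit real line along each arc and produces antisymmetric branching at each $x_i^{\delta,\diamond}$. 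For (iii), s-holomorphicity allows one to define the discrete primitive $H_\beta^\delta := \Im \int^{\cdot}(F_\beta^\delta)^2\,dz$, which is discretely subharmonic on one sublattice and superharmonic on the other, and whose boundary values are controlled by the Riemann--Hilbert data of (ii). Combining this with FK-Ising RSW bounds from~\cite{DCHN:Connection_probabilities_and_RSW_type_bounds, Chelkak-Smirnov:Universality_in_2D_Ising_and_conformal_invariance_of_fermionic_observables} on the probability that $\xi_\beta^\delta$ visits a given edge at distance bounded away from the marked points gives $|F_\beta^\delta| = O(\delta^{1/2})$ and justifies the normalization $2^{-1/4}\delta^{-1/2}$; a discrete maximum principle then produces uniform bounds on compact subsets of $\Omega\setminus\{x_1,\ldots,x_{2N}\}$ and precompactness in the locally-uniform topology.

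Any subsequential limit is then holomorphic on the double cover $\Sigma_{x_1,\ldots,x_{2N}}$, branches of order $(z-x_i)^{-1/2}$ at each marked point, satisfies a continuum Riemann--Hilbert condition of the form $\Im(\phi_\beta(z)^2 \tau(z)) = 0$ along $\partial\Omega$ with explicit tangent-type factor $\tau$, and has a prescribed leading coefficient at $x_{2\ell}$ fixed by the choice of $\nu(e_{2\ell}^{\delta,\diamond})$; the close-Carathéodory convergence of polygons is used only to transport the boundary data across the limit. Uniqueness of this boundary value problem is standard: the squared difference of two solutions is a single-valued holomorphic function on $\Omega$ with vanishing imaginary part on $\partial\Omega$ and only removable singularities at the marked points, hence vanishes identically. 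The main obstacle is step (iii) under general $\beta$: one must verify that the external wiring does not introduce nontrivial discrete monodromy in $H_\beta^\delta$ around the marked points, and that the boundary values of $H_\beta^\delta$ remain comparable across adjacent arcs so that the maximum principle yields a uniform bound. This requires a careful case analysis over the $2N$ arcs, but once it is in place, the rest of the argument follows the template of~\cite{Chelkak-Smirnov:Universality_in_2D_Ising_and_conformal_invariance_of_fermionic_observables, Izyurov:Smirnovs_observable_for_free_boundary_conditions_interfaces_and_crossing_probabilities}.
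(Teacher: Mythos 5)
Your overall template is correct and matches the paper's: establish s-holomorphicity of $F_\beta^\delta$ (Lemma~\ref{lem::s_holomo}), derive Riemann--Hilbert type boundary behavior (Lemma~\ref{lem::discrete_obser}), introduce the discrete primitive $H_\beta^\delta$ with its sub/superharmonicity and constant boundary values on arcs (Lemma~\ref{lem::discrete_H}), get precompactness of $2^{-1/4}\delta^{-1/2}F_\beta^\delta$ on compacts (Lemma~\ref{lem::sub_limit_obser}), and identify subsequential limits with $\phi_\beta$. However, there is a genuine gap in your step (iv): the claim that ``uniqueness of this boundary value problem is standard'' and follows from considering ``the squared difference of two solutions'' does not work, and in fact skips the hardest part of the whole argument.

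The issue is that the Riemann--Hilbert data alone (boundedness and holomorphicity on the double cover, the Neumann-type sign of $\partial_{\mathrm n}H$ on each arc, the residue normalization at $x_1$) does \emph{not} determine the limit $f_\HH$ uniquely. Mapping to $\HH$, the subsequential limit satisfies $f_\HH(z)^2 = -P(z)^2/\prod_{i}(z-\realpt_i)$ with $P$ a degree-$\leq N-1$ real polynomial, and after fixing the residue at $\realpt_1$ one still has to determine, for each $r\geq 2$, the sign $\varepsilon_r\in\{\pm 1\}$ in the branching relation $\lim_{z\to\realpt_{a_r}}\sqrt{z-\realpt_{a_r}}\sqrt{z-\realpt_{b_r}}\,f_\HH(z) = \varepsilon_r\lim_{z\to\realpt_{b_r}}\sqrt{z-\realpt_{a_r}}\sqrt{z-\realpt_{b_r}}\,f_\HH(z)$. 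Both sign choices are consistent with the RH data; only $\varepsilon_r=-1$ gives $\phi_\beta$ of Proposition~\ref{prop::holo_limiting}. Your ``squared difference'' argument fails because $(\phi_1-\phi_2)^2$ has genuine simple poles at the marked points whenever the residues of $\phi_1$ and $\phi_2$ differ in sign, which is exactly the ambiguity at stake. The paper resolves this by a discrete comparison: it compares $F_\beta^\delta$ on the outer corner edges $e_{a_r}^{\delta,\diamond}, e_{b_r}^{\delta,\diamond}$ with the single-curve (Dobrushin) observable $F_{\vcenter{\hbox{\includegraphics[scale=0.2]{figures/link-0.pdf}}}}^\delta$, using the fact that the exploration path $\xi_\beta^\delta$ and the exterior contour joining $a_r$ to $b_r$ form a closed loop so that the two windings $W_1+W_2=2\pi$; this shows $F_{\vcenter{\hbox{\includegraphics[scale=0.2]{figures/link-0.pdf}}}}^\delta$ and $F_\beta^\delta$ have the \emph{same} sign at one corner and the \emph{opposite} sign at the other, which forces $\varepsilon_r=-1$ in the limit by a jump-size contradiction. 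This argument, plus the invertibility of the matrix $R_\beta$ from Lemma~\ref{lem::R_beta_invertible}, is what delivers uniqueness; nothing in your sketch supplies it.

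Two smaller inaccuracies: Proposition~\ref{prop::observable_cvg} only needs Carath\'eodory convergence (close-Carath\'eodory is needed for tightness of the interfaces in Lemma~\ref{lem::FKIsing_tightness}, not here); and the precompactness of the scaled observables comes from the s-holomorphic compactness theorem of~\cite{Chelkak-Smirnov:Universality_in_2D_Ising_and_conformal_invariance_of_fermionic_observables} applied to the bounded primitive $H_\beta^\delta$, not from FK-Ising RSW bounds, which you invoke but which are not the tool used here.
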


For later use, we define a function (sometimes called ``spinor'' in the literature, e.g.,~\cite{Chelkak-Smirnov:Universality_in_2D_Ising_and_conformal_invariance_of_fermionic_observables, CHI:Conformal_invariance_of_spin_correlations_in_planar_Ising_model})
\begin{align} \label{eq::sqrt_branches}
z \quad \longmapsto \quad \prod_{j=1}^{2N} \frac{1}{\sqrt{z-x_j}} =: S_{x_1, \ldots, x_{2N}}(z) 
= S_{\bs{x}}(z)  ,
\end{align}
that is holomorphic and single-valued on a 
Riemann surface $\Sigma_{\bs{x}} = \Sigma_{x_1, \ldots, x_{2N}}$ 
which is a two-sheeted branched covering of the Riemann sphere $\hat{\C} = \C \cup \{\infty\}$ ramified at the points $x_1, \ldots, x_{2N}$. 
To determine the value of $S_{\bs{x}}(z) = S_{x_1, \ldots, x_{2N}}(z)$ at $z \in \hat{\C} \setminus \{x_1, \ldots, x_{2N}\}$ one has to choose a branch for it. 
We consider $S_{\bs{x}}$ as a holomorphic function on $\Sigma_{\bs{x}}$ formed by gluing two copies of the Riemann sphere together along $N$ fixed branch cuts 
that are simple non-crossing paths on the complement of $\Omega$ joining pairs of the points $x_1, \ldots, x_{2N}$ 
(for example, we could pick the branch cuts according to $\beta$). 
Locally around each ramification point $x_i$, we may consider the square root $z \mapsto \sqrt{z-x_i}$ as a holomorphic and single-valued function on the local chart of $\Sigma_{\bs{x}}$ at $x_i$ 
(with the two sheets locally identified with those of $\Sigma_{\bs{x}}$ so that $\sqrt{z-x_i}$ and $S_{\bs{x}}$ have the same sign). 
The properties~(\ref{eqn::holo_restriction1},~\ref{eqn::holo_restriction2}) stated in Proposition~\ref{prop::holo_limiting} are thus well-defined.

\begin{proposition} \label{prop::holo_limiting}
Let $\Omega = \HH$ and fix $\bs{x}=(x_1, \ldots, x_{2N})\in\chamber_{2N}$. 
There exists a unique polynomial $P_{\beta}$ of degree at most $N-1$ and  with real coefficients such that 
the holomorphic function 
\begin{align} \label{eqn::holo_limiting}
\phi_{\beta}(z) := \frac{\ii \, P_{\beta}(z)}{\prod_{j=1}^{2N} \sqrt{z-x_j}}
= \ii \, P_{\beta}(z) \, S_{\bs{x}}(z)
\end{align}
on the Riemann surface $\Sigma_{\bs{x}}$ 
satisfies the following $N$ properties: 
\begin{align}
\lim_{ z\to x_1} \sqrt{\pi} \, \sqrt{z-x_1} \, \phi_{\beta}(z) = \; & 1,\label{eqn::holo_restriction1} \\
\lim_{z\to x_{a_r}} \sqrt{z-x_{a_r}} \, \sqrt{z-x_{b_r}} \, \phi_{\beta}(z) = \; & - \lim_{z\to x_{b_r}} \sqrt{z-x_{a_r}} \, \sqrt{z-x_{b_r}} \, \phi_{\beta}(z), \qquad\textnormal{for all } \; 
r \in \{2,3,\ldots,N\} \label{eqn::holo_restriction2}
\end{align}
\end{proposition}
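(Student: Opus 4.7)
My plan is to reduce Proposition~\ref{prop::holo_limiting} to a square linear-algebra problem on the $N$-dimensional space of real polynomials of degree at most $N-1$, and then to prove non-singularity of the resulting system. For any such $P$, the formula $\phi(z) = \ii P(z) S_{\bs{x}}(z)$ automatically defines a meromorphic function on the hyperelliptic Riemann surface $\Sigma_{\bs{x}}$ (of genus $N-1$) with local expansion $\phi \sim c_i/t$ at each ramification point $x_i$, where $t = \sqrt{z-x_i}$ is the local coordinate and $c_i = \ii P(x_i)/\prod_{j\neq i}\sqrt{x_i - x_j}$. A short Riemann--Roch count on $\Sigma_{\bs{x}}$ confirms that every meromorphic function that is odd under the hyperelliptic involution, has at most simple poles at $x_1,\dots,x_{2N}$, and has at least simple zeros at the two points above infinity is of this form, so the ansatz space is genuinely $N$-dimensional.

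Next, I would fix branch conventions for the local square roots $\sqrt{z-x_i}$. With these the condition~\eqref{eqn::holo_restriction1} becomes $c_1 = 1/\sqrt{\pi}$ and condition~\eqref{eqn::holo_restriction2} for each $r = 2,\dots,N$ becomes $c_{b_r} + \ii\, c_{a_r} = 0$; unpacking the $c_i$ in terms of $P(x_i)$ and the positive real quantities $\tilde B_i := \prod_{j \neq i}\sqrt{|x_i - x_j|}$, and using that $b_r - a_r$ is odd in a planar link pattern, these become $N$ real linear conditions on the $N$ real coefficients of $P_\beta$, i.e., a square $N \times N$ real system. To show non-singularity I would argue that its homogeneous kernel is trivial. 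If $P$ lies in the kernel, then $\phi^2 = -P^2/\prod_j(z-x_j)$ is a rational function on $\hat{\C}$ whose residues at each paired position $\{x_{a_r},x_{b_r}\}$ with $r\ge 2$ cancel by the squared pairing, whose residue at infinity vanishes because $\phi^2 \sim z^{-2}$, and whose residue at $x_1$ vanishes because $P(x_1)=0$. The residue theorem then forces $P(x_{2\ell})=0$, yielding the factorization $P(z) = (z-x_1)(z-x_{2\ell})Q(z)$ with $\deg Q \le N-3$. Iterating the residue identity with $z^k \phi^2$ for $k = 1,\dots,N-2$ is meant to produce enough additional linear constraints on $Q$, when combined with the transformed pairing conditions, to force $Q\equiv 0$, hence $P\equiv 0$. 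Existence of $P_\beta$ then follows automatically, since a square homogeneous system with trivial kernel is bijective.

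The main obstacle is executing the iterative residue argument cleanly to conclusion: after one reduction the pairings transform nontrivially into $N-1$ linear conditions on the $N-2$ coefficients of $Q$, so one must verify that this over-determined system has full column rank, which is a combinatorial statement depending on the link pattern $\beta$. A conceptually cleaner alternative would be to compute the $N \times N$ determinant in closed form and identify it, up to a nowhere-vanishing factor, with the partition function $\LF_\beta$ from~\eqref{eqn::totalpartition_def}, which is manifestly positive; however, the identification $\LF_\beta = \coulombnew_\beta$ is only established later (Theorem~\ref{thm::totalpartition}, which relies on the observable convergence in Proposition~\ref{prop::observable_cvg}, and thus on the present proposition), so a self-contained proof at this stage must proceed along the residue-theorem route.
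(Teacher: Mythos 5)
Your setup agrees with the paper's: expand $P_\beta$ in powers of $(z-x_1)$, observe that~\eqref{eqn::holo_restriction1} and~\eqref{eqn::holo_restriction2} impose $N$ real linear conditions on the $N$ real coefficients, and reduce both existence and uniqueness to non-singularity of the resulting $N\times N$ real system (the paper isolates $\coeff_0$ from~\eqref{eqn::holo_restriction1} and packages the remaining homogeneous conditions into the $(N-1)\times(N-1)$ matrix $R_\beta$ of~\eqref{eqn::R_beta_def}). Your opening residue-theorem step is also correct: for $P$ in the kernel, $\phi^2 = -P^2/\prod_j(z-x_j)$ is rational with $\operatorname{Res}_{x_i}\phi^2 = c_i^2$, the pairing condition forces $c_{a_r}^2 + c_{b_r}^2 = 0$ for $r\geq 2$, $c_1=0$ is the first condition, and since $\phi^2 = O(z^{-2})$ at infinity the residue theorem yields $c_{2\ell}^2 = 0$, so $P(x_1)=P(x_{2\ell})=0$.

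The iteration you propose does not close, however, and this is a genuine gap rather than a deferred detail. Multiplying by $z^k$ destroys the pairwise cancellation, since $x_{a_r}^k c_{a_r}^2 + x_{b_r}^k c_{b_r}^2 = (x_{a_r}^k - x_{b_r}^k) c_{a_r}^2 \neq 0$, and already $z\phi^2$ picks up a nonzero residue $\coeff_{N-1}^2$ at infinity, so each power of $z$ contributes one equation mixing all the $c_{a_r}^2$ against $x_{a_r}^k - x_{b_r}^k$; showing this moment system has trivial kernel is a determinant computation that depends on $\beta$ and is not lighter than the original problem. Likewise, peeling off $(z-x_1)(z-x_{2\ell})$ and working with $Q$ rescales the surviving paired conditions by the distinct real weights $(x_{a_r}-x_1)^{-1}(x_{a_r}-x_{2\ell})^{-1}$ versus $(x_{b_r}-x_1)^{-1}(x_{b_r}-x_{2\ell})^{-1}$, which again breaks the cancellation that your argument relies on. You correctly flag that you are left with ``a combinatorial statement depending on the link pattern $\beta$,'' but that statement is the entire substance of the proposition, not a corner to cut. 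The paper supplies exactly this missing step in Lemma~\ref{lem::R_beta_invertible} together with Lemma~\ref{lem::phase_factor}: one expands $\det R_\beta$ via the Vandermonde identity as a sum of $2^{N-1}$ terms $Q_\beta(\bs{\hat{\sigma}})$ over $\bs{\hat{\sigma}}\in\{\pm1\}^{N-1}$, and then shows by induction on $N$ (removing a nearest-neighbor link $\{a_r, a_r+1\}\in\beta$, which always exists in a planar link pattern) that all of these terms share one common unit phase $\theta_\beta\in\{\pm 1,\pm\ii\}$; since each modulus $|Q_\beta(\bs{\hat{\sigma}})|$ is strictly positive, the determinant is nonzero. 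That phase-alignment lemma is what you would need in place of the unverified full-rank claim, and a workable version of your residue route would in effect have to reprove it.
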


We first prove Proposition~\ref{prop::holo_limiting} in Section~\ref{subsec::holo_limiting} and using it, we prove Proposition~\ref{prop::observable_cvg} in Section~\ref{subsec::observable_cvg}.

\begin{remark}
The 		special case 		$\beta=\unnested$ of  Proposition~\eqref{prop::holo_limiting} was proved 
		in~\textnormal{\cite[Lemma~2.4]{Izyurov:Smirnovs_observable_for_free_boundary_conditions_interfaces_and_crossing_probabilities}} using complex analysis techniques, which fail to work for general boundary conditions $\beta\in \LP_N$. 
		One can, in fact,  
		use the computation in~\textnormal{\cite[Appendix~A]{CHI:Conformal_invariance_of_spin_correlations_in_planar_Ising_model}} to prove uniqueness and existence in Proposition~\ref{prop::holo_limiting} and to show Proposition~\ref{prop::totalpartition_observable} in Section~\ref{subsec::holo_limiting}, as Izyurov did in~\textnormal{\cite[Proof of Proposition~3.5]{Izyurov:On_multiple_SLE_for_the_FK_Ising_model}}. 
		We give an alternative computation in Section~\ref{subsec::holo_limiting}, which could be applied\footnote{To achieve this, one has to consider the ratio $Q_{\beta}(\bs{\hat{\sigma}}_1)/Q_{\beta}(\bs{\hat{\sigma}}_2)$ for $\bs{\hat{\sigma}}_1, \bs{\hat{\sigma}}_2\in \{\pm1\}^{N-1}$ when following the analysis in the proof of Lemma~\ref{lem::phase_factor}.}  in turn to bulk spin correlations in~\textnormal{\cite[Theorem~1.2]{CHI:Conformal_invariance_of_spin_correlations_in_planar_Ising_model}}.
\end{remark}

\begin{remark} \label{rem::Riemann_surface}
From the definition~\eqref{eq::sqrt_branches} of $S_{\bs{x}}$, we see that
the function $z \mapsto \phi_{\beta}(z)$ in Proposition~\ref{prop::holo_limiting} is holomorphic and single-valued on the Riemann surface $\Sigma_{\bs{x}} = \Sigma_{x_1, \ldots, x_{2N}}$. 
Note that up to a choice of sign (that is, sheet of $\Sigma_{\bs{x}}$, or branch for $\phi_{\beta}$), 
$z \mapsto \phi_{\beta}(z)$ gives a holomorphic function on the upper half-plane $\HH$. 
Moreover, $\phi_{\beta}(z)$ is purely real
when $z\in (x_{2r-1}, x_{2r})$, and purely imaginary when $z\in (x_{2r},x_{2r+1})$. 
\end{remark}

\begin{remark} \label{rem::holo_limiting_polygon}
Because $\phi_{\beta}$ depends on $\bs{x}\in\chamber_{2N}$, we also write $\phi_{\beta}(z)=\phi_{\beta}(z; \HH; \bs{x})=\phi_{\beta}(z; \bs{x})$ when necessary. 
The proof of Proposition~\ref{prop::observable_cvg} 
(in Section~\ref{subsec::observable_cvg})
implies that, for all M\"obius maps $\varphi$ of $\HH$ such that $\varphi(x_1)<\cdots<\varphi(x_{2N})$, we have
\begin{align} \label{eqn::covari_phi_beta}
\big( \phi_{\beta}(z;\HH;x_1,\ldots,x_{2N}) \big)^2 
= { \varphi'(z) } \, \big( \phi_{\beta} ( \varphi(z);\HH;\varphi(x_1),\ldots,\varphi(x_{2N}) ) \big)^2.
\end{align}
Hence, we can define $\phi_{\beta}$ for general polygons  
$(\Omega; x_1, \ldots, x_{2N})$ via its conformal covariance rule\footnote{If needed, we could use some fixed branch of the square root, which is well-defined because $\varphi' \neq 0$, 
by picking it in a simply connected neighborhood of some reference point and extending to all of $\Omega$ by analytic continuation.}: 
\begin{align*}
\phi_{\beta}(z; \Omega; x_1, \ldots, x_{2N}) := 
\sqrt{\varphi'(z)} 
\; \phi_{\beta}(\varphi(z);\HH; \varphi(x_1), \ldots, \varphi(x_{2N})) , \qquad z \in \Omega ,
\end{align*}
where $\varphi$ is any conformal map from $\Omega$ onto $\HH$ such that $\varphi(x_1)<\cdots<\varphi(x_{2N})$. 
Note that~\eqref{eqn::covari_phi_beta} ensures that $\phi_\beta$ for general domains is independent of the choice of the conformal map $\varphi$ up to a sign. 
\end{remark}

\smallbreak

Let us make some further remarks for small values of $N$.
\begin{itemize}%
[leftmargin=*]
	\item 
When $N=1$, the function in Proposition~\ref{prop::holo_limiting} is 
\begin{align} \label{eqn::holo_limiting_N=1}
\phi_{\vcenter{\hbox{\includegraphics[scale=0.2]{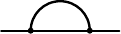}}}}(z; x_1, x_2) 
= \frac{\ii}{\sqrt{\pi}} \, \frac{\sqrt{x_2-x_1}}{\sqrt{z-x_1} \, \sqrt{z-x_2}} ,
\end{align} 
and for a polygon $(\Omega; x_1, x_2)$ with two marked points, we have (up to a sign)
\begin{align*}
\phi_{\vcenter{\hbox{\includegraphics[scale=0.2]{figures/link-0.pdf}}}}(z; \Omega; x_1, x_2) := 
{\sqrt{\varphi'(z)}}
\, \phi_{\vcenter{\hbox{\includegraphics[scale=0.2]{figures/link-0.pdf}}}}(\varphi(z);\HH; \varphi(x_1), \varphi(x_{2})) ,
\end{align*}
where $\varphi \colon \Omega \to \HH$ is {any} conformal map  such that $\varphi(x_1)<\varphi(x_2)$. 
In this case, Smirnov proved 
Proposition~\ref{prop::observable_cvg} in~\cite[Theorem~2.2]{Smirnov:Conformal_invariance_in_random_cluster_models1}: 
\begin{align*}
2^{-1/4} \, \delta^{-1/2} \, F^{\delta}_{\vcenter{\hbox{\includegraphics[scale=0.2]{figures/link-0.pdf}}}} (\cdot) 
\qquad \overset{\delta\to 0}{\longrightarrow} \qquad 
\phi_{\vcenter{\hbox{\includegraphics[scale=0.2]{figures/link-0.pdf}}}}(\cdot \, ; \Omega; x_1, x_2) 
\qquad\textnormal{locally uniformly}. 
\end{align*}

\item 
When $N=2$, we may verify Proposition~\ref{prop::holo_limiting} by a direct computation. 
In this case, there are two possible boundary conditions, 
$\vcenter{\hbox{\includegraphics[scale=0.2]{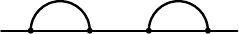}}}=\{\{1,2\}, \{3,4\}\}$ and $\vcenter{\hbox{\includegraphics[scale=0.2]{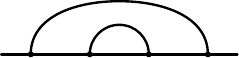}}}=\{\{1,4\}, \{2,3\}\}$, and
\begin{align}
\phi_{\vcenter{\hbox{\includegraphics[scale=0.2]{figures/link-1.pdf}}}} \; & (z; x_1, x_2, x_3, x_4) 
\label{eqn::obse_N=2_unne} \\
= \; & \frac{\ii}{\sqrt{\pi}} \,\frac{\Big(\sqrt{\frac{(x_3-x_1)(x_4-x_1)}{(x_2-x_1)}}-\sqrt{\frac{(x_3-x_2)(x_4-x_2)}{(x_2-x_1)}}\Big)(z-x_1)-\sqrt{(x_2-x_1)(x_3-x_1)(x_4-x_1)}}{\sqrt{z-x_1} \, \sqrt{z-x_2} \, \sqrt{z-x_3} \, \sqrt{z-x_4}} ,
\nonumber \\ 
\phi_{\vcenter{\hbox{\includegraphics[scale=0.2]{figures/link-2.pdf}}}} \; & (z; x_1, x_2, x_3, x_4)
\nonumber  \\
= \; & \frac{\ii}{\sqrt{\pi}} \, 
\frac{\Big(\sqrt{\frac{(x_4-x_2)(x_4-x_3)}{(x_4-x_1)}}+\sqrt{\frac{(x_2-x_1)(x_3-x_1)}{(x_4-x_1)}}\Big)(z-x_1)-\sqrt{(x_2-x_1)(x_3-x_1)(x_4-x_1)}}{\sqrt{z-x_1} \, \sqrt{z-x_2} \, \sqrt{z-x_3} \, \sqrt{z-x_4}} . 
\nonumber
\end{align}

\item For general $N$ and $\beta \in \LP_N$,
one can derive an explicit expression for $\phi_\beta$ using Cramer's rule.
\end{itemize}

\subsection{Proof of Proposition~\ref{prop::holo_limiting} and emergence of $\LF_\beta$}
\label{subsec::holo_limiting}
Our first goal is to show Proposition~\ref{prop::holo_limiting} via two auxiliary Lemmas~\ref{lem::R_beta_invertible} and~\ref{lem::phase_factor} (the latter in Appendix~\ref{appendix_aux}).
To this end, we first set some notation.
For $2\le r \le N$, we define row vectors $\smash{\bs{U}_{\beta}^{\pm}(r)}$ of size $N-1$ as
\begin{align*}
\bs{U}_{\beta}^{\pm}(r) 
:= \big( U_{\beta}^{\pm}(r,1), \, U_{\beta}^{\pm}(r,2), \, \ldots, \, U_{\beta}^{\pm}(r,N-1) \big) ,
\end{align*}
where for $2\le r \le N$ and $0 \le s \le N-1$, we denote 
\begin{align} \label{eqn::U_elements}
U_{\beta}^{+}(r,s) 
:= (x_{a_{r}} - x_1)^s \, \ddot{S}^{a_{r}, b_{r}}_{x_1, \ldots, x_{2N}}(x_{a_{r}})
\qquad \textnormal{and} \qquad 
U_{\beta}^{-}(r,s) 
:= (x_{b_{r}}-x_1)^s \, \ddot{S}^{a_{r}, b_{r}}_{x_1, \ldots, x_{2N}}(x_{b_{r}}) ,
\end{align}
and where the function 
\begin{align*}
z \; & \quad \longmapsto \quad \sqrt{z-x_{a_r}} \, \sqrt{z-x_{b_r}} \, S_{\bs{x}}(z) =: \underset{j\notin \{a_{r}, b_{r}\}}{\prod} \frac{1}{\sqrt{z - x_j}} 
=: \ddot{S}^{a_{r}, b_{r}}_{x_1, \ldots, x_{2N}}(z)
\end{align*}
is holomorphic and single-valued on a 
Riemann surface $\Sigma_{\bs{x}} = \Sigma_{x_1, \ldots, x_{2N}}$ as in Remark~\ref{rem::Riemann_surface}.
We also define an $(N-1) \times (N-1)$-matrix
\begin{align} \label{eqn::R_beta_def}
R_{\beta} :=
\begin{pmatrix} 
\bs{U}_{\beta}^{+}(2) \, + \, \bs{U}_{\beta}^{-}(2) \\
\cdot\\
\cdot\\
\cdot\\
\bs{U}_{\beta}^{+}(N) \, + \, \bs{U}_{\beta}^{-}(N)
\end{pmatrix},
\end{align}
that is, we define $R_{\beta}(r,s):=U_{\beta}^{+}(r+1,s)+U_{\beta}^{-}(r+1,s)$ for $1\leq r \leq N-1$ and $1\leq s\leq N-1$. 
Note that writing $\bs{\hat{\sigma}} = (\hat{\sigma}_2, \ldots, \hat{\sigma}_N)\in\{\pm 1\}^{N-1}$, 
and identifying $\pm 1$ with the superscript $\pm$,
we have
\begin{align} \label{eqn::determi_decomposition}
\det(R_{\beta}) = \sum_{\bs{\hat{\sigma}} \in \{\pm 1\}^{N-1}}Q_{\beta} (\bs{\hat{\sigma}}) ,
\qquad\textnormal{where}\qquad
Q_{\beta} (\bs{\hat{\sigma}}) 
:= \det
\begin{pmatrix} \bs{U}_{\beta}^{\hat{\sigma}_2}(2) \\
\cdot\\
\cdot\\
\cdot\\
\bs{U}_{\beta}^{\hat{\sigma}_{N}}(N)
\end{pmatrix}. 
\end{align}

\begin{proof}[Proof of Proposition~\ref{prop::holo_limiting}]
We write the polynomial $P_{\beta}$ as 
\begin{align*}
P_{\beta}(z) = \coeff_0 + \coeff_1(z-x_1) + \cdots + \coeff_{N-1}(z-x_1)^{N-1}, 
\end{align*}
where $\coeff_0, \coeff_1, \ldots, \coeff_{N-1} \in \R$ are some real coefficients. 
Note that $\coeff_0=P_{\beta}(x_1)$ and $\coeff_1=P_{\beta}'(x_1)$. 
Defining an $(N-1)$-component vector 
$\bs{V}_{\beta} = \big( V_{\beta}(1), \, V_{\beta}(2) , \, \ldots, \, V_{\beta}(N-1) \big)$ 
with entries
\begin{align}\label{eqn::r_beta_def}
V_{\beta}(r) 
:= R_{\beta}(r,0) 
,  \qquad 1\le r \le N-1 ,
\end{align}
we note that the restrictions~\eqref{eqn::holo_restriction1} and~\eqref{eqn::holo_restriction2} read  
\begin{align}
\label{eqn::holo_restriction1_aux} 
\sqrt{\pi} \, \ii \, \coeff_0 \, S_{x_2, x_3, \ldots, x_{2N}}(x_1) = & \; 1 , \\
\label{eqn::holo_restriction2_aux}
\sum_{n=1}^{N-1} \frac{\coeff_n}{\coeff_0} \, R_{\beta}(r,n) 
= &  \; - V_{\beta}(r), \qquad 1\le r \le N-1 ,
\end{align}
where 
$S_{x_2, x_3, \ldots, x_{2N}}(z) = \underset{j \neq 1}{\prod} \frac{1}{\sqrt{z - x_j}} := \sqrt{z-x_{1}} \, S_{\bs{x}}$
is holomorphic and single-valued on $\Sigma_{\bs{x}}$ as in Remark~\ref{rem::Riemann_surface}.
Proposition~\ref{prop::holo_limiting} now follows by showing that the matrix $R_{\beta}$ in~\eqref{eqn::R_beta_def} is invertible (Lemma~\ref{lem::R_beta_invertible}). 
\end{proof}

\begin{lemma} \label{lem::R_beta_invertible}
The matrix $R_{\beta}$ defined by~\eqref{eqn::R_beta_def} is invertible. 
\end{lemma}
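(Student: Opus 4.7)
The plan is to show that $\det(R_\beta)\neq 0$ by identifying its modulus with the sum of the moduli of the Vandermonde-type pieces appearing in~\eqref{eqn::determi_decomposition}, each of which is manifestly strictly positive.

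The first step is to unpack~\eqref{eqn::determi_decomposition}. For $\bs{\hat\sigma}\in\{\pm 1\}^{N-1}$, row $r-1$ of the matrix in $Q_\beta(\bs{\hat\sigma})$ reads $\big((y_r-x_1)^{s}\,\ddot S^{a_r,b_r}_{x_1,\ldots,x_{2N}}(y_r)\big)_{s=0,\ldots,N-2}$, where $y_r:=x_{a_r}$ if $\hat\sigma_r=+1$ and $y_r:=x_{b_r}$ if $\hat\sigma_r=-1$. Factoring the common spinor value out of each row and recognising the remaining determinant as Vandermonde in the shifted variables $y_r-x_1$ gives
\begin{align*}
Q_\beta(\bs{\hat\sigma}) \; = \; \prod_{r=2}^{N}\ddot S^{a_r,b_r}_{x_1,\ldots,x_{2N}}(y_r)\;\prod_{2\le r<r'\le N}(y_{r'}-y_r).
\end{align*}
In particular each $Q_\beta(\bs{\hat\sigma})$ is a product of nonzero complex numbers and hence nonzero.

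The second step is to invoke Lemma~\ref{lem::phase_factor} from the appendix, which controls the phases of the spinor values $\ddot S^{a_r,b_r}(y_r)$ on the appropriate sheet of the Riemann surface $\Sigma_{\bs{x}}$ associated to a branch-cut system encoding the link pattern $\beta$. The conclusion to extract is that, after one global choice of branch for $S_{\bs{x}}$, the combined argument $\arg Q_\beta(\bs{\hat\sigma})$ is independent of $\bs{\hat\sigma}$ modulo $2\pi$. Consequently, no cancellations occur in the sum~\eqref{eqn::determi_decomposition}, and multiplying through by the common phase yields
\begin{align*}
|\det(R_\beta)| \; = \; \sum_{\bs{\hat\sigma}\in\{\pm 1\}^{N-1}}|Q_\beta(\bs{\hat\sigma})| \; > \; 0,
\end{align*}
which is exactly invertibility. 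A convenient consistency check is that, up to the overall normalisation ensured by~\eqref{eqn::holo_restriction1_aux}, the right-hand side recovers the positive quantity whose square appears inside the parenthesis of~\eqref{eqn::totalpartition_def}, in line with Proposition~\ref{prop::totalpartition_observable} announced below.

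The hard part will be the phase-alignment step, i.e.\ the content of Lemma~\ref{lem::phase_factor}. Tracking the sign of each $\ddot S^{a_r,b_r}(y_r)$ on $\Sigma_{\bs{x}}$ requires a careful combinatorial analysis of which marked points lie on which side of the chosen branch cuts attached to the link pattern $\beta$, and of how flipping $\hat\sigma_r$ (i.e.\ swapping $x_{a_r}\leftrightarrow x_{b_r}$) simultaneously flips the spinor sign and the Vandermonde sign so that the two cancel. As sanity checks, the case $N=1$ is vacuous (the matrix $R_\beta$ is empty), while for $N=2$ one can directly verify the result from the explicit expressions~\eqref{eqn::obse_N=2_unne} for both boundary conditions.
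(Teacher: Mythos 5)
Your proposal matches the paper's argument: decompose $\det(R_\beta)$ into the $2^{N-1}$ terms $Q_\beta(\bs{\hat\sigma})$, evaluate each via a Vandermonde determinant, and invoke Lemma~\ref{lem::phase_factor} to align all their phases so that no cancellation occurs in the sum. One small slip: the row vectors $\bs{U}_\beta^\pm(r)$ are indexed by $s=1,\ldots,N-1$, not $s=0,\ldots,N-2$, so the Vandermonde factorisation actually produces an extra factor $\prod_{r=2}^N(y_r-x_1)$ alongside $\prod_{r<r'}(y_{r'}-y_r)$ and the spinor product (this is Eq.~\eqref{eqn::Vander} in the paper). Since each $y_r>x_1$, that omitted factor is a positive real number and your positivity conclusion is unaffected, but the stated formula for $Q_\beta(\bs{\hat\sigma})$ is off by this factor. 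Your identification of the phase-alignment step (Lemma~\ref{lem::phase_factor}) as the hard content, and of the mechanism — flipping $\hat\sigma_r$ changes the spinor sign and the Vandermonde sign in tandem — is exactly the structure of the paper's inductive proof of that lemma.
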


\begin{proof}
We need to show that $\det(R_{\beta})$
in~\eqref{eqn::determi_decomposition} is non-zero. 
Write
\begin{align} \label{eq::def_y_k}
y_r^{+, \beta} := x_{a_r} 
\qquad \textnormal{and} \qquad 
y_r^{-, \beta} := x_{b_r} , \qquad 2\le r \le N .
\end{align}
Using the Vandermonde determinant, we have 
\begin{align} \label{eqn::Vander}
\begin{split}
Q_{\beta} (\bs{\hat{\sigma}})
= & \; Q_{\beta}(\hat{\sigma}_2, \ldots, \hat{\sigma}_N) \\ 
= & \; \prod_{2\leq r \leq N} 
\big( y_r^{\hat{\sigma}_r, \beta} - x_1 \big) \; 
\prod_{2\leq s < t\leq N} \big( y_t^{\hat{\sigma}_t, \beta} - y_s^{\hat{\sigma}_s, \beta} \big) \; 
\prod_{2\leq r \leq N} 
 \ddot{S}^{a_{r}, b_{r}}_{x_1, \ldots, x_{2N}} \big( y_r^{\hat{\sigma}_r, \beta} \big) .
\end{split}
\end{align}
From Lemma~\ref{lem::phase_factor}, 
we find a constant $\theta_{\beta}\in \{ \pm 1, \pm \ii \}$ depending only on $\beta$ such that 
\begin{align} \label{eqn::Q_beta_positive}
\frac{Q_{\beta}(\bs{\hat{\sigma}})}{\theta_{\beta}} > 0 .
\end{align}
Combining~\eqref{eqn::determi_decomposition} with~\eqref{eqn::Q_beta_positive}, we obtain
$\frac{\det R_{\beta}}{\theta_{\beta}} > 0$, 
which implies that $R_{\beta}$ is invertible.
\end{proof}

The second goal of this section is to derive the expansion of $\phi_{\beta}$ as $z\to x_1$ (Lemma~\ref{lem::holo_expansion}) and to relate its expansion coefficients to the partition function $\LF_{\beta}$ defined in~\eqref{eqn::totalpartition_def} (Proposition~\ref{prop::totalpartition_observable}). 

\begin{lemma} \label{lem::holo_expansion}
Write $\bs{x}=(x_1, \ldots, x_{2N})\in\chamber_{2N}$. The holomorphic function~\eqref{eqn::holo_limiting} on $\Sigma_{\bs{x}}$
satisfies
\begin{align*}
\phi_{\beta}(z; \bs{x}) 
= \frac{1}{\sqrt{\pi} \, \sqrt{z-x_1}} 
+ \LK_{\beta}(\bs{x}) \, \sqrt{z-x_1} 
+ o(\sqrt{z-x_1}) , \qquad\textnormal{as } z\to x_1 ,
\end{align*}
where 
\begin{align}\label{eqn::holo_limiting_expansion_coefficient}
\LK_{\beta}(\bs{x}) 
= \LK_{\beta}(x_1, \ldots, x_{2N}) 
= \frac{1}{\sqrt{\pi}} \bigg( \frac{P_{\beta}'(x_1)}{P_{\beta}(x_1)}+\frac{1}{2}\sum_{k=2}^{2N}\frac{1}{x_k-x_1} \bigg). 
\end{align}
\end{lemma}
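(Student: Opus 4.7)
The plan is to derive the expansion directly from the explicit formula~\eqref{eqn::holo_limiting} for $\phi_\beta$ by clearing the singularity at $x_1$ and Taylor expanding the resulting smooth factor. Specifically, I will study
\begin{align*}
\Psi_\beta(z) \; := \; \sqrt{z-x_1} \, \phi_\beta(z) \; = \; \frac{\ii \, P_\beta(z)}{\prod_{j=2}^{2N} \sqrt{z-x_j}} ,
\end{align*}
which, as a function on the local chart of $\Sigma_{\bs{x}}$ around $x_1$, extends holomorphically across $x_1$ (the only ramification of $\phi_\beta$ in a small neighborhood of $x_1$ comes from $\sqrt{z-x_1}$, and we have just cancelled it). Hence $\Psi_\beta$ admits a convergent Taylor expansion
\begin{align*}
\Psi_\beta(z) \; = \; \Psi_\beta(x_1) \; + \; \Psi_\beta'(x_1) \, (z-x_1) \; + \; O\bigl((z-x_1)^2\bigr) .
\end{align*}

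First, the constant term. Evaluating at $z = x_1$ gives
$\Psi_\beta(x_1) = \ii \, P_\beta(x_1) \, S_{x_2,\ldots,x_{2N}}(x_1)$,
which by the normalization condition~\eqref{eqn::holo_restriction1} (equivalently~\eqref{eqn::holo_restriction1_aux}) equals $1/\sqrt{\pi}$. Second, the linear term. Using the logarithmic derivative,
\begin{align*}
\frac{\Psi_\beta'(z)}{\Psi_\beta(z)}
\; = \; \frac{d}{dz} \log \bigl( \ii \, P_\beta(z) \bigr) \, - \, \frac{1}{2} \sum_{j=2}^{2N} \frac{1}{z-x_j}
\; = \; \frac{P_\beta'(z)}{P_\beta(z)} \, + \, \frac{1}{2} \sum_{j=2}^{2N} \frac{1}{x_j - z} ,
\end{align*}
which, evaluated at $z = x_1$ and multiplied by $\Psi_\beta(x_1) = 1/\sqrt{\pi}$, yields precisely the claimed expression for $\LK_\beta(\bs{x})$ in~\eqref{eqn::holo_limiting_expansion_coefficient}.

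Dividing the expansion of $\Psi_\beta$ by $\sqrt{z-x_1}$ then produces the claimed two-term asymptotic for $\phi_\beta$ with the stated error $o(\sqrt{z-x_1})$. The one subtlety worth checking carefully, rather than a genuine obstacle, is the consistency of branch choices: the factor $\prod_{j=2}^{2N} \frac{1}{\sqrt{z-x_j}}$ is the restriction of the single-valued function $\sqrt{z-x_1} \, S_{\bs{x}}(z)$ on $\Sigma_{\bs{x}}$ to the chart at $x_1$, so the value $S_{x_2,\ldots,x_{2N}}(x_1)$ and its sign agree with the ones used to pin down $\phi_\beta$ via Proposition~\ref{prop::holo_limiting}. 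Once this is noted, the computation is just a routine Taylor expansion.
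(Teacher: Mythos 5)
Your proof is correct and takes essentially the same approach as the paper: both expand the holomorphic function $\sqrt{z-x_1}\,\phi_\beta(z)$ in a Taylor series around $x_1$, identify the constant term as $1/\sqrt{\pi}$ via the normalization~\eqref{eqn::holo_restriction1_aux}, and read off the linear coefficient as $\LK_\beta$. Your version is slightly more explicit in computing the linear coefficient via the logarithmic derivative, whereas the paper simply states the resulting formula, but the method is identical.
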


\begin{proof}
From the expression in~\eqref{eqn::holo_limiting}, we may write 
\begin{align*}
\phi_{\beta}(z; \bs{x}) = 
\frac{\LJ_{\beta}(\bs{x})}{\sqrt{z-x_1}}
+ \LK_{\beta}(\bs{x}) \, \sqrt{z-x_1} 
+ o(\sqrt{z-x_1}) , \qquad\textnormal{as } z\to x_1 ,
\end{align*}
where 
\begin{align*}
\LJ_{\beta}(x_1, \ldots, x_{2N}) 
= & \; \ii \, P_{\beta}(x_1) \, S_{x_2, x_3, \ldots, x_{2N}}(x_1) , \\
\LK_{\beta}(x_1, \ldots, x_{2N})
= & \; \ii \, P_{\beta}(x_1) \, S_{x_2, x_3, \ldots, x_{2N}}(x_1)
\times \bigg( \frac{P_{\beta}'(x_1)}{P_{\beta}(x_1)}+\frac{1}{2}\sum_{k=2}^{2N}\frac{1}{x_k-x_1} \bigg) .
\end{align*}
From~\eqref{eqn::holo_restriction1_aux} with $\coeff_0=P_{\beta}(x_1)$, we see that $\LJ_{\beta}=1/\sqrt{\pi}$ and~\eqref{eqn::holo_limiting_expansion_coefficient} holds. This completes the proof. 
\end{proof}

\begin{proposition}\label{prop::totalpartition_observable}
Write $\bs{x}=(x_1, \ldots, x_{2N})\in\chamber_{2N}$.
We have 
\begin{align}\label{eqn::totalpartition_observable}
\partial_1\log\LF_{\beta}(\bs{x}) = \frac{\sqrt{\pi}}{4} \, \LK_{\beta}(\bs{x}), 
\end{align}
where $\LF_{\beta}$ is defined in~\eqref{eqn::totalpartition_def} and $\LK_{\beta}$ is defined in~\eqref{eqn::holo_limiting_expansion_coefficient}. 
\end{proposition}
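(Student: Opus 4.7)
The strategy is a direct computation of both sides of~\eqref{eqn::totalpartition_observable}, followed by a combinatorial matching of two sums over sign vectors.

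First, I would expand the logarithmic derivative of $\LF_{\beta}$ using~\eqref{eqn::totalpartition_def}. Since $1 = a_1$ and the pair of $1$ in $\beta$ is $\{1, 2\ell\}$, the variable $x_1$ appears in $\LF_{\beta}(\bs{x})$ only through the prefactor $(x_{2\ell}-x_1)^{-1/8}$ and, inside the sum $T_\beta(\bs{x}) := \sum_{\bs{\sigma} \in \{\pm 1\}^N} \prod_{s<t} \chi_{st}^{\sigma_s \sigma_t / 4}$, through the cross-ratios $\chi_{1t} = \chi(x_1, x_{a_t}, x_{b_t}, x_{2\ell})$ for $t \geq 2$. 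Differentiation and insertion of the explicit form~\eqref{eqn::holo_limiting_expansion_coefficient} of $\LK_\beta$ reduce~\eqref{eqn::totalpartition_observable} to the identity
\begin{align*}
\partial_1 \log T_\beta(\bs{x}) \; = \; \frac{1}{2} \frac{P_\beta'(x_1)}{P_\beta(x_1)} + \frac{1}{4} \sum_{k \in \{2,\ldots,2N\} \setminus \{2\ell\}} \frac{1}{x_k - x_1} .
\end{align*}

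Second, I would extract $P_\beta'(x_1)/P_\beta(x_1) = p_1/p_0$ from the linear system~\eqref{eqn::holo_restriction2_aux} by Cramer's rule, giving a ratio $-\det(R_\beta') / \det(R_\beta)$, where $R_\beta'$ denotes $R_\beta$ with its first column (corresponding to $s=1$) replaced by $\bs{V}_\beta$ (corresponding to $s=0$). Multilinearity over rows expands both determinants as signed sums over $\bs{\hat{\sigma}} \in \{\pm 1\}^{N-1}$: one obtains $\det(R_\beta) = \sum_{\bs{\hat{\sigma}}} Q_\beta(\bs{\hat{\sigma}})$ as in~\eqref{eqn::determi_decomposition} with Vandermonde expression~\eqref{eqn::Vander}, and a parallel formula for $\det(R_\beta')$ in which the factor $\prod_{r} (y_r^{\hat{\sigma}_r, \beta} - x_1)$ of~\eqref{eqn::Vander} is replaced by the elementary symmetric polynomial $e_1(\{ y_r^{\hat{\sigma}_r, \beta} - x_1 \}_r )$.

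Third, I would establish the combinatorial bridge. Using the bijection $\{\pm 1\}^{N-1} \leftrightarrow \{ \bs{\sigma} \in \{\pm 1\}^N : \sigma_1 = +1 \}$ via $\hat{\sigma}_r = \sigma_{r+1}$, and the cross-ratio identity
\begin{align*}
\chi(x_{a_s}, x_{a_t}, x_{b_t}, x_{b_s}) \; = \; \frac{(x_{a_t} - x_{a_s})(x_{b_s} - x_{b_t})}{(x_{b_t} - x_{a_s})(x_{b_s} - x_{a_t})} ,
\end{align*}
one may rewrite the Vandermonde product $\prod_{2 \leq s < t \leq N} (y_t^{\hat{\sigma}_t, \beta} - y_s^{\hat{\sigma}_s, \beta})$ together with the boundary evaluations of $\ddot{S}^{a_r, b_r}_{\bs{x}}$ appearing in~\eqref{eqn::Vander}. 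After dividing by a common $\bs{\hat{\sigma}}$-independent prefactor depending only on $\bs{x}$, each term $Q_\beta(\bs{\hat{\sigma}}) / \theta_\beta$ equals $\prod_{s<t} \chi_{st}^{\sigma_s \sigma_t / 4}$, so that $T_\beta$ is proportional to $\det(R_\beta) / \theta_\beta$ up to an explicit $\bs{x}$-dependent factor (using the invariance $\bs{\sigma} \to -\bs{\sigma}$ of the summand of $T_\beta$). Taking $\partial_1$ of this identity and combining with the analogous expansion for $\det(R_\beta')$ yields the required formula of Step~1.

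The main obstacle will be the bookkeeping in Step~3: $T_\beta$ is an unsigned sum of square-roots of cross-ratio products, while $\det(R_\beta)$ is a signed sum of Vandermonde-type products, so the identification relies crucially on the positivity $Q_\beta(\bs{\hat{\sigma}})/\theta_\beta > 0$ (Lemma~\ref{lem::phase_factor}) and on consistent branch choices of the square roots in $\ddot{S}^{a_r, b_r}_{\bs{x}}$ evaluated at real points. This is essentially a Pfaffian/spin-correlation computation in disguise, reflecting that $\LF_\beta$ is the ``boundary square-root form'' of an Ising $N$-point bulk correlation while $\phi_\beta$ plays the role of its accompanying Pfaffian kernel, analogous to the bulk analysis carried out in~\cite[Appendix~A]{CHI:Conformal_invariance_of_spin_correlations_in_planar_Ising_model}.
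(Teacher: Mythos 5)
Your overall strategy coincides with the paper's own proof: reduce to a statement about $\partial_1\log T_\beta$, extract $P_\beta'(x_1)/P_\beta(x_1)$ via Cramer's rule as $-\det(R_\beta^\bullet)/\det(R_\beta)$, expand both determinants over $\bs{\hat{\sigma}}\in\{\pm 1\}^{N-1}$ by multilinearity over rows, and match the two sums term by term using the positivity $Q_\beta(\bs{\hat{\sigma}})/\theta_\beta>0$ together with an explicit cross-ratio identity. What you call the ``combinatorial bridge'' in Step~3 is exactly Lemma~\ref{lem::connection_chi}, and the positivity input is Lemma~\ref{lem::phase_factor}, so the proposal follows the same route as the paper.

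There is, however, a concrete computational error in Step~2. After replacing the power-$1$ column of $R_\beta$ by $\bs{V}_\beta$ (power $0$), the remaining column-power set is $\{0,2,3,\ldots,N-1\}$; dividing the corresponding generalized Vandermonde in the $N-1$ variables $z_r := y_r^{\hat{\sigma}_r,\beta}-x_1$ by the ordinary Vandermonde gives the Schur polynomial $s_{(1^{N-2})}=e_{N-2}(z_2,\ldots,z_N)$, \emph{not} $e_1$. Since $Q_\beta(\bs{\hat{\sigma}})$ carries the factor $\prod_r z_r = e_{N-1}$, the correct per-term ratio is $e_{N-2}/e_{N-1}=\sum_{r=2}^N z_r^{-1}$, which is precisely what Lemma~\ref{lem::Cramer_decom} delivers (see~\eqref{eqn::Cramer_decom_app}) and what is needed to match~\eqref{eqn::totalpatition_aux1} at the end of the proof via the identity
\begin{align*}
-2\sum_{r=2}^N \frac{1}{y_r^{\hat{\sigma}_r,\beta}-x_1}+\sum_{r=2}^N\Big(\frac{1}{x_{a_r}-x_1}+\frac{1}{x_{b_r}-x_1}\Big)=\sum_{r=2}^N\hat{\sigma}_r\Big(\frac{1}{x_{b_r}-x_1}-\frac{1}{x_{a_r}-x_1}\Big).
\end{align*}
With $e_1$ taken literally, the per-term factor would instead be $\big(\sum_r z_r\big)/\prod_r z_r$, which does not match the required form and the computation would not close. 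Once $e_1$ is corrected to $e_{N-2}$, your outline is essentially the paper's argument, with the remaining ``bookkeeping'' you flag at the end being precisely the content of Lemma~\ref{lem::connection_chi}.
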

\begin{proof}
	On the one hand, let us compute $\partial_1\log \LF_{\beta}(\bs{x})$. For 
	the cross-ratios,  
(recalling~\eqref{eq::pair_of_1})
	we have
	\begin{align*}
		\partial_1 \chi(x_1,x_{a_r},x_{b_r},x_{2\ell}) 
		= -\chi(x_1,x_{a_r},x_{b_r},x_{2\ell}) \; \frac{x_{b_r}-x_{a_r}}{(x_{a_r}-x_1)(x_{b_r}-x_1)} , \qquad 2\leq r \leq N .
	\end{align*}
Thus, writing
$\bs{\sigma}=(\sigma_1, \ldots, \sigma_N)\in\{\pm 1\}^N$ 
and $\bs{\hat{\sigma}}=(\hat{\sigma}_2, \ldots, \hat{\sigma}_N)\in\{\pm 1\}^{N-1}$, 
where the variables $\hat{\sigma}_r$ in $\bs{\hat{\sigma}}$ could be viewed as products $\sigma_1\sigma_r$ of the variables $\sigma_1$ and $\sigma_r$ in $\bs{\sigma}$ for $2\leq r\leq N$,
we obtain 
\begin{align} \label{eqn::totalpatition_aux1}
\; & 8 \, \partial_1 \log \LF_{\beta}(\bs{x})  \, - \,  \frac{1}{x_{2\ell}-x_1} \\
\nonumber 
= \; &
\frac{ \underset{\bs{\sigma}\in\{\pm 1\}^{N}}{\sum} 
{\Big(-\sigma_1 \underset{r=2}{\overset{N}{\sum}}
\, \sigma_r \frac{x_{b_r}-x_{a_r}}{(x_{a_r}-x_1)(x_{b_r}-x_1)}\Big) }
\Big(\underset{s=2}{\overset{N}{\prod}}
\, \chi(x_1,x_{a_s},x_{b_s},x_{2\ell})^{\frac{\sigma_1\sigma_s}{4}}\Big)
\Big(\underset{2\leq r< s \leq N}{\prod} 
\, \chi(x_{a_r},x_{a_s},x_{b_s},x_{b_r})^{\frac{\sigma_r \sigma_s}{4}}\Big)}{ \underset{\bs{\sigma}\in\{\pm 1\}^{N}}{\sum} 
\Big( \underset{s=2}{\overset{N}{\prod}}
\, \chi(x_1,x_{a_s},x_{b_s},x_{2\ell})^{\frac{\sigma_1\sigma_s}{4}}\Big) \Big( \underset{2\leq r< s \leq N}{\prod} 
\, \chi(x_{a_r},x_{a_s},x_{b_s},x_{b_r})^{\frac{\sigma_r \sigma_s}{4}}\Big)} \\
\nonumber 
= & \; 
\frac{ \underset{\bs{\hat{\sigma}}\in\{\pm 1\}^{N-1}}{\sum} 
{ \Big(-\underset{r=2}{\overset{N}{\sum}}
\, \hat{\sigma}_r \frac{x_{b_r}-x_{a_r}}{(x_{a_r}-x_1)(x_{b_r}-x_1)}\Big) }
\Big(\underset{s=2}{\overset{N}{\prod}}
\, \chi(x_1,x_{a_s},x_{b_s},x_{2\ell})^{\frac{\hat{\sigma}_s}{4}}\Big)
\Big(\underset{2\leq r< s \leq N}{\prod}
\, \chi(x_{a_r},x_{a_s},x_{b_s},x_{b_r})^{\frac{\hat{\sigma}_r \hat{\sigma}_s}{4}}\Big)}{ \underset{\bs{\hat{\sigma}}\in\{\pm 1\}^{N-1}}{\sum} 
\Big( \underset{s=2}{\overset{N}{\prod}}
\, \chi(x_1,x_{a_s},x_{b_s},x_{2\ell})^{\frac{\hat{\sigma}_s}{4}}\Big)
\Big(\underset{2\leq r< s \leq N}{\prod}
\, \chi(x_{a_r},x_{a_s},x_{b_s},x_{b_r})^{\frac{\hat{\sigma}_r \hat{\sigma}_s}{4}}\Big)} \\
\nonumber 
= & \; 
\frac{ \underset{\bs{\hat{\sigma}}\in\{\pm 1\}^{N-1}}{\sum} 
{ \Big(-\underset{r=2}{\overset{N}{\sum}}
\, \hat{\sigma}_r\frac{x_{b_r}-x_{a_r}}{(x_{a_r}-x_1)(x_{b_r}-x_1)}\Big) }
\Big(\underset{s=2}{\overset{N}{\prod}}
\, \chi(x_1,x_{a_s},x_{b_s},x_{2\ell})^{\frac{\hat{\sigma}_s+1}{4}}\Big)
\Big(\underset{2\leq r< s \leq N}{\prod}
\, \chi(x_{a_r},x_{a_s},x_{b_s},x_{b_r})^{\frac{\hat{\sigma}_r \hat{\sigma}_s + 1}{4}}\Big)}{\underset{\bs{\hat{\sigma}}\in\{\pm 1\}^{N-1}}{\sum}
\Big( \underset{s=2}{\overset{N}{\prod}}
\, \chi(x_1,x_{a_s},x_{b_s},x_{2\ell})^{\frac{\hat{\sigma}_s+1}{4}}\Big)
\Big(\underset{2\leq r< s \leq N}{\prod}
\, \chi(x_{a_r},x_{a_s},x_{b_s},x_{b_r})^{\frac{\hat{\sigma}_r \hat{\sigma}_s + 1}{4}}\Big)} , 
\end{align}

On the other hand, let us compute $\LK_{\beta}(\bs{x})$. We denote by $R^{\bullet}_{\beta}$ the $(N-1)\times(N-1)$-matrix obtained by replacing the first column of $R_{\beta}$ by the column vector
{$\smash{\bs{V}_{\beta} = \big( V_{\beta}(1), \, V_{\beta}(2) , \, \ldots, \, V_{\beta}(N-1) \big)^t}$}
defined in~\eqref{eqn::r_beta_def}. 
Then, combining~\eqref{eqn::holo_restriction2_aux} with Cramer's rule, we find that 
\begin{align} \label{eqn::Cramer}
	\frac{P_{\beta}'(x_1)}{P_{\beta}(x_1)}=-\frac{\det (R_{\beta}^{\bullet})}{\det (R_{\beta})}.
\end{align}
Using Lemma~\ref{lem::Cramer_decom} (from Appendix~\ref{appendix_aux}) we can find functions 
$g^{\bs{\hat{\sigma}}, \beta}(\bs{x})>0$ 
for $\bs{\hat{\sigma}}=(\hat{\sigma}_2,\ldots,\hat{\sigma}_{N})\in \{\pm 1\}^{N-1}$ such that
\begin{align}  \label{eqn::Cramer_decom}
	\frac{\det (R_{\beta}^{\bullet})}{\det (R_{\beta})}
	= \frac{\underset{\bs{\hat{\sigma}}\in\{\pm 1\}^{N-1}}{\sum} \, g^{\bs{\hat{\sigma}}, \beta}(\bs{x}) \; 
	\underset{r=2}{\overset{N}{\sum}} \; \big( y_{r}^{\hat{\sigma}_r, \beta} - x_1 \big)^{-1} }{\underset{\bs{\hat{\sigma}}\in\{\pm 1\}^{N-1}}{\sum} \; g^{\bs{\hat{\sigma}}, \beta}(\bs{x})} ,
\end{align}
where $y_{r}^{\hat{\sigma}_r, \beta}$ are defined in~\eqref{eq::def_y_k}. 
Lemma~\ref{lem::connection_chi} (from Appendix~\ref{appendix_aux}) implies that there exist functions $f_{\beta}(\bs{x})>0$ such that, for all $\bs{\hat{\sigma}}=(\hat{\sigma}_2, \ldots, \hat{\sigma}_N)\in\{\pm 1\}^{N-1}$, we have 
\begin{align} \label{eqn::connection_chi}
	g^{\bs{\hat{\sigma}}, \beta}(\bs{x}) 
	= f_{\beta}(\bs{x}) \; \prod_{2\leq r \leq N}\chi(x_1,x_{a_r},x_{b_r},x_{2\ell})^{\frac{\hat{\sigma}_r+1}{4}} 
	\prod_{2\leq s < t \leq N}\chi(x_{a_s},x_{a_t},x_{b_t},x_{b_s})^{\frac{\hat{\sigma}_s \hat{\sigma}_t+1}{4}} . 
\end{align}
Plugging all of~(\ref{eqn::Cramer},~\ref{eqn::Cramer_decom},~\ref{eqn::connection_chi})  
into~\eqref{eqn::holo_limiting_expansion_coefficient}, 
and recalling~\eqref{eq::pair_of_1}, 
we obtain 
\begin{align*} 
2\sqrt{\pi} \, \LK_{\beta}(\bs{x}) 
= \; & 2\frac{P_{\beta}'(\bs{x})}{P_{\beta}(\bs{x})} 
\, + \, \sum_{k=2}^{2N} \frac{1}{x_{k}-x_1} \\
= \; & \frac{1}{x_{2\ell}-x_1} \, + \, 2\frac{P_{\beta}'(\bs{x})}{P_{\beta}(\bs{x})}
\, + \, \sum_{k=2}^{N} \Big( \frac{1}{x_{a_k}-x_1}+\frac{1}{x_{b_k}-x_1} \Big)
= \; \frac{1}{x_{2\ell}-x_1} \, + \,  \textnormal{\eqref{eqn::totalpatition_aux1}},
\end{align*}
where we also used the identity 
\begin{align*}
	-2\sum_{r=2}^N \frac{1}{y_r^{\hat{\sigma}_r,\beta}-x_1}+\sum_{r=2}^N \left(\frac{1}{x_{a_r}-x_1}+\frac{1}{x_{b_r}-x_1}\right)=\sum_{r=2}^N \hat{\sigma}_r\left(\frac{1}{x_{b_r}-x_1}-\frac{1}{x_{a_r}-x_1}\right)
\end{align*}
for all $\bs{\hat{\sigma}}=(\hat{\sigma}_2,\ldots,\hat{\sigma}_{N})\in \{\pm1\}^{N-1}$.
This gives the asserted identity~\eqref{eqn::totalpartition_observable}.
\end{proof}

We fill in the details to finish the proof of Proposition~\ref{prop::totalpartition_observable} (Lemmas~\ref{lem::Cramer_decom} and~\ref{lem::connection_chi}) in Appendix~\ref{appendix_aux}.

\subsection{Scaling limit of the observable:  Proof of Proposition~\ref{prop::observable_cvg}}
\label{subsec::observable_cvg}
Some key ideas in the proof of Proposition~\ref{prop::observable_cvg} are learned from~\cite{Chelkak-Smirnov:Universality_in_2D_Ising_and_conformal_invariance_of_fermionic_observables, Izyurov:Smirnovs_observable_for_free_boundary_conditions_interfaces_and_crossing_probabilities} --- we adjust them to deal with the FK-Ising model in polygons in our setup. 
We first fix some terminology on discrete complex analysis --- see~\cite{Chelkak-Smirnov:Discrete_complex_analysis_on_isoradial_graphs} for more details on discrete harmonicity, holomorphicity, and s-holomorphicity.

\begin{itemize}[leftmargin=*]
\item We say that a function $u \colon \Z^2 \to \C$ is (discrete) \emph{harmonic} (resp.~sub/superharmonic) 
at a vertex $z \in \Z^2$ if 
$\Delta u(z) := \sum_{w \sim z} (u(w)-u(z))=0$  
(resp.~$\Delta u(z) \geq 0$, $\Delta u(z) \leq 0$), where the sum is taken over all neighbors of $z$.
We say that a function $u$ is harmonic (resp.~sub/superharmonic) 
on a subgraph of $\Z^2$ if $u$ is harmonic (resp.~sub/superharmonic) at all vertices of this subgraph.

\item We say that a function $\phi \colon \Z^2 \cup (\Z^2)^{\bullet}\to \C$ is (discrete) \emph{holomorphic} around a medial vertex $z^\diamond$ if the (discrete) Cauchy-Riemann equation at $z^\diamond$ holds: $\phi(n)-\phi(s)=\ii (\phi(e)-\phi(w))$, where $n, w, s, e$ are the vertices incident to $z^\diamond$ in counterclockwise order (two of them are primal vertices while the other two are dual vertices).

\item We say that a function $f \colon (\Z^2)^\diamond\to \C$ is \emph{spin-holomorphic (s-holomorphic)} around a medial edge $e^\diamond$ if 
\begin{align*}
\mathrm{Proj}_{\nu(e^\diamond) \, \R}[f(z_-^\diamond)] = \mathrm{Proj}_{\nu(e^\diamond) \, \R}[f(z_+^\diamond)] , 
\end{align*}
where $z_-^\diamond$ and $z_+^\diamond$ are endpoints of the medial edge $e^\diamond$, 
and $\mathrm{Proj}_{L}$ is the orthogonal projection onto the line $L$ on the complex plane. 
Note that, if $f$ is s-holomorphic around all medial edges of $\Omega^{\delta,\diamond}$ that are not adjacent to the marked medial vertices, 
then it is holomorphic around all interior vertices of $\Omega^{\delta}$ and around all interior dual vertices of $\Omega^{\delta,\bullet}$ (see, e.g.,~\cite[Remark~3.3]{Smirnov:Conformal_invariance_in_random_cluster_models1}). 
\end{itemize}

The next lemma shows that the observable $F_{\beta}^{\delta}$ has Riemann type boundary behavior.

\begin{lemma} \label{lem::discrete_obser}
The observable $F_{\beta}^{\delta}$ has the following properties.
\begin{enumerate}
\item \label{item::edge_obser_direc}
If $e^{\diamond}$ is a medial edge connecting two vertices on $\partial\Omega^{\delta,\diamond}\setminus \{x_1^{\delta,\diamond},x_2^{\delta,\diamond},\ldots,x_{2N}^{\delta,\diamond}\}$, then
$F_{\beta}^{\delta}(e^{\diamond})\parallel \nu(e^{\diamond})$.  

\item \label{item::vertex_obser_direc_odd}
If $x^{\diamond} \in \partial\Omega^{\delta,\diamond}$ is a medial vertex lying on some primal edge in  
$\smash{\bigcup_{r=1}^{N}} 
(x_{2r-1}^{\delta} \, x_{2r}^{\delta})$, 
then $F_{\beta}^{\delta}(x^{\diamond})\parallel \frac{1}{\sqrt{e(x^{\diamond})}}$,
where $e(x^{\diamond})$ is the primal edge having $x^{\diamond}$ as its midpoint, oriented to have the primal polygon on its left, 
and the branch choice of the square root is arbitrary. 

\item \label{item::vertex_obser_direc_even}
If $x^{\diamond} \in \partial\Omega^{\delta,\diamond}$ is a medial vertex lying on some dual edge in 
$\smash{\bigcup_{r=1}^{N}} 
(x_{2r}^{\delta,\bullet} \, x_{2r+1}^{\delta,\bullet})$, 
then $F_{\beta}^{\delta}(x^{\diamond}) \parallel \frac{\ii}{\sqrt{e(x^{\diamond})}}$,
where $e(x^{\diamond})$ is the dual edge having $x^{\diamond}$ as its midpoint, oriented to have the dual polygon on its left,  and the branch choice of the square root is arbitrary. 
\end{enumerate}
\end{lemma}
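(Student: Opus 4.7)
The guiding principle is that the boundary conditions rigidly prescribe the direction of the exploration path along each boundary medial edge, which in turn fixes the phase of the spinor $F_\beta^\delta$ up to a sign.

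For part~\ref{item::edge_obser_direc}, assume $F_\beta^\delta(e^\diamond) \neq 0$, so $\PP_\beta^\delta[e^\diamond \in \xi_\beta^\delta] > 0$. Since $e^\diamond$ lies on $\partial \Omega^{\delta,\diamond}$ between two unmarked boundary vertices on the same arc, the boundary condition $\beta$ (wired on odd-to-even arcs, dual-wired on even-to-odd arcs) determines which side of $e^\diamond$ is adjacent to a primal-open cluster and which side to a dual-open cluster. Because $\xi_\beta^\delta$ is oriented and locally separates these two clusters, the direction in which it crosses $e^\diamond$ is deterministic on the event $\{e^\diamond \in \xi_\beta^\delta\}$; consequently the winding $W := W_{\xi_\beta^\delta}(e_{2\ell}^{\delta,\diamond}, e^\diamond)$ is pinned modulo $4\pi$ once the direction of $e_{2\ell}^{\delta,\diamond}$ is fixed. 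Using the identity $e^\diamond/|e^\diamond| = \pm (e_{2\ell}^{\delta,\diamond}/|e_{2\ell}^{\delta,\diamond}|) \, \exp(-\ii W)$ (the overall sign being precisely the source of the spinor ambiguity) together with $\nu(e) = (e/|e|)^{-1/2}$, one obtains $\nu(e_{2\ell}^{\delta,\diamond}) \exp(-\tfrac{\ii}{2} W) = \pm\, \nu(e^\diamond)$, so the expectation in Definition~\ref{def::observables} is a real multiple of $\nu(e^\diamond)$.

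For parts~\ref{item::vertex_obser_direc_odd} and~\ref{item::vertex_obser_direc_even}, substitute part~\ref{item::edge_obser_direc} into the definition~\eqref{eqn::boundary_verte_obser}: each of $F_\beta^\delta(e_\pm^\diamond)$ becomes a real multiple of $\nu(e_\pm^\diamond)$. The two boundary medial edges $e_\pm^\diamond$ incident to $x^\diamond$ meet at a right angle, so their unit directions differ by multiplication by $\pm \ii$, giving $\nu(e_+^\diamond)/\nu(e_-^\diamond) = \exp(\mp \ii \pi/4)$ up to sign. The phases $\sqrt{2}\exp(\pm \ii \pi/4)$ in~\eqref{eqn::boundary_verte_obser} are calibrated so that, after this substitution, the two summands combine into a real multiple of a common direction. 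A short direct computation --- using that the primal (respectively dual) edge $e(x^\diamond)$ through $x^\diamond$ sits at a $\pi/4$ angle to each of $e_\pm^\diamond$, so that $1/\sqrt{e(x^\diamond)}$ differs from each $\nu(e_\pm^\diamond)$ by a factor $\exp(\pm \ii \pi/8)$ --- shows that the sum is a real multiple of $1/\sqrt{e(x^\diamond)}$ in case~\ref{item::vertex_obser_direc_odd} and of $\ii/\sqrt{e(x^\diamond)}$ in case~\ref{item::vertex_obser_direc_even}. The swap of $e_+^\diamond$ and $e_-^\diamond$ between the two formulas in~\eqref{eqn::boundary_verte_obser} reflects the opposite orientation of wired versus dual-wired arcs and accounts for the extra factor of $\ii$ in~\ref{item::vertex_obser_direc_even}.

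The main technical delicacy is sign-bookkeeping: both $\nu$ and the phase $\exp(-\tfrac{\ii}{2}W)$ are defined only up to sign, which is the origin of the spinor nature of $F_\beta^\delta$. The lemma is phrased using the parallelism relation $\parallel$ precisely to absorb this ambiguity, so that once a branch of $\nu$ is fixed consistently along each boundary arc and at the reference edge $e_{2\ell}^{\delta,\diamond}$, the three assertions reduce to direct algebraic identities among unit complex numbers. No global argument is needed: the proof is entirely local at each edge $e^\diamond$ and each vertex $x^\diamond$ in question.
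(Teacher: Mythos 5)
Your approach to part~\ref{item::edge_obser_direc} is essentially the same as the paper's (the paper simply cites \cite[Lemma~4.1]{Smirnov:Conformal_invariance_in_random_cluster_models1}, which is the winding-modulus argument you spell out). However, your treatment of parts~\ref{item::vertex_obser_direc_odd} and~\ref{item::vertex_obser_direc_even} has a genuine gap.

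You substitute part~\ref{item::edge_obser_direc} into the definition~\eqref{eqn::boundary_verte_obser} to write $F_\beta^\delta(e_+^\diamond)=a_+\nu(e_+^\diamond)$ and $F_\beta^\delta(e_-^\diamond)=a_-\nu(e_-^\diamond)$ with $a_\pm$ real, and then claim the phases $\sqrt{2}e^{\pm\ii\pi/4}$ are ``calibrated'' so that the sum points along $1/\sqrt{e(x^\diamond)}$. But parallelism alone does not control the ratio $a_+/a_-$. Writing $d:=1/\sqrt{e(x^\diamond)}$ and $\nu(e_\pm^\diamond)=d\,e^{\pm\ii\pi/8}$ (up to sign), one finds for $i$ odd
\begin{align*}
F_\beta^\delta(x^\diamond)
= \sqrt{2}\,d\,\big[a_+e^{-\ii\pi/8}+a_-e^{\ii\pi/8}\big]
= \sqrt{2}\,d\,\big[(a_++a_-)\cos(\tfrac{\pi}{8})+\ii(a_--a_+)\sin(\tfrac{\pi}{8})\big],
\end{align*}
which is a real multiple of $d$ if and only if $a_+=a_-$. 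Your argument never establishes this equality, so the direction of the sum is not pinned down.

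What is missing is the exact scalar relation between the two edge observables, not just their directions. This is the content of the paper's equation~\eqref{eqn::obser_one_step_change}, which follows from the structural observation that the exploration path $\xi_\beta^\delta$ passes through $e_-^\diamond$ if and only if it passes through $e_+^\diamond$, and when it does, the winding increment from $e_-^\diamond$ to $e_+^\diamond$ is a deterministic one-step turn of $\pm\pi/2$ (sign depending on the parity of $i$). This yields the pointwise identity $F_\beta^\delta(e_+^\diamond)=e^{\mp\ii\pi/4}F_\beta^\delta(e_-^\diamond)$, i.e.\ $a_+=a_-$ after consistent branch choices, which is precisely what your ``calibration'' step silently assumes. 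Once you add this observation — which is independent of and in addition to part~\ref{item::edge_obser_direc} — the rest of your computation goes through.
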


\begin{proof}
The same argument as in~\cite[Lemma~4.1]{Smirnov:Conformal_invariance_in_random_cluster_models1} proves Item~\ref{item::edge_obser_direc}. 
Covering both Items~\ref{item::vertex_obser_direc_odd} and~\ref{item::vertex_obser_direc_even}, suppose that $x^{\diamond} \in (x_{i}^{\delta,\diamond} \, x_{i+1}^{\delta,\diamond})$. 
Let $e_-^{\diamond}, e_+^{\diamond} \in  (x_{i}^{\delta,\diamond} \, x_{i+1}^{\delta,\diamond})$ be the oriented medial edges having $x^{\diamond}$ as end vertex and beginning vertex, respectively. 
It follows from Definition~\ref{def: exploration path} (recalling also~\eqref{eq::pair_of_1}) 
of the exploration path $\xi^{\delta}$ that it passes through $e_-^{\diamond}$ if and only if it passes through $e_+^{\diamond}$. 
Moreover, when $\xi^{\delta}$ passes through $e_-^{\diamond}$, the winding is
\begin{align*}
W_{\xi^{\delta}} \big( e_{2\ell}^{\delta,\diamond},e_+^{\diamond} \big) 
= 
\begin{cases}
W_{\xi^{\delta}} \big( e_{2\ell}^{\delta,\diamond},e_-^{\diamond} \big)  + \frac{\pi}{2} ,  & \textnormal{if $i$ is odd;}\\
W_{\xi^{\delta}} \big( e_{2\ell}^{\delta,\diamond},e_-^{\diamond} \big) - \frac{\pi}{2} ,  & \textnormal{if $i$ is even.}
\end{cases}
\end{align*}
Consequently, we have
\begin{align}\label{eqn::obser_one_step_change}
F_{\beta}^{\delta}(e_+^{\diamond}) =
\begin{cases}
F_{\beta}^{\delta}(e_-^{\diamond})\exp(-\ii\frac{\pi}{4}), & \textnormal{if $i$ is odd},\\
F_{\beta}^{\delta}(e_-^{\diamond})\exp(\ii\frac{\pi}{4}), & \textnormal{if $i$ is even}.
\end{cases}
\end{align}
Thus, by~\eqref{eqn::boundary_verte_obser} and~\eqref{eqn::obser_one_step_change}, we have
\begin{align} \label{eqn::obser_tangent}
\begin{cases}
F_{\beta}^{\delta}(x^{\diamond}) \parallel F_{\beta}^{\delta}(e_-^{\diamond})\exp(-\ii\frac{\pi}{8}), & \textnormal{if $i$ is odd},\\
F_{\beta}^{\delta}(x^{\diamond}) \parallel F_{\beta}^{\delta}(e_-^{\diamond})\exp(\ii\frac{\pi}{8}), &\textnormal{if $i$ is even}.
\end{cases}
\end{align}
Items~\ref{item::vertex_obser_direc_odd} and~\ref{item::vertex_obser_direc_even} 
now follow from~\eqref{eqn::obser_tangent} and Item~\ref{item::edge_obser_direc}.
\end{proof}

The key property of the observable $F_{\beta}^{\delta}$ is its discrete holomorphicity.

\begin{lemma}\label{lem::s_holomo}
If $z^{\diamond}$ and $w^{\diamond}$ are either two interior vertices of $\Omega^{\delta,\diamond}$, or
two boundary vertices such that $z^{\diamond} , w^{\diamond} \in \smash{\bigcup_{r=1}^{N}} (x_{2r}^{\delta,\diamond} \, x_{2r+1}^{\delta,\diamond})\setminus\{x_1^{\delta,\diamond},x_2^{\delta,\diamond},\ldots,x_{2N}^{\delta,\diamond}\}$, 
and $e^{\diamond}$ is the medial edge connecting them, then $F_{\beta}^{\delta}$ is s-holomorphic around $e^{\diamond}$, that is, 
\begin{align} \label{eqn::s_holomo}
\mathrm{Proj}_{\nu(e^{\diamond}) \, \R}[F_{\beta}^{\delta}(z^{\diamond})]=\mathrm{Proj}_{\nu(e^{\diamond}) \, \R}[F_{\beta}^{\delta}(w^{\diamond})]=F_{\beta}^{\delta}(e^{\diamond}).
\end{align}
In particular, the vertex observable $F_{\beta}^{\delta}$ is holomorphic around all interior vertices of $\Omega^{\delta}$ and around all interior dual vertices of $\Omega^{\delta,\bullet}$.
\end{lemma}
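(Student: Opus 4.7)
The statement is a local version of Smirnov's classical s-holomorphicity for the FK-Ising fermionic observable, adapted to our exploration path $\xi_\beta^\delta$ with general boundary condition $\beta$. The proof would proceed in two main steps: the spinor property of $F_\beta^\delta$ on edges, then the local identity at vertices via an involution on FK configurations.

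\textbf{Spinor property.} I would first show that $F_\beta^\delta(e^\diamond) = c(e^\diamond)\,\nu(e^\diamond)$ with $c(e^\diamond) \in \R$ for every medial edge $e^\diamond$, once a global sign of $\nu(e_{2\ell}^{\delta,\diamond})$ is fixed. For configurations contributing a nonzero term in the definition of $F_\beta^\delta(e^\diamond)$, the winding $W_{\xi_\beta^\delta}(e_{2\ell}^{\delta,\diamond}, e^\diamond)$ lives in a single coset $W_0 + 2\pi\Z$ determined by the geometric angle between $e_{2\ell}^{\delta,\diamond}$ and $e^\diamond$, because the interface is simple and different configurations differ only in how many additional loops they encircle. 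Hence $\exp(-\tfrac{\ii}{2}W)\in\{\pm 1\}\cdot \exp(-\tfrac{\ii}{2}W_0)$, and multiplying by $\nu(e_{2\ell}^{\delta,\diamond})$ gives a real multiple of $\nu(e^\diamond)$ after choosing the sign appropriately.

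\textbf{Local identity at vertices.} With the spinor property in hand, the identity~\eqref{eqn::s_holomo} reduces to a real linear relation among the coefficients $c(e'^\diamond)$ for the four medial edges $e'^\diamond$ incident to $z^\diamond$: since consecutive $\nu$'s differ in argument by $\pi/4$, the projection coefficients of $\nu(e'^\diamond)$ onto $\nu(e^\diamond)\,\R$ are $1, \tfrac{1}{\sqrt{2}}, 0, -\tfrac{1}{\sqrt{2}}$, so the identity takes the form $c(e^\diamond_{\mathrm{L}}) - c(e^\diamond_{\mathrm{R}}) = \sqrt{2}\, c(e^\diamond)$. I would prove this relation by Smirnov's classical local involution: flipping the status (open/closed) of the primal edge whose midpoint is $z^\diamond$ pairs each configuration in which $\xi_\beta^\delta$ uses one pair of medial edges at $z^\diamond$ with a configuration in which it uses the orthogonal pair. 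The ratio of FK weights under the flip is $\tfrac{p_c(2)}{1-p_c(2)}\cdot 2^{\pm 1/2} = \sqrt{2}\cdot \sqrt{2}^{\pm 1}$, which at the critical point precisely cancels the phase $\exp(\pm\ii\pi/4)$ coming from the $\pi/2$ winding change between the two ways the interface traverses $z^\diamond$, producing the desired real identity. For $z^\diamond, w^\diamond$ on a dual arc $(x_{2r}^{\delta,\diamond}\,x_{2r+1}^{\delta,\diamond})$, corresponding to a free primal boundary segment, the primal edge through $z^\diamond$ does not belong to $\Omega^\delta$, so only half of the configurations enter the involution; the balance still closes and~\eqref{eqn::s_holomo} again holds. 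The final ``in particular'' assertion about discrete holomorphicity follows automatically from s-holomorphicity together with the fact that the vertex observable takes values in $\R$ at primal vertices and in $\ii\R$ at dual vertices, giving real and imaginary parts that are each harmonic.

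\textbf{Expected obstacle.} The only novelty relative to the case $\beta = \unnested$ treated by Izyurov is the global structure of $\xi_\beta^\delta$, which may traverse external arcs dictated by $\beta$. However, the local flip at $z^\diamond$ modifies the configuration only within a single-edge neighborhood of $z^\diamond$, and thus preserves both the global segments of $\xi_\beta^\delta$ outside this neighborhood and the pattern along the external boundary, so the pairing of configurations is well-defined for arbitrary $\beta$. The main bookkeeping to check is that the flip changes the cluster count of the quotient graph $\omega^{\bssymb_\beta}$ by exactly $\pm 1$ --- a standard consequence of planarity and of the fact that $\xi_\beta^\delta$ passes through $z^\diamond$ exactly once in the paired configurations. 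Once this is verified, the identification of the $c(e^\diamond)$'s follows from the critical-weight cancellation exactly as in the simply-connected setup.
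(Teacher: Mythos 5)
Your core strategy matches the paper's: establish the spinor property of the edge observable (the paper's Lemma~\ref{lem::discrete_obser}, Item~\ref{item::edge_obser_direc}, which is Smirnov's Lemma~4.1), then obtain~\eqref{eqn::s_holomo} at interior medial vertices by Smirnov's local involution (the paper cites~\cite[Lemma~4.5]{Smirnov:Conformal_invariance_in_random_cluster_models1} directly rather than re-deriving it, but the content is identical, and your observation that the flip is local and therefore insensitive to the external connectivity $\beta$ is precisely why the cited lemma applies verbatim here).

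There are, however, two places where your argument is off. First, for $z^{\diamond},w^{\diamond}$ on a dual arc $(x_{2r}^{\delta,\diamond}\,x_{2r+1}^{\delta,\diamond})$, an involution argument is not the right mechanism and your claim that ``only half the configurations enter the involution; the balance still closes'' is a gap: there is no primal edge in $\Omega^{\delta}$ to flip at $z^{\diamond}$, so there is no involution. The correct argument, which the paper uses, is purely algebraic: the boundary vertex observable~\eqref{eqn::boundary_verte_obser} is \emph{defined} so that~\eqref{eqn::s_holomo} holds along the boundary. Concretely, combining~\eqref{eqn::boundary_verte_obser} with the one-step phase relation~\eqref{eqn::obser_one_step_change} and the spinor direction $F_{\beta}^{\delta}(e^{\diamond})\parallel\nu(e^{\diamond})$, one computes $\mathrm{Proj}_{\nu(e^{\diamond})\,\R}[F_{\beta}^{\delta}(z^{\diamond})]=F_{\beta}^{\delta}(e^{\diamond})$ directly; no bijection of configurations is involved. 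Second, your justification of the ``in particular'' clause --- that the vertex observable is real at primal vertices and imaginary at dual vertices --- is incorrect: the vertex observable is defined on medial vertices, not primal or dual ones. The statement that s-holomorphicity implies discrete holomorphicity (vanishing of discrete contour integrals around primal and dual vertices) is an elementary algebraic consequence of the projection identity~\eqref{eqn::s_holomo} itself, and is what~\cite[Remark~3.3]{Smirnov:Conformal_invariance_in_random_cluster_models1} gives; the appeal to values in $\R$ or $\ii\R$ is not the right reason and should be replaced by that computation.
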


\begin{proof}
If $z^{\diamond} , w^{\diamond} \in \smash{\bigcup_{r=1}^{N}}  (x_{2r}^{\delta,\diamond} \, x_{2r+1}^{\delta,\diamond})\setminus\{x_1^{\delta,\diamond},x_2^{\delta,\diamond},\ldots,x_{2N}^{\delta,\diamond}\}$,
then~\eqref{eqn::s_holomo} follows immediately from the definition~\eqref{eqn::boundary_verte_obser} of $F_{\beta}^{\delta}$ 
together with Item~\ref{item::edge_obser_direc} of
Lemma~\ref{lem::discrete_obser} and the observation~\eqref{eqn::obser_one_step_change}.
For two interior medial vertices~\eqref{eqn::s_holomo} follows from~\cite[Lemma~4.5]{Smirnov:Conformal_invariance_in_random_cluster_models1}. 
The discrete holomorphicity of the vertex observable $F_{\beta}^{\delta}$ can be deduced from its s-holomorphicity (see, e.g.,~\cite[Remark~3.3]{Smirnov:Conformal_invariance_in_random_cluster_models1}).  
\end{proof}

From Lemma~\ref{lem::s_holomo} and~\cite[Lemma~3.6]{Smirnov:Conformal_invariance_in_random_cluster_models1}, 
we see that there exists a unique function 
(the imaginary part of the discrete ``primitive'' of $(F_{\beta}^{\delta})^2$) 
\begin{align*} 
H^{\delta}_{\beta} \colon \Omega^\delta\cup \Omega^{\delta,\bullet}\to \R
\qquad \textnormal{such that} \qquad 
\begin{cases} 
H^{\delta}_{\beta}(x_{1}^{\delta}) = 0 , \\
H^{\delta}_{\beta}(w^{\bullet}) - H^{\delta}_{\beta}(z) 
= \big| \mathrm{Proj}_{\nu(e^{\diamond}) \, \R} [F_{\beta}^{\delta}(e^{\diamond}) ] \big|^2 ,
\end{cases} 
\end{align*}
for each medial edge $e^{\diamond}$ bordered by a primal vertex $z \in \Omega^\delta$ and a dual vertex $w^{\bullet} \in \Omega^{\delta,\bullet}$. 
Let $\smash{H^{\delta,\bullet}_{\beta}}$ and $\smash{H^{\delta,\circ}_{\beta}}$ be the restrictions of $H_{\beta}^{\delta}$ on $\Omega^{\delta,\bullet}$ and $\Omega^{\delta}$, respectively.
Note that, if $z,w \in \Omega^{\delta}$ are two neighboring primal vertices, 
then we have (see, e.g.,~\cite[Remark~3.7]{Smirnov:Conformal_invariance_in_random_cluster_models1})
\begin{align} \label{eqn::discrete_integr}
H^{\delta,\circ}_{\beta}(z) - H^{\delta,\circ}_{\beta}(w)
= \Im \bigg( \frac{\big(F_{\beta}^{\delta}(\frac{z+w}{2})\big)^2}{\sqrt{2}\delta} (z - w) \bigg) . 
\end{align}
Notably, the function $H^{\delta}_{\beta}$ has Dirichlet type boundary conditions that are more directly related to the exploration path --- see Eq.~\eqref{eqn::const_jump} in the next lemma.

\begin{lemma} \label{lem::discrete_H}
There exist constants $(\MainConst_1^{\delta},\ldots,\MainConst_{2N}^{\delta}) \in \R^{2N}$
with $\MainConst_{1}^{\delta}=0$ such that the following  
hold.
\begin{enumerate}
\item \label{item::boundary_value_H} 
The function $\smash{H^{\delta,\bullet}_{\beta}}$ is subharmonic on the interior vertices of $\Omega^{\delta,\bullet}$. The function $\smash{H^{\delta,\circ}_{\beta}}$ is superharmonic on the interior vertices of $\Omega^{\delta}$. For each $r \in  \{1,2,\ldots,N\}$, we have the boundary values
\begin{align*}
\begin{cases}
H^{\delta,\bullet}_{\beta} = \MainConst_{2r}^{\delta} & \textnormal{ on }(x_{2r}^{\delta,\bullet} \, x_{2r+1}^{\delta,\bullet}) , \\
H^{\delta,\circ}_{\beta} = \MainConst_{2r-1}^{\delta} & \textnormal{ on }(x_{2r-1}^{\delta} \, x_{2r}^{\delta}).
\end{cases}
\end{align*}

\item \label{item::boundary midification} 
For each $r \in  \{1,2,\ldots,N\}$, set 
$H^{\delta,\bullet}_{\beta} := \MainConst_{2r-1}^{\delta}$ on dual vertices in $(\delta\Z^2)^{\bullet} \setminus \Omega^{\delta,\bullet}$ adjacent to $(x_{2r-1}^{\delta,\bullet} \, x_{2r}^{\delta,\bullet})$ 
and $H^{\delta,\circ}_{\beta} := \MainConst_{2r}^{\delta}$ on primal vertices in $\delta\Z^2 \setminus \Omega^{\delta}$ adjacent to $(x_{2r}^{\delta} \, x_{2r+1}^{\delta})$. 
Then, the function $\smash{H^{\delta,\bullet}_{\beta}}$ is also subharmonic at all $z^{\bullet} \in \smash{\bigcup_{r=1}^{N}} (x_{2r-1}^{\delta,\bullet} \, x_{2r}^{\delta,\bullet})$ with Laplacian modified on the boundary: 
\begin{align*}
\Delta H^{\delta,\bullet}_{\beta}(z^{\bullet}) := \sum_{w^{\bullet}\sim z^{\bullet}} d(z^{\bullet},w^{\bullet}) \, (H^{\delta,\bullet}_{\beta}(w^{\bullet})-H^{\delta,\bullet}_{\beta}(z^{\bullet})) \geq 0 ,
\end{align*}
 where $d(z^{\bullet},w^{\bullet}):=1$ if $w^{\bullet} \in \Omega^{\delta,\bullet}$ and $d(z^{\bullet},w^{\bullet}):=2\tan\frac{\pi}{8}=2(\sqrt{2}-1)$ if $w^{\bullet}\notin\Omega^{\delta,\bullet}$.

Besides, $\smash{H^{\delta,\circ}_{\beta}}$ is superharmonic at all $z \in \smash{\bigcup_{r=1}^{N}} (x_{2r}^{\delta} \, x_{2r+1}^{\delta})$ with Laplacian modified on the boundary: 
\begin{align*}
\Delta H^{\delta,\circ}_{\beta}(z) 
:= \sum_{w \sim z} d(z,w) \, (H^{\delta,\circ}_{\beta}(w) - H^{\delta,\circ}_{\beta}(z)) \leq 0 , 
\end{align*}
where $d(z,w) := 1$ if $w \in \Omega^{\delta}$ and $d(z,w) = 2(\sqrt{2}-1)$ if $w \notin\Omega^{\delta}$.      

\item \label{item::const_comparison} For each $r \in \{1,2,\ldots,N\}$, we have 
$\MainConst_{2r}^{\delta}\geq \MainConst_{2r-1}^{\delta}$ and $\MainConst_{2r}^{\delta}\geq \MainConst_{2r+1}^{\delta}$.

\item \label{item::const_jump} 
For each $r \in \{1,2,\ldots,N\}$, we have
\begin{align}\label{eqn::const_jump}
|\MainConst_{a_r-1}^{\delta}-\MainConst_{a_r}^{\delta}| 
= \; & |\MainConst_{b_r-1}^{\delta}-\MainConst_{b_r}^{\delta}| \\
= \; & \Big( \PP_{\beta}^{\delta} \big[\textnormal{$\xi^{\delta}$ passes through the outer corners $y_{a_r}^{\delta,\diamond}$ and $y_{b_r}^{\delta,\diamond}$} \big]\Big)^2.\notag
\end{align} 
In particular, we have $|\MainConst_{1}^{\delta}-\MainConst_{2N}^{\delta}|=1$. As a consequence, the family 
$\{ \MainConst_1^{\delta},\ldots,\MainConst_{2N}^{\delta} \}_{\delta>0}$
of constants is uniformly bounded.
\end{enumerate}
\end{lemma}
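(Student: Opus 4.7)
The proof follows the classical Smirnov s-holomorphic framework: we exploit Lemma~\ref{lem::s_holomo} together with the Riemann-type boundary data of Lemma~\ref{lem::discrete_obser} to extract Dirichlet-type data for the discrete primitive $H^{\delta}_{\beta}$, and then read the jumps of this data off of $|F^\delta_\beta|^2$ at the outer corner edges.

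For Items~\ref{item::boundary_value_H} and~\ref{item::const_comparison}, existence of $H^{\delta}_{\beta}$ up to a global additive constant is granted by Lemma~\ref{lem::s_holomo} and~\cite[Lemma~3.6]{Smirnov:Conformal_invariance_in_random_cluster_models1}; normalizing $H^{\delta,\circ}_\beta(x_1^\delta) = 0$ fixes $\MainConst_1^\delta = 0$. I will then show $H^{\delta,\circ}_\beta$ is constant on each primal-wired arc: if $z,w$ are primal neighbors on such an arc, then Item~\ref{item::vertex_obser_direc_odd} of Lemma~\ref{lem::discrete_obser} says $F^\delta_\beta(\tfrac{z+w}{2})^2 \parallel 1/e(\tfrac{z+w}{2}) \parallel 1/(w-z)$, so plugging into~\eqref{eqn::discrete_integr} gives $H^{\delta,\circ}_\beta(z)=H^{\delta,\circ}_\beta(w)$; the analogous argument with Item~\ref{item::vertex_obser_direc_even} and the dual version of~\eqref{eqn::discrete_integr} handles dual-wired arcs. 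Define $\MainConst^\delta_{2r-1}$, $\MainConst^\delta_{2r}$ as these common values. Since $H^{\delta,\bullet}_\beta(w^\bullet) - H^{\delta,\circ}_\beta(z) = |\mathrm{Proj}_{\nu(e^\diamond)\R}F^\delta_\beta(e^\diamond)|^2\geq 0$ for every primal-dual pair, comparing values at neighboring arcs yields Item~\ref{item::const_comparison}. Interior sub/superharmonicity of $H^{\delta,\bullet}_\beta$ and $H^{\delta,\circ}_\beta$ is the standard discrete Dirichlet-energy identity for primitives of squared s-holomorphic functions, cf.~\cite[Section~3]{Smirnov:Conformal_invariance_in_random_cluster_models1}.

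For Item~\ref{item::boundary midification}, I adapt the Chelkak--Smirnov boundary identity from~\cite{Chelkak-Smirnov:Universality_in_2D_Ising_and_conformal_invariance_of_fermionic_observables}. If $z^\bullet$ is a dual vertex on a primal-wired arc, exactly one of its four neighbors $w^\bullet$ is missing from $\Omega^{\delta,\bullet}$; decomposing $F^\delta_\beta(z^\bullet)$ onto the two $\pi/4$-rotated lines corresponding to the two boundary medial edges incident to $z^\bullet$, and using that the projections of $F^\delta_\beta$ onto $\nu(e^\diamond)\R$ are constant across each boundary medial edge, the contribution to $\Delta H^{\delta,\bullet}_\beta(z^\bullet)$ that would have come from $w^\bullet$ is restored exactly by setting $H^{\delta,\bullet}_\beta(w^\bullet):=\MainConst^\delta_{2r-1}$ and weighting the boundary edge by $2\tan(\pi/8)=2(\sqrt2-1)$. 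The identical calculation with primal/dual exchanged handles $H^{\delta,\circ}_\beta$.

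For Item~\ref{item::const_jump}, the jump of $H$ across the marked medial vertex $x_j^{\delta,\diamond}$ (between the two adjacent arcs) equals $|F^\delta_\beta(e_j^{\delta,\diamond})|^2$, since $H$ is the primitive of $|\mathrm{Proj}_{\nu(e^\diamond)\R}F^\delta_\beta(e^\diamond)|^2$ and at an outer corner edge Item~\ref{item::edge_obser_direc} of Lemma~\ref{lem::discrete_obser} gives $F_\beta^\delta(e_j^{\delta,\diamond}) \parallel \nu(e_j^{\delta,\diamond})$. At outer corner edges the winding $W_{\xi_\beta^\delta}(e_{2\ell}^{\delta,\diamond}, e_j^{\delta,\diamond})$ is deterministic, so the expectation defining $F^\delta_\beta(e_j^{\delta,\diamond})$ reduces to a unimodular phase times $\PP_\beta^\delta[\xi^\delta_\beta \ni e_j^{\delta,\diamond}]$, giving
\begin{align*}
|\MainConst_{a_r-1}^\delta-\MainConst_{a_r}^\delta|
= |F^\delta_\beta(e^{\delta,\diamond}_{a_r})|^2
= \PP_\beta^\delta\big[\xi^\delta_\beta \text{ passes through } y^{\delta,\diamond}_{a_r}\big]^2.
\end{align*}
By Definition~\ref{def: exploration path}, the path $\xi^\delta_\beta$ visits $y^{\delta,\diamond}_{a_r}$ precisely when it visits $y^{\delta,\diamond}_{b_r}$ (they are endpoints of a common traversal segment), so the two probabilities coincide and the asserted equality~\eqref{eqn::const_jump} holds. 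For $r=1$ the exploration path deterministically starts at $y^{\delta,\diamond}_{1}$ and ends at $y^{\delta,\diamond}_{2\ell}$, so this probability equals $1$, which (reading indices cyclically at $j=1$ and using $\MainConst^\delta_1=0$) gives $|\MainConst^\delta_1-\MainConst^\delta_{2N}|=1$. Uniform boundedness is then automatic: all $2N$ jumps are bounded by $1$, so $|\MainConst^\delta_j|\le 2N$ by telescoping. The main technical obstacle is the modified-Laplacian identity (Item~\ref{item::boundary midification}): the $2\tan(\pi/8)$ boundary weight requires a careful case-by-case projection computation at each type of boundary medial edge, whereas the remaining items are direct consequences of the Riemann boundary conditions and the exploration-path interpretation of $F^\delta_\beta$.
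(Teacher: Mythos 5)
Your proof is correct and follows essentially the same route as the paper's own argument: interior sub/superharmonicity of $H^{\delta,\bullet}_\beta$ and $H^{\delta,\circ}_\beta$ is imported from Smirnov's Lemma~3.8, the modified boundary Laplacian with weight $2\tan(\pi/8)$ from Chelkak--Smirnov's Lemma~3.14, and Items~\ref{item::boundary_value_H}, \ref{item::const_comparison}, \ref{item::const_jump} are read off the construction of the discrete primitive. The difference is only one of granularity: where the paper writes ``by construction,'' you unpack the mechanics --- showing constancy on wired/free arcs by combining the Riemann boundary directions from Lemma~\ref{lem::discrete_obser} with the increment relation~\eqref{eqn::discrete_integr}, deducing Item~\ref{item::const_comparison} from the nonnegativity $H^{\delta,\bullet}_\beta(w^\bullet)-H^{\delta,\circ}_\beta(z)=|\mathrm{Proj}_{\nu(e^\diamond)\R}F^{\delta}_\beta(e^\diamond)|^2\ge 0$ across primal--dual edges near the marked points, and identifying the jump at the outer corner edge $e_{a_r}^{\delta,\diamond}$ as $|F^\delta_\beta(e_{a_r}^{\delta,\diamond})|^2$ together with the observation that the winding at an outer corner edge is deterministic, which collapses $|F^\delta_\beta|$ to the visiting probability of $\xi_\beta^\delta$. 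That last observation, plus the fact that $\xi_\beta^\delta$ visits $y_{a_r}^{\delta,\diamond}$ iff it visits $y_{b_r}^{\delta,\diamond}$ (they are joined by the $\beta$-contour), gives~\eqref{eqn::const_jump} exactly as the paper intends. The telescoping bound $|\MainConst^\delta_k|\le 2N$ is one looser than the paper's $2N-1$ but serves the same purpose. In short: same proof, spelled out more explicitly.
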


\begin{proof}
The subharmonicity of $\smash{H^{\delta,\bullet}_{\beta}}$ and superharmonicity of $\smash{H^{\delta,\circ}_{\beta}}$ on interior vertices both follow from 
\cite[Lemma~3.8]{Smirnov:Conformal_invariance_in_random_cluster_models1}. 
 By construction, $\smash{H^{\delta,\bullet}_{\beta}}$ is constant on $(x_{2r}^{\delta,\bullet} \, x_{2r+1}^{\delta,\bullet})$ 
and $\smash{H^{\delta,\circ}_{\beta}}$ is constant on $(x_{2r-1}^{\delta} \, x_{2r}^{\delta})$. This gives Item~\ref{item::boundary_value_H}. 
Item~\ref{item::boundary midification} follows from~\cite[Lemma~3.14]{Chelkak-Smirnov:Universality_in_2D_Ising_and_conformal_invariance_of_fermionic_observables}.
Item~\ref{item::const_comparison} and relation~\eqref{eqn::const_jump} hold by construction. 
The identity $|\MainConst_{1}^{\delta}-\MainConst_{2N}^{\delta}|=1$ follows from~\eqref{eqn::const_jump} since $\xi^{\delta}$ goes through $y_{1}^{\delta,\diamond}$ with probability one. 
Lastly, as $\MainConst_{1}^{\delta}=0$, we find from~\eqref{eqn::const_jump} that
$|\MainConst_{k}^{\delta}|\leq 2N-1$, for all $\delta>0$ and $1 \leq k \leq 2N$. 
\end{proof}

We see from Lemma~\ref{lem::discrete_H} that the collection $\{ \MainConst_1^{\delta},\ldots,\MainConst_{2N}^{\delta} \}_{\delta>0}$ of constants has convergent subsequences. For the convergence of the observable, we also need the following key lemma.

\begin{lemma} \label{lem::sub_limit_obser}
Assume the same setup as in Proposition~\ref{prop::observable_cvg}. 
We extend $H^{\delta}_{\beta}$ to continuous functions on the planar domains corresponding to $\Omega^{\delta,\diamond}$ via linear interpolation. 
Then, the sequence 
\begin{align*}
\big\{ (2^{-1/4}\delta^{-1/2} \, F^{\delta}_{\beta}, \, H^{\delta}_{\beta}) \big\}_{\delta>0}
\end{align*}
has 
\textnormal{(}locally uniformly\textnormal{)} convergent subsequences. 
Moreover, any subsequential limit $(F_{\beta},H_{\beta})$, 
with also $(\MainConst_1^\delta,\MainConst_2^\delta,\ldots,\MainConst_{2N}^\delta)$ converging to some $(\MainConst_1,\MainConst_2,\ldots,\MainConst_{2N})\in\R^{2N}$, 
satisfies the following properties. 
\begin{enumerate}
\item \label{item::conti_integral}
The function $F_{\beta}$ is holomorphic on $\Omega$, and $H_{\beta}(w)=\Im\int^w F_{\beta}(z)^2 \, \ud z$ on $\Omega \ni w$. 

\item \label{item::bounded_harmo_h}
The function $H_{\beta}$ is bounded and harmonic on $\Omega$. 

\item \label{item::const_boundar_h}
We have $H_{\beta}(z)\to \MainConst_k$ as $z\to (x_k \, x_{k+1})$ {in $\HH$}, for all $k \in \{1,2,\ldots,2N\}$. 

\item \label{item::const_comparison_limit}
The relations $\MainConst_{2r} \geq \MainConst_{2r-1}$ and $\MainConst_{2r} \geq \MainConst_{2r+1}$ hold for all $r \in  \{1,2,\ldots,N\}$.  

\item \label{item::const_jump_limit}
The relation $|\MainConst_{a_r-1}-\MainConst_{a_r}| = |\MainConst_{b_r-1}-\MainConst_{b_r}|$ holds for all $r \in \{1,2,\ldots,N\}$, and we have $|\MainConst_{1}-\MainConst_{2N}|=1$. 

\item \label{item::partial_n} 
The outer normal derivative $\partial_{\mathrm{n}} H_{\beta}$
of the function $H_{\beta}$ satisfies $\partial_{\mathrm{n}} H_{\beta}\geq 0$ on $\smash{\bigcup_{r=1}^{N}}  (x_{2r} \, x_{2r+1})$ and $\partial_{\mathrm{n}} H_{\beta}\leq 0$ on $\smash{\bigcup_{r=1}^{N}} (x_{2r-1} \, x_{2r})$ in the following sense: 
if $z\in (x_{2r} \, x_{2r+1})$ for some $r$, then
\begin{align*}
H_{\beta}^{-1}( {-\infty} , \MainConst_{2r}] \cap \{w \in \Omega \colon |w-z|<\epsilon \} \neq \emptyset , \qquad\textnormal{for all } \epsilon > 0,
\end{align*}
while if $z\in (x_{2r-1} \, x_{2r})$ for some $r$, then
\begin{align*}
H_{\beta}^{-1} [\MainConst_{2r-1},\infty) \cap \{w \in \Omega \colon |w-z|<\epsilon\} \neq \emptyset , \qquad \textnormal{for all } \epsilon > 0 .
\end{align*}
\end{enumerate}
\end{lemma}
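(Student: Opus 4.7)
The plan is to establish precompactness first, and then pass to the limit in each of the discrete identities from Lemma~\ref{lem::discrete_H}, using the machinery of s-holomorphic functions and their discrete primitives developed in~\cite{Smirnov:Conformal_invariance_in_random_cluster_models1, Chelkak-Smirnov:Universality_in_2D_Ising_and_conformal_invariance_of_fermionic_observables}.

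First, for precompactness: by Item~\ref{item::const_jump} of Lemma~\ref{lem::discrete_H}, the constants $(C_1^\delta,\ldots,C_{2N}^\delta)$ are uniformly bounded, so by Items~\ref{item::boundary_value_H} and~\ref{item::boundary midification} the functions $H^\delta_\beta$ have uniformly bounded boundary values and are sub/superharmonic (with the modified Laplacian at the free portions of the boundary). Standard maximum-principle arguments for the modified Laplacian (cf.~\cite[Proposition~3.15]{Chelkak-Smirnov:Universality_in_2D_Ising_and_conformal_invariance_of_fermionic_observables}) therefore give uniform boundedness of $H^\delta_\beta$, so $(H^\delta_\beta)_\delta$ is precompact in the topology of uniform convergence on compact subsets of $\Omega$. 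The s-holomorphicity of $F^\delta_\beta$ (Lemma~\ref{lem::s_holomo}), combined with the identity~\eqref{eqn::discrete_integr} expressing differences of $H^{\delta,\circ}_\beta$ as imaginary parts of $(F^\delta_\beta)^2$ times increments, then gives uniform boundedness of the s-holomorphic observables on compact subsets of $\Omega$, via the now-standard compactness principle for s-holomorphic functions~\cite[Proposition~3.11, Corollary~3.12]{Chelkak-Smirnov:Universality_in_2D_Ising_and_conformal_invariance_of_fermionic_observables}. Extracting a diagonal subsequence, we obtain $(F_\beta, H_\beta)$ with $F_\beta$ holomorphic on $\Omega$ and $H_\beta$ harmonic on $\Omega$, together with a limit $(C_1,\ldots,C_{2N})$ of the boundary constants; the identity $H_\beta(w) = \Im\int^w F_\beta(z)^2\,\mathrm{d}z$ is obtained by passing to the limit in~\eqref{eqn::discrete_integr}, which yields Items~\ref{item::conti_integral} and~\ref{item::bounded_harmo_h}.

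Next, for the boundary behavior of $H_\beta$ (Item~\ref{item::const_boundar_h}), we use the close-Carath\'eodory convergence of $(\Omega^{\delta,\diamond}; x_1^{\delta,\diamond},\ldots,x_{2N}^{\delta,\diamond})$ together with the (sub/super)harmonicity established in Lemma~\ref{lem::discrete_H}. The strategy is to use barrier functions at arcs $(x_k\,x_{k+1})$ where $H^{\delta,\bullet}_\beta$ or $H^{\delta,\circ}_\beta$ has the fixed boundary value $C_k^\delta$, combined with a small-boundary exception near the marked points, as in~\cite[Theorem~6.1]{Chelkak-Smirnov:Universality_in_2D_Ising_and_conformal_invariance_of_fermionic_observables} and~\cite[Proof of Theorem~2.6]{Izyurov:Smirnovs_observable_for_free_boundary_conditions_interfaces_and_crossing_probabilities}. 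Items~\ref{item::const_comparison_limit} and~\ref{item::const_jump_limit} are direct consequences of Items~\ref{item::const_comparison} and~\ref{item::const_jump} of Lemma~\ref{lem::discrete_H} after passing to the limit. The inequalities $\partial_{\mathrm n} H_\beta \geq 0$ or $\leq 0$ on the appropriate arcs (Item~\ref{item::partial_n}) follow from Item~\ref{item::boundary midification} by a limiting argument: the modified Laplacian is sub/superharmonic even at vertices adjacent to the free boundary arcs, which translates in the limit to the one-sided normal-derivative bound via comparison with harmonic measure from a small neighborhood of the boundary point.

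The main obstacle I expect is the boundary regularity at and near the marked points $x_1,\ldots,x_{2N}$, where the boundary type of $H^\delta_\beta$ changes and the domain approximations may be rough. The close-Carath\'eodory hypothesis (Definition~\ref{def:closeCara}) is precisely designed to handle this: it guarantees good discrete approximations of $\partial\Omega$ near each $x_j$ so that barrier arguments (quarter-disc harmonic measure estimates) can be applied to conclude that $H^\delta_\beta$ cannot develop uncontrolled oscillations near the marked points. Combined with the uniform boundary values on each arc away from the marked points, this gives the clean boundary identification in Item~\ref{item::const_boundar_h}. The remaining items then follow by taking limits in the corresponding discrete identities without further difficulty.
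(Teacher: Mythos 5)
Your plan follows the same overall route as the paper: uniform bounds on $H^\delta_\beta$ from the boundary-value and jump estimates in Lemma~\ref{lem::discrete_H}, precompactness of the pair $(2^{-1/4}\delta^{-1/2}F^\delta_\beta, H^\delta_\beta)$ via the compactness principle for s-holomorphic functions (\cite[Theorem~3.12]{Chelkak-Smirnov:Universality_in_2D_Ising_and_conformal_invariance_of_fermionic_observables}), Morera and passage to the limit in~\eqref{eqn::discrete_integr} for Items~\ref{item::conti_integral}--\ref{item::bounded_harmo_h}, a harmonic-measure comparison for Item~\ref{item::const_boundar_h}, and limits of the discrete inequalities for Items~\ref{item::const_comparison_limit}--\ref{item::partial_n}. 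The general structure is right and the citations are appropriate.

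Where you are imprecise is exactly at the heart of Item~\ref{item::const_boundar_h}. You say ``barrier functions at arcs $(x_k\, x_{k+1})$ where $H^{\delta,\bullet}_\beta$ or $H^{\delta,\circ}_\beta$ has the fixed boundary value'', but the actual mechanism is a two-sided \emph{sandwich} built from the duality of the two restrictions: on the wired arc $(x_{2r-1}\,x_{2r})$ the primal restriction $H^{\delta,\circ}_\beta$ is \emph{super}harmonic and constant $\MainConst_{2r-1}^\delta$ there, which together with the global bound $|H^\delta_\beta|\le M$ gives the lower bound
$H_\beta(z)\ge \MainConst_{2r-1}\,\hm(z;(x_{2r-1}\,x_{2r});\Omega) - M\,\hm(z;(x_{2r}\,x_{2r-1});\Omega)$,
while the dual restriction $H^{\delta,\bullet}_\beta$ is \emph{sub}harmonic with the \emph{same} extended boundary value $\MainConst_{2r-1}^\delta$ on that arc (Item~\ref{item::boundary midification} of Lemma~\ref{lem::discrete_H}), giving the matching upper bound. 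Passing to the limit in the discrete harmonic measures (\cite[Theorem~3.12]{Chelkak-Smirnov:Discrete_complex_analysis_on_isoradial_graphs}) and sending $z$ to the arc then pins down $H_\beta\to\MainConst_{2r-1}$. Your formulation in terms of a single barrier at each arc, plus ``small-boundary exception near the marked points'', could be made to work, but it obscures the crucial role of using both primal and dual restrictions of $H^\delta_\beta$ with matched boundary values.

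A second, smaller misstep: you attribute the boundary identification in Item~\ref{item::const_boundar_h} to close-Carath\'eodory convergence near the marked points. Proposition~\ref{prop::observable_cvg} and Lemma~\ref{lem::sub_limit_obser} are stated under plain Carath\'eodory convergence, and the sandwich above needs only the Carath\'eodory convergence of discrete harmonic measures; the strengthened close-Carath\'eodory hypothesis is what's needed for precompactness of the \emph{interfaces} (Lemma~\ref{lem::FKIsing_tightness}), not for the convergence of the observable.
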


\begin{proof}
The sequence $\{H_{\beta}^{\delta}\}_{\delta>0}$ is uniformly bounded by
Items~\ref{item::boundary_value_H}~\&~\ref{item::const_jump} of Lemma~\ref{lem::discrete_H}:
we have
\begin{align} \label{eq::H_uniformly_bounded}
|H_{\beta}^{\delta}| \leq M , \qquad \textnormal{for all } \, \delta > 0 ,
\end{align}
with some $M\in (0,\infty)$.
Thus, the sequence $\big\{ (2^{-1/4}\delta^{-1/2} \, F^{\delta}_{\beta}, \, H^{\delta}_{\beta}) \big\}_{\delta>0}$ has (locally uniformly) convergent subsequences by~\cite[Theorem~3.12]{Chelkak-Smirnov:Universality_in_2D_Ising_and_conformal_invariance_of_fermionic_observables}. 
Item~\ref{item::const_jump} of Lemma~\ref{lem::discrete_H} ensures that $\{(\MainConst_{1}^{\delta},\MainConst_{2}^{\delta},\ldots,\MainConst_{2N}^{\delta})\}_{\delta>0}$ has convergent subsequences.
Let $(F_{\beta},H_{\beta})$ be any subsequential limit 
along a sequence $\delta_n \to 0$ as $n \to \infty$
of $\big\{ (2^{-1/4}\delta^{-1/2} \, F^{\delta}_{\beta}, \, H^{\delta}_{\beta}) \big\}_{\delta>0}$ with $(\MainConst_{1}^{\delta_n},\ldots,\MainConst_{2N}^{\delta_n})$ also converging to some $(\MainConst_{1},\ldots,\MainConst_{2N})\in\R^{2N}$ 
(choosing a simultaneously convergent subsequence by refining the sequence if necessary).
Since $F_{\beta}^{\delta}$ is (discrete) holomorphic for each $\delta>0$ (Lemma~\ref{lem::s_holomo}) and the convergence is locally uniform, the limit $F_{\beta}$ is holomorphic due to Morera's theorem. By~\eqref{eqn::discrete_integr} and the locally uniform convergence, we obtain the relation $H_{\beta}(w)=\Im\int^w F_{\beta}(z)^2 \, \ud z$. 
Being the imaginary part of the holomorphic function $w \mapsto \int^{w} F_{\beta}(z)^2 \, \ud z$, the function $H_{\beta}(w)$ is harmonic on $\Omega$, and~\eqref{eq::H_uniformly_bounded} implies that $H_{\beta}$ is bounded on $\Omega$. 
This proves Items~\ref{item::conti_integral}~\&~\ref{item::bounded_harmo_h}.

Next, fix $r \in  \{1,2,\ldots,N\}$. We will prove that $H_{\beta}(z)\to \MainConst_{2r-1}$ as $z\to (x_{2r-1} \, x_{2r})$. Let $z \in \Omega$ be any point. On the one hand, 
let $\{z^{\delta_n}\}_{n \geq 1}$ be a sequence of interior primal vertices approximating $z$. Denote by $\hm(z^{\delta_n}; E; \Omega^{\delta_n})$ the discrete harmonic measure of $E \subset \partial\Omega^{\delta_n}$ viewed from $z^{\delta_n}$. 
Then, we have 
\begin{align*}
H_{\beta}(z) 
= \; & \lim_{n \to \infty} H_{\beta}^{\delta_n,\circ}(z^{\delta_n}) \\
\geq  \; & 
\limsup_{n \to \infty} \Big( \MainConst_{2r-1}^{\delta_n} \, \hm \big( z^{\delta_n};(x_{2r-1}^{\delta_n} \, x_{2r}^{\delta_n});\Omega^{\delta_n}\big) 
- M \, \hm \big( z^{\delta_n};(x_{2r}^{\delta_n} \, x_{2r-1}^{\delta_n});\Omega^{\delta_n}\big)\Big)\\
= \; &  \MainConst_{2r-1} \, \hm \big(z;(x_{2r-1} \, x_{2r});\Omega\big) 
- M \, \hm \big(z;(x_{2r} \, x_{2r-1});\Omega \big),
\end{align*}
where the inequality in the second line 
follows from the superharmonicity of $\smash{H_{\beta}^{\delta_n,\circ}}$ (Items~\ref{item::boundary_value_H}~\&~\ref{item::boundary midification} of Lemma~\ref{lem::discrete_H}) 
and the fact that $\smash{H_{\beta}^{\delta_n,\circ}}$ takes the constant value $\MainConst_{2r-1}^{\delta_n}$ along $(x_{2r-1}^{\delta_n} \, x_{2r}^{\delta_n})$ (Item~\ref{item::boundary_value_H} of Lemma~\ref{lem::discrete_H});
and the equality in the third line is due to the convergence of the discrete polygons in the Carath\'{e}odory sense and~\cite[Theorem~3.12]{Chelkak-Smirnov:Discrete_complex_analysis_on_isoradial_graphs}. 
Therefore, we have
\begin{align}\label{eqn::sublimit_aux_lower}
H_{\beta}(z) \geq \MainConst_{2r-1} - 2 M \, \hm \big(z;(x_{2r} \, x_{2r-1});\Omega \big) .
\end{align}
On the other hand, let $\{z^{\delta_n,\bullet}\}_{n \geq 1}$ be a sequence of interior dual vertices approximating $z$. 
Denote by $\hm(z^{\delta_n,\bullet};E;\Omega^{\delta_n,\bullet})$ the discrete harmonic measure of $E \subset \partial\Omega^{\delta_n,\bullet}$ viewed from $z^{\delta_n,\bullet}$. 
Then, we have
\begin{align*}
H_{\beta}(z) = \; & \lim_{n \to \infty} H_{\beta}^{\delta_n,\bullet}(z^{\delta_n,\bullet})\\
\leq \; & \liminf_{n \to \infty} \Big(\MainConst_{2r-1}^{\delta_n} \, \hm \big(z^{\delta_n,\bullet};(x_{2r-1}^{\delta_n,\bullet} \, x_{2r}^{\delta_n,\bullet});\Omega^{\delta_n,\bullet}\big) 
+ M \, \hm \big(z^{\delta_n,\bullet};(x_{2r}^{\delta_n,\bullet} \, x_{2r-1}^{\delta_n,\bullet});\Omega^{\delta_n,\bullet}\big) \Big) \\
= \; & \MainConst_{2r-1} \, \hm \big(z;(x_{2r-1} \, x_{2r};\Omega)\big) 
+ M \, \hm \big(z;(x_{2r} \, x_{2r-1});\Omega\big),
\end{align*}
where the inequality in the second line is due to the subharmonicity of $H_{\beta}^{\delta_n,\bullet}$  (Items~\ref{item::boundary_value_H}~\&~\ref{item::boundary midification} of Lemma~\ref{lem::discrete_H}) 
and the fact that $H_{\beta}^{\delta_n,\bullet}$ takes the constant value $\MainConst_{2r-1}^{\delta_n}$ along $(x_{2r-1}^{\delta_n,\bullet}x_{2r}^{\delta_n,\bullet})$ (Item~\ref{item::boundary midification} of Lemma~\ref{lem::discrete_H}). 
Therefore, we have
\begin{align}\label{eqn::sublimt_aux_upper}
H_{\beta}(z)\leq \MainConst_{2r-1} + 2M \, \hm \big(z;(x_{2r} \, x_{2r-1});\Omega\big).
\end{align}
Combining the bounds~(\ref{eqn::sublimit_aux_lower},~\ref{eqn::sublimt_aux_upper}), we obtain
$H_{\beta}(z) \to \MainConst_{2r-1}$ as $z\to (x_{2r-1} \, x_{2r})$. 
A similar argument shows that $H_{\beta}(z)\to \MainConst_{2r}$ as $z\to (x_{2r} \, x_{2r+1})$. This proves Item~\ref{item::const_boundar_h}.

Lastly, Items~\ref{item::const_comparison_limit}~\&~\ref{item::const_jump_limit} follow respectively from 
Items~\ref{item::const_comparison}~\&~\ref{item::const_jump} of Lemma~\ref{lem::discrete_H};
while
Item~\ref{item::partial_n} follows from~\cite[Remark~6.3]{Chelkak-Smirnov:Universality_in_2D_Ising_and_conformal_invariance_of_fermionic_observables} and 
Items~\ref{item::vertex_obser_direc_odd}~\&~\ref{item::vertex_obser_direc_even} of Lemma~\ref{lem::discrete_obser}.
This concludes the proof. 
\end{proof}

We are now ready to prove Proposition~\ref{prop::observable_cvg}.

\begin{proof}[Proof of Proposition~\ref{prop::observable_cvg}]
For definiteness, fix a sign for 
$\phi_{\beta}(\cdot \, ;\Omega;x_1,\ldots,x_{2N})$.
Lemmas~\ref{lem::discrete_H}~\&~\ref{lem::sub_limit_obser} ensure that the sequences $\{(\MainConst_{1}^{\delta},\ldots,\MainConst_{2N}^{\delta})\}_{\delta>0}$ of constants and $\big\{ (2^{-1/4}\delta^{-1/2} \, F^{\delta}_{\beta}, \, H^{\delta}_{\beta}) \big\}_{\delta>0}$ of pairs of functions have convergent subsequences. 
Let $(F_{\beta},H_{\beta})$ be any subsequential limit of the latter and 
$(\MainConst_{1},\ldots,\MainConst_{2N})\in\R^{2N}$ of the former. 
It suffices to show that $F_{\beta}(\cdot) = \phi_{\beta}(\cdot \, ;\Omega;x_{1},\ldots,x_{2N})$ 
(with appropriate choice of sign for $\nu(e_{2\ell}^{\delta,\diamond})$). 
We consider the situation in the upper half-plane. 
Fix a sign for the function 
$\phi_{\beta}(\cdot \, ;\HH;x_1,\ldots,x_{2N})$.
Let $\varphi$ be a conformal map from $\Omega$ onto $\HH$ such that $\varphi(x_{1})<\cdots<\varphi(x_{2N})$. We define
\begin{align*}
h_{\HH}(z) := H_{\beta}( \varphi^{-1}(z)) , \qquad  
f_{\HH}(z) := F_{\beta}(\varphi^{-1}(z)) \; \sqrt{(\varphi^{-1})'(z)} ,
 \qquad \textnormal{and} \qquad 
 \realpt_{i} := \varphi(x_i) , \qquad \textnormal{for } 1\leq i \leq 2N,
 \end{align*}
where we fix the branch of the square root so that 
$\sqrt{(\varphi)'(\cdot)} \, \phi_{\beta}(\varphi(\cdot) \, ; \HH;\realpt_1,\ldots,\realpt_{2N}) = \phi_{\beta}(\cdot \, ;\Omega;x_1,\ldots,x_{2N})$.

Items~\ref{item::bounded_harmo_h} \&~\ref{item::const_boundar_h} of Lemma~\ref{lem::sub_limit_obser} imply that $h_{\HH}$ can be extended to a bounded continuous function on $\overline{\HH}\setminus\{\realpt_{1},\realpt_{2},\ldots,\realpt_{2N}\}$ which is harmonic on $\HH$ with constant value $\MainConst_{i}$ on each $(\realpt_{i} \, \realpt_{i+1})$ for $i \in \{1,\ldots,2N\}$. 
Consequently, the function $h_{\HH}(z)$ is 
a (real) linear combination of $\mathrm{Hm}(z;(\realpt_{i} \, \realpt_{i+1});\HH)$,
the harmonic measures of $(\realpt_{i} \, \realpt_{i+1})$ viewed from $z\in\HH$ with $1\leq i \leq 2N$.

Item~\ref{item::conti_integral} of 
Lemma~\ref{lem::sub_limit_obser} gives the holomorphicity of $f_{\HH}$ on $\HH$ and the relation $h_{\HH}(w)=\Im\int^{w} f_{\HH}(z)^2\ud z$.
Consequently, there exists a polynomial $Q(z)$ of degree at most $2N-1$ with real coefficients such that  
\begin{align*}
f_{\HH}(z)^2=\frac{Q(z)}{\prod_{i=1}^{2N}(z-\realpt_{i})} .
\end{align*}

Item~\ref{item::partial_n} 
of Lemma~\ref{lem::sub_limit_obser} implies that the outer normal derivative\footnote{In this case, we also use $\partial_{\mathrm{n}} h_\HH$ to denote the ordinary outer normal derivative, since the boundary $\partial\HH=\R$ is smooth.}  
of the function $h_\HH$
satisfies $\partial_{\mathrm{n}} h_{\HH}\leq 0$ on $\smash{\bigcup_{r=1}^{N}}  (\realpt_{2r-1} \, \realpt_{2r})$ and $\partial_{\mathrm{n}}h_{\HH}\geq 0$ on $\smash{\bigcup_{r=1}^{N}} (\realpt_{2r} \, \realpt_{2r+1})$. Furthermore, for each $z\in \R\setminus\{\realpt_{1},\realpt_{2},\ldots,\realpt_{2N}\}$ we have $\partial_{\mathrm{n}}h_{\HH}(z)=-f_{\HH}(z)^2$, 
which implies that $Q(z)\leq 0$ whenever $z\in \R$. Since $f_{\HH}$ is holomorphic on $\HH$, the polynomial $Q(z)$ cannot have zeros of odd degree in $\HH$. Thus, we have $Q(z)=-P(z)^2$ for some polynomial $P(z)$ of degree at most $N-1$ with real coefficients. Since $|\MainConst_{1}-\MainConst_{2N}|=1$ 
(by Item~\ref{item::const_jump_limit} of Lemma~\ref{lem::sub_limit_obser}), 
by computing the residue of $f_{\HH}(z)^2$ at $\realpt_{1}$, we conclude that 
with appropriate choice of the sign of $\nu(e_{2\ell}^{\delta,\diamond})$ and hence the sign of $f_{\HH}$, we have
\begin{align}\label{eqn::jump_1}
\lim_{z\to \realpt_{1}} \sqrt{\pi} \, \sqrt{z-\realpt_1} \, f_{\HH}(z)=1.
\end{align}

For any $r \in \{2,\ldots,N\}$, since $\MainConst_{a_r-1}-\MainConst_{a_r} = -(\MainConst_{b_r-1}-\MainConst_{b_r})$ (Items~\ref{item::const_comparison_limit}~\&~\ref{item::const_jump_limit} of Lemma~\ref{lem::sub_limit_obser}), 
by computing the residues of $f_{\HH}(z)^2$ at $\realpt_{a_r}$ and $\realpt_{b_r}$, we conclude that for some sign $\varepsilon_r \in \{1,-1\}$, we have
\begin{align} \label{eqn::sign_change}
\lim_{z \to \realpt_{a_r}} \sqrt{z-\realpt_{a_r}} \, \sqrt{z-\realpt_{b_r}} \, f_{\HH}(z) 
= \varepsilon_r \lim_{z\to \realpt_{b_r}} \sqrt{z-\realpt_{a_r}} \, \sqrt{z-\realpt_{b_r}} \, f_{\HH}(z).
\end{align}
 Combining~(\ref{eqn::jump_1},~\ref{eqn::sign_change}) with Proposition~\ref{prop::holo_limiting}, it remains to show that $\varepsilon_r = -1$ for all $2 \leq r \leq N$.
Without loss of generality, we may assume that $a_r$ is odd. Consider the critical FK-Ising model on $\Omega^{\delta}$ with the boundary condition 
\begin{align}\label{eqn::bc_Dobru}
\textnormal{wired on }(x_{a_r}^{\delta} \, x_{b_r}^{\delta}) 
\qquad
\textnormal{and} 
\qquad
\textnormal{free on }(x_{b_r}^{\delta} \, x_{a_r}^{\delta}),
\end{align} 
and denote by  $\E_{\vcenter{\hbox{\includegraphics[scale=0.2]{figures/link-0.pdf}}}}^{\delta}$ the expectation of this model. 
For this model, the edge observable  $\smash{F_{\vcenter{\hbox{\includegraphics[scale=0.2]{figures/link-0.pdf}}}}^{\delta}}$ on the medial edges of $\Omega^{\delta,\diamond}$ and the outer corner edges $\{e_{a_r}^{\delta,\diamond}, e_{b_r}^{\delta,\diamond}\}$ is
\begin{align*}
F_{\vcenter{\hbox{\includegraphics[scale=0.2]{figures/link-0.pdf}}}}^{\delta}(e)
:=  \nu(e_{b_r}^{\delta,\diamond}) \, 
\E_{\vcenter{\hbox{\includegraphics[scale=0.2]{figures/link-0.pdf}}}}^{\delta}\Big[\one \{e\in \eta_{a_r}^{\delta}\} \exp \big(-\tfrac{\ii}{2}W_{\eta_{a_r}^{\delta}} \big( e_{b_r}^{\delta,\diamond},e\big) \big)\Big] ,
\end{align*}
where $\eta_{a_r}^{\delta}$ is the exploration path from $y_{a_r}^{\delta,\diamond}$ to $y_{b_r}^{\delta,\diamond}$ 
and the number $\smash{W_{\eta_{a_r}^{\delta}} \big( y_{b_r}^{\delta,\diamond},e \big)}$ 
is the winding from $y_{b_r}^{\delta,\diamond}$ to $e$ along the reversal of $\eta_{a_r}^{\delta}$.
One can prove similarly as in~\cite[Lemma~4.1]{Smirnov:Conformal_invariance_in_random_cluster_models1} that
$\smash{F_{\beta}^{\delta}(e_{b_r}^{\delta,\diamond})\parallel \nu(e_{b_r}^{\delta,\diamond})}$, which implies that
\begin{align} \label{eqn::obser_compa_same_sign} F_{\vcenter{\hbox{\includegraphics[scale=0.2]{figures/link-0.pdf}}}}^{\delta}(e_{b_r}^{\delta,\diamond}) 
= \lambda_{b_r} \, F_{\beta}^{\delta}(e_{b_r}^{\delta,\diamond})\qquad \textnormal{for some } \lambda_{b_r} > 0 .
\end{align}
The vertex observable $\smash{F_{\vcenter{\hbox{\includegraphics[scale=0.2]{figures/link-0.pdf}}}}^{\delta}}$ on interior vertices of $\Omega^{\delta,\diamond}$ is 
\begin{align*}
F_{\vcenter{\hbox{\includegraphics[scale=0.2]{figures/link-0.pdf}}}}^{\delta}(z) := \frac{1}{2}\sum_{e\sim z}F_{\vcenter{\hbox{\includegraphics[scale=0.2]{figures/link-0.pdf}}}}^{\delta}(e) ,
\end{align*}
and on boundary vertices it is
\begin{align*}
F_{\vcenter{\hbox{\includegraphics[scale=0.2]{figures/link-0.pdf}}}}^{\delta}(z) := 
\begin{cases}
\sqrt{2}\exp(-\ii\frac{\pi}{4})F_{\vcenter{\hbox{\includegraphics[scale=0.2]{figures/link-0.pdf}}}}^{\delta}(e_+^{\diamond})+\sqrt{2}\exp(\ii\frac{\pi}{4})F_{\vcenter{\hbox{\includegraphics[scale=0.2]{figures/link-0.pdf}}}}^{\delta}(e_-^{\diamond}), & \textnormal{if $v\in (x_{a_r}^{\delta,\diamond}x_{b_r}^{\delta,\diamond})$,} \\[.5em]
\sqrt{2}\exp(-\ii\frac{\pi}{4})F_{\vcenter{\hbox{\includegraphics[scale=0.2]{figures/link-0.pdf}}}}^{\delta}(e_-^{\diamond})+\sqrt{2}\exp(\ii\frac{\pi}{4})F_{\vcenter{\hbox{\includegraphics[scale=0.2]{figures/link-0.pdf}}}}^{\delta}(e_+^{\diamond}), & \textnormal{if $v\in (x_{b_r}^{\delta,\diamond}x_{a_r}^{\delta,\diamond})$,}
\end{cases}
\end{align*}
where for a medial vertex $z^{\diamond} \in \partial\Omega^{\delta,\diamond}\setminus\{x_{a_r}^{\delta,\diamond}, x_{b_r}^{\delta,\diamond}\}$, we denote by $e_-^{\diamond}, e_+^{\diamond} \in \Omega^{\delta,\diamond}$ the medial edges having $z^{\diamond}$ as end vertex and beginning vertex, respectively. 
We extend the vertex observable $\smash{F_{\vcenter{\hbox{\includegraphics[scale=0.2]{figures/link-0.pdf}}}}^{\delta}}$ to a continuous function on the planar domain corresponding to $\Omega^{\delta,\diamond}$ via linear interpolation. 
A similar argument as for $\smash{F_{\vcenter{\hbox{\includegraphics[scale=0.2]{figures/link-0.pdf}}}}^{\delta}}$ shows that the sequence $\{2^{-1/4}\delta^{-1/2} \, F_{\vcenter{\hbox{\includegraphics[scale=0.2]{figures/link-0.pdf}}}}^{\delta}\}_{\delta>0}$ of scaled vertex observables has locally uniformly convergent subsequences, and by~\cite[Theorem~2.2]{Smirnov:Conformal_invariance_in_random_cluster_models1}, any subsequential limit equals $\pm \phi_{\vcenter{\hbox{\includegraphics[scale=0.2]{figures/link-0.pdf}}}}(\cdot \, ; \Omega; \realpt_{a_r}, \realpt_{b_r})$ defined in~\eqref{eqn::holo_limiting_N=1}.  
Note also that\footnote{Note that this does not violate~\eqref{eqn::holo_restriction2}, since $r=1$ in~\eqref{eqn::sign_change_Dobru}.}  
by~\eqref{eqn::holo_limiting_N=1}, we have
\begin{align} \label{eqn::sign_change_Dobru}
\lim_{z\to \realpt_{a_r}} \sqrt{z-\realpt_{a_r}} \, \sqrt{z-\realpt_{b_r}} \, \phi_{\vcenter{\hbox{\includegraphics[scale=0.2]{figures/link-0.pdf}}}}(z;\HH;\realpt_{a_r}, \realpt_{b_r}) 
=  \lim_{z\to \realpt_{b_r}} \sqrt{z-\realpt_{a_r}} \, \sqrt{z-\realpt_{b_r}} \, \phi_{\vcenter{\hbox{\includegraphics[scale=0.2]{figures/link-0.pdf}}}}(z;\HH;\realpt_{a_r}, \realpt_{b_r}).
\end{align}
Now, let us compare $\smash{F_{\vcenter{\hbox{\includegraphics[scale=0.2]{figures/link-0.pdf}}}}^{\delta}}$ and $F_{\beta}^{\delta}$. 
To this end, a key observation is that 
\begin{align} \label{eqn::sign_change_discrete}
\textnormal{$\big|F_{\vcenter{\hbox{\includegraphics[scale=0.2]{figures/link-0.pdf}}}}^{\delta}(e_{b_r}^{\delta,\diamond}) + F_{\beta}^{\delta}(e_{b_r}^{\delta,\diamond})\big|$ and $\big|F_{\vcenter{\hbox{\includegraphics[scale=0.2]{figures/link-0.pdf}}}}^{\delta}(e_{a_r}^{\delta,\diamond}) + F_{\beta}^{\delta}(e_{a_r}^{\delta,\diamond})\big|$ \; differ by $2\min \big\{|F_{\vcenter{\hbox{\includegraphics[scale=0.2]{figures/link-0.pdf}}}}^{\delta}(e_{b_r}^{\delta,\diamond})|, |F_{\beta}^{\delta}(e_{b_r}^{\delta,\diamond})|\big\}$.}
\end{align}
Observation~\eqref{eqn::sign_change_discrete} can be derived as follows. 
\begin{itemize}
\item First, by construction, the exploration path $\xi^{\delta}$ passes through $e_{a_r}^{\delta,\diamond}$ and $e_{b_r}^{\delta,\diamond}$ if and only if it passes through the contour corresponding to $\{a_r,b_r\}$ outside of $\Omega^{\delta}$. 
In this case, we denote by $W_1$ the winding from $e_{a_r}^{\delta,\diamond}$ to $e_{b_r}^{\delta,\diamond}$ along the reversal of $\xi^{\delta}$, which is independent of the configuration. 
Then, we have (recalling~\eqref{eq::pair_of_1})
\begin{align*} 
W_{\xi^{\delta}}(e_{2\ell}^{\delta,\diamond}, e_{a_r}^{\delta,\diamond}) = W_{\xi^{\delta}} \big( e_{2\ell}^{\delta,\diamond}, e_{b_r}^{\delta,\diamond} \big) - W_{1} 
\qquad \Longrightarrow \qquad 
F_{\beta}^{\delta}(e_{a_r}^{\delta,\diamond}) 
= e^{\ii W_{1} / 2}
\, F_{\beta}^{\delta}(e_{b_r}^{\delta,\diamond}).
\end{align*}

\item Second, consider the critical FK-Ising model on $\Omega^{\delta}$ with boundary condition~\eqref{eqn::bc_Dobru}. The exploration path $\eta_{a_r}^{\delta}$ passes through $e_{a_r}^{\delta,\diamond}$ and $e_{b_r}^{\delta,\diamond}$ with probability one. 
Denote by $W_2$ the winding from $e_{b_r}^{\delta,\diamond}$ to $e_{a_r}^{\delta,\diamond}$ along the reversal of $\eta_{a_r}^{\delta}$, which is also independent of the configuration. Then, we have
\begin{align*} 
W_{\eta_{a_r}^{\delta}}(e_{b_r}^{\delta,\diamond}, e_{a_r}^{\delta,\diamond}) = W_{\eta_{a_r}^{\delta}}\big( e_{b_r}^{\delta,\diamond}, e_{b_r}^{\delta,\diamond}\big) + W_{2} 
\qquad \Longrightarrow \qquad 
F_{\vcenter{\hbox{\includegraphics[scale=0.2]{figures/link-0.pdf}}}}^{\delta}(e_{a_r}^{\delta,\diamond}) 
= e^{- \ii W_{2} / 2}
\, F_{\vcenter{\hbox{\includegraphics[scale=0.2]{figures/link-0.pdf}}}}^{\delta}(e_{b_r}^{\delta,\diamond}). 
\end{align*}

\item Third, the exploration path $\eta_{a_r}^{\delta}$ inside of $\Omega^{\delta}$ and the contour corresponding to $\{a_r,b_r\}$ outside of $\Omega^{\delta}$ always form a loop, which implies that $W_{1}+W_{2}=2\pi$.
\end{itemize}
Combining the above observations for the windings $W_{1}$ and $W_{2}$ 
with~\eqref{eqn::obser_compa_same_sign}, we obtain
\begin{align} \label{eqn::obser_compa_opp_sign}
F_{\vcenter{\hbox{\includegraphics[scale=0.2]{figures/link-0.pdf}}}}^{\delta}(e_{a_r}^{\delta,\diamond}) = \lambda_{a_r} \, F_{\beta}^{\delta}(e_{a_r}^{\delta,\diamond}) , \qquad \textnormal{for some } \lambda_{a_r} < 0 .
\end{align}
The relations~\eqref{eqn::obser_compa_same_sign} and~\eqref{eqn::obser_compa_opp_sign} now together imply~\eqref{eqn::sign_change_discrete}.

Now, we are ready to show that~\eqref{eqn::sign_change} holds with signs $\varepsilon_r = -1$ for all $2 \leq r \leq N$. 
First of all, if $\MainConst_{a_r-1}=\MainConst_{a_r}$, then the left-hand side of~\eqref{eqn::sign_change} equals zero, so we can take $\epsilon_r=-1$. In contrast, if $\MainConst_{a_r-1} \neq \MainConst_{a_r}$, then~\eqref{eqn::sign_change_Dobru} shows that  the function 
\begin{align*}
w \quad \longmapsto \quad
\Im\int^{w}(f_{\HH}(\cdot)+\phi_{\vcenter{\hbox{\includegraphics[scale=0.2]{figures/link-0.pdf}}}}(\cdot \, ;\HH;\realpt_{a_r},\realpt_{b_r}))^2
\end{align*}
has jumps of the same size at $\realpt_{a_r}$ and $\realpt_{b_r}$,
while by~\eqref{eqn::sign_change_discrete}, the function 
defined via a subsequential limit along some $\delta_n \to 0$ as $n\to \infty$, 
\begin{align*}
w \quad \longmapsto \quad
\lim_{n\to \infty} \Im\int^{\varphi^{-1}(w)}\frac{(F^{\delta_n}_{\beta}(\cdot)+F_{\vcenter{\hbox{\includegraphics[scale=0.2]{figures/link-0.pdf}}}}^{\delta_n}(\cdot))^2}{\sqrt{2}{\delta_n}} 
\; = \; \Im\int^{w} \big( f_{\HH}(\cdot) + \phi_{\vcenter{\hbox{\includegraphics[scale=0.2]{figures/link-0.pdf}}}}(\cdot \, ;\HH;\realpt_{a_r},\realpt_{b_r}) \big)^2 ,
\end{align*}
has jumps of different sizes $(1-|\MainConst_{a_r-1}-\MainConst_{a_r}|)^2$ and $(1+|\MainConst_{a_r-1}-\MainConst_{a_r}|)^2$ at $\realpt_{a_r}$ and $\realpt_{b_r}$, respectively.  
This is a contradiction. 
Hence, we conclude that $\varepsilon_r = -1$ for all $2 \leq r \leq N$. The proof is now complete.
\end{proof}

 \begin{corollary} \label{cor::cvg_c_i}
The limit $\smash{\underset{\delta\to 0}{\lim} (\MainConst_1^{\delta},\ldots,\MainConst_{2N}^{\delta}) := (\MainConst_1,\ldots,\MainConst_{2N})}$ exists and satisfies 
\begin{align} \label{eqn::difference_c_i}
\lim_{\delta\to 0}|\MainConst_{k-1}^{\delta} - \MainConst_{k}^{\delta}| 
= \lim_{z\to \varphi(x_k)} \pi \,  |z-\varphi(x_k)| \, 
\big| \phi_{\beta}( z;\varphi(x_1),\ldots,\varphi(x_{2N}) ) \big|^2 , \qquad \textnormal{for  $1\leq k \leq 2N$},
\end{align}
where $\varphi$ is any conformal map from $\Omega$ onto $\HH$ such that $\varphi(x_1)<\cdots<\varphi(x_{2N})$.
\end{corollary}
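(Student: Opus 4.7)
The plan is to leverage the uniqueness of the subsequential limit $\phi_{\beta}$ established in Proposition~\ref{prop::observable_cvg} to pin down each $\MainConst_{k}$ via a residue computation, and then conclude that the full limit exists because every convergent subsequence has the same limit.

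First, by Item~\ref{item::const_jump} of Lemma~\ref{lem::discrete_H}, the sequence
$\{(\MainConst_{1}^{\delta},\ldots,\MainConst_{2N}^{\delta})\}_{\delta>0}$ is uniformly bounded, so it has convergent subsequences. Fix any such subsequence along $\delta_{n}\to 0$ with limit $(\MainConst_{1},\ldots,\MainConst_{2N})\in\R^{2N}$. Refining if necessary, Lemma~\ref{lem::sub_limit_obser} ensures that along the same subsequence
$(2^{-1/4}\delta_n^{-1/2}F_{\beta}^{\delta_n}, H_{\beta}^{\delta_n})$ converges locally uniformly to a pair $(F_{\beta},H_{\beta})$, with $F_{\beta}$ holomorphic on $\Omega$, $H_{\beta}$ bounded and harmonic on $\Omega$ with boundary values $\MainConst_{k}$ on $(x_{k}\,x_{k+1})$, and $H_{\beta}(w)=\Im\int^{w}F_{\beta}(z)^{2}\,\ud z$. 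By Proposition~\ref{prop::observable_cvg} (with the appropriate choice of sign $\nu(e_{2\ell}^{\delta,\diamond})$), the limit satisfies
\begin{align*}
F_{\beta}(z)=\sqrt{\varphi'(z)}\,\phi_{\beta}(\varphi(z);\HH;\varphi(x_{1}),\ldots,\varphi(x_{2N})),
\end{align*}
where $\varphi\colon\Omega\to\HH$ is a conformal map with $\varphi(x_{1})<\cdots<\varphi(x_{2N})$. In particular, $F_{\beta}$ is uniquely determined by $(\Omega;x_{1},\ldots,x_{2N})$.

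Next, transport everything to $\HH$ via $\varphi$: write $h_{\HH}(z)=H_{\beta}(\varphi^{-1}(z))$ and $f_{\HH}(z)=F_{\beta}(\varphi^{-1}(z))\sqrt{(\varphi^{-1})'(z)}$, so that $h_{\HH}(w)=\Im\int^{w}f_{\HH}(z)^{2}\,\ud z$ and, as in the proof of Proposition~\ref{prop::observable_cvg}, $f_{\HH}^{2}=-P^{2}/\prod_{i=1}^{2N}(z-\realpt_{i})$ with $\realpt_{i}=\varphi(x_{i})$ and $P$ a real polynomial of degree $\leq N-1$. Each $\realpt_{k}$ is at worst a simple pole of $f_{\HH}^{2}$, and near $\realpt_{k}$ the primitive $\int^{z}f_{\HH}(w)^{2}\,\ud w$ equals $a_{k}\log(z-\realpt_{k})$ plus a function analytic at $\realpt_{k}$, where $a_{k}:=\mathrm{Res}_{\realpt_{k}}f_{\HH}^{2}\in\R$. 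Comparing boundary values of $h_{\HH}=\MainConst_{k-1}$ on $(\realpt_{k-1},\realpt_{k})$ and $h_{\HH}=\MainConst_{k}$ on $(\realpt_{k},\realpt_{k+1})$, and using that $\Im\log(z-\realpt_{k})$ jumps by $-\pi$ as one crosses $\realpt_{k}$ from left to right along $\R$ via $\HH$, we obtain
\begin{align*}
\MainConst_{k}-\MainConst_{k-1}=-\pi\,a_{k}, \qquad 1\le k\le 2N.
\end{align*}
Taking absolute values and computing
\begin{align*}
|a_{k}|=\lim_{z\to\realpt_{k}}|z-\realpt_{k}|\,|f_{\HH}(z)|^{2}
=\lim_{z\to\realpt_{k}}|z-\realpt_{k}|\,|\phi_{\beta}(z;\HH;\realpt_{1},\ldots,\realpt_{2N})|^{2},
\end{align*}
yields the desired identity~\eqref{eqn::difference_c_i} for the subsequential limit.

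Finally, the right-hand side of~\eqref{eqn::difference_c_i} depends only on $(\Omega;x_{1},\ldots,x_{2N})$, so the magnitudes $|\MainConst_{k}-\MainConst_{k-1}|$ are the same for every convergent subsequence. The sign pattern $\MainConst_{2r}\geq\MainConst_{2r-1}$, $\MainConst_{2r}\geq\MainConst_{2r+1}$ from Item~\ref{item::const_comparison_limit} of Lemma~\ref{lem::sub_limit_obser} together with the normalization $\MainConst_{1}=0$ then determines each $\MainConst_{k}$ uniquely. Consequently, every convergent subsequence of $\{(\MainConst_{1}^{\delta},\ldots,\MainConst_{2N}^{\delta})\}_{\delta>0}$ has the same limit, which combined with the boundedness forces the full sequence to converge. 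The only real obstacle is handling the sign of the residue carefully to confirm~\eqref{eqn::difference_c_i}; once that is settled, uniqueness of the limit constants follows formally from the uniqueness of $\phi_{\beta}$.
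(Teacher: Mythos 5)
Your proof is correct and takes essentially the same route as the paper: the residue computation of $h_{\HH}=\Im\int^{w}f_{\HH}^{2}$ at each $\realpt_{k}$ combined with the uniqueness (up to sign) of the limit observable $\phi_{\beta}$ from Proposition~\ref{prop::observable_cvg}. You fill in the ``every subsequential limit is the same, hence the full sequence converges'' step that the paper states more tersely, and the sign bookkeeping via Item~\ref{item::const_comparison_limit} of Lemma~\ref{lem::sub_limit_obser} together with $\MainConst_{1}=0$ is a correct (if slightly redundant, since the signed residue $a_{k}$ is already determined by $\phi_{\beta}^{2}$) way to pin down the constants.
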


\begin{proof}
Proposition~\ref{prop::observable_cvg} implies that $\MainConst_{k-1}^{\delta}-\MainConst_{k}^{\delta}$ converges as $\delta\to 0$ for all $1\leq k \leq 2N$. Combining this with the fact that $\MainConst_{1}^{\delta}=0$ (Lemma~\ref{lem::discrete_H}), we obtain the convergence of the sequence $\{(\MainConst_{1}^{\delta},\ldots,\MainConst_{2N}^{\delta})\}_{\delta>0}$ as $\delta\to 0$. 
Identity~\eqref{eqn::difference_c_i} then follows from Lemma~\ref{lem::sub_limit_obser}, Proposition~\ref{prop::observable_cvg}, after computing the residues of $| \phi_{\beta}(z;\varphi(x_1),\ldots,\varphi(x_{2N})) |^2$ at $\varphi(x_k)$ for $1\leq k \leq 2N$.
\end{proof}
\subsection{Scaling limit of the interfaces: Proof of Theorem~\ref{thm::FKIsing_Loewner}}
\label{subsec::FKIsing_Loewner}
We are now ready to prove the convergence of the interfaces in Conjecture~\ref{conj::rcm_Loewner} for the FK-Ising model (random-cluster model with $q=2$), that is, the assertion in Theorem~\ref{thm::FKIsing_Loewner}. 
With precompactness
from Lemma~\ref{lem::FKIsing_tightness} 
and the convergence of the observable from
Propositions~\ref{prop::observable_cvg},~\ref{prop::holo_limiting},~\&~\ref{prop::totalpartition_observable} at hand, the proof is 
a standard martingale argument. 
We summarize its steps below. 

\begin{proof}[Proof of Theorem~\ref{thm::FKIsing_Loewner}]
By rotation symmetry of the partition function~\eqref{eqn::totalpartition_def} on one hand and of the discrete model on the other hand, 
we may without loss of generality consider the interface
$\smash{\eta_1^{\delta}}$ starting from $\smash{x_1^{\delta, \diamond}}$, i.e., assume that $i=1$. 
By assumption, the medial polygons $(\Omega^{\delta, \diamond}; x_1^{\delta, \diamond}, \ldots, x_{2N}^{\delta, \diamond})$ converge to 
$(\Omega; x_1, \ldots, x_{2N})$ in the Carath\'{e}odory sense, so there are
conformal maps $\varphi_{\delta} \colon \Omega^{\delta, \diamond} \to \HH$ and $\varphi \colon \Omega \to \HH$
such that $\varphi(x_1)<\cdots<\varphi(x_{2N})$ and,
as $\delta \to 0$, the maps $\varphi_{\delta}^{-1}$ converge to $\varphi^{-1}$ locally uniformly, and $\varphi_{\delta}(x_j^{\delta, \diamond}) \to \varphi(x_j)$ for all $j$.
Denote by $\tilde{\eta}_1^{\delta} := \varphi_{\delta}(\eta_1^{\delta})$ the conformal image of the interface $\eta_1^{\delta}$ parameterized by half-plane capacity. 
By Lemma~\ref{lem::FKIsing_tightness}, we may choose a subsequence $\delta_{n}\to 0$ such that $\eta^{\delta_{n}}_1$ converges weakly in the metric~\eqref{eq::curve_metric} as $n\to\infty$. 
We denote the limit by $\eta_1$, define $\tilde{\eta}_1:=\varphi(\eta_1)$, 
and parameterize it also by half-plane capacity. 
It follows from the proof of Lemma~\ref{lem::FKIsing_tightness} together 
with~\cite[Corollary~1.7]{Kemppainen-Smirnov:Random_curves_scaling_limits_and_Loewner_evolutions} that the family $\{\tilde{\eta}^{\delta_{n}}_1|_{[0,t]} \colon [0,t] \to \overline{\HH} \}_{n \geq 1}$  is 
precompact in the uniform topology of curves parameterized by half-plane capacity.
Thus (also by coupling them into the same probability space), 
we can choose a further subsequence, 
still denoted $\delta_n$, such that $\tilde{\eta}^{\delta_{n}}_1$ converges to $\tilde{\eta}_1$ locally uniformly as $n\to\infty$, almost surely.  
Next, define $\tau^{\delta_n}$ to be the first time when $\eta_1^{\delta_n}$ hits the arc $(x_2^{\delta_n} \, x_{2N}^{\delta_n})$ 
and $\tau$ to be the first time when $\eta_1$ hits $(x_2 \, x_{2N})$. 
By properly adjusting the coupling (see, e.g.,~\cite[Section~4]{Garban-Wu:On_the_convergence_of_FK-Ising_percolation_to_SLE} or~\cite[Lemma~4.3]{Izyurov:On_multiple_SLE_for_the_FK_Ising_model})
we may furthermore assume that 
$\underset{n \to \infty}{\lim} \tau^{\delta_n} = \tau$ almost surely.

Now, denote by $(W_t, t\ge 0)$ the Loewner driving function of $\tilde{\eta}_1$ and by $(g_t, t\ge 0)$ the corresponding conformal maps. 
Write $V_t^j := g_t(\varphi(x_j))$ for $j\in\{2,3, \ldots, 2N\}$. 
Via a standard argument (see, e.g.,~\cite[Lemmas~3.3 and~4.3]{Izyurov:On_multiple_SLE_for_the_FK_Ising_model}), 
we derive from the spinor observable $\phi_{\beta}$ of  Proposition~\ref{prop::holo_limiting} the local martingale 
\begin{align} \label{eqn::ust_polygon_mart}
M_t(z) := (g_t'(z))^{1/2} \times 
\phi_{\beta}(g_t(z); W_t, V_t^2, \ldots, V_t^{2N}) , \qquad t < \tau,
\end{align}
where throughout the proof, $(\cdot)^{1/2}$ uses the principal branch of the square root.

It remains to argue that $(W_t, t\ge 0)$ is a semimartingale and to find the SDE for it. 
This step is also standard by now.  
For any $w<y_2<\cdots<y_{2N}$, the function $\partial_w \phi_\beta(\cdot;w,y_2,\ldots,y_{2N})$ 
is holomorphic and not identically zero, so its zeros are isolated. 
Pick $z\in\HH$ with $|z|$ large enough such that $\partial_w\phi_\beta(z;w,y_2,\ldots,y_{2N})\neq 0$.
By the implicit function theorem, $w$ is locally a smooth function of $(\phi_\beta,z,y_2,\ldots,y_{2N})$. 
Thus, by continuity, each time $t < \tau$ has a neighborhood $I_t$ for which
we can choose a deterministic $z$ such that $W_s$ is locally a smooth function of 
$(M_s(z),g_s(z),g_s(y_2),\ldots,g_s(y_{2N}))$ for all $s \in I_t$.
This implies that $(W_t, t\ge 0)$ is a semimartingale. 
To find the SDE for $W_t$, let $D_t$ denote the drift term of $W_t$. 
By a computation using It\^o's formula, we find from~\eqref{eqn::ust_polygon_mart} and using the Loewner equation~\eqref{eqn:LE} the identities
\begin{align*}
\frac{\ud M_t(z) }{(g_t'(z))^{1/2}}
= & \; \frac{-\phi_{\beta} \, \ud t}{(g_t(z)-W_t)^2} 
\, + \, \frac{2(\partial_z\phi_{\beta}) \, \ud t}{g_t(z)-W_t}
\, +\, (\partial_1\phi_{\beta}) \, \ud W_t+\sum_{j=2}^{2N}\frac{2(\partial_j\phi_{\beta}) \, \ud t}{V_t^j-W_t}+\frac{1}{2}(\partial_1^2\phi_{\beta}) \, \ud\langle W\rangle_t . 
\end{align*}
Combining this with Lemma~\ref{lem::holo_expansion}, 
we find the expansion 
\begin{align*}
\frac{\ud M_t(z)}{(g_t'(z))^{1/2}} 
= & \; (g_t(z)-W_t)^{-5/2} \Big(-\frac{2}{\sqrt{\pi}} \, \ud t 
 \, +  \, \frac{3}{8\sqrt{\pi}} \, \ud\langle W\rangle_t\Big) \\
\; & +  \, (g_t(z)-W_t)^{-3/2}\Big(\frac{1}{2\sqrt{\pi}} \, \ud W_t  \, 
-  \, \frac{1}{8}\LK_{\beta} \, \ud\langle W\rangle_t\Big)
\, + \, o(g_t(z)-W_t)^{-3/2} .
\end{align*}
As the drift term of $M_t(z)$ has to vanish, we conclude that 
\begin{align*}
\nonumber
\ud \langle W\rangle_t = \; & \frac{16}{3}  \, \ud t 
\qquad  \textnormal{and} \qquad
\frac{1}{2\sqrt{\pi}} \, \ud D_t  \, - \, \frac{1}{8}\LK_{\beta} \, \ud\langle W\rangle_t = 0 \\
\qquad \Longrightarrow \qquad
\ud \langle W\rangle_t = \; & \frac{16}{3} \, \ud t
\qquad  \textnormal{and} \qquad
\ud D_t = \frac{4\sqrt{\pi}}{3}\LK_{\beta} \, \ud t . 
\end{align*}
Now, recalling that the goal is to derive an SDE for the driving function $W$, 
we conclude from Proposition~\ref{prop::totalpartition_observable} that 
\begin{align*}
\ud W_t = \sqrt{\frac{16}{3}} \, \ud B_t  \, +  \, \frac{16}{3} (\partial_1\log\LF_{\beta})(W_t, V_t^2, \ldots, V_t^{2N}) \, \ud t , \qquad t < \tau. 
\end{align*}
This proves the convergence of the interface, 
and the identity $\LF_{\beta} = \coulombnew_{\beta}$ from the proof of Theorem~\ref{thm::totalpartition} 
completes the proof of Theorem~\ref{thm::FKIsing_Loewner}.
\end{proof}

\newpage

\section{FK-Ising model connection probabilities: Proof of Theorem~\ref{thm::FKIsing_crossingproba}}
\label{sec::crossingproba}
The goal of this section is to derive the scaling limit of the connection probabilities (Theorem~\ref{thm::FKIsing_crossingproba}). 

\smallbreak

The convergence of the boundary values 
$\{(\MainConst_1^{\delta},\ldots,\MainConst_{2N}^{\delta})\}_{\delta>0}$ of the discrete primitive 
in Corollary~\ref{cor::cvg_c_i} is related to the convergence of the connection probabilities: 
indeed, when $N=2$, the former implies the latter via~\eqref{eqn::const_jump}, see Lemma~\ref{lem::cvg_proba_N=2}.
However, for general $N$ and general boundary conditions $\beta \in \LP_N$, this is not the case 
since \emph{the exploration path may not fully determine the internal connectivity pattern of the interfaces}.
To find the scaling limit for general~$\beta$, we first 
derive it with $\beta=\unnested$ in Section~\ref{subsec::crossingproba_unnested} 
(via a martingale argument using the convergence of the interfaces from Theorem~\ref{thm::FKIsing_Loewner}, or~\cite[Theorem~1.1]{Izyurov:On_multiple_SLE_for_the_FK_Ising_model}),
and then address a general $\beta$ in Section~\ref{subsec::crossingproba_general} by comparing it to the case of $\unnested$.  
The comparison relies on combinatorial properties of the meander matrix (Definition~\ref{def::meander}) together with those of the random-cluster model, also of independent interest (Proposition~\ref{prop::crossingproba_comparison}).

Actually, we only really need from Theorem~\ref{thm::FKIsing_Loewner} the case of $\beta = \unnested$ to show Theorem~\ref{thm::FKIsing_crossingproba} for general $\beta$ (using the combinatorial observation from Proposition~\ref{prop::crossingproba_comparison}).
Indeed, the main inputs for proving Theorem~\ref{thm::FKIsing_crossingproba} in the case of $\beta = \unnested$ are
Theorem~\ref{thm::FKIsing_Loewner} in the case of $\beta = \unnested$, 
Corollary~\ref{cor::linearcombination_FKIsing}, and a priori estimates from Section~\ref{subsec::crossingproba_unnested} and Appendix~\ref{appendix_technical}.
The additional non-trivial inputs to derive Theorem~\ref{thm::FKIsing_crossingproba} for general $\beta \in \LP_N$ 
are the aforementioned Proposition~\ref{prop::crossingproba_comparison} 
and the cascade relation in Lemma~\ref{lem::ppf_cascade_mart}.

\begin{lemma} \label{lem::cvg_proba_N=2}
Theorem~\ref{thm::FKIsing_crossingproba} holds with $N=2$.
\end{lemma}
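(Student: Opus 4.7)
The plan is to exploit the fact that for $N=2$ the exploration path $\xi_\beta^\delta$ of Definition~\ref{def: exploration path} fully reveals the internal connectivity $\conn^\delta$. Indeed, $\LP_2$ has only two elements, and for any $\beta \in \LP_2$ with distinguished link $\{1, 2\ell\} \in \beta$, the exploration path $\xi_\beta^\delta$ either traverses only the outer corners $\{y_1^{\delta,\diamond}, y_{2\ell}^{\delta,\diamond}\}$ (precisely when $\conn^\delta = \beta$) or else traverses all four outer corners $\{y_1^{\delta,\diamond}, y_2^{\delta,\diamond}, y_3^{\delta,\diamond}, y_4^{\delta,\diamond}\}$ (precisely when $\conn^\delta = \alpha$, the unique other element of $\LP_2$). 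Writing $\{a_2, b_2\}$ for the second link of $\beta$, identity~\eqref{eqn::const_jump} of Lemma~\ref{lem::discrete_H} applied with $r=2$ therefore yields
\begin{align*}
\big( \PP_\beta^\delta [\conn^\delta = \alpha ] \big)^2
\; = \; |\MainConst_{a_2-1}^\delta - \MainConst_{a_2}^\delta| .
\end{align*}

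Next, I would pass to the scaling limit using Corollary~\ref{cor::cvg_c_i}, which gives the convergence $\MainConst_k^\delta \to \MainConst_k$ together with the formula
\begin{align*}
\lim_{\delta \to 0} |\MainConst_{a_2-1}^\delta - \MainConst_{a_2}^\delta|
\; = \; \lim_{z \to \varphi(x_{a_2})} \pi \, |z - \varphi(x_{a_2})| \, |\phi_\beta(z; \varphi(x_1), \ldots, \varphi(x_4))|^2,
\end{align*}
where $\varphi$ is any conformal map from $\Omega$ onto $\HH$ taking $x_j$ to a real point and preserving the cyclic order. The explicit expressions~\eqref{eqn::obse_N=2_unne} for $\phi_\beta$ at $N=2$ allow the residue on the right-hand side to be evaluated in closed form, so that $\underset{\delta \to 0}{\lim} \PP_\beta^\delta[\conn^\delta = \alpha]$ is expressed as an elementary function of the cross-ratios of $(x_1, x_2, x_3, x_4)$.

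It then remains to identify this limit with the prediction $\LM_{\alpha,\beta}(2) \PartF_\alpha / \LF_\beta$ from Theorem~\ref{thm::FKIsing_crossingproba}. By the conformal covariance of both sides (Remark~\ref{rem::holo_limiting_polygon}, Proposition~\ref{prop::totalpartition_COV}, and the extension~\eqref{eqn::coulombgasintegral_general}), it suffices to verify the identity for $\Omega = \HH$. I would combine the explicit formula~\eqref{eqn::totalpartition_def} for $\LF_\beta$, the explicit hypergeometric expressions for the pure partition functions $\PartF_\alpha$ with $\kappa = 16/3$ and $N=2$ (available, e.g., in~\cite{Wu:Convergence_of_the_critical_planar_ising_interfaces_to_hypergeometric_SLE}), and the entries $\LM_{\alpha,\beta}(2) \in \{2, \sqrt{2}\}$ of the $N=2$ meander matrix. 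The complementary identity for $\conn^\delta = \beta$ then follows automatically from $\PP_\beta^\delta[\conn^\delta = \alpha] + \PP_\beta^\delta[\conn^\delta = \beta] = 1$ together with Corollary~\ref{cor::linearcombination_FKIsing}, which guarantees $\LF_\beta = \sum_{\gamma \in \LP_2} \LM_{\gamma,\beta}(2) \PartF_\gamma$ and hence the correct normalization.

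The main technical obstacle will be this last algebraic matching: evaluating the residue of $|\phi_\beta|^2$ in~\eqref{eqn::obse_N=2_unne} and recognizing the resulting combination of square roots as $\LM_{\alpha,\beta}(2) \PartF_\alpha / \LF_\beta$. The computation is elementary but somewhat delicate because of the branch-choice subtleties inherent in the spinor observable. An alternative, conceptually cleaner route would invoke Theorem~\ref{thm::FKIsing_Loewner} to realize the limit curve as an $\SLE_{16/3}$ variant driven by $\partial_1 \log \LF_\beta$ and then apply the martingale $\PartF_\alpha / \LF_\beta$, obtained from the PDEs~\eqref{eqn::PDE}, together with a priori bounds on its terminal value; this is precisely the strategy generalized to arbitrary $\beta$ and $N$ in Sections~\ref{subsec::crossingproba_unnested}--\ref{subsec::crossingproba_general}, so the $N=2$ case serves as a preliminary and illustrative instance.
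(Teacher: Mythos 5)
Your proposal is correct and follows essentially the same route as the paper: it uses the $N=2$ feature that the exploration path fully reveals $\conn^\delta$, passes to the limit via~\eqref{eqn::const_jump} and Corollary~\ref{cor::cvg_c_i}, evaluates the residue of $|\phi_\beta|^2$ using the explicit expressions~\eqref{eqn::obse_N=2_unne}, matches the result against $\LM_{\alpha,\beta}(2)\PartF_\alpha/\LF_\beta$, and infers the complementary probability from Corollary~\ref{cor::linearcombination_FKIsing}. The only cosmetic difference is that the paper carries out the residue computation for $\beta = \unnested$ and dispatches the other boundary condition by rotation symmetry, whereas you propose to treat both $\beta \in \LP_2$ directly; you also correctly identify the martingale route as an alternative, while noting it cannot replace this lemma since it serves as the base case of the induction in Proposition~\ref{prop::crossingproba_unnested}.
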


Note that this is consistent with~\cite[Eq.~(117)]{FSKZ:A_formula_for_crossing_probabilities_of_critical_systems_inside_polygons}
(see also~\textnormal{\cite[Corollary~2.7]{Izyurov:Smirnovs_observable_for_free_boundary_conditions_interfaces_and_crossing_probabilities}}).

\begin{proof}
We have two possible boundary conditions, $\vcenter{\hbox{\includegraphics[scale=0.2]{figures/link-1.pdf}}}=\{\{1,2\}, \{3,4\}\}$ and $\vcenter{\hbox{\includegraphics[scale=0.2]{figures/link-2.pdf}}}=\{\{1,4\}, \{2,3\}\}$. 
We will show the convergence  
		\begin{align} \label{eqn::cro_proba_N=2}
		\lim_{\delta\to 0} \PP_{\vcenter{\hbox{\includegraphics[scale=0.2]{figures/link-1.pdf}}}}^{\delta} [ \FKconn^{\delta} 
		= \vcenter{\hbox{\includegraphics[scale=0.2]{figures/link-2.pdf}}} ] 
	\,	= \, \frac{\sqrt{2} \, \PartF_{\vcenter{\hbox{\includegraphics[scale=0.2]{figures/link-2.pdf}}}}(\Omega ; x_1,x_2,x_3,x_4)}{\LF_{\vcenter{\hbox{\includegraphics[scale=0.2]{figures/link-1.pdf}}}}(\Omega ; x_1,x_2,x_3,x_4)} ,
	\end{align}
	which also implies the assertion for 
	$\FKconn^{\delta} = \vcenter{\hbox{\includegraphics[scale=0.2]{figures/link-1.pdf}}}$, since by 
	combining~\eqref{eqn::cro_proba_N=2} with Corollary~\ref{cor::linearcombination_FKIsing}, we have 
	\begin{align*}
\lim_{\delta\to 0}\PP_{\vcenter{\hbox{\includegraphics[scale=0.2]{figures/link-1.pdf}}}}^{\delta} [ \FKconn^{\delta} 
\, = \, \vcenter{\hbox{\includegraphics[scale=0.2]{figures/link-1.pdf}}} ]
\, = \, 1 \, - \,  \lim_{\delta\to 0}\PP_{\vcenter{\hbox{\includegraphics[scale=0.2]{figures/link-1.pdf}}}}^{\delta} [ \FKconn^{\delta}=\vcenter{\hbox{\includegraphics[scale=0.2]{figures/link-2.pdf}}} ]
\, = \, \frac{2\,\PartF_{\vcenter{\hbox{\includegraphics[scale=0.2]{figures/link-1.pdf}}}}(\Omega ; x_1,x_2,x_3,x_4)}{\LF_{\vcenter{\hbox{\includegraphics[scale=0.2]{figures/link-1.pdf}}}}(\Omega ; x_1,x_2,x_3,x_4)} .
	\end{align*}
The probabilities with boundary condition $\vcenter{\hbox{\includegraphics[scale=0.2]{figures/link-2.pdf}}}$ can be derived using rotation symmetry.

Thus, it remains to show~\eqref{eqn::cro_proba_N=2}. 
Note that the right-hand side of~\eqref{eqn::cro_proba_N=2} is conformally invariant by the covariance property~\eqref{eqn::COV} shared by both the numerator and the denominator. 
Let $\varphi$ be a conformal map from $\Omega$ onto $\HH$ such that $\varphi(x_{1})<\cdots<\varphi(x_{4})$, and denote 
\begin{align*}
\chi=\frac{(\realpt_4-\realpt_3)(\realpt_2-\realpt_1)}{(\realpt_3-\realpt_1)(\realpt_4-\realpt_2)} 
\qquad \textnormal{and} \qquad
\realpt_{i} := \varphi(x_i) \, \in \, \R , \qquad \textnormal{for } \, 1\leq i \leq 4 .
\end{align*}
On the one hand, Eq.~\eqref{eqn::totalpartition_def} and~\cite[Section~2]{Peltola-Wu:Global_and_local_multiple_SLEs_and_connection_probabilities_for_level_lines_of_GFF} give
\begin{align*}
\LF_{\vcenter{\hbox{\includegraphics[scale=0.2]{figures/link-1.pdf}}}}(\realpt_1,\realpt_2,\realpt_3,\realpt_4)
= \; & \sqrt{2} \, (\realpt_2-\realpt_1)^{-1/8} (\realpt_4-\realpt_3)^{-1/8} \big( (1-\chi)^{1/4} + (1-\chi)^{-1/4} \big)^{1/2} , \\
\PartF_{\vcenter{\hbox{\includegraphics[scale=0.2]{figures/link-2.pdf}}}}(\realpt_1,\realpt_2,\realpt_3,\realpt_4)
= \; & (\realpt_4-\realpt_1)^{-1/8} (\realpt_3-\realpt_2)^{-1/8} \chi^{3/8} (1+\sqrt{1-\chi})^{-1/2} . 
\end{align*} 
Thus, since the ratio of $\LF_{\vcenter{\hbox{\includegraphics[scale=0.2]{figures/link-1.pdf}}}}$ and $\PartF_{\vcenter{\hbox{\includegraphics[scale=0.2]{figures/link-2.pdf}}}}$ is conformally invariant by~\eqref{eqn::COV},
we find that
\begin{align} \label{eqn::cro_proba_N=2_aux2}
\frac{\sqrt{2}\PartF_{\vcenter{\hbox{\includegraphics[scale=0.2]{figures/link-2.pdf}}}}( \Omega ; x_1,x_2,x_3,x_4)}{\LF_{\vcenter{\hbox{\includegraphics[scale=0.2]{figures/link-1.pdf}}}}(\Omega ; x_1,x_2,x_3,x_4)} 
= \frac{\sqrt{\chi} \, (1+\sqrt{1-\chi})^{-1/2}}{(1-\chi)^{1/8}\big((1-\chi)^{1/4}+(1-\chi)^{-1/4}\big)^{1/2}} 
= \frac{\sqrt{\chi}}{1+\sqrt{1-\chi}} .
\end{align}
On the other hand, using the exploration path $\xi_{\vcenter{\hbox{\includegraphics[scale=0.2]{figures/link-1.pdf}}}}^{\delta}$ from Definition~\ref{def: exploration path} and the scaling limit of the observable from Section~\ref{subsec::holo_observable},
we find
\begin{align*}	\PP_{\vcenter{\hbox{\includegraphics[scale=0.2]{figures/link-1.pdf}}}}^{\delta}[\FKconn^{\delta}=\vcenter{\hbox{\includegraphics[scale=0.2]{figures/link-2.pdf}}}]
	= \; & \lim_{z\to \realpt_4} \sqrt{\pi} \, \big| (z-\realpt_{4})^{1/2} \, \phi_{\vcenter{\hbox{\includegraphics[scale=0.2]{figures/link-1.pdf}}}}(z;\realpt_1,\realpt_2,\realpt_3,\realpt_4) \big| 
	&& \textnormal{[by~\eqref{eqn::const_jump} and Cor.~\ref{cor::cvg_c_i}]}\\
	= \; &  \frac{\sqrt{(\realpt_4-\realpt_2)(\realpt_3-\realpt_1)}}{\sqrt{(\realpt_2-\realpt_1)(\realpt_4-\realpt_3)}}-\frac{\sqrt{(\realpt_4-\realpt_1)(\realpt_3-\realpt_2)}}{\sqrt{(\realpt_2-\realpt_1)(\realpt_4-\realpt_3)}}&&\textnormal{[by~\eqref{eqn::obse_N=2_unne}]}\\
	= \; &  \frac{1-\sqrt{1-\chi}}{\sqrt{\chi}}. 
\end{align*}
Comparing this with~\eqref{eqn::cro_proba_N=2_aux2}, we obtain~\eqref{eqn::cro_proba_N=2}. This completes the proof. 
\end{proof}
\subsection{Proof of Theorem~\ref{thm::FKIsing_crossingproba}: The completely unnested case}
\label{subsec::crossingproba_unnested}
The goal of this section is to prove Theorem~\ref{thm::FKIsing_crossingproba} when $\beta = \unnested$  as in~\eqref{eqn::unnested}. 
We use a standard martingale argument and the convergence of the interfaces,
which also relies on the domain Markov property of SLE curves and the Markov property of the discrete model.
The main difficulty in the proof is to establish a priori estimates for the behavior of the martingale upon swallowing marked points.

\smallbreak

For a polygon 
$(\Omega; x_1, \ldots, x_{2N})$ whose marked boundary points 
$x_1, \ldots, x_{2N}$ lie on sufficiently regular boundary segments (e.g.,~$C^{1+\eps}$ for some $\eps>0$), 
we denote 
\begin{align*} 
\LF_{\unnested}^{(N)}(\Omega; x_1, \ldots, x_{2N}) 
:= \; & \prod_{j=1}^{2N} |\varphi'(x_j)|^{1/16} \times \LF_{\unnested}^{(N)}(\varphi(x_1), \ldots, \varphi(x_{2N})) ,
\end{align*}
where $\varphi \colon \Omega \to \HH$ is any conformal map such that $\varphi(x_1) < \cdots < \varphi(x_{2N})$. 
It follows from the M\"obius covariance~\eqref{eqn::COV} in Theorem~\ref{thm::CGI_property} that this definition is independent of the choice of the map $\varphi$.
Fixing a choice and denoting throughout this section
$\realpt_{i} := \varphi(x_i)$ 
for notational simplicity, we have
\begin{align*} 
\LF_{\unnested}^{(N)}(\realpt_1, \ldots, \realpt_{2N}) 
= \; & \prod_{r=1}^N |\realpt_{2r}-\realpt_{2r-1}|^{-1/8} \times \bigg( \sum_{\bs{\sigma} \in \{\pm 1\}^N} \prod_{1\le s < t \le N}\chi(\realpt_{2s-1}, \realpt_{2t-1}, \realpt_{2t}, \realpt_{2s})^{\sigma_s \sigma_t / 4}\bigg)^{1/2} 
\end{align*}
as in~\eqref{eqn::totalpartition_def}. 
Since $\PartF_{\alpha}(\Omega; x_1, \ldots, x_{2N})$ 
is also given in terms of the conformal map $\varphi$ and Definition~\ref{def::PPF_general}, 
we see that when considering ratios $\smash{\PartF_{\alpha}/\LF_{\unnested}^{(N)}}$, 
we may relax the assumption on the regularity of $\partial \Omega$.

\begin{proposition} \label{prop::crossingproba_unnested}
Assume the same setup as in Theorem~\ref{thm::FKIsing_Loewner} 
with $\beta = \unnested$ as in~\eqref{eqn::unnested}.
The endpoints of the $N$ interfaces give rise to a random planar link pattern $\FKconn^{\delta}$ in $\LP_N$. 
We have
\begin{align} \label{eqn::crossingproba_unnested}
\lim_{\delta\to 0}\PP_{\unnested}^{\delta} [ \FKconn^{\delta}=\alpha ] 
\; = \; \LM_{\alpha,\unnested}(2) \, \frac{\PartF_{\alpha}(\Omega; x_1, \ldots, x_{2N})}{\LF_{\unnested}^{(N)}(\Omega; x_1, \ldots, x_{2N})} , 
\qquad \textnormal{for any } \, \alpha \in \LP_N .
\end{align}
\end{proposition}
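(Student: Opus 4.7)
The plan is to exploit the convergence of the first interface from Theorem~\ref{thm::FKIsing_Loewner} (in the case $\beta = \unnested$) via a martingale argument based on the ratio of partition functions. Since both sides of~\eqref{eqn::crossingproba_unnested} are conformally covariant (the factors $|\varphi'(x_j)|^{h(\kappa)}$ cancel in the ratio), I apply a conformal map $\varphi \colon \Omega \to \HH$ and work in the half-plane with $\realpt_j = \varphi(x_j)$ in place of $x_j$. Fix $\alpha \in \LP_N$.

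The central observable is the process
\begin{align*}
M_t^{\alpha} \; := \; \LM_{\alpha, \unnested}(2) \; \frac{\PartF_{\alpha}(W_t, V_t^2, \ldots, V_t^{2N})}{\LF_{\unnested}^{(N)}(W_t, V_t^2, \ldots, V_t^{2N})},
\end{align*}
where $W_t$ is the Loewner driving function of the scaling-limit curve $\eta_1$ starting at $\realpt_1$ and $V_t^j = g_t(\realpt_j)$ for $j \geq 2$. A direct It\^o computation using the BPZ equations~\eqref{eqn::PDE} satisfied by $\PartF_{\alpha}$ (Definition~\ref{def::PPF_general}) and by $\LF_{\unnested}^{(N)} = \coulombnew_{\unnested}$ (Proposition~\ref{prop::totalpartition_PDE}), combined with the SDE from Theorem~\ref{thm::FKIsing_Loewner} whose drift is exactly $\tfrac{16}{3}\partial_1 \log \LF_{\unnested}^{(N)}$, shows that $M_t^{\alpha}$ is a continuous local martingale up to the stopping time $\tau$ at which $\eta_1$ first hits $\partial \HH \setminus \{\realpt_1\}$.

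By planarity and the parity of the boundary condition $\unnested$, almost surely $\eta_1(\tau) = \realpt_{2r}$ for some $r \in \{1, \ldots, N\}$. On the event $\{\eta_1(\tau) = \realpt_{2r}\}$, the intermediate Loewner images $V_t^2, \ldots, V_t^{2r-1}$ collapse to $W_t$ simultaneously, while $V_t^{2r+1}, \ldots, V_t^{2N}$ remain separated. The cascade relation for pure partition functions $\{\PartF_{\alpha}\}$ from~\cite{Wu:Convergence_of_the_critical_planar_ising_interfaces_to_hypergeometric_SLE}, together with the recursive asymptotics~\eqref{eqn::ASY_LF} of $\LF_{\unnested}^{(N)}$ (Proposition~\ref{prop::totalpartition_ASY}), allows one to identify $\lim_{t \to \tau} M_t^{\alpha}$ on this event as $\one\{\{1,2r\} \in \alpha\}$ multiplied by the analogous ratio for the induced sub-polygon. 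Iterating this identification by successively examining the remaining interfaces, and matching with the discrete side via Theorem~\ref{thm::FKIsing_Loewner} and the domain Markov property of FK-Ising, reduces the problem to elementary base cases ($N = 1, 2$) and yields~\eqref{eqn::crossingproba_unnested}.

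The principal obstacle, and the reason for the deferral to Appendix~\ref{appendix_technical}, is that $M_t^{\alpha}$ is only a local martingale and need not be uniformly integrable up to $\tau$: both numerator and denominator can degenerate when marked points coalesce with $W_t$. Quantitative a priori estimates controlling $M_t^{\alpha}$ in neighborhoods of the swallowing times, together with tightness arguments for the joint convergence of the interface and the conditional discrete probabilities, are required both to justify optional stopping for $M_t^{\alpha}$ and to interchange the limits $\delta \to 0$ and $t \to \tau$. These estimates combine fine asymptotic analysis of $\PartF_{\alpha}$ and $\LF_{\unnested}^{(N)}$ with RSW-type bounds for FK-Ising, and constitute the technical content of the argument.
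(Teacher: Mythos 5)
Your proposal adopts the right general framework — a martingale ratio $\PartF_\alpha/\LF_{\unnested}^{(N)}$, the cascade relation for pure partition functions, an induction on $N$, and the domain Markov property — so it is close in spirit to the paper's argument. But there are two concrete errors in the execution.

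\textbf{The stopping time and the geometry at $\tau$ are wrong.}
You stop $\eta_1$ at its first hitting time of $\partial\HH\setminus\{\realpt_1\}$ and assert ``almost surely $\eta_1(\tau)=\realpt_{2r}$ for some $r$,'' with $V_t^2,\ldots,V_t^{2r-1}$ all coalescing with $W_t$ at that instant. This is false. Since $\kappa=16/3 \in (4,8)$, the scaling-limit curve is boundary-touching; its first boundary touch after leaving $\realpt_1$ occurs at a \emph{generic} point of the boundary, not at a marked point, and the event that the tip is exactly at a marked point has probability zero. Nor does the full block of intermediate images collapse at once: at the first time an adjacent marked point is swallowed, only some of the $V_t^j$ are absorbed and the tip is strictly between marked points. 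The paper's proof instead grows the curve from $x_2$ (recalling $\{1,2\}\in\alpha$ by rotation), stops at the time $T$ when $\realpt_1$ or $\realpt_3$ is first swallowed, and records $\eta(T)\in(\realpt_j,\realpt_{j+1})$ for some $j$. One then splits events according to whether $j\in\corrind_\alpha$ (the factorization in~\eqref{eqn::mart_cascade} holds, with $\eta(T)$ appearing as a new marked point in the left component) or $j\notin\corrind_\alpha$ (the martingale limit vanishes by Proposition~\ref{prop::mart_vanish_combined}). This distinction between ``correct'' and ``wrong'' swallowing events, which your argument omits, is the combinatorial engine of the induction and is exactly where the meander-matrix factorization~\eqref{eqn::meander_cascade} enters.

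\textbf{The technical obstacle is misidentified.}
You attribute the need for Appendix~\ref{appendix_technical} to lack of uniform integrability because ``both numerator and denominator can degenerate.'' In fact, by Corollary~\ref{cor::linearcombination_FKIsing}, $\LF_{\unnested}^{(N)} = \sum_{\gamma}\LM_{\gamma,\unnested}(2)\PartF_\gamma \geq \LM_{\alpha,\unnested}(2)\PartF_\alpha$ with all terms positive, so $M_t$ is bounded between $0$ and $\LM_{\alpha,\unnested}(2)^{-1}$, hence automatically uniformly integrable and optional stopping is immediate. The genuine difficulty deferred to the appendix is the \emph{computation of the boundary limits} $\lim_{t\to T}M_t$: in particular, establishing that the martingale vanishes on the ``wrong'' events and that the cascade factorization~\eqref{eqn::mart_cascade} holds almost surely on the ``correct'' ones, which requires a priori estimates on $\LB_\alpha/\LB_{\unnested}^{(N)}$ and a lower bound on $\LY_{\unnested}^{(N)}$ near the boundary of the chamber.
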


\begin{proof}
We derive the probability~\eqref{eqn::crossingproba_unnested} by induction on $N \geq 1$. 
The initial case of $N=1$ is trivial, and the case of $N=2$ holds by Lemma~\ref{lem::cvg_proba_N=2}. 
Thus, we fix $N\ge 3$ and assume that~\eqref{eqn::crossingproba_unnested} holds up to $N-1$. 
For definiteness, we consider the case where $\{1,2\} \in \alpha \in \LP_N$.
The probabilities $\big\{\PP^{\delta}_{\unnested}[\FKconn^{\delta}=\alpha]\big\}_{\delta>0}$ form a sequence of numbers in $[0,1]$, so there is always subsequential limit. 
It suffices to show that any subsequential limit along a sequence $\delta_n\to 0$ 
satisfies 
\begin{align} \label{eqn::crossingproba_unnested_goal}
\mathsf{P}_{\alpha} := \underset{n\to\infty}{\lim} \PP^{\delta_n}_{\unnested} [ \FKconn^{\delta_n}=\alpha ]
\; = \; \LM_{\alpha,\unnested}(2) \, \frac{\PartF_{\alpha}(\realpt_1, \ldots, \realpt_{2N})}{\LF_{\unnested}^{(N)}(\realpt_1, \ldots, \realpt_{2N})} ,
\end{align}
since the right-hand side is conformally invariant by the covariance property~\eqref{eqn::COV} shared by both the numerator and the denominator. 
From Theorem~\ref{thm::FKIsing_Loewner}, we know that 
(up to the first time $T$ when $\realpt_1$ or $\realpt_3$ is swallowed)
the interface $\eta^\delta$ starting from $x_2^{\delta,\diamond}$ 
converges weakly to the image under $\smash{\varphi^{-1}}$ of the Loewner chain $\eta$ with driving function $W$ 
started from $W_0 = \realpt_2$ and
satisfying the SDE~\eqref{eqn::rcm_Loewner_chain} with partition function $\smash{\coulombnew_{\unnested} = \LF_{\unnested}^{(N)}}$,
where
$(V_t^{1}, W_t, V_t^{3},\ldots, V_t^{2N}) = (g_t(\realpt_1), W_t, g_t(\realpt_3), \ldots, g_t(\realpt_{2N}))$.
For convenience, we couple them 
(by the Skorohod representation theorem) 
in the same probability space so that the convergence occurs almost surely. 
Now, the process 
\begin{align*}
M_t := \frac{\PartF_{\alpha}(g_t(\realpt_1), W_t, g_t(\realpt_3), \ldots, g_t(\realpt_{2N}))}{\LF_{\unnested}^{(N)}(g_t(\realpt_1), W_t, g_t(\realpt_3), \ldots, g_t(\realpt_{2N}))} , \qquad t<T ,
\end{align*}
is a bounded martingale due to Corollary~\ref{cor::linearcombination_FKIsing} and the PDEs~\eqref{eqn::PDE} by It\^o's formula. 
Note that~\eqref{eqn::crossingproba_unnested_goal} involves its starting value $M_0$.
The key to the proof is to analyze the limiting behavior of $M_t$ as $t \nearrow T$. 

\smallbreak

We have either 
$\eta(T)\in (\realpt_j, \realpt_{j+1})$ for $j\in\{3,4,\ldots, 2N\}$,  
or $\eta(T)\in (\realpt_{2N}, \realpt_1) = (\realpt_{2N}, \infty)\cup(-\infty, \realpt_1)$.  
When considering the limit of $M_t$, we classify the possibilities $\eta(T)\in (\realpt_j, \realpt_{j+1})$ with ``correct" $j$ and ``wrong" $j$. 
For this, we define $\corrind_{\alpha}$ to be the set of indices $j\in\{4,5,\ldots, 2N\}$ such that $\{3,4,\ldots, j\}$ forms a sub-link pattern of $\alpha$ (these indices are ``correct").
After relabeling the indices by $1, 2, \ldots, j-2$, we denote this sub-link pattern by $\alpha_j$, and we denote by $\alpha/\alpha_j$ the sub-link pattern obtained from $\alpha$ by removing the links in $\alpha_j$ and relabeling the remaining indices by $1, 2, \ldots, 2N-j+2$. 
\begin{enumerate}
\item[($\corrind$):] 
On the event $\eta(T)\in (\realpt_j, \realpt_{j+1})$ with $j\in\corrind_{\alpha}$, Lemma~\ref{lem::mart_cascade} (proven below) 
gives the following cascade relation: almost surely, we have
\begin{align}\label{eqn::mart_cascade}
M_T = \lim_{t\to T}M_t 
= \frac{\PartF_{\alpha_j}(D_T^R; \realpt_3, \realpt_4, \ldots, \realpt_j)}{\LF_{\unnested}^{(j/2-1)}(D_T^R; \realpt_3, \realpt_4, \ldots, \realpt_j)} 
\; \frac{\PartF_{\alpha/\alpha_j}(D_T^L; \realpt_1, \eta(T), \realpt_{j+1}, \realpt_{j+2}, \ldots, \realpt_{2N})}{\LF_{\unnested}^{(N-j/2+1)}(D_T^L; \realpt_1, \eta(T), \realpt_{j+1}, \realpt_{j+2}, \ldots, \realpt_{2N})} ,
\end{align}
where $D_T^R$ (resp.~$D_T^L$) denotes the  
component of $\HH\setminus\eta[0,T]$ with $\realpt_3$ (resp.~$\realpt_1$) on its boundary. 

\item[($\corrind^c$):] 
On the event $\eta(T)\in (\realpt_j, \realpt_{j+1})$ with $j\in\{3,4,\ldots, 2N\}\setminus\corrind_{\alpha}$, 
from Proposition~\ref{prop::mart_vanish_combined} 
(presented in Appendix~\ref{appendix_technical}) we see that $M_T$ vanishes: almost surely, we have 
\begin{align}\label{eqn::mart_vanish}
M_T = \lim_{t\to T}M_t = 0 .
\end{align}
\end{enumerate}
Combining~(\ref{eqn::mart_cascade},~\ref{eqn::mart_vanish}) with the identity $M_0 = \E[M_T]$ from the optional stopping theorem, we obtain
\begin{align} \label{eqn::crossingproba_unnested_cascade}
\frac{\PartF_{\alpha}(\realpt_1, \ldots, \realpt_{2N})}{\LF_{\unnested}^{(N)}(\realpt_1, \ldots, \realpt_{2N})} 
\; = \; M_0 \; = \; \E[M_T] \; = \; 
\sum_{j\in\corrind_{\alpha}} 
\E \big[ \one \{\eta(T)\in(\realpt_j, \realpt_{j+1})\} \, M_T \big] .
\end{align}
To simplify notation, we replace the superscripts ``$\delta_n$'' by ``$n$'', and we drop the superscript ``$\diamond$''. 
Let us now consider the FK-Ising interface $\eta^{n}$ starting from $x_2^{n}$,
and denote by $T^{n}$ the first time when $\eta^{n}$ intersects $(x_3^{n} \, x_1^{n})$. 
Denote also by $D^{n,R}$ (resp.~$D^{n,L}$)
the connected component of $\Omega^n\setminus\eta^n[0,T^n]$ with $x_3^n$ (resp.~$x_1^n$) on its boundary.
Then for each $j\in\{3,4, \ldots, 2N\}$, on the event $\{\eta^n(T^n)\in(x_j^n \, x_{j+1}^n)\}$, almost surely
the polygon $(D^{n,R}; x_3^n,x_4^n, \ldots, x_j^n)$ converges to the polygon $(\varphi^{-1}(D_T^R); x_3,x_4, \ldots, x_j)$, and the polygon 
$(D^{n,L}; x_{1}^n, \eta^n(T^n), x_{j+1}^n, x_{j+2}^n, \ldots, x_{2N}^n)$ 
to the polygon $(\varphi^{-1}(D_T^L); x_1, \varphi^{-1}(\eta(T)), x_{j+1}, x_{j+2}, \ldots, x_{2N})$
in the close-Carath\'{e}odory sense 
(this can be seen via a standard argument, see, e.g.,~\cite[Section~4]{Garban-Wu:On_the_convergence_of_FK-Ising_percolation_to_SLE} and~\cite[Lemma~5.6]{Izyurov:On_multiple_SLE_for_the_FK_Ising_model}). 
Hence, using the domain Markov property of the FK-Ising model and the induction hypothesis, we find that on the event $\{\eta^{n}(T^{n}) \in(x_j^n \, x_{j+1}^n)\}$, 
the following almost sure convergence\footnote{By the Skorohod representation theorem, 
we can couple all of the random variables on the same probability space so that the convergence takes place almost surely.} 
holds:
\begin{align} 
\nonumber
\; & 
\E^n_{\unnested}\big[\one \{\FKconn^n=\alpha \} \cond \eta^n[0,T^n]\big] \\[.5em]
\nonumber
= \;\; & \E^n_{\unnested} \big[
\one{\{\smash{\FKhatconn^{n,R}} = \alpha_j\}}  \, 
\one{\{\smash{\FKhatconn^{n,L}} = \alpha/\alpha_j\}}
\cond \eta^n[0,T^n] \big] \\[.5em]
\nonumber
= \;\; & 
\hat{\PP}^{n,R}_{\unnested} [\smash{\FKhatconn^{n,R}} = \alpha_j ] \;
\hat{\PP}^{n,L}_{\unnested} [\smash{\FKhatconn^{n,L}} = \alpha/\alpha_j ] \\[.5em]
\begin{split} \label{eqn::induction_hypo}
\qquad \overset{n \to \infty}{\longrightarrow} \qquad \; & 
\frac{\LM_{\alpha_j, \unnested}(2) \, \PartF_{\alpha_j}(\varphi^{-1}(D_T^R); x_3, x_4, \ldots, x_j)}{\LF_{\unnested}^{(j/2-1)}(\varphi^{-1}(D_T^R); x_3, x_4, \ldots, x_j)} \\
\; & \;\times
\frac{\LM_{\alpha/\alpha_j, \unnested}(2) \, \PartF_{\alpha/\alpha_j}(\varphi^{-1}(D_T^L); x_1, \varphi^{-1}(\eta(T)), x_{j+1}, \ldots, x_{2N})}{\LF_{\unnested}^{(N-j/2+1)}(\varphi^{-1}(D_T^L); x_1, \varphi^{-1}(\eta(T)), x_{j+1}, \ldots, x_{2N})} 
\end{split}
\end{align}
where $\smash{\hat{\PP}^{n,R}_{\unnested}}$ and $\smash{\hat{\PP}^{n,L}_{\unnested}}$
are respectively the FK-Ising measures
on the random polygons $(D^{n,R}; x_3^n,x_4^n, \ldots, x_j^n)$
and $(D^{n,L}; x_{1}^n, \eta^n(T^n), x_{j+1}^n, \ldots, x_{2N}^n)$, 
both measurable with respect to $\eta^{n}$,
and $\smash{\FKhatconn^{n,R}}$ and $\smash{\FKhatconn^{n,L}}$ denote respectively the 
random connectivity patterns in $\LP_{j/2-1}$ and $\LP_{N-j/2+1}$. 
Now, we note that for all $j\in\corrind_{\alpha}$, the meander matrix~\eqref{eqn::meandermatrix_def_general} satisfies the simple factorization identity
\begin{align} \label{eqn::meander_cascade}
\LM_{\alpha_j, \unnested}(2) \; \LM_{\alpha/\alpha_j, \unnested}(2) = \LM_{\alpha,\unnested}(2) . 
\end{align}
Therefore, 
using the conformal invariance (CI) of the $\SLE_{16/3}$ type curve $\eta$ and of the martingale $M$,
we conclude that 
\begin{align*}
\mathsf{P}_{\alpha} 
:= \; & \lim_{n\to\infty}\PP^{n}_{\unnested}[\FKconn^{n}=\alpha] && \\
= \; &  \lim_{n\to\infty} \sum_{j\in\corrind_{\alpha}}
\E^n_{\unnested} \Big[ \one \{\eta^n(T^n)\in(x_j^n \, x_{j+1}^n)\} \, 
\E^n_{\unnested}\big[\one \{\FKconn^n=\alpha \} \cond \eta^n[0,T^n]\big]\Big] 
&& \textnormal{[by tower property]}  \\
= & \; \sum_{j\in\corrind_{\alpha}} 
\E\bigg[ \one \{ \varphi^{-1}(\eta(T)) \in (x_j, x_{j+1})\} \, 
\frac{\LM_{\alpha_j, \unnested}(2) \, \PartF_{\alpha_j}(\varphi^{-1}(D_T^R); x_3, x_4, \ldots, x_j)}{\LF_{\unnested}^{(j/2-1)}(\varphi^{-1}(D_T^R); x_3, x_4, \ldots, x_j)} \\
\; & \qquad\qquad \times
\frac{\LM_{\alpha/\alpha_j, \unnested}(2) \, \PartF_{\alpha/\alpha_j}(\varphi^{-1}(D_T^L); x_1, \varphi^{-1}(\eta(T)), x_{j+1}, \ldots, x_{2N})}{\LF_{\unnested}^{(N-j/2+1)}(\varphi^{-1}(D_T^L); x_1, \varphi^{-1}(\eta(T)), x_{j+1}, \ldots, x_{2N})} \bigg]
&& \textnormal{[by~\eqref{eqn::induction_hypo}]}  \\
= & \; \LM_{\alpha,\unnested}(2) \, \sum_{j\in\corrind_{\alpha}} \, 
\E\big[\one \{ \eta(T) \in (\realpt_j, \realpt_{j+1})\} \, M_T \big]
&& \textnormal{[by~(\ref{eqn::mart_cascade},~\ref{eqn::meander_cascade})~\&~CI]} \\
= & \; \LM_{\alpha,\unnested}(2) \, \frac{\PartF_{\alpha}(\realpt_1, \ldots, \realpt_{2N})}{\LF_{\unnested}^{(N)}(\realpt_1, \ldots, \realpt_{2N})} . 
&& \textnormal{[by~\eqref{eqn::crossingproba_unnested_cascade}]} 
\end{align*}
This gives the sought identification~\eqref{eqn::crossingproba_unnested_goal} and finishes the induction step.
\end{proof}

To complete the proof of Proposition~\ref{prop::crossingproba_unnested}, it remains to verify the properties~($\corrind$) and~($\corrind^c$) 
of the martingale $M$ in the limit as $t \nearrow T$. 
The latter is the topic of Appendix~\ref{appendix_technical}, while the former we prove below in Lemma~\ref{lem::mart_cascade} after two preparatory results (Lemmas~\ref{lem::ppf_cascade_mart} and~\ref{lem::totalpartition_cascade_mart}).

\begin{lemma} \label{lem::ppf_cascade_mart}
Fix $\kappa\in (4,6]$ and $(\realpt_1,\ldots,\realpt_{2N})\in \chamber_{2N}$, suppose that $\{1,2\}\in\alpha \in \LP_N$, 
and fix an index $j\in\corrind_{\alpha}$.
Let $\smash{\hat{\eta}}$ be the $\SLE_{\kappa}$ curve in $\HH$ from $\realpt_2$ to $\realpt_1$, and let $\smash{\hat{T}}$ be the first time when it swallows $\realpt_1$ or $\realpt_3$.
Let $\smash{(\hat{W}_t \colon 0\le t\le \hat{T})}$ be the Loewner driving function of $\smash{\hat{\eta}}$, 
and $\smash{(\hat{g}_t \colon 0\le t\le \hat{T})}$ the corresponding conformal maps. 
Finally, denote by $\smash{\hat{D}^R_{\hat{T}}}$ 
\textnormal{(}resp.~$\smash{\hat{D}^L_{\hat{T}}}$\textnormal{)}
the connected component of $\smash{\HH\setminus\hat{\eta}[0,\hat{T}]}$ with $\realpt_3$ \textnormal{(}resp.~$\realpt_1$\textnormal{)} on its boundary. 
Then, almost surely on the event $\smash{\{\hat{\eta}(\hat{T})\in(\realpt_j, \realpt_{j+1})\}}$, we have 
\begin{align} \label{eqn::ppf_cascade_mart}
\begin{split}
\; & \lim_{t\to \hat{T}} \Big(\prod_{i=3}^{2N}\hat{g}_t'(\realpt_i)^{h(\kappa)}\Big) \; \frac{\PartF_{\alpha}(\hat{g}_t(\realpt_1), \hat{W}_t, \hat{g}_t(\realpt_3), \hat{g}_t(\realpt_4), \ldots, \hat{g}_t(\realpt_{2N}))}{\PartF_{\vcenter{\hbox{\includegraphics[scale=0.2]{figures/link-0.pdf}}}}(\hat{g}_t(\realpt_1), \hat{W}_t)} \\
=\; & \PartF_{\alpha_j}(\hat{D}^R_{\hat{T}}; \realpt_3, \realpt_4, \ldots, \realpt_j) \; \frac{\PartF_{\alpha/\alpha_j}(\hat{D}^L_{\hat{T}};\realpt_1, \hat{\eta}(\hat{T}), \realpt_{j+1}, \realpt_{j+2}, \ldots, \realpt_{2N})}{\PartF_{\vcenter{\hbox{\includegraphics[scale=0.2]{figures/link-0.pdf}}}}(\hat{D}^L_{\hat{T}}; \realpt_1, \hat{\eta}(\hat{T}))}. 
\end{split}
\end{align}
\end{lemma}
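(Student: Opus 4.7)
The plan is to extract the limit~\eqref{eqn::ppf_cascade_mart} from three ingredients: the PDE system~\eqref{eqn::PDE} and M\"obius covariance~\eqref{eqn::COV} satisfied by $\PartF_\alpha$ (Definition~\ref{def::PPF_general}), and, as our main probabilistic input, the explicit construction of the pure partition functions for $\kappa \in (0,6]$ from~\cite[Theorem~1.7]{Wu:Convergence_of_the_critical_planar_ising_interfaces_to_hypergeometric_SLE}.

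First, I would set
\begin{align*}
M_t := \prod_{i=3}^{2N} \hat g_t'(\realpt_i)^{h(\kappa)} \; \frac{\PartF_\alpha(\hat g_t(\realpt_1), \hat W_t, \hat g_t(\realpt_3), \ldots, \hat g_t(\realpt_{2N}))}{\PartF_{\vcenter{\hbox{\includegraphics[scale=0.2]{figures/link-0.pdf}}}}(\hat g_t(\realpt_1), \hat W_t)} , \qquad 0 \le t < \hat T ,
\end{align*}
which is the left-hand side of~\eqref{eqn::ppf_cascade_mart} prior to taking the limit. A direct It\^o computation using the Loewner equation~\eqref{eqn:LE} and~\eqref{eqn::PDE}, together with the explicit formula $\PartF_{\vcenter{\hbox{\includegraphics[scale=0.2]{figures/link-0.pdf}}}}(\realpt_1, \realpt_2) = (\realpt_2-\realpt_1)^{-2h(\kappa)}$ (which gives the chordal $\SLE_\kappa$ drift from $\realpt_2$ to $\realpt_1$), shows that $(M_t)_{t < \hat T}$ is a local martingale; the power-law bound~\eqref{eqn::PPFBounds_general} then upgrades this to a genuine bounded martingale, so in particular $M_{\hat T} := \lim_{t \nearrow \hat T} M_t$ exists almost surely.

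Next, by the conformal covariance~\eqref{eqn::COV}, I would recast $M_t$ in the intrinsic form
\begin{align*}
M_t = \frac{\PartF_\alpha \big( \HH \setminus \hat\eta[0,t]; \, \realpt_1, \hat\eta(t), \realpt_3, \ldots, \realpt_{2N} \big)}{\PartF_{\vcenter{\hbox{\includegraphics[scale=0.2]{figures/link-0.pdf}}}} \big( \HH \setminus \hat\eta[0,t]; \, \realpt_1, \hat\eta(t) \big)} ,
\end{align*}
in which the Jacobian weights cancel. On the event $\{\hat\eta(\hat T) \in (\realpt_j, \realpt_{j+1})\}$ with $j \in \corrind_\alpha$, the slit domain disconnects at $\hat\eta(\hat T)$ into $\hat D^R_{\hat T}$ and $\hat D^L_{\hat T}$, and the defining property of $\corrind_\alpha$ ensures that every link of $\alpha$ lies entirely on one side --- the links on the $\hat D^R_{\hat T}$ side forming the sub-pattern $\alpha_j$ and those on the $\hat D^L_{\hat T}$ side forming $\alpha/\alpha_j$. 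I would then identify $M_{\hat T}$ by applying the two-point fusion asymptotics~\eqref{eqn::PPFASY_general} iteratively along the $j/2-1$ links of $\alpha_j$, combined with conformal covariance to move to the reference domain $\hat D^R_{\hat T}$; the same fusion applied to the single link of the denominator $\PartF_{\vcenter{\hbox{\includegraphics[scale=0.2]{figures/link-0.pdf}}}}$ produces the chordal factor $\PartF_{\vcenter{\hbox{\includegraphics[scale=0.2]{figures/link-0.pdf}}}}(\hat D^L_{\hat T}; \realpt_1, \hat\eta(\hat T))$, while the residual factor on the $\hat D^L_{\hat T}$ side is precisely $\PartF_{\alpha/\alpha_j}(\hat D^L_{\hat T}; \realpt_1, \hat\eta(\hat T), \realpt_{j+1}, \ldots, \realpt_{2N})$, reproducing the right-hand side of~\eqref{eqn::ppf_cascade_mart}.

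The main obstacle will be making this iterated fusion rigorous, because~\eqref{eqn::PPFASY_general} only controls the leading behavior when a \emph{single pair} of neighboring arguments is brought together, whereas in the pinching all of $\realpt_3, \ldots, \realpt_j$ are driven into the tip $\hat\eta(\hat T)$ simultaneously. Controlling the error terms uniformly as $t \nearrow \hat T$ is of the same flavor as the analysis carried out in Appendix~\ref{appendix_technical} for the complementary event. A cleaner conceptual route that sidesteps most of this technicality is to invoke~\cite[Theorem~1.7]{Wu:Convergence_of_the_critical_planar_ising_interfaces_to_hypergeometric_SLE}: there, $M_t$ is (up to the Jacobian) precisely the Radon--Nikodym derivative of the multiple-$\SLE_\kappa$ law associated to $\alpha$ with respect to the chordal $\SLE_\kappa$ from $\realpt_2$ to $\realpt_1$, and, conditionally on $\hat\eta[0,\hat T]$ on the event considered, the remaining curves decompose into two independent multiple $\SLE_\kappa$ systems with link patterns $\alpha_j$ in $\hat D^R_{\hat T}$ and $\alpha/\alpha_j$ in $\hat D^L_{\hat T}$. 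Identity~\eqref{eqn::ppf_cascade_mart} is then the factorization of this conditional density across the two independent subsystems.
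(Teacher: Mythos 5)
Your ``cleaner conceptual route'' is indeed the paper's route, and you are right to be skeptical of the first approach. Attempting to identify $M_{\hat T}$ by iterated two-point fusion via~\eqref{eqn::PPFASY_general} is a dead end here: that asymptotics only governs the collapse of a \emph{single} neighboring pair, while at the disconnection time $\hat T$ the whole group of images $\hat g_t(\realpt_3),\ldots,\hat g_t(\realpt_j)$ is squeezed toward $\hat W_t$ simultaneously, and the rates of collapse of the different pairs are coupled in a way that the one-pair asymptotics simply does not control. The error terms are not ``of the same flavor'' as Appendix~\ref{appendix_technical}, which addresses the opposite situation (showing the ratio \emph{vanishes} for the wrong $j$, using only crude power-law bounds); it does not give the precise fusion constants needed to identify a nonzero limit. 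So the first paragraph of your plan, while a natural instinct, is not a viable path to the stated identity.

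What makes the proof work --- and what the paper actually does --- is to treat the ratio $M_t$ not as something to analyze pointwise via asymptotics, but as a conditional expectation coming from the cascade relation of~\cite[Section~6]{Wu:Convergence_of_the_critical_planar_ising_interfaces_to_hypergeometric_SLE}, which is the probabilistic construction you cite. Concretely, with $\hat\eta$ the $\SLE_\kappa$ from $\realpt_2$ to $\realpt_1$ run to infinity (not just to $\hat T$), the cascade relation says
\begin{align*}
\frac{\PartF_{\alpha}(\HH;\realpt_1,\ldots,\realpt_{2N})}{\PartF_{\vcenter{\hbox{\includegraphics[scale=0.2]{figures/link-0.pdf}}}}(\HH;\realpt_1,\realpt_2)}
= \E\Big[\one\{\LE_{\alpha}(\hat\eta)\}\prod_{i}\PartF_{\alpha^{R,i}}(\hat D^{R,i};\ldots)\Big] .
\end{align*}
Conditioning on $\hat\eta[0,t]$ and using the domain Markov property together with~\eqref{eqn::COV} shows that the left side of~\eqref{eqn::ppf_cascade_mart}, prior to the limit, \emph{equals} the conditional expectation $\E[\cdots\cond\hat\eta[0,t]]$; this gives the martingale you wanted in your first paragraph, but with an explicit closed expression. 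Taking $t\nearrow\hat T$ on $\{\hat\eta(\hat T)\in(\realpt_j,\realpt_{j+1})\}$, the component $\hat D^{R,1}=\hat D^R_{\hat T}$ and the sub-pattern $\alpha^{R,1}=\alpha_j$ are already determined, so $\PartF_{\alpha_j}(\hat D^R_{\hat T};\realpt_3,\ldots,\realpt_j)$ factors out. One then applies the cascade relation a \emph{second} time to the remaining curve $(\hat\eta(t):t\ge\hat T)$, which given $\hat\eta[0,\hat T]$ is $\SLE_\kappa$ in $\hat D^L_{\hat T}$ from $\hat\eta(\hat T)$ to $\realpt_1$, and this turns the residual conditional expectation into $\PartF_{\alpha/\alpha_j}(\hat D^L_{\hat T};\ldots)/\PartF_{\vcenter{\hbox{\includegraphics[scale=0.2]{figures/link-0.pdf}}}}(\hat D^L_{\hat T};\realpt_1,\hat\eta(\hat T))$. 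Your sketch captures this factorization picture, but stops short of the two explicit invocations of the cascade relation and the conditioning/limit argument that actually produce~\eqref{eqn::ppf_cascade_mart}. Note also that the It\^o/martingale computation in your first paragraph becomes redundant once the cascade relation is in hand --- the conditional-expectation representation gives both the martingale property and the a.s.\ convergence for free.
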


\begin{proof}
We use the so-called ``cascade relation" for pure partition functions, see~\cite[Section~6]{Wu:Convergence_of_the_critical_planar_ising_interfaces_to_hypergeometric_SLE}. 
With $\{1,2\}\in\alpha$, this relation 
holds for the $\SLE_{\kappa}$ curve $\smash{\hat{\eta}}$ in any polygon $(\Omega; x_1, \ldots, x_{2N})$ from $x_2$ to $x_1$: 
\begin{align} \label{eqn::ppf_cascade}
\frac{\PartF_{\alpha}(\Omega; x_1, \ldots, x_{2N})}{\PartF_{\vcenter{\hbox{\includegraphics[scale=0.2]{figures/link-0.pdf}}}}(\Omega; x_1,x_2)} 
= \; & \E\Big[\one \{\LE_{\alpha}(\hat{\eta})\} \, 
\PartF_{\alpha^{R, 1}}(\hat{D}^{R,1}; \ldots)\times\cdots\times\PartF_{\alpha^{R,r}}(\hat{D}^{R,r}; \ldots)\Big] ,
\end{align}
where 
\begin{itemize}[leftmargin=2em]
\item $\smash{\LE_{\alpha}(\hat{\eta})}$ is the event that $\smash{\hat{\eta}}$ is \emph{allowed} by $\alpha$, that is, for all $\{a,b\}\in\alpha$ such that $\{a,b\}\neq\{1,2\}$, the points $x_a$ and $x_b$ lie on the boundary of the same connected component of $\smash{\Omega\setminus\hat{\eta}}$; 

\item on the event $\smash{\LE_{\alpha}(\hat{\eta})}$, 
from left to right $\smash{\hat{D}^{R, 1}}, \ldots, \smash{\hat{D}^{R, r}}$
are those the connected components of $\smash{\Omega\setminus\hat{\eta}}$ that have some of the points $x_3, \ldots, x_{2N}$ on the boundary; and

\item the link pattern $\alpha$ is divided into sub-link patterns corresponding to the marked points on the boundaries of the components $\smash{\hat{D}^{R,1}}, \ldots, \smash{\hat{D}^{R, r}}$, 
which after relabeling the indices we denote by $\alpha^{R, 1}, \ldots, \alpha^{R, r}$.
\end{itemize}
Using the cascade relation~\eqref{eqn::ppf_cascade} conditioned on 
the initial segment $\smash{\hat{\eta}[0,t]}$ together with the domain Markov property of the SLE curve $\smash{\hat{\eta}}$ and the conformal covariance~\eqref{eqn::COV}, we find that
\begin{align*}
\; & \E \Big[\one \{\LE_{\alpha}(\hat{\eta})\} \, \PartF_{\alpha^{R, 1}}(\hat{D}^{R,1}; \ldots)\times\cdots\times\PartF_{\alpha^{R,r}}(\hat{D}^{R,r}; \ldots) \cond \hat{\eta}[0,t]\Big] \\
= \; & \Big(\prod_{i=3}^{2N}\hat{g}_t'(\realpt_i)^{h(\kappa)}\Big) \; \frac{\PartF_{\alpha}(\hat{g}_t(\realpt_1), \hat{W}_t, \hat{g}_t(\realpt_3), \ldots, \hat{g}_t(\realpt_{2N}))}{\PartF_{\vcenter{\hbox{\includegraphics[scale=0.2]{figures/link-0.pdf}}}}(\hat{g}_t(\realpt_1), \hat{W}_t)} , \qquad t<\hat{T} . 
\end{align*}
On the event $\smash{\{\hat{\eta}(\hat{T})\in (\realpt_j, \realpt_{j+1})\}}$, we have $\smash{\hat{D}^{R,1}=\hat{D}^R_{\hat{T}}}$ and $\alpha^{R,1}=\alpha_j$. 
Hence, we obtain
\begin{align} \label{eqn::ppf_cascade_aux1}
\begin{split}
 & \;  \lim_{t\to \hat{T}} \Big(\prod_{i=3}^{2N}\hat{g}_t'(\realpt_i)^{h(\kappa)}\Big) \; \frac{\PartF_{\alpha}(\hat{g}_t(\realpt_1), \hat{W}_t, \hat{g}_t(\realpt_3), \ldots, \hat{g}_t(\realpt_{2N}))}{\PartF_{\vcenter{\hbox{\includegraphics[scale=0.2]{figures/link-0.pdf}}}}(\hat{g}_t(\realpt_1), \hat{W}_t)} \\
= & \; \PartF_{\alpha_j}(\hat{D}^R_{\hat{T}}; \realpt_3, \realpt_4, \ldots, \realpt_j) \; 
\E \Big[\one \{\LE_{\alpha}(\hat{\eta})\} \, 
\PartF_{\alpha^{R, 2}}(\hat{D}^{R,2}; \ldots)\times\cdots\times\PartF_{\alpha^{R,r}}(\hat{D}^{R,r}; \ldots)\cond \hat{\eta}[0,\hat{T}]\Big].  
\end{split}
\end{align}
Now, $\smash{(\hat{\eta}(t) \colon t\ge \hat{T})}$ given $\smash{\hat{\eta}[0,\hat{T}]}$ has the law of the $\SLE_{\kappa}$ curve in $\smash{\hat{D}^L_{\hat{T}}}$ from $\smash{\hat{\eta}(\hat{T})}$ to $\realpt_1$. 
Applying the cascade relation~\eqref{eqn::ppf_cascade} to the curve $\smash{(\hat{\eta}(t) \colon t\ge \hat{T})}$ in $\smash{\hat{D}^L_{\hat{T}}}$, 
together with the Markov property of the $\SLE_\kappa$ curve $\smash{\hat{\eta}}$ and the conformal covariance~\eqref{eqn::COV}, we have
\begin{align*}
\E \Big[\one \{\LE_{\alpha}(\hat{\eta})\} \, 
\PartF_{\alpha^{R, 2}}(\hat{D}^{R,2}; \ldots)\times\cdots\times\PartF_{\alpha^{R,r}}(\hat{D}^{R,r}; \ldots)\cond \hat{\eta}[0,\hat{T}]\Big] 
=& \;  \frac{\PartF_{\alpha/\alpha_j}(\hat{D}^L_{\hat{T}};\realpt_1, \hat{\eta}(\hat{T}),  \realpt_{j+1}, \ldots, \realpt_{2N})}{\PartF_{\vcenter{\hbox{\includegraphics[scale=0.2]{figures/link-0.pdf}}}}(\hat{D}^L_{\hat{T}}; \realpt_1, \hat{\eta}(\hat{T}))}. 
\end{align*} 
Plugging this into~\eqref{eqn::ppf_cascade_aux1}, we obtain the asserted identity~\eqref{eqn::ppf_cascade_mart}. 
\end{proof}

\begin{lemma} \label{lem::totalpartition_cascade_mart}
Assume the same setup as in Lemma~\ref{lem::ppf_cascade_mart} and fix $\kappa=16/3$.  
Suppose that the index $j \in \{4,6,\ldots, 2N\}$ is even. 
Then, almost surely on the event $\smash{\{\hat{\eta}(\hat{T})\in(\realpt_j, \realpt_{j+1})\}}$, we have 
\begin{align} \label{eqn::totalpartition_cascade_mart}
\begin{split}
& \; \lim_{t\to \hat{T}}\Big(\prod_{i=3}^{2N}\hat{g}_t'(\realpt_i)^{1/16}\Big) \; \frac{\LF_{\unnested}^{(N)}(\hat{g}_t(\realpt_1), \hat{W}_t, \hat{g}_t(\realpt_3), \ldots, \hat{g}_t(\realpt_{2N}))}{\PartF_{\vcenter{\hbox{\includegraphics[scale=0.2]{figures/link-0.pdf}}}}(\hat{g}_t(\realpt_1), \hat{W}_t)} \\
=& \;  \LF_{\unnested}^{(j/2-1)}(\hat{D}^R_{\hat{T}}; \realpt_3, \realpt_4, \ldots, \realpt_j) \; \frac{\LF_{\unnested}^{(N-j/2+1)}(\hat{D}^L_{\hat{T}};\realpt_1, \hat{\eta}(\hat{T}), \realpt_{j+1}, \realpt_{j+2}, \ldots, \realpt_{2N})}{\PartF_{\vcenter{\hbox{\includegraphics[scale=0.2]{figures/link-0.pdf}}}}(\hat{D}^L_{\hat{T}}; \realpt_1, \hat{\eta}(\hat{T}))}. 
\end{split}
\end{align}
\end{lemma}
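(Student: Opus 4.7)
The plan is to decompose both sides of~\eqref{eqn::totalpartition_cascade_mart} into pure partition functions via Corollary~\ref{cor::linearcombination_FKIsing} and match the terms, using Lemma~\ref{lem::ppf_cascade_mart} together with a suitable extension. Writing $\LF_{\unnested}^{(N)} = \sum_{\alpha \in \LP_N} \LM_{\alpha, \unnested}(2)\, \PartF_\alpha$ and similarly expanding the right-hand side into a double sum over $(\gamma, \delta) \in \LP_{j/2-1} \times \LP_{N-j/2+1}$, the claim reduces to the term-by-term identification of the limit
\[
L_\alpha := \lim_{t \to \hat{T}} \Big(\prod_{i=3}^{2N} \hat{g}_t'(\realpt_i)^{1/16}\Big) \, \frac{\PartF_\alpha(\hat{g}_t(\realpt_1), \hat{W}_t, \hat{g}_t(\realpt_3), \ldots, \hat{g}_t(\realpt_{2N}))}{\PartF_{\vcenter{\hbox{\includegraphics[scale=0.2]{figures/link-0.pdf}}}}(\hat{g}_t(\realpt_1), \hat{W}_t)}
\]
for each $\alpha \in \LP_N$, almost surely on the event $\{\hat{\eta}(\hat{T}) \in (\realpt_j, \realpt_{j+1})\}$. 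I would call $\alpha$ \emph{compatible with the cut at $j$} if every link of $\alpha$ has both endpoints in the same part of $\{3, \ldots, j\} \sqcup \{1, 2, j+1, \ldots, 2N\}$; any such $\alpha$ splits uniquely as $(\alpha^R, \alpha^L) \in \LP_{j/2-1} \times \LP_{N-j/2+1}$. The target identification is $L_\alpha = \PartF_{\alpha^R}(\hat{D}^R_{\hat{T}})\, \PartF_{\alpha^L}(\hat{D}^L_{\hat{T}}) / \PartF_{\vcenter{\hbox{\includegraphics[scale=0.2]{figures/link-0.pdf}}}}(\hat{D}^L_{\hat{T}})$ for compatible $\alpha$ and $L_\alpha = 0$ otherwise.

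When $\{1, 2\} \in \alpha$, compatibility coincides with $j \in \corrind_\alpha$, and the identification follows directly from Lemma~\ref{lem::ppf_cascade_mart} and the cascade relation~\eqref{eqn::ppf_cascade} used in its proof (whose indicator $\one\{\LE_\alpha(\hat{\eta})\}$ produces the vanishing when $\alpha$ is incompatible). The main obstacle is the case $\{1, 2\} \notin \alpha$, where the cascade of~\cite{Wu:Convergence_of_the_critical_planar_ising_interfaces_to_hypergeometric_SLE} is not directly applicable since the endpoints of the chordal $\SLE_{16/3}$ do not form a link of $\alpha$. I would treat this case either by iteratively applying the boundary asymptotics~\eqref{eqn::PPFASY_general} of $\PartF_\alpha$ as the points $\hat{g}_t(\realpt_3), \ldots, \hat{g}_t(\realpt_j), \hat{W}_t$ cluster to a common limit on the real line, combined with the M\"obius covariance~\eqref{eqn::COV}, or by appealing to the probabilistic construction of $\PartF_\alpha$ from~\cite[Theorem~1.7]{Wu:Convergence_of_the_critical_planar_ising_interfaces_to_hypergeometric_SLE} together with the spatial independence of the multiple SLE processes in the two sub-domains $\hat{D}^R_{\hat{T}}$ and $\hat{D}^L_{\hat{T}}$. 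Once these identifications are in place, the match with the right-hand side of~\eqref{eqn::totalpartition_cascade_mart} follows from the bijection $\alpha \leftrightarrow (\alpha^R, \alpha^L)$ between compatible $\alpha$'s and pairs $(\gamma, \delta)$, together with the meander factorization
\[
\LM_{\alpha, \unnested}(2) = \LM_{\alpha^R, \unnested}(2) \cdot \LM_{\alpha^L, \unnested}(2)
\]
for compatible $\alpha$: since $j$ is even, each unnested pair $\{2r-1, 2r\}$ lies entirely within one of the two parts, and so the meander of $\alpha$ against $\unnested$ decomposes into two disjoint sub-meanders whose loop counts add.
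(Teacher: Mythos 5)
Your high-level strategy — expanding $\LF_{\unnested}^{(N)}$ via Corollary~\ref{cor::linearcombination_FKIsing}, using the cascade relation term by term, and exploiting the meander factorization $\LM_{\alpha,\unnested}(2)=\LM_{\alpha^R,\unnested}(2)\LM_{\alpha^L,\unnested}(2)$ — is correct, and the first piece (links $\alpha$ with $\{1,2\}\in\alpha$, handled by Lemma~\ref{lem::ppf_cascade_mart}) matches the paper's treatment of this case exactly. The meander factorization and the bijection $\alpha\leftrightarrow(\alpha^R,\alpha^L)$ are also used in the paper. But there are two genuine gaps where your proposal substitutes vague suggestions for specific arguments.

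First, for \emph{compatible} $\alpha$ with $\{2,b\}\in\alpha$, $b\notin\{1\}$: neither of your proposed routes is worked out, and both face real obstacles. Iterating~\eqref{eqn::PPFASY_general} is delicate because (ASY) is a pointwise limit in the configuration, not a uniform estimate that commutes with taking $t\to\hat{T}$; and appealing to ``spatial independence'' in the probabilistic construction is not straightforward because the chord $\hat{\eta}$ from $\realpt_2$ to $\realpt_1$ is not one of the curves in the multiple-$\SLE$ ensemble associated to such $\alpha$. The paper instead uses a concrete \emph{change-of-target} trick: it considers the $\SLE_{16/3}$ curve $\tilde{\eta}$ from $\realpt_2$ to $\realpt_b$ (so that $\{2,b\}\in\alpha$ and Lemma~\ref{lem::ppf_cascade_mart} applies after relabeling), observes that $\realpt_1$ and $\realpt_b$ are not swallowed at $\tilde{T}$ so the relevant factors converge (cf.~\eqref{eqn::ppf_cascadeJ2_aux2}), produces an algebraic identity that trades the ratio $\PartF_{\vcenter{\hbox{\includegraphics[scale=0.2]{figures/link-0.pdf}}}}(\tilde{W}_t,\tilde{g}_t(\realpt_b))/\PartF_{\vcenter{\hbox{\includegraphics[scale=0.2]{figures/link-0.pdf}}}}(\tilde{g}_t(\realpt_1),\tilde{W}_t)$ for a limit (cf.~\eqref{eq::equals_one}), and then transfers back to $\hat{\eta}$ by mutual absolute continuity of the two $\SLE$ laws via~\cite{Schramm-Wilson:SLE_coordinate_changes}.

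Second, for \emph{incompatible} $\alpha$ with $\{1,2\}\notin\alpha$, your claim $L_\alpha=0$ is not substantiated, and it is not even clear the limit $L_\alpha$ (with normalization by $\PartF_{\vcenter{\hbox{\includegraphics[scale=0.2]{figures/link-0.pdf}}}}(\hat{g}_t(\realpt_1),\hat{W}_t)$, which stays bounded away from $0$ and $\infty$) exists. The paper sidesteps this: Proposition~\ref{prop::mart_vanish_combined} (Item~\ref{item::mart_vanish2}) proves the ratio $\PartF_\gamma/\LF_{\unnested}^{(N)}\to 0$ for $\gamma$ with $j\notin\corrind_\gamma$, which is a genuinely different normalization that exploits $\LF_{\unnested}^{(N)}\geq\LB_{\unnested}^{(N)}$ (Lemma~\ref{lem::bounds_remaining_term}). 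The final step in the paper then starts from the tautology $1=\lim_{t\to\hat{T}}\sum_\gamma\LM_{\gamma,\unnested}(2)\PartF_\gamma/\LF_{\unnested}^{(N)}$, drops the $\LJ_3$ terms, and solves for the desired limit — never establishing the existence of $L_\gamma$ for incompatible $\gamma$, which your term-by-term approach would require. If you want to pursue your outline, you should either adopt the change-of-target trick and the normalization $\PartF_\gamma/\LF_{\unnested}^{(N)}$, or else prove the missing a priori control that would give $L_\gamma=0$ for all incompatible $\gamma$.
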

\begin{proof}
From Corollary~\ref{cor::linearcombination_FKIsing}, we have
\begin{align*}
\LF_{\unnested}^{(N)}=\sum_{\gamma\in\LP_N}\LM_{\gamma, \unnested}(2) \, \PartF_{\gamma}. 
\end{align*}
We will divide $\gamma\in\LP_N$ into three groups. First of all, set 
\begin{align*}
\LJ_1:=\{\gamma\in\LP_N \colon \{1,2\} \in \gamma , \,  j \in \corrind_{\gamma}\}.
\end{align*}
Next, we consider $\gamma\in\LP_N$ such that $\{2,b\}\in\gamma$ for some $b \neq 1$. With such $\gamma$, we define $\corrind_{\gamma}$ to be the set of indices $i \in \{4,5,\ldots, b-1\}$ such that $\{3,4,\ldots, i\}$ forms a sub-link pattern of $\gamma$, and we define $\gamma_i$ and $\gamma/\gamma_i$ similarly as before. 
We set
\begin{align*}
\LJ_2(b) := \big\{\gamma\in\LP_N \colon \{2,b\} \in \gamma , \, j \in\corrind_{\gamma} \big\} , \qquad \textnormal{ for } b \in \{3,5,\ldots, 2N-1\} , 
\qquad 
\LJ_2 := \bigsqcup_{b \in\{3,5,\ldots, 2N-1\}} \LJ_2(b) .
\end{align*}
Lastly, we define $\LJ_3 := \{\gamma\in\LP_N \colon j\not\in\corrind_{\gamma}\}$. 
We will treat the cases of $\LJ_1$, $\LJ_2$, and $\LJ_3$ one by one. 
\begin{enumerate}[leftmargin=*]
\item \label{item:1}
For $\gamma\in\LJ_1$, we find almost surely on the event $\smash{\{\hat{\eta}(\hat{T})\in(\realpt_j, \realpt_{j+1})\}}$ the identity 
\begin{align*} 
& \; \lim_{t\to \hat{T}}  \Big(\prod_{i=3}^{2N}\hat{g}_t'(\realpt_i)^{1/16}\Big) \; \frac{\PartF_{\gamma}(\hat{g}_t(\realpt_1), \hat{W}_t, \hat{g}_t(\realpt_3), \ldots, \hat{g}_t(\realpt_{2N}))}{\PartF_{\vcenter{\hbox{\includegraphics[scale=0.2]{figures/link-0.pdf}}}}(\hat{g}_t(\realpt_1), \hat{W}_t)} \\
= & \; \PartF_{\gamma_j}(\hat{D}^R_{\hat{T}}; \realpt_3, \ldots, \realpt_j) \;
\frac{\PartF_{\gamma/\gamma_j}(\hat{D}^L_{\hat{T}};\realpt_1, \hat{\eta}(\hat{T}),  \realpt_{j+1}, \ldots, \realpt_{2N})}{\PartF_{\vcenter{\hbox{\includegraphics[scale=0.2]{figures/link-0.pdf}}}}(\hat{D}^L_{\hat{T}};\realpt_1, \hat{\eta}(\hat{T}))} .
&& \textnormal{[by Lemma~\ref{lem::ppf_cascade_mart}]} 
\end{align*}

\item \label{item:2}
For $\gamma\in\LJ_2$, fix some $b\in\{3,5,\ldots 2N-1\}$ such that $\gamma\in\LJ_2(b)$. 
Let $\smash{\tilde{\eta}}$ be the $\SLE_{16/3}$ curve in $\HH$ from $\realpt_2$ to $\realpt_b$, and let $\smash{\tilde{T}}$ be the first time when it swallows $\realpt_1$ or $\realpt_3$. 
Let $\smash{(\tilde{W}_t \colon 0\le t\le \smash{\tilde{T}})}$ be the Loewner driving function of $\smash{\tilde{\eta}}$ and $\smash{(\tilde{g}_t \colon 0\le t\le \smash{\tilde{T}})}$ the corresponding conformal maps. 
Denote by $\smash{\tilde{D}^R_{\tilde{T}}}$ 
(resp.~$\smash{\tilde{D}^L_{\tilde{T}}}$)
the connected component of $\smash{\HH\setminus\tilde{\eta}[0,\tilde{T}]}$ with $\realpt_3$
(resp.~$\realpt_1$) on its boundary. 
Using a similar analysis as in Lemma~\ref{lem::ppf_cascade_mart}, almost surely on the event $\smash{\{\tilde{\eta}(\tilde{T})\in(\realpt_j, \realpt_{j+1})\}}$, we have 
\begin{align*}
\; & \lim_{t\to \tilde{T}}\Big(\prod_{i \notin\{2,b\}}\tilde{g}_t'(\realpt_i)^{1/16}\Big) \; \frac{\PartF_{\gamma}(\tilde{g}_t(\realpt_1), \tilde{W}_t, \tilde{g}_t(\realpt_3), \ldots, \tilde{g}_t(\realpt_{2N}))}{\PartF_{\vcenter{\hbox{\includegraphics[scale=0.2]{figures/link-0.pdf}}}}(\tilde{W}_t, \tilde{g}_t(\realpt_b))} \\
=\; &  \PartF_{\gamma_j}(\tilde{D}^R_{\tilde{T}}; \realpt_3, \ldots, \realpt_j)\;\frac{\PartF_{\gamma/\gamma_j}(\tilde{D}^L_{\tilde{T}};\realpt_1, \tilde{\eta}(\tilde{T}),  \realpt_{j+1}, \ldots, \realpt_{2N})}{\PartF_{\vcenter{\hbox{\includegraphics[scale=0.2]{figures/link-0.pdf}}}}(\tilde{D}^L_{\tilde{T}}; \tilde{\eta}(\tilde{T}), \realpt_b)}. 
\end{align*}
Note that, on the event $\smash{\{\tilde{\eta}(\tilde{T})\in(\realpt_j, \realpt_{j+1})\}}$, we also have
\begin{align} \label{eqn::ppf_cascadeJ2_aux2}
\begin{cases}
\underset{t\to \tilde{T}}{\lim} \;
\tilde{g}'_t(\realpt_1)=\tilde{g}'_{\tilde{T}}(\realpt_1) , \\[.5em]
\underset{t\to \tilde{T}}{\lim} \;
\tilde{g}'_t(\realpt_b)=\tilde{g}'_{\tilde{T}}(\realpt_b) , \\[.5em] 
\underset{t\to \tilde{T}}{\lim} \;
\PartF_{\vcenter{\hbox{\includegraphics[scale=0.2]{figures/link-0.pdf}}}}(\tilde{W}_t, \tilde{g}_t(\realpt_b))=\PartF_{\vcenter{\hbox{\includegraphics[scale=0.2]{figures/link-0.pdf}}}}(\tilde{W}_{\tilde{T}}, \tilde{g}_{\tilde{T}}(\realpt_b)).
\end{cases}
\end{align}
Therefore, we obtain 
\begin{align*}
& \; \lim_{t\to \tilde{T}}\Big(\prod_{i=3}^{2N}\tilde{g}_t'(\realpt_i)^{1/16}\Big) \; \frac{\PartF_{\gamma}(\tilde{g}_t(\realpt_1), \tilde{W}_t, \tilde{g}_t(\realpt_3), \ldots, \tilde{g}_t(\realpt_{2N}))}{\PartF_{\vcenter{\hbox{\includegraphics[scale=0.2]{figures/link-0.pdf}}}}(\tilde{g}_t(\realpt_1), \tilde{W}_t)} \\
= & \; \lim_{t\to \tilde{T}}\Big(\prod_{i \notin\{2,b\}}\tilde{g}_t'(\realpt_i)^{1/16}\Big) \; \frac{\PartF_{\gamma}(\tilde{g}_t(\realpt_1), \tilde{W}_t, \tilde{g}_t(\realpt_3), \ldots, \tilde{g}_t(\realpt_{2N}))}{\PartF_{\vcenter{\hbox{\includegraphics[scale=0.2]{figures/link-0.pdf}}}}(\tilde{W}_t, \tilde{g}_t(\realpt_b))} 
\; \frac{\tilde{g}'_t(\realpt_b)^{1/16} \; \PartF_{\vcenter{\hbox{\includegraphics[scale=0.2]{figures/link-0.pdf}}}}(\tilde{W}_t, \tilde{g}_t(\realpt_b))}{\tilde{g}'_t(\realpt_1)^{1/16} \; \PartF_{\vcenter{\hbox{\includegraphics[scale=0.2]{figures/link-0.pdf}}}}(\tilde{g}_t(\realpt_1), \tilde{W}_t)}
\\
= & \; \PartF_{\gamma_j}(\tilde{D}^R_{\tilde{T}}; \realpt_3, \ldots, \realpt_j) \; 
\frac{\PartF_{\gamma/\gamma_j}(\tilde{D}^L_{\tilde{T}};\realpt_1, \tilde{\eta}(\tilde{T}),  \realpt_{j+1}, \ldots, \realpt_{2N})}{\PartF_{\vcenter{\hbox{\includegraphics[scale=0.2]{figures/link-0.pdf}}}}(\tilde{D}^L_{\tilde{T}}; \tilde{\eta}(\tilde{T}), \realpt_b)} \;  \frac{\tilde{g}'_{\tilde{T}}(\realpt_b)^{1/16} \; \PartF_{\vcenter{\hbox{\includegraphics[scale=0.2]{figures/link-0.pdf}}}}(\tilde{W}_{\tilde{T}}, \tilde{g}_{\tilde{T}}(\realpt_b))}{\tilde{g}'_{\tilde{T}}(\realpt_1)^{1/16} \; \PartF_{\vcenter{\hbox{\includegraphics[scale=0.2]{figures/link-0.pdf}}}}(\tilde{g}_{\tilde{T}}(\realpt_1), \tilde{W}_{\tilde{T}})} .
&& \textnormal{[by~\eqref{eqn::ppf_cascadeJ2_aux2}]} \\
= & \; \PartF_{\gamma_j}(\tilde{D}^R_{\tilde{T}}; \realpt_3, \ldots, \realpt_j)\;\frac{\PartF_{\gamma/\gamma_j}(\tilde{D}^L_{\tilde{T}};\realpt_1, \tilde{\eta}(\tilde{T}),  \realpt_{j+1}, \ldots, \realpt_{2N})}{\PartF_{\vcenter{\hbox{\includegraphics[scale=0.2]{figures/link-0.pdf}}}}(\tilde{D}^L_{\tilde{T}}; \realpt_1, \tilde{\eta}(\tilde{T}))} , 
&& \textnormal{[by~\eqref{eq::equals_one}]} 
\end{align*}
using also the observation 
\begin{align} \label{eq::equals_one}
\frac{\PartF_{\vcenter{\hbox{\includegraphics[scale=0.2]{figures/link-0.pdf}}}}(\tilde{D}^L_{\tilde{T}}; \realpt_1, \tilde{\eta}(\tilde{T}))}{\PartF_{\vcenter{\hbox{\includegraphics[scale=0.2]{figures/link-0.pdf}}}}(\tilde{D}^L_{\tilde{T}};\tilde{\eta}(\tilde{T}),  \realpt_b)}\; \frac{\tilde{g}'_{\tilde{T}}(\realpt_b)^{1/16} \; \PartF_{\vcenter{\hbox{\includegraphics[scale=0.2]{figures/link-0.pdf}}}}(\tilde{W}_{\tilde{T}}, \tilde{g}_{\tilde{T}}(\realpt_b))}{\tilde{g}'_{\tilde{T}}(\realpt_1)^{1/16} \; \PartF_{\vcenter{\hbox{\includegraphics[scale=0.2]{figures/link-0.pdf}}}}(\tilde{g}_{\tilde{T}}(\realpt_1), \tilde{W}_{\tilde{T}})}=1 .
\end{align}
As the law of $\smash{( \hat{\eta}(t) \colon t\le \hat{T})}$ conditional on $\smash{\{\hat{\eta}(\hat{T})\in(\realpt_j, \realpt_{j+1})\}}$ is absolutely continuous 
to that of $\smash{( \tilde{\eta}(t) \colon t\le \tilde{T})}$ conditional on $\smash{\{\tilde{\eta}(\tilde{T})\in(\realpt_j, \realpt_{j+1})\}}$, 
the above relation also holds for $\smash{\hat{\eta}}$
--- see, e.g.,~\cite{Schramm-Wilson:SLE_coordinate_changes}.

\item \label{item:3}
For $\gamma\in\LJ_3$, 
Item~\ref{item::mart_vanish2} of Proposition~\ref{prop::mart_vanish_combined} 
gives that almost surely on the event $\smash{\{\hat{\eta}(\hat{T})\in(\realpt_j, \realpt_{j+1})\}}$, 
\begin{align*}
\lim_{t\to\hat{T}}\frac{\PartF_{\gamma}(\hat{g}_t(\realpt_1), \hat{W}_t, \hat{g}_t(\realpt_3), \ldots, \hat{g}_t(\realpt_{2N}))}{\LF_{\unnested}^{(N)}(\hat{g}_t(\realpt_1), \hat{W}_t, \hat{g}_t(\realpt_3), \ldots, \hat{g}_t(\realpt_{2N}))}=0. 
\end{align*}
\end{enumerate}
Combining Cases~\ref{item:1}--\ref{item:3}, 
we see that
almost surely on the event $\smash{\{\hat{\eta}(\hat{T})\in(\realpt_j, \realpt_{j+1})\}}$, we have
\begin{align*}
1 = & \; \lim_{t\to\hat{T}}\frac{ \underset{\gamma\in\LJ_1\cup\LJ_2}{\sum} \LM_{\gamma, \unnested}(2) \, \PartF_{\gamma}(\hat{g}_t(\realpt_1), \hat{W}_t, \hat{g}_t(\realpt_3), \ldots, \hat{g}_t(\realpt_{2N}))}{\LF_{\unnested}^{(N)}(\hat{g}_t(\realpt_1), \hat{W}_t, \hat{g}_t(\realpt_3), \ldots, \hat{g}_t(\realpt_{2N}))} \\
& \; + \lim_{t\to\hat{T}}\frac{\underset{\gamma\in\LJ_3}{\sum} \LM_{\gamma, \unnested}(2) \, \PartF_{\gamma}(\hat{g}_t(\realpt_1), \hat{W}_t, \hat{g}_t(\realpt_3), \ldots, \hat{g}_t(\realpt_{2N}))}{\LF_{\unnested}^{(N)}(\hat{g}_t(\realpt_1), \hat{W}_t, \hat{g}_t(\realpt_3), \ldots, \hat{g}_t(\realpt_{2N}))} \\
= & \; \frac{\underset{\gamma\in\LJ_1\cup\LJ_2}{\sum} \LM_{\gamma_j, \unnested}(2) \, \PartF_{\gamma_j}(\hat{D}^R_{\hat{T}}; \realpt_3, \ldots, \realpt_j)\;\LM_{\gamma/\gamma_j, \unnested}(2) \, \frac{\PartF_{\gamma/\gamma_j}(\hat{D}^L_{\hat{T}};\realpt_1, \hat{\eta}(\hat{T}),  \realpt_{j+1}, \ldots, \realpt_{2N})}{\PartF_{\vcenter{\hbox{\includegraphics[scale=0.2]{figures/link-0.pdf}}}}(\hat{D}^L_{\hat{T}}; \realpt_1, \hat{\eta}(\hat{T}))}}{\underset{t\to\hat{T}}{\lim} \Big(\prod_{i=3}^{2N}\hat{g}'_t(\realpt_i)^{1/16}\Big)\;\frac{\LF_{\unnested}^{(N)}(\hat{g}_t(\realpt_1), \hat{W}_t, \hat{g}_t(\realpt_3), \ldots, \hat{g}_t(\realpt_{2N}))}{\PartF_{\vcenter{\hbox{\includegraphics[scale=0.2]{figures/link-0.pdf}}}}(\hat{g}_t(\realpt_1), \hat{W}_t)}} 
&& \textnormal{[by~\eqref{eqn::meander_cascade}~\&~\ref{item:1}--\ref{item:3}]} \\
= & \; \frac{\LF_{\unnested}^{(j/2-1)}(\hat{D}^R_{\hat{T}}; \realpt_3, \ldots, \realpt_j)\;\frac{\LF_{\unnested}^{(N-j/2+1)}(\hat{D}^L_{\hat{T}};\realpt_1, \hat{\eta}(\hat{T}),  \realpt_{j+1}, \ldots, \realpt_{2N})}{\PartF_{\vcenter{\hbox{\includegraphics[scale=0.2]{figures/link-0.pdf}}}}(\hat{D}^L_{\hat{T}}; \realpt_1, \hat{\eta}(\hat{T}))}}{\underset{t\to\hat{T}}{\lim} \Big(\prod_{i=3}^{2N}\hat{g}'_t(\realpt_i)^{1/16}\Big)\;\frac{\LF_{\unnested}^{(N)}(\hat{g}_t(\realpt_1), \hat{W}_t, \hat{g}_t(\realpt_3), \ldots, \hat{g}_t(\realpt_{2N}))}{\PartF_{\vcenter{\hbox{\includegraphics[scale=0.2]{figures/link-0.pdf}}}}(\hat{g}_t(\realpt_1), \hat{W}_t)}} .
&&\textnormal{[by~Cor.~\ref{cor::linearcombination_FKIsing}]}
\end{align*}
This gives the asserted identity~\eqref{eqn::totalpartition_cascade_mart} and completes the proof. 
\end{proof}

\begin{lemma} \label{lem::mart_cascade}
Assume the same setup as in the proof of Proposition~\ref{prop::crossingproba_unnested}.
Suppose that $j \in \corrind_{\alpha}$. 
Then, on the event $\{\eta(T) \in (\realpt_j, \realpt_{j+1}) \}$, the relation~\eqref{eqn::mart_cascade} holds almost surely. 
\end{lemma}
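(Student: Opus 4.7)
The plan is to reduce the statement to a direct application of the two cascade lemmas just proven, by rewriting $M_t$ as a ratio whose numerator and denominator are exactly the quantities whose $t \to T$ limits are computed in Lemmas~\ref{lem::ppf_cascade_mart} and~\ref{lem::totalpartition_cascade_mart}. Concretely, I would multiply the numerator $\PartF_\alpha$ and the denominator $\LF_{\unnested}^{(N)}$ of $M_t$ simultaneously by the common factor $\bigl(\prod_{i=3}^{2N} g_t'(\realpt_i)^{1/16}\bigr) / \PartF_{\vcenter{\hbox{\includegraphics[scale=0.2]{figures/link-0.pdf}}}}(g_t(\realpt_1), W_t)$, where $g_t$ and $W_t$ are the Loewner maps and driving function of $\eta$. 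Writing
\[
A_t := \Big(\prod_{i=3}^{2N} g_t'(\realpt_i)^{1/16}\Big) \frac{\PartF_\alpha(g_t(\realpt_1), W_t, g_t(\realpt_3), \ldots, g_t(\realpt_{2N}))}{\PartF_{\vcenter{\hbox{\includegraphics[scale=0.2]{figures/link-0.pdf}}}}(g_t(\realpt_1), W_t)},
\]
and $B_t$ analogously with $\LF_{\unnested}^{(N)}$ in place of $\PartF_\alpha$, one then has the trivial identity $M_t = A_t / B_t$.

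The key step will be to identify $\lim_{t \to T} A_t$ and $\lim_{t \to T} B_t$ via the cascade identities. Those lemmas are stated for the \emph{pure} $\SLE_{16/3}$ curve $\hat{\eta}$ from $\realpt_2$ to $\realpt_1$, whereas here $\eta$ is the multiple-$\SLE_{16/3}$ driven by the SDE~\eqref{eqn::rcm_Loewner_chain} with partition function $\coulombnew_{\unnested} = \LF_{\unnested}^{(N)}$. This transfer is standard: up to any deterministic stopping time strictly before $T$, the law of $\eta$ is absolutely continuous with respect to that of $\hat{\eta}$, via Girsanov, with Radon--Nikodym density proportional to the $\LF_{\unnested}^{(N)}$-martingale, so almost-sure statements about the one-sided limits $t \nearrow T$ carry over to $\eta$. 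Granting this, Lemma~\ref{lem::ppf_cascade_mart} with $\kappa = 16/3$ (so that $h(\kappa) = 1/16$) gives
\[
\lim_{t \to T} A_t = \PartF_{\alpha_j}(D_T^R; \realpt_3, \ldots, \realpt_j) \; \frac{\PartF_{\alpha/\alpha_j}(D_T^L; \realpt_1, \eta(T), \realpt_{j+1}, \ldots, \realpt_{2N})}{\PartF_{\vcenter{\hbox{\includegraphics[scale=0.2]{figures/link-0.pdf}}}}(D_T^L; \realpt_1, \eta(T))},
\]
and Lemma~\ref{lem::totalpartition_cascade_mart} (applicable because $j \in \corrind_\alpha$ forces $j$ to be even, as $\{3, 4, \ldots, j\}$ must form a sub-link pattern of $\alpha$) gives the analogous formula with $\LF_{\unnested}^{(\cdot)}$ replacing $\PartF_{(\cdot)}$.

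Taking the ratio $\lim A_t / \lim B_t$, the common factor $\PartF_{\vcenter{\hbox{\includegraphics[scale=0.2]{figures/link-0.pdf}}}}(D_T^L; \realpt_1, \eta(T))$ cancels, leaving precisely the boundary-factorized product on the right-hand side of~\eqref{eqn::mart_cascade}. Positivity of $\lim B_t$ on the event $\{\eta(T) \in (\realpt_j, \realpt_{j+1})\}$ (a consequence of Corollary~\ref{cor::linearcombination_FKIsing} combined with positivity of each $\PartF_\gamma$) guarantees that the ratio is well-defined. The main delicate point I anticipate is the absolute-continuity transfer as $t \nearrow T$, since a Girsanov density could in principle blow up as a marked point is swallowed; however, on the event $j \in \corrind_\alpha \subseteq \{4,6,\ldots,2N\}$ the endpoint $\eta(T) \in (\realpt_j, \realpt_{j+1})$ stays strictly separated from both the source $\realpt_2$ and the target $\realpt_1$, so the relevant Girsanov factor remains bounded along a suitable sequence $t_n \nearrow T$. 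An alternative, more self-contained route would be to repeat the arguments in the proofs of Lemmas~\ref{lem::ppf_cascade_mart} and~\ref{lem::totalpartition_cascade_mart} verbatim with $\hat{\eta}$ replaced by $\eta$, using the domain Markov property of the multiple SLE~\eqref{eqn::rcm_Loewner_chain} in place of that of the pure SLE curve.
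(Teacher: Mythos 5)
Your proposal is correct and mirrors the paper's proof: both take the ratio of the limits furnished by Lemmas~\ref{lem::ppf_cascade_mart} and~\ref{lem::totalpartition_cascade_mart} (with the common factor $\PartF_{\vcenter{\hbox{\includegraphics[scale=0.2]{figures/link-0.pdf}}}}(D_T^L;\realpt_1,\eta(T))$ cancelling), and then transfer the almost-sure limit from the pure $\SLE_{16/3}$ curve $\hat{\eta}$ to the multiple-SLE curve $\eta$ via conditional absolute continuity of the two laws on the event $\{\eta(T)\in(\realpt_j,\realpt_{j+1})\}$, citing~\cite{Schramm-Wilson:SLE_coordinate_changes}. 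The only cosmetic difference is the order of operations (paper: compute the limit for $\hat\eta$, then transfer; you: transfer first, then compute), which is immaterial.
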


\begin{proof}
In the notation of Lemma~\ref{lem::ppf_cascade_mart}, 
on the event $\smash{\{\hat{\eta}(\hat{T}) \in (\realpt_j, \realpt_{j+1})\}}$, Eq.~(\ref{eqn::ppf_cascade_mart},~\ref{eqn::totalpartition_cascade_mart}) give  
almost surely 
\begin{align*}
\; & \lim_{t\to \hat{T}}\frac{\PartF_{\alpha}(\hat{g}_t(\realpt_1), \hat{W}_t, \hat{g}_t(\realpt_3), \ldots, \hat{g}_t(\realpt_{2N}))}{\LF_{\unnested}^{(N)}(\hat{g}_t(\realpt_1), \hat{W}_t, \hat{g}_t(\realpt_3), \ldots, \hat{g}_t(\realpt_{2N}))}\\
=\; &  \frac{\PartF_{\alpha_j}(\hat{D}^R_{\hat{T}}; \realpt_3, \realpt_4, \ldots, \realpt_j)}{\LF_{\unnested}^{(j/2-1)}(\hat{D}^R_{\hat{T}}; \realpt_3, \realpt_4, \ldots, \realpt_j)}\;\frac{\PartF_{\alpha/\alpha_j}(\hat{D}^L_{\hat{T}};\realpt_1, \hat{\eta}(\hat{T}), \realpt_{j+1}, \realpt_{j+2}, \ldots, \realpt_{2N})}{\LF_{\unnested}^{(N-j/2+1)}(\hat{D}^L_{\hat{T}};\realpt_1, \hat{\eta}(\hat{T}), \realpt_{j+1}, \realpt_{j+2}, \ldots, \realpt_{2N})}. 
\end{align*}
Since the law of $( \eta(t) \colon t\le T)$ conditional on $\{\eta(T)\in (\realpt_j, \realpt_{j+1})\}$ is absolutely continuous with respect to the law of $\smash{( \hat{\eta}(t) \colon t\le \hat{T})}$ conditional on $\smash{\{\hat{\eta}(\hat{T})\in (\realpt_j, \realpt_{j+1})\}}$, 
this gives~\eqref{eqn::mart_cascade}
--- see, e.g.,~\cite{Schramm-Wilson:SLE_coordinate_changes}.
\end{proof}

\subsection{Proof of Theorem~\ref{thm::FKIsing_crossingproba}: The general case}
\label{subsec::crossingproba_general}
The goal of this section is to prove Theorem~\ref{thm::FKIsing_crossingproba} with a general boundary condition $\beta\in\LP_N$, using Proposition~\ref{prop::crossingproba_unnested}. 
The key is the following observation for the discrete models --- 
which holds, in fact, for all random-cluster models 
with cluster-weight $q>0$ and edge-weight being the self-dual value~\eqref{eq: pselfdual}.

\begin{proposition}\label{prop::crossingproba_comparison}
	Consider the 
	random-cluster model on the primal polygon $(\Omega; x_1, \ldots, x_{2N})$ with cluster-weight $q>0$ 
and edge-weight 
\begin{align} \label{eq: pselfdual}
p=\frac{\sqrt{q}}{1+\sqrt{q}}.
\end{align}
The random connectivity $\conn$ in this model satisfies the identity 
	\begin{align}\label{eqn::crossingproba_comparison}
		\PP_{\beta}[\conn=\alpha] 
		= \frac{\frac{\LM_{\alpha,\beta}(q)}{\LM_{\alpha,\unnested}(q)}\, \PP_{\unnested}[\conn=\alpha]}{\underset{\gamma\in\LP_N} {\sum}\frac{\LM_{\gamma,\beta}(q)}{\LM_{\gamma,\unnested}(q)}\, \PP_{\unnested}[\conn=\gamma]} , 
		\qquad \textnormal{for all } \, \alpha,\beta\in\LP_N .
	\end{align}
\end{proposition}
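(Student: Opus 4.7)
My plan is to exploit the fact that at the self-dual edge-weight $p = \sqrt{q}/(1+\sqrt{q})$ the probability of a configuration $\omega$ admits a clean loop-representation formula. Specifically, a standard computation using Euler's relation and planar duality for the random-cluster model (see, e.g.,~\cite[Proposition~3.4]{Duminil-Copin:PIMS_lectures}) rewrites the unnormalized weight as
\begin{align*}
p^{o(\omega)}(1-p)^{c(\omega)} \, q^{k(\omega^{\bssymb_\beta})}
\; = \; C(\Omega) \cdot \sqrt{q}^{\; L_\beta(\omega)},
\end{align*}
where $L_\beta(\omega)$ is the total number of loops in the loop representation of $\omega$ subject to b.c.~$\beta$, and the constant $C(\Omega)$ depends only on the polygon, not on $\omega$ or on $\beta$.

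The crucial observation is then a decomposition of $L_\beta(\omega)$ into a ``bulk'' contribution independent of $\beta$ plus a ``boundary'' contribution determined only by the random connectivity $\conn(\omega)$ together with $\beta$. Precisely, let $N_{\mathrm{int}}(\omega)$ denote the number of loops in the loop representation that do not touch any of the $N$ interfaces; by its very definition $N_{\mathrm{int}}(\omega)$ is independent of $\beta$. The remaining loops are formed by concatenating the $N$ interfaces (which pair the marked boundary points according to $\conn(\omega)$) with the external wirings prescribed by $\beta$, and by Definition~\ref{def::meander} the number of such loops equals exactly $\LL_{\conn(\omega), \beta}$. Hence
\begin{align*}
L_\beta(\omega) \; = \; N_{\mathrm{int}}(\omega) + \LL_{\conn(\omega), \beta}.
\end{align*}

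With this decomposition at hand, writing $A_\alpha := \{\omega : \conn(\omega) = \alpha\}$ and
\begin{align*}
Y_\alpha := \sum_{\omega \in A_\alpha} \sqrt{q}^{\; N_{\mathrm{int}}(\omega)} ,
\end{align*}
which depends on $\alpha$ and the polygon but \emph{not} on $\beta$, and using that $\LL_{\alpha,\beta}$ is constant on $A_\alpha$ together with $\LM_{\alpha, \beta}(q) = \sqrt{q}^{\LL_{\alpha,\beta}}$, we immediately obtain
\begin{align*}
\PP_{\beta}[\conn = \alpha]
\; = \; \frac{\LM_{\alpha,\beta}(q) \, Y_\alpha}{\underset{\gamma \in \LP_N}{\sum} \LM_{\gamma,\beta}(q) \, Y_\gamma} ,
\qquad \alpha, \beta \in \LP_N .
\end{align*}
Specializing to $\beta = \unnested$ yields $Y_\alpha = \big( \sum_\gamma \LM_{\gamma,\unnested}(q)\, Y_\gamma \big) \cdot \PP_{\unnested}[\conn = \alpha] / \LM_{\alpha, \unnested}(q)$; the $\alpha$-independent prefactor cancels when substituted into both numerator and denominator of the displayed ratio for general $\beta$, giving~\eqref{eqn::crossingproba_comparison}.

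I expect the only nontrivial point to be verifying the loop-count decomposition $L_\beta(\omega) = N_{\mathrm{int}}(\omega) + \LL_{\conn(\omega), \beta}$: one must argue that every loop in the full representation either lies entirely in the boundary-avoiding ``internal'' part or arises from closing the $N$ interfaces via the external $\beta$-wirings, with no mixing between the two families. This follows at once from the planar structure of the loop representation (Figure~\ref{fig::loop_representation}), since the interfaces are pairwise disjoint from each other and from the internal loops, and the external wirings of $\beta$ affect only their closing outside $\Omega$. All other steps are purely algebraic manipulations of the self-dual loop weight.
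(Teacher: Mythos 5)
Your proposal is correct and takes essentially the same route as the paper's own proof: both rely on the self-dual loop-representation weight together with the decomposition of the loop count into a $\beta$-independent "internal" part and the meander count $\LL_{\conn(\omega),\beta}$, and then compare with the case $\beta=\unnested$ by algebraic cancellation. The paper phrases this as $\PP_{\beta}[\omega]\propto \sqrt{q}^{\LN(\omega)}\LM_{\conn(\omega),\beta}(q)$ (citing~\cite[Proposition~3.17]{DCS:Conformal_invariance_of_lattice_models}), which is exactly your identity $L_\beta(\omega)=N_{\mathrm{int}}(\omega)+\LL_{\conn(\omega),\beta}$ with $\LN=N_{\mathrm{int}}$ and $\LM=\sqrt{q}^{\LL}$, and your quantity $Y_\alpha$ matches the paper's $\sum_{\omega\in\LW(\alpha)}\sqrt{q}^{\LN(\omega)}$.
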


\begin{proof}
	We denote by $\LW$ the set of random-cluster configurations that are wired on the boundary arcs $(x_{2r-1} \, x_{2r})$ for $1\le r \le N$, namely, 
	\begin{align*}
		\LW := \Big\{\omega=(\omega_e)_{e\in E(\Omega)}\in \{0,1\}^{E(\Omega)} \; \colon \; \omega_e=1 \textnormal{ for all } e \in \bigcup_{r=1}^N (x_{2r-1} \, x_{2r}) \Big\} .
	\end{align*}
	Also, we denote by $\LN(\omega)$ the number of loops in the loop representation of $\omega$ (recall Figure~\ref{fig::loop_representation}). 
Thanks to the hypothesis~\eqref{eq: pselfdual}, 
a standard argument (see, e.g.,~\cite[Proposition~3.17]{DCS:Conformal_invariance_of_lattice_models}) shows that 
	\begin{align} \label{eqn::proba_loop_repre}
		\PP_{\beta}[\omega] = \frac{\sqrt{q}^{\LN(\omega)}\LM_{\conn(\omega),\beta}(q)}{\underset{\varpi\in \LW}{\sum} \sqrt{q}^{\LN(\varpi)}\LM_{\conn(\varpi),\beta}(q)}, \qquad \textnormal{for all }\omega\in\LW.
	\end{align} 
On the one hand, identity~\eqref{eqn::proba_loop_repre} gives
\begin{align} \label{eqn::cro_pro_discrete_loop_repre}
\PP_{\beta}[\conn = \alpha] 
= \frac{\LM_{\alpha,\beta}(q) \underset{\omega\in \LW({\alpha})}{\sum} \sqrt{q}^{\LN(\omega)}}{\underset{\gamma\in\LP_{N}}{\sum} \LM_{\gamma,\beta}(q) \underset{\varpi \in \LW({\gamma})}{\sum} \sqrt{q}^{\LN(\varpi)}} , 
\qquad \textnormal{for all } \, \alpha,\beta\in\LP_N ,
\end{align}
where $\LW({\alpha}) := \{\omega\in \LW \colon \conn(\omega)=\alpha\}$.
On the other hand, applying~\eqref{eqn::cro_pro_discrete_loop_repre} to the right-hand side (RHS) of~\eqref{eqn::crossingproba_comparison}, we find that
\begin{align*}
\textnormal{RHS of~\eqref{eqn::crossingproba_comparison}} 
= \; & \left( \frac{\LM_{\alpha,\beta}(q) \underset{\omega\in\LW({\alpha})}{\sum} \sqrt{q}^{\LN(\omega)}}{\underset{\delta \in \LP_N}{\sum} \LM_{\delta,\unnested}(q) \underset{\upsilon\in\LW({\delta})}{\sum} \sqrt{q}^{\LN(\upsilon)}} \right)
\left( \underset{\gamma\in\LP_N}{\sum} \frac{\LM_{\gamma,\beta}(q) \underset{\varpi \in \LW({\gamma})}{\sum} \sqrt{q}^{\LN(\varpi)}}{\underset{\delta \in \LP_N}{\sum} \LM_{\delta,\unnested}(q) \underset{\upsilon\in\LW({\delta})}{\sum} \sqrt{q}^{\LN(\upsilon)}} \right)^{-1} \\
= \; & \frac{\LM_{\alpha,\beta}(q) \underset{\omega\in \LW({\alpha})}{\sum} \sqrt{q}^{\LN(\omega)}}{\underset{\gamma\in\LP_N}{\sum} \LM_{\gamma,\beta}(q)\underset{\varpi \in  \LW({\gamma})}{\sum} \sqrt{q}^{\LN(\varpi)}}  
\; = \; \PP_{\beta}[\conn = \alpha] ,
 &&\textnormal{[by~\eqref{eqn::cro_pro_discrete_loop_repre}]}
\end{align*}
which is the left-hand side of the asserted identity~\eqref{eqn::crossingproba_comparison}. 
\end{proof}

The general case in Theorem~\ref{thm::FKIsing_crossingproba} follows now with little effort.

\begin{proof}[Proof of Theorem~\ref{thm::FKIsing_crossingproba}]
For any $\alpha,\beta\in\LP_N$, we have 
\begin{align*} 
\lim_{\delta\to 0}\PP_{\beta}^{\delta}[\FKconn^{\delta}=\alpha]
= & \; \frac{\frac{\LM_{\alpha,\beta}(2)}{\LM_{\alpha,\unnested}(2)} \; \underset{\delta\to 0}{\lim} \; \PP_{\unnested}^{\delta}[\FKconn^{\delta}=\alpha]}{\underset{\gamma\in\LP_N}{\sum} \frac{\LM_{\gamma,\beta}(2)}{\LM_{\gamma,\unnested}(2)}\; \underset{\delta\to 0}{\lim} \; \PP_{\unnested}^{\delta}[\FKconn^{\delta}=\gamma]} 
&&\textnormal{[by~Prop.~\ref{prop::crossingproba_comparison} with $q=2$]} \\
=& \; \frac{\LM_{\alpha,\beta}(2) \, \frac{\PartF_{\alpha}(\Omega; x_1, \ldots, x_{2N})}{\LF_{\unnested}^{(N)}(\Omega; x_1, \ldots, x_{2N})}}{\underset{\gamma\in\LP_N}{\sum} \LM_{\gamma,\beta}(2) \, \frac{\PartF_{\gamma}(\Omega; x_1, \ldots, x_{2N})}{\LF_{\unnested}^{(N)}(\Omega; x_1, \ldots, x_{2N})}} 
&&\textnormal{[by~Prop.~\ref{prop::crossingproba_unnested}]} \\
=& \; \LM_{\alpha,\beta}(2) \, \frac{\PartF_{\alpha}(\Omega; x_1, \ldots, x_{2N})}{\LF_{\beta}(\Omega; x_1, \ldots, x_{2N})}.
&&\textnormal{[by~Cor.~\ref{cor::linearcombination_FKIsing}]}
\end{align*}
This completes the proof. 
\end{proof}

\begin{remark}
It follows from Theorems~\ref{thm::FKIsing_Loewner}~\&~\ref{thm::FKIsing_crossingproba} that 
the so-called ``global'' multiple $\SLE_{16/3}$ associated to $\alpha$, as defined in~\textnormal{\cite[Proposition~1.4]{BPW:On_the_uniqueness_of_global_multiple_SLEs}},  
is the same as the so-called ``local'' multiple $\SLE_{16/3}$ associated to $\alpha$.
We leave the details to a dedicated reader. 
\end{remark}


\appendix

\section{Combinatorial lemmas for Section~\ref{sec::FKIsing_Loewner} --- details for Proposition~\ref{prop::totalpartition_observable}}
\label{appendix_aux}
Here, we fill in the details to finish the proof of Proposition~\ref{prop::totalpartition_observable}.
We use the notation from Section~\ref{subsec::holo_limiting}.

\begin{lemma} \label{lem::phase_factor}
For $Q_{\beta}(\bs{\hat{\sigma}})$ appearing in~\textnormal{(\ref{eqn::determi_decomposition}, \ref{eqn::Vander})}, 
there exists a constant $\theta_{\beta} \in \{ \pm 1, \pm \ii\}$ depending only on $\beta$ such that~\eqref{eqn::Q_beta_positive} holds for all $\bs{\hat{\sigma}} = (\hat{\sigma}_2,\ldots,\hat{\sigma}_N)\in \{\pm 1\}^{N-1}$\textnormal{:}
\begin{align} \label{eqn::Q_beta_positive_again}
\frac{Q_{\beta}(\bs{\hat{\sigma}})}{\theta_{\beta}} > 0 .
\end{align}
\end{lemma}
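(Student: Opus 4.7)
The plan is to show that the phase of $Q_\beta(\bs{\hat{\sigma}})$ is constant on $\{\pm 1\}^{N-1}$ by establishing that flipping any single coordinate multiplies $Q_\beta$ by a positive real. Since the hypercube $\{\pm 1\}^{N-1}$ is connected under single-coordinate flips, this gives the existence of $\theta_\beta$. The fact that $\theta_\beta \in \{\pm 1,\pm\ii\}$ will follow because, in (\ref{eqn::Vander}), only the spinor factor $\prod_r \ddot S^{a_r,b_r}(y_r^{\hat{\sigma}_r,\beta})$ produces phases, and each such spinor is a power of $\ii$ times a positive real.

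Fix $r \in \{2,\ldots,N\}$ and let $\bs{\hat{\sigma}}^{\pm}$ differ only in coordinate $r$. I would split the ratio $Q_\beta(\bs{\hat{\sigma}}^+)/Q_\beta(\bs{\hat{\sigma}}^-)$ into three pieces: the linear piece $(x_{a_r}-x_1)/(x_{b_r}-x_1)$, which is positive since $a_r,b_r \ge 2$; the Vandermonde piece coming from the terms $(y_r-y_s)$ and $(y_t-y_r)$ in the middle product of~\eqref{eqn::Vander}; and the spinor piece $\ddot S^{a_r,b_r}(x_{a_r})/\ddot S^{a_r,b_r}(x_{b_r})$. For the Vandermonde piece, the sign of each factor flips exactly when $y_s \in (x_{a_r},x_{b_r})$; by the planarity of $\beta$, this happens precisely when the link $\{a_s,b_s\}$ is nested inside $\{a_r,b_r\}$. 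Denoting the number of such indices $s$ by $n_r^{\mathrm{in}}$, the Vandermonde ratio equals $(-1)^{n_r^{\mathrm{in}}}$ times a positive real.

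For the spinor piece, I would compute the relevant limits on the upper half-plane sheet of $\Sigma_{\bs{x}}$, using the standard branch ($\sqrt{z}$ lies in the first quadrant for $z \in \HH$). A direct count shows that at $z = x_{a_r}+\ii 0^+$ each factor $(z-x_j)^{-1/2}$ with $j > a_r$ contributes a phase of $-\ii$ (and none with $j < a_r$), while at $z = x_{b_r}+\ii 0^+$ only the factors with $j > b_r$ contribute $-\ii$. Consequently the ratio equals $(-\ii)^{b_r-a_r-1}$ times a positive real. Planarity of $\beta$ combined with the link $\{a_r,b_r\} \in \beta$ forces every index in $(a_r,b_r)$ to be matched within $(a_r,b_r)$, so $b_r-a_r-1 = 2n_r^{\mathrm{in}}$ and therefore $(-\ii)^{b_r-a_r-1} = (-1)^{n_r^{\mathrm{in}}}$. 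The Vandermonde sign and spinor sign thus cancel exactly, giving a positive real ratio and proving~\eqref{eqn::Q_beta_positive_again}.

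The main technical obstacle is the careful bookkeeping of the branches of the square roots in the spinor factor: one has to verify that the upper half-plane limits at $x_{a_r}$ and $x_{b_r}$ really do correspond to the same branch of $\ddot S^{a_r,b_r}$ on $\Sigma_{\bs{x}}$ (since this function is single-valued on $\HH$ and holomorphic at these two points, there is no ambiguity, but the accounting for the phases of each individual $(z-x_j)^{-1/2}$ still requires care), and to confirm the planarity identity $b_r-a_r-1 = 2n_r^{\mathrm{in}}$, which is what makes the two sign contributions cancel. Once this combinatorial identity is isolated, the rest is a direct computation.
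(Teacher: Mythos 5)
Your argument is correct and takes a genuinely different route from the paper's. The paper proves the lemma by induction on $N$: after an explicit base case $N=2$, it picks a nearest-neighbor link $\{a_r,b_r\}$ with $b_r = a_r+1$ (so that no other link is nested inside, i.e.\ $n_r^{\mathrm{in}}=0$), factorizes $Q_\beta(\bs{\hat{\sigma}})$ into five pieces $T_1,\dots,T_5$, checks that the phases of $T_1,\dots,T_4$ are $\bs{\hat{\sigma}}$-independent, and recognizes $T_5$ as (essentially) $Q_{\beta/\{a_r,b_r\}}$ so that the induction hypothesis applies. Your approach instead flips a single arbitrary coordinate $\hat{\sigma}_r$ and tracks the phase change directly, using the more general planarity identity $b_r-a_r-1 = 2n_r^{\mathrm{in}}$ to show that the Vandermonde sign $(-1)^{n_r^{\mathrm{in}}}$ and the spinor phase $(-\ii)^{b_r-a_r-1}$ cancel exactly; connectedness of the hypercube under single flips then gives the result without induction. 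Your route avoids the base case and the choice of a privileged short link, at the price of needing the somewhat finer cancellation; the paper's choice of a nearest-neighbor link makes the cancellation trivial ($n_r^{\mathrm{in}}=0$) but shifts the burden to the induction hypothesis. Both arguments hinge on the same planarity fact (indices between $a_r$ and $b_r$ are matched among themselves) and on the branch bookkeeping you correctly flag as the main delicate point; note also that, strictly speaking, the Vandermonde piece does contribute signs $\pm 1$ in addition to the spinor's powers of $-\ii$, but this is harmless for the conclusion $\theta_\beta \in \{\pm 1, \pm\ii\}$.
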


\begin{proof}
We prove the claim~\eqref{eqn::Q_beta_positive_again} by induction on $N \geq 2$. 
For the initial case where $N=2$, we have the two boundary conditions $\vcenter{\hbox{\includegraphics[scale=0.2]{figures/link-1.pdf}}}=\{\{1,2\}, \{3,4\}\}$ and $\vcenter{\hbox{\includegraphics[scale=0.2]{figures/link-2.pdf}}}=\{\{1,4\}, \{2,3\}\}$, and\footnote{We use $\sqrt{\cdot}$ to denote the principal branch of the square root.}
\begin{align*}
Q_{\vcenter{\hbox{\includegraphics[scale=0.2]{figures/link-1.pdf}}}}(-) = \; & \frac{x_4-x_1}{\sqrt{x_4-x_1} \, \sqrt{x_{4}-x_2}} , \qquad 
\; && Q_{\vcenter{\hbox{\includegraphics[scale=0.2]{figures/link-2.pdf}}}}(-) = -\ii \, \frac{x_3-x_1}{\sqrt{x_3-x_1} \, \sqrt{x_4-x_3}}  , \\
Q_{\vcenter{\hbox{\includegraphics[scale=0.2]{figures/link-1.pdf}}}}(+) = \; & \frac{x_3-x_1}{\sqrt{x_3-x_1} \, \sqrt{x_3-x_2}} , \qquad  
\; && Q_{\vcenter{\hbox{\includegraphics[scale=0.2]{figures/link-2.pdf}}}}(+) = -\ii \, \frac{x_2-x_1}{\sqrt{x_2-x_1} \, \sqrt{x_4-x_2}} . 
\end{align*}
Thus, the claim~\eqref{eqn::Q_beta_positive_again} holds for $N=2$ with $\theta_{\vcenter{\hbox{\includegraphics[scale=0.2]{figures/link-1.pdf}}}} = 1$
and $\theta_{\vcenter{\hbox{\includegraphics[scale=0.2]{figures/link-2.pdf}}}} = -\ii$.

Next, fix $N\geq 3$ and assume that the claim~\eqref{eqn::Q_beta_positive_again} holds up to $N-1$. 
Fix $\beta\in\LP_N$. Choose an index $r \in \{2,\ldots,N\}$ such that $b_{r}=a_{r}+1$. With this choice of $r$, we have
\begin{align} \label{eqn::partial_order_locations}
s < a_{r} , \qquad \textnormal{ for all } \, s \notin \{a_{r},b_{r}\} 
\qquad \Longleftrightarrow \qquad 
s < b_{r} , \qquad \textnormal{ for all } \, s \notin \{a_{r},b_{r}\} .
\end{align}
For any $\bs{\hat{\sigma}} = (\hat{\sigma}_2,\ldots,\hat{\sigma}_N) \in \{\pm 1\}^{N-1}$, note that~\eqref{eqn::Vander} implies that
\begin{align*}
Q_{\beta} (\bs{\hat{\sigma}}) 
= \; & \underbrace{\bigg(\prod_{2\leq s \leq N} \big( y_s^{\hat{\sigma}_s, \beta} - x_1 \big) \bigg)}_{=: T_1}
\; 
\underbrace{\bigg(\frac{1}{\prod_{j \notin \{a_{r},b_{r}\}} \sqrt{y_{r}^{\hat{\sigma}_r, \beta} - x_{j} }}\bigg)}_{=: T_2} \\[.5em]
\; & \times \,
\underbrace{\bigg(\prod_{2\leq s < r}\frac{y_{r}^{\hat{\sigma}_r, \beta}-y_{s}^{\hat{\sigma}_s, \beta}}{\sqrt{ y_{s}^{\hat{\sigma}_s, \beta}-x_{a_{r}}} \, \sqrt{ y_{s}^{\hat{\sigma}_s, \beta}-x_{b_{r}} }}\bigg)}_{=: T_3}
\;
\underbrace{\bigg(\prod_{r< s \leq N}\frac{y_{s}^{\hat{\sigma}_s, \beta}-y_{r}^{\hat{\sigma}_r, \beta}}{ \sqrt{ y_{s}^{\hat{\sigma}_s, \beta}-x_{a_{r}} } \, \sqrt{ y_{s}^{\hat{\sigma}_s, \beta}-x_{b_{r}} }}\bigg)}_{=: T_4} \\[.5em]
\; & \times \,
\underbrace{\bigg(\prod_{\substack{2\leq s < t \leq N \\ s,t\neq r}}\big( y_{t}^{\hat{\sigma}_t, \beta}-y_{s}^{\hat{\sigma}_s, \beta} \big) \bigg)
\;
\bigg(\prod_{\substack{2\leq s \leq N \\ s \neq r}}  \ddot{S}_{x_1,\ldots,x_{a_r-1},x_{b_r+1},\ldots,x_{2N}}^{a_s,b_s}\big(y_s^{\hat{\sigma}_s,\beta}\big) \bigg)}_{=: T_5} ,
\end{align*}
where $y_{r}^{\hat{\sigma}_r, \beta}$ are defined in~\eqref{eq::def_y_k}. 
Let us analyze the phase factors of the terms $T_k$ for $1\leq k \leq 5$:
\begin{enumerate}[leftmargin=2em]
	\item We always have $T_1>0$.
	
	\item The phase factor of $T_2$ is independent of the choice of $\bs{\hat{\sigma}}$, due to the observation~\eqref{eqn::partial_order_locations}.
	
	\item According to the explicit formula of $T_3$, its phase factor depends on $\bs{\hat{\sigma}}$ 
	only through $(\hat{\sigma}_2,\ldots,\hat{\sigma}_{r})$. 
	The observation~\eqref{eqn::partial_order_locations} readily implies that the phase factor of $T_3$ is independent of the choice of $\hat{\sigma}_{r}$. Moreover, it is also independent of the choice of $(\hat{\sigma}_2,\ldots,\hat{\sigma}_{r-1})$, since for each $s\leq r-1$, we have
\begin{itemize}
\item if $y_{s}^{\hat{\sigma}_s, \beta} < x_{a_{r}}$, 
then 
\begin{align*}
\frac{y_{r}^{\hat{\sigma}_r, \beta}-y_{s}^{\hat{\sigma}_s, \beta}}{\sqrt{ y_{s}^{\hat{\sigma}_s, \beta} - x_{a_{r}} } \, 
\sqrt{ y_{s}^{\hat{\sigma}_s, \beta}-x_{b_{r}} }} 
= - \frac{y_{r}^{\hat{\sigma}_r, \beta}-y_{s}^{\hat{\sigma}_s, \beta}}{\sqrt{ x_{a_{r}}-y_{s}^{\hat{\sigma}_s, \beta} } \, \sqrt{x_{b_{r}}-y_{s}^{\hat{\sigma}_s, \beta} }}
\; < \; 0 ;
\end{align*}
\item if $y_{s}^{\hat{\sigma}_s, \beta} > x_{b_{r}}$, then 
\begin{align*}
\frac{y_{r}^{\hat{\sigma}_r, \beta}-y_{s}^{\hat{\sigma}_s, \beta}}{\sqrt{ y_{s}^{\hat{\sigma}_s, \beta}-x_{a_{r}} } \, \sqrt{ y_{s}^{\hat{\sigma}_s, \beta}-x_{b_{r}} }} 
= - \frac{y_{s}^{\hat{\sigma}_s, \beta}-y_{r}^{\hat{\sigma}_r, \beta}}{\sqrt{ y_{s}^{\hat{\sigma}_s, \beta}-x_{a_{r}} } \, \sqrt{ y_{s}^{\hat{\sigma}_s, \beta}-x_{b_{r}} }}
\; < \; 0 .
\end{align*}
\end{itemize}
Thus, in both cases the phase factor of $T_3$ is independent of the choice of $\bs{\hat{\sigma}}$.

\item The phase factor of $T_4$ is similarly independent of the choice of $\bs{\hat{\sigma}}$.

\item By the induction hypothesis, the phase factor of $T_5$ equals $\theta_{\beta/\{a_{r},b_{r}\}} \in \{ \pm 1, \pm \ii \}$.
\end{enumerate}
As the phase factor of $Q_{\beta}(\bs{\hat{\sigma}})$  
equals the product of the phase factors of $T_k$ for $1\leq k \leq 5$, we find a constant $\theta_\beta \in \{ \pm 1, \pm \ii \}$ depending only on $\beta$ such that~\eqref{eqn::Q_beta_positive} holds. 
This completes the induction step. 
\end{proof}

\begin{lemma} \label{lem::Cramer_decom}
There exist functions $g^{\bs{\hat{\sigma}}, \beta}(\bs{x})>0$ for $\bs{\hat{\sigma}}=(\hat{\sigma}_2, \ldots, \hat{\sigma}_N)\in \{\pm 1\}^{N-1}$ such that~\eqref{eqn::Cramer_decom} holds:
\begin{align}  \label{eqn::Cramer_decom_app}
	\frac{\det (R_{\beta}^{\bullet})}{\det (R_{\beta})}
	= \frac{\underset{\bs{\hat{\sigma}}\in\{\pm 1\}^{N-1}}{\sum} \, g^{\bs{\hat{\sigma}}, \beta}(\bs{x}) \; 
	\underset{r=2}{\overset{N}{\sum}} \; \big( y_{r}^{\hat{\sigma}_r, \beta} - x_1 \big)^{-1} }{\underset{\bs{\hat{\sigma}}\in\{\pm 1\}^{N-1}}{\sum} \; g^{\bs{\hat{\sigma}}, \beta}(\bs{x})} .
\end{align}
\end{lemma}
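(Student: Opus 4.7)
The strategy is to mimic for $\det(R_\beta^\bullet)$ the multilinearity expansion of $\det(R_\beta)$ carried out in~\eqref{eqn::determi_decomposition}, reduce the resulting row-expanded summands via a Jacobi bi-alternant identity, and then absorb a common phase using Lemma~\ref{lem::phase_factor}. Concretely, I would first observe that each row of $R_\beta^\bullet$ (indexed by $r = 2, \ldots, N$) has the form $\tilde{\bs{U}}_\beta^+(r) + \tilde{\bs{U}}_\beta^-(r)$, where $\tilde{\bs{U}}_\beta^\pm(r)$ agrees with $\bs{U}_\beta^\pm(r)$ except that its first entry is $U_\beta^\pm(r,0) = \ddot{S}_{\bs{x}}^{a_r, b_r}(y_r^{\pm, \beta})$ rather than $U_\beta^\pm(r,1)$. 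By multilinearity of the determinant over rows, $\det(R_\beta^\bullet) = \sum_{\bs{\hat{\sigma}} \in \{\pm 1\}^{N-1}} Q_\beta^\bullet(\bs{\hat{\sigma}})$, where $Q_\beta^\bullet(\bs{\hat{\sigma}})$ is the determinant of the matrix with $r$-th row $\tilde{\bs{U}}_\beta^{\hat{\sigma}_r}(r)$.

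Factoring $\ddot{S}_{\bs{x}}^{a_r, b_r}(y_r^{\hat{\sigma}_r, \beta})$ out of each row in $Q_\beta^\bullet(\bs{\hat{\sigma}})$ leaves a Vandermonde-like matrix whose $r$-th row reads $(1, z_r^2, z_r^3, \ldots, z_r^{N-1})$ in the variables $z_r := y_r^{\hat{\sigma}_r, \beta} - x_1$ -- i.e., the standard Vandermonde matrix in $(z_2, \ldots, z_N)$ with its column of linear powers $z_r^1$ replaced by the column of top powers $z_r^{N-1}$. By the Jacobi bi-alternant formula, the ratio of this determinant to the ordinary Vandermonde $V(\bs{z}) = \prod_{2 \le s < t \le N}(z_t - z_s)$ equals the Schur polynomial $s_\lambda(z_2, \ldots, z_N)$ with $\lambda$ the partition having $N-2$ parts equal to one, which is simply $e_{N-2}(z_2, \ldots, z_N)$. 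Using the elementary identity $e_{N-2}(z_2, \ldots, z_N) = (\prod_{r=2}^N z_r)(\sum_{r=2}^N z_r^{-1})$ together with the Vandermonde expression~\eqref{eqn::Vander} for $Q_\beta(\bs{\hat{\sigma}})$, one then obtains the clean factorization $Q_\beta^\bullet(\bs{\hat{\sigma}}) = Q_\beta(\bs{\hat{\sigma}}) \cdot \sum_{r=2}^N (y_r^{\hat{\sigma}_r, \beta} - x_1)^{-1}$.

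Setting $g^{\bs{\hat{\sigma}}, \beta}(\bs{x}) := Q_\beta(\bs{\hat{\sigma}})/\theta_\beta$, which is strictly positive by Lemma~\ref{lem::phase_factor}, and dividing both numerator and denominator of $\det(R_\beta^\bullet)/\det(R_\beta)$ by the common phase $\theta_\beta$, then yields the desired identity~\eqref{eqn::Cramer_decom_app}. The only delicate point is sign tracking at the Jacobi bi-alternant step, since our column powers are ordered increasingly ($0, 1, 2, \ldots, N-2$ in the Vandermonde denominator versus $0, 2, 3, \ldots, N-1$ in the modified numerator) while the bi-alternant convention orders them decreasingly; fortunately, both orderings are reversed by the same permutation of $N-1$ columns, so the signs cancel between numerator and denominator, and a direct sanity check at $N = 2, 3$ confirms that no extraneous sign appears. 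Everything else reduces to routine row and column operations.
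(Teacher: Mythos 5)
Your proof is correct, but it takes a genuinely different route from the paper's. You expand $\det(R_{\beta}^{\bullet})$ directly by multilinearity, then apply the Jacobi bi-alternant formula to identify the ratio $Q_{\beta}^{\bullet}(\bs{\hat{\sigma}})/Q_{\beta}(\bs{\hat{\sigma}})$ with the Schur polynomial $s_{(1^{N-2})} = e_{N-2}$, and finally use the factorization $e_{N-2}(z_2,\ldots,z_N) = (\prod_r z_r)(\sum_r z_r^{-1})$. The paper instead avoids the bi-alternant entirely by introducing the auxiliary $N \times N$ determinant $Q(z)$ with an extra row $\bs{Z}=(1,z,\ldots,z^{N-1})$: expanding this via the ordinary Vandermonde formula and then reading off the coefficient of $z^1$ produces $-\det(R_\beta^\bullet)$ at once. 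The two arguments are morally equivalent --- cofactor expansion of the augmented Vandermonde along the first row is exactly what encodes the bi-alternant --- but the paper's generating-function device is somewhat more self-contained (only the plain Vandermonde formula is invoked, and it recovers~\eqref{eqn::Vander} for $\det(R_\beta)$ in passing by setting $z=0$), whereas yours is a bit more explicit and requires citing the Schur-polynomial identity. Your handling of the sign is sound: both the modified alternant and the Vandermonde denominator are in increasing column order, so converting both to the decreasing convention incurs the same reversal sign, which cancels in the ratio. Your proof also supplies the needed positivity of $g^{\bs{\hat{\sigma}},\beta}$ via Lemma~\ref{lem::phase_factor}, exactly as the paper does.
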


\begin{proof}
By Lemma~\ref{lem::phase_factor}, 
with $Q_{\beta}(\bs{\hat{\sigma}})$ defined in~\eqref{eqn::determi_decomposition}, 
we have 
\begin{align*}
g^{\bs{\hat{\sigma}}, \beta}(\bs{x}) := 
\frac{Q_{\beta}(\bs{\hat{\sigma}})}{\theta_{\beta}} > 0 . 
\end{align*}
It remains to verify~\eqref{eqn::Cramer_decom_app}.
On the one hand, the identity~\eqref{eqn::determi_decomposition} implies that
\begin{align} \label{eqn::Cramer_decom_aux1}
	\det(R_{\beta}) 
	= \theta_{\beta} \, \sum_{\bs{\hat{\sigma}}\in\{\pm 1\}^{N-1}}g^{\bs{\hat{\sigma}}, \beta}(\bs{x}).
\end{align}
On the other hand, let us compute $\det R_{\beta}^{\bullet}$. For $2\le r \le N$, we define row vectors $\smash{\bs{U}_{\beta}^{\pm,\bullet}(r)}$ of size $N$ as
\begin{align*}
\bs{U}_{\beta}^{\pm,\bullet}(r) 
:= \big( U_{\beta}^{\pm}(r,0), \, U_{\beta}^{\pm}(r,1), \, U_{\beta}^{\pm}(r,2), \, \ldots, \, U_{\beta}^{\pm}(r,N-1) \big) ,
\end{align*} 
where $U_{\beta}^{\pm}(r,n)$ are defined in~\eqref{eqn::U_elements}. We then define another row vector of size $N$ for a variable $z$ as 
\begin{align}
\bs{Z} := (1, z, z^2, \ldots, z^{N-1}) ,
\end{align}
and consider two polynomials 
$Q(z)$ and
$\smash{Q_{\beta}^\bullet (\bs{\hat{\sigma}}; z) }$,
for $\bs{\hat{\sigma}} = (\hat{\sigma}_2,\ldots,\hat{\sigma}_{N})\in\{\pm 1\}^{N-1}$, defined as 
\begin{align*}
		Q(z) := & \; \det
		\begin{pmatrix}
		{ \bs{Z} } \\
		\bs{U}_{\beta}^{+,\bullet}(2) \, + \, \bs{U}_{\beta}^{-,\bullet}(2)\\
		\cdot\\
		\cdot\\
		\cdot\\
		\bs{U}_{\beta}^{+,\bullet}(N) \, + \, \bs{U}_{\beta}^{-,\bullet}(N)
	\end{pmatrix} 
\qquad \textnormal{and} \qquad
Q_{\beta}^\bullet (\bs{\hat{\sigma}}; z) 
:= \det 
\begin{pmatrix} 
{ \bs{Z} } \\
\bs{U}_{\beta}^{\hat{\sigma}_2, \bullet}(2) \\
\cdot\\
\cdot\\
\cdot\\
\bs{U}_{\beta}^{\hat{\sigma}_{N}, \bullet}(N)
\end{pmatrix} .
\end{align*} 
Then, using the Vandermonde determinant, we find that
\begin{align} \label{eqn::Vander_Qx}
\begin{split} 
Q(z) = & \; 
\sum_{\bs{\hat{\sigma}}\in \{\pm 1\}^{N-1}} 
{ Q_{\beta}^\bullet (\bs{\hat{\sigma}}; z) }
\\
= & \; \sum_{\bs{\hat{\sigma}}\in \{\pm 1\}^{N-1}}
\prod_{2\leq r \leq N} \big( y_{r}^{\hat{\sigma}_r, \beta} - x_1 - z \big) 
\prod_{2\leq s < t \leq N} \big( y_{t}^{\hat{\sigma}_t, \beta} - y_{s}^{\hat{\sigma}_s, \beta} \big)
\prod_{2\leq r\leq N} \ddot{S}^{a_{r}, b_{r}}_{x_1, \ldots, x_{2N}} \big( y_r^{\hat{\sigma}_r, \beta} \big) .
\end{split} 
\end{align}
Combining~\eqref{eqn::Vander} and~\eqref{eqn::Vander_Qx} with the fact that $-\det(R_{\beta}^{\bullet})$ equals the coefficient of $z$ in the polynomial $Q(z)$, we finally obtain
\begin{align} \label{eqn::Cramer_decom_aux2}
	\det(R_{\beta}^{\bullet}) 
	= \; & \sum_{\bs{\hat{\sigma}}\in \{\pm 1\}^{N-1}} 
	Q_{\beta}(\bs{\hat{\sigma}}) \;
	 \sum_{r=2}^{N} \; \frac{1}{y_{r}^{\hat{\sigma}_r, \beta}-x_1} 
\; = \; \theta_{\beta} \, \sum_{\bs{\hat{\sigma}}\in \{\pm 1\}^{N-1}} g^{\bs{\hat{\sigma}}, \beta}(\bs{x}) \; 
\sum_{r=2}^{N} \; \frac{1}{y_{r}^{\hat{\sigma}_r, \beta}-x_1}  .
\end{align} 
Combining~\eqref{eqn::Cramer_decom_aux1} with~\eqref{eqn::Cramer_decom_aux2}, we obtain the sought identity~\eqref{eqn::Cramer_decom_app}. 
\end{proof}

\begin{lemma} \label{lem::connection_chi}
For the functions $g^{\bs{\hat{\sigma}}, \beta}(\bs{\hat{\sigma}})$ in Lemma~\ref{lem::Cramer_decom}, there exist functions $f_{\beta}(\bs{\hat{\sigma}})$ such that~\eqref{eqn::connection_chi} holds for all $\bs{\hat{\sigma}}=(\hat{\sigma}_2, \ldots, \hat{\sigma}_N)\in\{\pm 1\}^{N-1}$.
\end{lemma}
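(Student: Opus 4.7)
The strategy is to verify the claim by comparing both sides under single-sign flips in $\bs{\hat{\sigma}}$, exploiting the explicit Vandermonde-type formula~\eqref{eqn::Vander} for $Q_\beta(\bs{\hat{\sigma}})$. By Lemma~\ref{lem::phase_factor}, each $g^{\bs{\hat{\sigma}},\beta}(\bs{x}) = Q_\beta(\bs{\hat{\sigma}})/\theta_\beta$ is strictly positive, and all cross-ratios appearing in~\eqref{eqn::connection_chi} are positive. Hence it suffices to fix a reference pattern $\bs{\hat{\sigma}}_0 = (-1,\ldots,-1)$, define
\begin{align*}
f_{\beta}(\bs{x}) := g^{\bs{\hat{\sigma}}_0, \beta}(\bs{x}) \Big/ \prod_{2\le s<t\le N} \chi(x_{a_s},x_{a_t},x_{b_t},x_{b_s})^{1/2} > 0 ,
\end{align*}
and then prove that the ratio $g^{\bs{\hat{\sigma}},\beta}(\bs{x})/g^{\bs{\hat{\sigma}}',\beta}(\bs{x})$ equals the corresponding ratio of the right-hand side of~\eqref{eqn::connection_chi} whenever $\bs{\hat{\sigma}}$ and $\bs{\hat{\sigma}}'$ differ in a single coordinate $r$. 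An easy induction on the number of flipped coordinates then gives~\eqref{eqn::connection_chi} for all $\bs{\hat{\sigma}}$.

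For the single-flip computation, suppose $\bs{\hat{\sigma}}'$ is obtained from $\bs{\hat{\sigma}}$ by switching $\hat{\sigma}_r = -1$ to $\hat{\sigma}'_r = +1$, so that $y_r^{\hat{\sigma}_r, \beta} = x_{b_r}$ becomes $y_r^{\hat{\sigma}'_r, \beta} = x_{a_r}$. Reading off~\eqref{eqn::Vander}, the ratio $|Q_\beta(\bs{\hat{\sigma}}')|/|Q_\beta(\bs{\hat{\sigma}})|$ splits into three explicit factors: a linear term $|x_{a_r}-x_1|/|x_{b_r}-x_1|$; a product $\prod_{s\neq r} |y_s^{\hat{\sigma}_s,\beta}-x_{a_r}|/|y_s^{\hat{\sigma}_s,\beta}-x_{b_r}|$ from the Vandermonde pairs; and a square-root product $\prod_{j\notin\{a_r,b_r\}}(|x_{b_r}-x_j|/|x_{a_r}-x_j|)^{1/2}$ from the $\ddot S$ factor. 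On the other hand, flipping $\hat{\sigma}_r$ from $-1$ to $+1$ in the right-hand side of~\eqref{eqn::connection_chi} multiplies it by
\begin{align*}
\chi(x_1,x_{a_r},x_{b_r},x_{2\ell})^{1/2} \prod_{s\neq r, s\ge 2} \chi(x_{a_{\min}},x_{a_{\max}},x_{b_{\max}},x_{b_{\min}})^{\hat{\sigma}_s/2} ,
\end{align*}
where $\min$, $\max$ refer to the ordering of $\{s,r\}$.

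The heart of the proof is then a direct algebraic matching of these two expressions. The singleton $j=1$ and $j=2\ell$ contributions from the square-root product combine with the linear factor $|x_{a_r}-x_1|/|x_{b_r}-x_1|$ to reconstruct $\chi(x_1,x_{a_r},x_{b_r},x_{2\ell})^{1/2}$. The remaining $j \in \{a_s, b_s\}$ contributions for $s \in \{2,\ldots,N\}\setminus\{r\}$ pair up with the corresponding Vandermonde factor $|y_s^{\hat{\sigma}_s,\beta}-x_{a_r}|/|y_s^{\hat{\sigma}_s,\beta}-x_{b_r}|$, and a brief case check on whether $\hat{\sigma}_s = +1$ (giving $y_s = x_{a_s}$) or $\hat{\sigma}_s = -1$ (giving $y_s = x_{b_s}$) confirms that their product recovers $\chi(x_{a_{\min}},x_{a_{\max}},x_{b_{\max}},x_{b_{\min}})^{\hat{\sigma}_s/2}$ in each case. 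The main obstacle is purely combinatorial bookkeeping --- carefully tracking which absolute-value factors come from which source and handling the four ``boundary" indices $j \in \{1, 2\ell, a_r, b_r\}$ separately --- but no estimate or analytic input is required once the Vandermonde formula is in hand.
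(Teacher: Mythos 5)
Your proof is correct, and it takes a genuinely different route from the paper's. The paper works \emph{globally}: it expands the Vandermonde determinant~\eqref{eqn::Vander} and groups the absolute-value factors into terms $A(\bs{\hat{\sigma}};r)$ (built from the linear factors $|y_r^{\hat{\sigma}_r,\beta}-x_1|$ and the $j\in\{1,2\ell\}$ square-roots from $\ddot{S}$) and pairwise terms $B(\bs{\hat{\sigma}};s,t)$ (built from the Vandermonde difference and the $j\in\{a_s,b_s,a_t,b_t\}$ square-roots), then verifies by inspection that each such term equals the corresponding cross-ratio power $\chi^{(\hat{\sigma}_r+1)/4}$ or $\chi^{(\hat{\sigma}_s\hat{\sigma}_t+1)/4}$ multiplied by a $\bs{\hat{\sigma}}$-independent quantity, from which $f_\beta$ is read off in closed form. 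You instead argue \emph{incrementally}: fix the reference $\bs{\hat{\sigma}}_0=(-1,\ldots,-1)$, define $f_\beta$ so that~\eqref{eqn::connection_chi} holds there, then check that a single sign flip $\hat{\sigma}_r\colon -1\mapsto+1$ multiplies both sides by the same factor $\chi(x_1,x_{a_r},x_{b_r},x_{2\ell})^{1/2}\prod_{s\neq r}\chi(\ldots)^{\hat{\sigma}_s/2}$, and induct on the number of flips. The algebraic matching you perform (the $j\in\{1,2\ell\}$ square-roots plus the linear factor giving the first cross-ratio, and the $j\in\{a_s,b_s\}$ square-roots plus the Vandermonde pair giving the pairwise cross-ratios) is exactly correct, and the case split on $\hat{\sigma}_s=\pm 1$ comes out as claimed. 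The two arguments rest on the same Vandermonde structure and involve comparable bookkeeping; the paper's version has the minor advantage of producing an explicit closed-form $f_\beta$, while yours avoids having to guess the right grouping in advance and only tracks ratios, which is arguably cleaner to check.
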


\begin{proof}
	Fix $\bs{\hat{\sigma}}$. 
Recall from~\eqref{eq::pair_of_1} that in the boundary condition $\beta=\{\{a_1,b_1\},\ldots,\{a_N,b_N\}\}$, we have $a_1=1$ and $b_1=2\ell$. 	
	Combining the facts that $|\theta_{\beta}|=1$ and $g^{\bs{\hat{\sigma}}, \beta}(\bs{x})>0$ with~\eqref{eqn::Vander}, we obtain
	\begin{align*} 
		g^{\bs{\hat{\sigma}}, \beta}(\bs{x}) 
		= & \; \bigg( \prod_{2\leq r \leq N} 
		\underbrace{\frac{\big| y_{r}^{\hat{\sigma}_r, \beta} - x_1 \big|}{\sqrt{\big| y_{r}^{\hat{\sigma}_r, \beta} - x_1 \big|} \, \sqrt{\big|y_r^{\hat{\sigma}_r, \beta} - x_{2\ell}\big|}}}_{=: A(\bs{\hat{\sigma}}; r)}\bigg) 
		\\
		& \; \times \bigg( 
		\prod_{2\leq s < t \leq N} \underbrace{\frac{ \big| y_{t}^{\hat{\sigma}_t, \beta} - y_{s}^{\hat{\sigma}_s, \beta} \big|}{\sqrt{\big|y_{t}^{\hat{\sigma}_t, \beta}-x_{a_s}\big|} \, \sqrt{\big| y_{t}^{\hat{\sigma}_t, \beta} - x_{b_s} \big|} \, \sqrt{\big| y_{s}^{\hat{\sigma}_s, \beta} - x_{a_{t}} \big|} \, \sqrt{\big| y_{s}^{\hat{\sigma}_s, \beta} - x_{b_{t}} \big|}}}_{=: B(\bs{\hat{\sigma}}; s,t)} \bigg) ,
	\end{align*}
where
\begin{align*} 
A(\bs{\hat{\sigma}}; r) = \; &
\chi(x_1,x_{a_r},x_{b_r},x_{2\ell})^{\frac{\hat{\sigma}_r+1}{4}}
\, 
\frac{ \sqrt{|x_{b_r}-x_1|} }{ \sqrt{|x_{b_r}-x_{2\ell}|} } , \qquad
2\leq r \leq N , \\
B(\bs{\hat{\sigma}}; s,t) = \; & \chi(x_{a_s},x_{a_t},x_{b_t},x_{b_s})^{\frac{\hat{\sigma}_s \hat{\sigma}_t + 1}{4}}
 \, \frac{1}{ \sqrt{|x_{b_t}-x_{b_s}|} \sqrt{|x_{a_t}-x_{a_s}|} } , \qquad 2\leq s < t \leq N .
\end{align*}
Therefore, we can choose
\begin{align*}
	f_{\beta}(\bs{x}) := 
	\prod_{2\leq r \leq N} \frac{\sqrt{|x_{b_r}-x_1|}}{\sqrt{|x_{b_r}-x_{2\ell}|}} \times
	\prod_{2\leq s < t \leq N}\frac{1}{\sqrt{|x_{b_t}-x_{b_s}|} \, \sqrt{|x_{a_t}-x_{a_s}|}} .
\end{align*}
This proves the lemma.
\end{proof}

\section{Technical lemmas for Section~\ref{sec::crossingproba}}
\label{appendix_technical}
In this appendix,
we gather technical results for deterministic curves. The setup is the following.
\begin{itemize}[leftmargin=2em]
\item Fix $N \geq 1$ and marked points $\bs{x} = (x_1, \ldots, x_{2N}) \in \chamber_{2N}$. 
Suppose $\eta$ is a continuous curve in $\HH$ starting from $x_2$ with continuous Loewner driving function $W$. 
Let $T$ be the first time when $x_1$ or $x_3$ is swallowed by $\eta$. 
Assume that $\eta[0,T]$ does not hit any marked points except for the starting point $x_2$. 
Let $(g_t \colon  0\le t\le T)$ be the conformal maps corresponding to this Loewner chain. 

\item For $\alpha\in\LP_N$ such that $\{2,b\}\in\alpha$ for $b \in \{1,3,5,\ldots, 2N-1\}$, 
define $\corrind_{\alpha}$ to be the set of indices $j\in\{4,5,\ldots, b-1\}$ 
such that $\{3,4,\ldots, j\}$ forms a sub-link pattern of $\alpha$. 

\item Define the bound functions 
\begin{align*} 
\LB_{\alpha}(\bs{x}):=\prod_{\{a,b\}\in\alpha}|x_b-x_a|^{-1/8} ,
\end{align*}
and recall the formula~\eqref{eqn::totalpartition_def}:
with $\bs{\sigma} = (\sigma_1, \sigma_2, \ldots, \sigma_N)$, we have
\begin{align*}
\LF_{\unnested}^{(N)}(\bs{x}) 
= \prod_{r=1}^N |x_{2r}-x_{2r-1}|^{-1/8} \bigg( \sum_{\bs{\sigma} \in \{\pm 1\}^N} \prod_{1\le s < t \le N}\chi(x_{2s-1}, x_{2t-1}, x_{2t}, x_{2s})^{\sigma_s \sigma_t / 4}\bigg)^{1/2} .
\end{align*}

\item For notational convenience, we also define 
\begin{align*}
\LB_{\unnested}^{(N)}(\bs{x}) 
:= & \; \prod_{r=1}^N |x_{2r}-x_{2r-1}|^{-1/8} , \\
\LY_{\unnested}^{(N)}(\bs{x}) 
:= & \; \frac{\LF_{\unnested}^{(N)}(\bs{x})}{\LB_{\unnested}^{(N)}(\bs{x})} 
= \bigg( \sum_{\bs{\sigma} \in \{\pm 1\}^N} \prod_{1\le s < t \le N}\chi(x_{2s-1}, x_{2t-1}, x_{2t}, x_{2s})^{\sigma_s \sigma_t / 4}\bigg)^{1/2} .
\end{align*}
\end{itemize}

The goal of this appendix is to prove the following 
technical result (Proposition~\ref{prop::mart_vanish_combined}). 
To this end, we first collect basic facts in Lemma~\ref{lem::distance_conformal_image}. 
Then, we give estimates for $\smash{\LB_{\alpha}/\LB_{\unnested}^{(N)}}$ and $\smash{\LY_{\unnested}^{(N)}}$ in 
Lemmas~\ref{lem::technical_oddj}--\ref{lem::bounds_remaining_term}. 
With these at hand, we complete the proof of Proposition~\ref{prop::mart_vanish_combined} in the end.

\begin{proposition} \label{prop::mart_vanish_combined}
Fix a link pattern $\alpha \in \LP_N$. Consider the 
continuous curve $\eta$ in $\HH$ in the above setup. 
\begin{enumerate}
\item \label{item::mart_vanish1}
Suppose $\{1,2\}\in\alpha$. For odd $j\in\{3,5,\ldots, 2N-1\}$, if $\eta(T)\in (x_j, x_{j+1})$, then we have
\begin{align*} 
\lim_{t\to T}\frac{\LB_{\alpha}(g_t(x_1), W_t, g_t(x_3), \ldots, g_t(x_{2N}))}{\LF_{\unnested}^{(N)}(g_t(x_1), W_t, g_t(x_3), \ldots, g_t(x_{2N}))}=0. 
\end{align*}

\item \label{item::mart_vanish2}
For even $j\in\{4,6,\ldots, 2N\}$ such that $j\not\in\corrind_{\alpha}$, if $\eta(T)\in (x_j, x_{j+1})$, then we have
\begin{align*} 
\lim_{t\to T}\frac{\LB_{\alpha}(g_t(x_1), W_t, g_t(x_3), \ldots, g_t(x_{2N}))}{\LF_{\unnested}^{(N)}(g_t(x_1), W_t, g_t(x_3), \ldots, g_t(x_{2N}))}=0. 
\end{align*}
\end{enumerate}
\end{proposition}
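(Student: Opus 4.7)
The plan is to decompose $\LF^{(N)}_{\unnested} = \LB^{(N)}_{\unnested} \cdot \LY^{(N)}_{\unnested}$ and bound each factor via a count of links whose two endpoints both lie in the swallowed set $S = \{3, \ldots, j\}$. Denote these counts $m_\alpha$ and $m_{\unnested}$. A direct enumeration gives $m_{\unnested} = (j-3)/2$ in Case~\ref{item::mart_vanish1} (the $\unnested$-pairs $\{3,4\},\{5,6\},\ldots,\{j-2,j-1\}$; note that $j$ odd makes $\{j,j+1\}$ straddle) and $m_{\unnested} = (j-2)/2$ in Case~\ref{item::mart_vanish2}. The hypotheses constrain $m_\alpha$ as follows. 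In Case~\ref{item::mart_vanish1}, since $|S|=j-2$ is odd and $\{1,2\}\in\alpha$, at least one of the other links of $\alpha$ must straddle $S$, so $m_\alpha \leq (j-3)/2 = m_{\unnested}$. In Case~\ref{item::mart_vanish2}, the condition $j \notin \corrind_\alpha$ means $\alpha|_S$ is not a valid sub-pairing; combined with $|S|$ even (so straddling links must occur in pairs), this forces at least two straddling links, hence $m_\alpha \leq (j-4)/2 < m_{\unnested}$.

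As the curve $\eta$ pinches off a pocket near $\eta(T)$, conformal covariance of $g_t$ together with the close-Carath\'{e}odory convergence of the domains $\Omega_t = \HH \setminus \eta[0,t]$ to the limit configuration provides a common scale $\lambda_t \downarrow 0$ such that $|g_t(x_r) - W_t| \asymp \lambda_t$ uniformly for $r \in S$; hence also $|g_t(x_a) - g_t(x_b)| \asymp \lambda_t$ for any $a, b \in S$, so that
\begin{align*}
\frac{\LB_\alpha(\bs{y})}{\LB^{(N)}_{\unnested}(\bs{y})} \;\asymp\; \lambda_t^{(m_{\unnested}-m_\alpha)/8},
\end{align*}
which is bounded in Case~\ref{item::mart_vanish1} and of order $\lambda_t^{1/8} \to 0$ in Case~\ref{item::mart_vanish2}.

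For $\LY^{(N)}_{\unnested}$, the algebraic identity
\begin{align*}
(y_{2t-1}-y_{2s-1})(y_{2t}-y_{2s}) - (y_{2t}-y_{2s-1})(y_{2t-1}-y_{2s}) = (y_{2s}-y_{2s-1})(y_{2t}-y_{2t-1}) > 0
\end{align*}
yields $\chi_{st} \geq 1$ uniformly, so retaining the two terms $\bs{\sigma}=(\pm 1,\ldots,\pm 1)$ in~\eqref{eqn::totalpartition_def} gives $\LY^{(N)}_{\unnested} \geq \sqrt{2}\prod_{s<t}\chi_{st}^{1/8} \geq \sqrt{2}$. This universal bound already closes Case~\ref{item::mart_vanish2}, where $\LB_\alpha / \LF^{(N)}_{\unnested} \lesssim (\LB_\alpha/\LB^{(N)}_{\unnested})/\sqrt{2} \to 0$. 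For Case~\ref{item::mart_vanish1}, I would sharpen by exhibiting the divergent cross-ratio $\chi_{1,(j+1)/2}$: substituting $(u_1,u_2,u_3,u_4) = (g_t(x_1), g_t(x_j), g_t(x_{j+1}), W_t)$ into $\chi_{st} = |u_2-u_1||u_4-u_3|/(|u_3-u_1||u_4-u_2|)$, only the denominator factor $|u_4 - u_2| = W_t - g_t(x_j) \asymp \lambda_t$ shrinks while the three other differences stay bounded, giving $\chi_{1,(j+1)/2} \asymp \lambda_t^{-1}$ and $\LY^{(N)}_{\unnested} \gtrsim \lambda_t^{-1/8}$. Combined with the bounded prefactor ratio, $\LB_\alpha/\LF^{(N)}_{\unnested} = O(\lambda_t^{1/8}) \to 0$ in Case~\ref{item::mart_vanish1} as well. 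The main technical obstacle is justifying rigorously the comparable-rate scaling $|g_t(x_r) - W_t| \asymp \lambda_t$ uniformly for $r \in S$ under merely a continuous driving function, which requires careful use of the conformal covariance of $g_t$ near the pinch point and quantitative control of the close-Carath\'{e}odory convergence; this is the geometric content that makes the simple counting of blow-up factors translate into the stated vanishing.
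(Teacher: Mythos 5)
Your high-level strategy is essentially identical to the paper's: decompose $\LF^{(N)}_{\unnested} = \LB^{(N)}_{\unnested}\cdot\LY^{(N)}_{\unnested}$, count the links with both endpoints in the swallowed set $S=\{3,\ldots,j\}$ (yielding $m_\alpha\le m_\unnested$ in the odd case and $m_\alpha<m_\unnested$ in the even case), and for the odd case supplement with a divergent cross-ratio lower bound on $\LY^{(N)}_{\unnested}$, exactly as the paper does via Lemma~\ref{lem::technical_oddj}, Lemma~\ref{lem::technical_evenj}, and Lemma~\ref{lem::bounds_remaining_term}. The link-counting and the cross-ratio $\chi(g_t(x_1),g_t(x_j),g_t(x_{j+1}),W_t)\asymp (g_t(x_j)-W_t)^{-1}\to\infty$ both match the paper.

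The genuine gap is in the ``common scale $\lambda_t$'' step, and you have correctly flagged its location but not its precise nature. The claim $|g_t(x_r)-W_t|\asymp\lambda_t$ for $r\in S$ is defensible, but the inference ``hence also $|g_t(x_a)-g_t(x_b)|\asymp\lambda_t$ for $a,b\in S$'' is false: by the paper's Lemma~\ref{lem::distance_conformal_image}~(B.2), the differences $|g_t(x_a)-g_t(x_b)|$ of swallowed points decay \emph{strictly faster} than the distances $|W_t-g_t(x_c)|$ to the driving function, i.e.\ they are $o(\lambda_t)$, not $\asymp\lambda_t$. Consequently the formula $\LB_\alpha/\LB^{(N)}_{\unnested}\asymp\lambda_t^{(m_\unnested-m_\alpha)/8}$ is wrong: a single power of a common scale cannot record the two distinct rates in play. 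This matters concretely in Case~\ref{item::mart_vanish2} when $\alpha$ contains a link $\{2,b\}$ with $b\in S$ (allowed, since Proposition~\ref{prop::mart_vanish_combined}~\ref{item::mart_vanish2} does not assume $\{1,2\}\in\alpha$, and Lemma~\ref{lem::totalpartition_cascade_mart} invokes it for such $\alpha$): that link contributes a factor $|W_t-g_t(x_b)|^{-1/8}$ which blows up, but at the \emph{slower} rate $\lambda_t^{-1/8}$ rather than the faster rate of the in-$S$ differences. Your exponent accounting thus undercounts the blow-up of $\LB_\alpha$ in exactly the place where one must be most careful. The paper avoids this by working only with \emph{ratios}: (B.1) bounds ratios of two swallowed differences, and (B.2) shows the ratio of a swallowed difference to a $W_t$-distance tends to zero, and the factor $A_2$ in the proof of Lemma~\ref{lem::technical_evenj} is precisely the pairing of the slow factor $|g_t(x_b)-W_t|^{-1/8}$ against the fast factor $|g_t(x_j)-g_t(x_{j-1})|^{-1/8}$. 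To close your argument you need to replace the single-scale ansatz by these two separate ratio estimates (both already available in the literature, cf.\ \cite{Liu-Wu:Scaling_limits_of_crossing_probabilities_in_metric_graph_GFF}), and then redo the pairing so that the potential slow factor from a $\{2,b\}$ link is matched against an excess fast factor in $\LB^{(N)}_{\unnested}$ rather than absorbed into a homogeneous power of $\lambda_t$.
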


To simplify notation, we denote $f\lesssim g$ if $f/g$ is bounded by a finite constant from above, by $f\gtrsim g$ if $g\lesssim f$, and by $f\asymp g$ if $f\lesssim g$ and $f\gtrsim g$.

\begin{lemma}\label{lem::distance_conformal_image}
Fix marked points $x_1 < x_2 < y_1, \, y_2, \, y_3, \, y_4 < x_3 < x_4$. If $\eta(T)\in (x_3, x_4)$, then we have
\begin{align}\label{eqn::basicfact1}
\bigg| \frac{g_t(y_1)-g_t(y_2)}{g_t(y_3)-g_t(y_4)} \bigg| \asymp 1, 
\end{align}
where the constants in $\asymp$ depend on $\eta[0,T]$ and the marked points and are independent of $t \geq 0$, and 
\begin{align}\label{eqn::basicfact2}
\lim_{t\to T} \bigg|\frac{g_t(y_2)-g_t(y_1)}{W_t-g_t(y_3)}\bigg|=0. 
\end{align}
\end{lemma}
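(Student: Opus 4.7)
Both claims reduce to standard conformal distortion estimates for the Loewner map $g_t$. First I would extract the geometric fact that, since $\eta[0,T]$ meets $\mathbb{R}$ only at $x_2$ and $\eta(T)\in(x_3,x_4)$, the hull $K_T$ at time $T$ swallows the entire interval $(x_2,\eta(T))$, so in particular $g_t(y_i)\to W_T$ for every $i$ and $W_t\to W_T$ as $t\to T$. Moreover, $\eta[0,T]$ remains at a fixed positive distance $d_0>0$ from the compact set $\{y_1,\dots,y_4\}\subset(x_2,x_3)$, because the only point of $\mathbb{R}$ reachable by $\eta[0,T]$ near this interval is $\eta(T)\in(x_3,x_4)$, which is strictly separated from $(x_2,x_3)$.

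For claim~\eqref{eqn::basicfact1}, I would apply Schwarz reflection across the boundary arc $(x_2,x_3)$: since $g_t$ maps $(x_2,x_3)\subset\partial\Omega_t$ into $\mathbb{R}\subset\partial\mathbb{H}$, it extends to a univalent map on an open complex neighborhood $U$ of $\{y_1,\dots,y_4\}$. The neighborhood $U$ can be chosen independently of $t\in[0,T)$ because the reflected hull $K_t\cup\overline{K_t}$ stays at Euclidean distance $\ge d_0$ from the $y_i$'s. Koebe's distortion theorem on $U$ then yields $|g_t'(\zeta_1)|\asymp|g_t'(\zeta_2)|$ uniformly in $t$ for $\zeta_1,\zeta_2$ in a fixed compact subset of $U$ containing the $y_i$'s. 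Consequently $|g_t(y_i)-g_t(y_j)|=\bigl|\int_{y_j}^{y_i}g_t'(\xi)\,d\xi\bigr|\asymp|g_t'(y_1)|\,|y_i-y_j|$, and~\eqref{eqn::basicfact1} follows as the ratio reduces to the $t$-independent $|y_1-y_2|/|y_3-y_4|$.

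For claim~\eqref{eqn::basicfact2}, the first claim shows $|g_t(y_2)-g_t(y_1)|\asymp|g_t'(y_3)|$, so the task reduces to proving $|g_t'(y_3)|/|W_t-g_t(y_3)|\to 0$. The Loewner equations $\partial_t g_t(y)=2/(g_t(y)-W_t)$ and $\partial_t\log g_t'(y)=-2/(g_t(y)-W_t)^2$ formally give
\[
\partial_t\log\frac{|g_t'(y_3)|}{|g_t(y_3)-W_t|}=-\frac{4}{(g_t(y_3)-W_t)^2}+\frac{\dot{W}_t}{g_t(y_3)-W_t}.
\]
The negative singular first term dominates: integrating over $[0,T)$, it diverges to $-\infty$ since $g_t'(y_3)\to 0$ forces $\int_0^T(g_s(y_3)-W_s)^{-2}\,ds=+\infty$. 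The driving term is subleading, controlled via Stieltjes integration against $dW$ using the continuity of $W$ on $[0,T]$. Thus the log-ratio tends to $-\infty$, and the ratio vanishes.

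The main technical obstacle is the rigorous treatment of the $\dot W_t$ contribution in~\eqref{eqn::basicfact2} for a merely continuous (not $C^1$) driving function, since $\dot W_t$ need not exist pointwise. I would bypass pointwise derivatives by working directly with integrated expressions: combining $\log|g_t'(y_3)|=-\int_0^t 2(g_s(y_3)-W_s)^{-2}\,ds$ with a Stieltjes-integration rewrite of $\log|g_t(y_3)-W_t|$ using $dW$, then exploiting uniform continuity of $W$ on $[0,T]$ to bound the driving contribution by an $o(1)$ error compared to the divergent quadratic term. This is a standard, if delicate, piece of Loewner-chain analysis.
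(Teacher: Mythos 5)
Your Koebe--Schwarz argument for~\eqref{eqn::basicfact1} is sound and is a reasonable way to establish the estimate. One inaccuracy to flag: the assertion that ``$\eta[0,T]$ meets $\R$ only at $x_2$'' is not implied by the hypotheses of the appendix, which only forbid $\eta$ from hitting the \emph{marked} points; for $\kappa\in(4,8)$ the relevant curves genuinely touch the real line at other points. What is true (and suffices) is that the compact set $\eta[0,T]$ avoids $\{y_1,\dots,y_4\}$, hence $\mathrm{dist}(\eta[0,T],\{y_1,\dots,y_4\})\ge d_0>0$; and since the $y_i$ remain un-swallowed for $t<T$ (tacit in the statement), any touch point of $\eta$ on $(x_2,x_3)$ must lie to the left of all $y_i$, so $K_t$ (not just $\eta[0,T]$) stays at distance $\ge d_0$ from the $y_i$ and the Schwarz-reflected domain of univalence is indeed uniform in $t$. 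The paper itself simply cites [Liu--Wu, Eqs.~(A.1)--(A.2)], so your argument here is a genuine, self-contained alternative.

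Your treatment of~\eqref{eqn::basicfact2}, however, has a real gap at the step you flag. The appendix only assumes $W$ continuous, and in the application $W$ is a Brownian motion with drift and therefore has unbounded variation; consequently $\int_0^t dW_s/(g_s(y_3)-W_s)$ is not a Stieltjes integral, and the proposed rewrite of $\log|g_t(y_3)-W_t|$ is simply not available. The fallback to ``uniform continuity of $W$'' does not repair this: a Riemann-sum bound with mesh $\delta$ and modulus of continuity $\omega$ gives something of order $\omega(\delta)\,\delta^{-1}\int_0^t ds/(g_s(y_3)-W_s)$, which blows up as $\delta\to 0$ for any non-Lipschitz $W$. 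It is also worth noting that, even in the semimartingale case, the honest computation of $d\log|g_t(y_3)-W_t|$ carries an It\^o correction $\tfrac{\kappa}{2}(g_t(y_3)-W_t)^{-2}\,dt$, replacing your drift coefficient $-4$ by $\tfrac{\kappa}{2}-4$; this remains negative for $\kappa<8$ but makes clear that the formal manipulation alone does not determine the outcome. A route that avoids differentiating through $W$ altogether (and is in the spirit of the cited reference) is purely geometric: compare the harmonic measures, seen from a fixed $z_0\in\HH$, of the interval $(y_1,y_2)$ and of the boundary arc running from the tip $\eta(t)$ to $y_3$; as $t\to T$ any Brownian path contributing to the former must squeeze through a shrinking gap near $\eta(T)$, and conditionally on passing through that gap the hitting distribution concentrates near $\eta(T)$, which is bounded away from $(y_1,y_2)$ but sits directly on the arc from $\eta(t)$ to $y_3$. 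This yields the ratio $\to 0$ without ever invoking $dW$.
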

\begin{proof}
See, for instance,~\cite[Eq.~(A.1)~\&~(A.2)]{Liu-Wu:Scaling_limits_of_crossing_probabilities_in_metric_graph_GFF}. 
\end{proof}

\begin{lemma}\label{lem::technical_oddj}
Suppose $\{1,2\}\in\alpha$. For odd $j\in \{3,5,\ldots, 2N-1\}$, if $\eta(T)\in (x_j, x_{j+1})$, then we have 
\begin{align*} 
\frac{\LB_{\alpha}(g_t(x_1),W_t,g_t(x_{3}),\ldots,g_t(x_{2N}))}{\LB_{\unnested}^{(N)}(g_t(x_1),W_t,g_t(x_3),\ldots, g_t(x_{2N}))}\lesssim 1,
\end{align*}
where the constant in $\lesssim$ depends on $\eta[0,T]$ and $\bs{x} \in \chamber_{2N}$ and is independent of $t \geq 0$. 
\end{lemma}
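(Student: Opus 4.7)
The plan is to reduce the ratio to a product over $r=2,\ldots,N$ and then perform a case analysis on each factor based on how many endpoints of the pair are swallowed at time $T$. Since $\{1,2\}\in\alpha$, the link $\{1,2\}$ cancels between numerator and denominator (and $W_t$ disappears entirely), leaving
\[
\frac{\LB_{\alpha}}{\LB_{\unnested}^{(N)}}=\prod_{r=2}^{N}\left|\frac{g_t(x_{2r-1})-g_t(x_{2r})}{g_t(x_{a_r})-g_t(x_{b_r})}\right|^{1/8}.
\]
Let $S:=\{3,4,\ldots,j\}$ be the set of indices swallowed at time $T$ (these are the points enclosed by $\eta[0,T]\cup[x_2,\eta(T)]$, and they collapse under $g_t$ to a common boundary point as $t\to T$). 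Each pair has $0$, $1$, or $2$ endpoints in $S$: for pairs with both endpoints in $S$, $|g_t(x_k)-g_t(x_{k'})|\to 0$; for pairs with at least one endpoint outside $S$, the difference remains bounded above and uniformly bounded away from $0$.

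First, I would apply a generalization of Lemma~\ref{lem::distance_conformal_image}~\eqref{eqn::basicfact1} to show that all differences $|g_t(x_k)-g_t(x_{k'})|$ with $k,k'\in S$, $k\neq k'$, are mutually comparable, with constants depending only on $\eta[0,T]$ and $\bs{x}$. Fixing a reference scale $\delta(t):=|g_t(x_3)-g_t(x_4)|\to 0$ and denoting by $n_s^{\bullet}$ the number of pairs in the partition $\bullet$ with both endpoints in $S$, the product then reduces to
\[
\prod_{r=2}^{N}\left|\frac{g_t(x_{2r-1})-g_t(x_{2r})}{g_t(x_{a_r})-g_t(x_{b_r})}\right|\;\asymp\; \delta(t)^{\,n_s^{\unnested}-n_s^{\alpha}},
\]
and it will suffice to show that the exponent is non-negative.

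The second step is purely combinatorial. Among the unnested pairs restricted to $\{3,\ldots,2N\}$, exactly $\{3,4\},\ldots,\{j-2,j-1\}$ lie entirely in $S$ while $\{j,j+1\}$ is the unique mixed pair, giving $n_s^{\unnested}=(j-3)/2$. For $\alpha$, if $n_m^{\alpha}$ denotes the number of pairs in $\alpha\setminus\{\{1,2\}\}$ with exactly one endpoint in $S$, counting swallowed endpoints gives $2n_s^{\alpha}+n_m^{\alpha}=|S|=j-2$. Since $j$ is odd, $j-2$ is odd, so $n_m^{\alpha}$ is odd and hence $n_m^{\alpha}\geq 1$, forcing $n_s^{\alpha}\leq (j-3)/2=n_s^{\unnested}$. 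Combined with the first step, the product is bounded, and raising to the $1/8$-th power gives the claim.

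The main obstacle will be making precise the comparability statement for arbitrary pairs of swallowed points. Lemma~\ref{lem::distance_conformal_image} is stated only for four marked points in $(x_2,x_3)$ with $\eta(T)\in(x_3,x_4)$; the situation here involves several swallowed points simultaneously collapsing, and the endpoint of the Loewner chain may lie further along the boundary. I expect that the underlying estimates from~\cite{Liu-Wu:Scaling_limits_of_crossing_probabilities_in_metric_graph_GFF} carry over with only cosmetic modifications, but verifying this and tracking the dependence of the implicit constants (which must be uniform in $t$) will require the bulk of the technical work.
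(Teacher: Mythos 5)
Your proof is correct and uses essentially the same strategy as the paper: after cancelling the $\{1,2\}$ link, you classify the remaining pairs by how many endpoints lie in the swallowed set $\{3,\ldots,j\}$, observe that pairs with at most one swallowed endpoint contribute $\asymp 1$, and use the parity of $j$ to show that the unnested pattern has at least as many fully-swallowed pairs as $\alpha$ (the paper phrases this as matching $m:=\#\LI_\alpha^j$ small factors in the numerator against $m$ of the $(j-3)/2$ small factors in the denominator and discarding the excess, but the combinatorics is identical). The concern you raise about Lemma~\ref{lem::distance_conformal_image} not literally covering the case where $x_j$ itself is an endpoint of a fully-swallowed pair is legitimate, but the paper's argument has the same minor imprecision; both are resolved by the underlying estimates of \cite{Liu-Wu:Scaling_limits_of_crossing_probabilities_in_metric_graph_GFF}, which apply to any marked points lying on the boundary arc swallowed by $\eta[0,T]$.
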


\begin{proof}
Write the link pattern $\alpha=\{\{a_1,b_1\},\ldots, \{a_N,b_N\}\}$ as in~\eqref{eqn::linkpatterns_ordering}, so that $\{a_1,b_1\}=\{1,2\}$. 
Assuming that $\eta(T)\in (x_j, x_{j+1})$, 
we have $g_t(x_l)-g_t(x_k)\asymp 1$ for all indices  $2<k<j<l$ or $j\le k<l$. Thus, we see that 
\begin{align}\label{eqn::technical_oddj_aux1}
			\frac{\LB_{\alpha}(g_t(x_1),W_t,g_t(x_{3}),\ldots,g_t(x_{2N}))}{\LB_{\unnested}^{(N)}(g_t(x_1),W_t,g_t(x_3),\ldots, g_t(x_{2N}))}\asymp 
			\frac{\underset{r \in \LI_{\alpha}^{j}}{\prod}
			 |g_t(x_{b_r}) - g_t(x_{a_r})|^{-1/8}}{\underset{s \in \LI_{\unnested}^{j}}{\prod} |g_t(x_{2s}) - g_t(x_{2s-1})|^{-1/8}}, 
		\end{align}
where 
\begin{align} 
			\LI_{\alpha}^{j} := \; & \{r \in \{1,2,\dots, N\} \colon a_r ,b_r \in\{3,4,\ldots, j\}\}, \label{eqn::set_1} \\
			\LI_{\unnested}^{j} := \; & \{s \in \{1,2,\dots,N\} \colon 2s-1,2s \in \{3,4,\dots,j\}\}\label{eqn::set_2}.
		\end{align} 
Since $j$ is odd, we have
		\begin{align*}
			\#\LI_{\unnested}^{j}=\frac{j-3}{2}\qquad \textnormal{and} \qquad m = m(j,\alpha) := \#\LI_{\alpha}^{j}\leq \frac{j-3}{2},
		\end{align*}
		which implies that $\{2 ,3, \ldots, m+1\}\subset \LI_{\unnested}^{j}$. 
Now, for $s \in \LI_{\unnested}^{j}$, we have
$\smash{\underset{t\to T}{\lim} \; |g_t(x_{2s})-g_t(x_{2s-1})| = 0}$.
 Thus, we see that the right-hand side (RHS) of~\eqref{eqn::technical_oddj_aux1} can be estimated as
\begin{align}\label{eqn::technical_oddj_aux2}
\textnormal{RHS of}~\eqref{eqn::technical_oddj_aux1} \, \lesssim \, \frac{\underset{r \in \LI_{\alpha}^{j}}{\prod} |g_t(x_{b_r}) - g_t(x_{a_r})|^{-1/8}}{\underset{s \in \{2,3,\ldots, m+1\}}{\prod} |g_t(x_{2s}) - g_t(x_{2s-1})|^{-1/8}}. 
\end{align}
There are equally many (namely,~$m$) factors in the denominator and in the numerator of RHS of~\eqref{eqn::technical_oddj_aux2}. 
From~\eqref{eqn::basicfact1}, we then find that 
$\textnormal{RHS of}~\eqref{eqn::technical_oddj_aux2} \asymp 1$, which completes the proof. 
\end{proof}

\begin{lemma} \label{lem::technical_evenj}
For even $j\in\{4,6,\ldots, 2N\}$ and $j\not\in\corrind_{\alpha}$, if $\eta(T)\in (x_j, x_{j+1})$, then we have
 \begin{align} \label{eqn::technical_evenj}
 	\lim_{t\to T}\frac{\LB_{\alpha}(g_t(x_1),W_t,g_t(x_{3}),\ldots,g_t(x_{2N}))}{\LB_{\unnested}^{(N)}(g_t(x_1),W_t,g_t(x_3),\ldots, g_t(x_{2N}))}=0.
 \end{align}
\end{lemma}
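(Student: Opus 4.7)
The plan is to mirror the strategy of Lemma~\ref{lem::technical_oddj} but with a more delicate case analysis and a crucial appeal to the fine estimate~\eqref{eqn::basicfact2}.

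As $t\to T$ with $\eta(T)\in(x_j,x_{j+1})$, the quantities $W_t$ and $g_t(x_k)$ for $k\in\{3,\ldots,j\}$ all converge to $W_T$, while $g_t(x_1)$ and $g_t(x_k)$ for $k\in\{j+1,\ldots,2N\}$ remain bounded away from $W_T$. Hence, as in~\eqref{eqn::technical_oddj_aux1}, the only factors diverging in either $\LB_\alpha$ or $\LB_\unnested^{(N)}$ are those $|y_b-y_a|^{-1/8}$ for which both indices $a,b$ lie in $\{2,3,\ldots,j\}$ (with position~$2$ occupied by $W_t$); the remaining factors are $\asymp 1$ and cancel in the ratio. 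Using~\eqref{eqn::basicfact1}, every internal difference $|g_t(x_k)-g_t(x_l)|$ with $k,l\in\{3,\ldots,j\}$ is comparable to a common scale $\varepsilon_t:=|g_t(x_j)-g_t(x_3)|\to 0$.

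Counting blow-ups: $\LB_\unnested^{(N)}$ has exactly $j/2-1$ divergent factors, one per link $\{2s-1,2s\}\subset\{3,\ldots,j\}$ with $s\in\{2,\ldots,j/2\}$. For $\LB_\alpha$, let $m=\#\LI_\alpha^j$ count the links of $\alpha$ lying entirely inside $\{3,\ldots,j\}$; the numerator then contributes $m$ ``internal'' factors $\asymp \varepsilon_t^{-1/8}$, plus---if the pair $b$ of $2$ in $\alpha$ happens to lie in $\{3,\ldots,j\}$---one additional ``tip'' factor $|W_t-g_t(x_b)|^{-1/8}$. I would then split into two cases. \emph{Case A:} $\{2,b\}\in\alpha$ with $b\in\{3,\ldots,j\}$. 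The remaining $j-3$ indices of $\{3,\ldots,j\}\setminus\{b\}$ admit at most $\lfloor(j-3)/2\rfloor=j/2-2$ internal pairings (as $j-3$ is odd), so $m\le j/2-2$. \emph{Case B:} $b\notin\{3,\ldots,j\}$. Here the hypothesis $j\notin\corrind_\alpha$ prevents $\{3,\ldots,j\}$ from being a sub-link pattern of $\alpha$, so some index in $\{3,\ldots,j\}$ pairs externally; parity again forces $m\le j/2-2$.

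Combining: in Case~B, $\LB_\alpha/\LB_\unnested^{(N)}\asymp\varepsilon_t^{(j/2-1-m)/8}$ has exponent $\ge 1/8$, so the ratio $\to 0$. In Case~A, $\LB_\alpha/\LB_\unnested^{(N)}\asymp |W_t-g_t(x_b)|^{-1/8}\,\varepsilon_t^{(j/2-1-m)/8}$; since $j/2-1-m\ge 1$, this is at most $(\varepsilon_t/|W_t-g_t(x_b)|)^{1/8}$ times a non-negative power of $\varepsilon_t$, and~\eqref{eqn::basicfact2} yields $\varepsilon_t/|W_t-g_t(x_b)|\to 0$. The main technical point, and the step I expect to demand the most care, is the borderline subcase of Case~A with $m=j/2-2$, where the internal factor counts match exactly and the decay comes entirely from the ``tip versus bulk'' gap provided by~\eqref{eqn::basicfact2}; one must also verify that all implicit constants absorbed into $\asymp$ are uniform in $t$ near $T$ so they do not interfere with the limit.
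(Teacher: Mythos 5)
Your argument is correct and is essentially the paper's proof: you identify the same two-case split (the pair $b$ of index $2$ lying inside or outside $\{3,\ldots,j\}$), derive the same combinatorial bound $m\le j/2-2$ via parity and the hypothesis $j\notin\corrind_\alpha$, and invoke~\eqref{eqn::basicfact2} in exactly the borderline situation where the tip factor $|W_t-g_t(x_b)|^{-1/8}$ is also divergent. The only difference is organizational --- the paper factors the ratio explicitly into $A_1 A_2 A_3$ (with $A_1\asymp1$, $A_2\to0$, $A_3\lesssim1$) whereas you collapse the internal factors onto a common scale $\varepsilon_t$ and do exponent arithmetic --- but the bookkeeping and the two key lemma inputs are identical.
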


 \begin{proof}
			Write $\alpha=\{\{a_1,b_1\},\ldots, \{a_N,b_N\}\}$ as in~\eqref{eqn::linkpatterns_ordering}, and write also $\{a_2,b_2\}=\{2,b\}$. 
\begin{itemize}[leftmargin=2em]
\item Assume that $j\in \{4,6,\ldots, 2N-2\}$.
Define the sets $\LI_{\alpha}^{j}$ and $\LI_{\unnested}^{j}$ as in~\eqref{eqn::set_1}~\&~\eqref{eqn::set_2}. Combining the facts that $j$ is even and $j\notin \corrind_{\alpha}$, we obtain
		\begin{align*} 
		\#\LI_{\unnested}^{j}= \frac{j-2}{2} \qquad \textnormal{and} \qquad m = m(j,\alpha) := \#\LI_{\alpha}^{j}\leq \frac{j-2}{2}-1, 
		\end{align*}
		which implies that
		\begin{align*} 
			\{2,3,\ldots,m+1\}\subset \LI_{\unnested}^{j} \qquad \textnormal{and}\qquad \frac{j}{2}\in \LI_{\unnested}^{j}\setminus \{2,3,\ldots,m+1\}.
		\end{align*}
		Thus, we can write
		\begin{align*}
		 & \;	\frac{\LB_{\alpha}(g_t(x_1),W_t,g_t(x_{3}),\ldots,g_t(x_{2N}))}{\LB_{\unnested}^{(N)}(g_t(x_1),W_t,g_t(x_3),\ldots, g_t(x_{2N}))} \\
		= & \;  \underbrace{\bigg( \frac{\underset{r \in \LI_{\alpha}^{j}}{\prod} |g_t(x_{b_r}) - g_t(x_{a_r})|^{-1/8}}{\underset{s \in \{2,3,\ldots,m+1\}}{\prod} |g_t(x_{2s}) - g_t(x_{2s-1})|^{-1/8}} \bigg)}_{=: A_1}
			\; \underbrace{ \bigg(\frac{|g_{t}(x_b)-W_t|^{-1/8}}{|g_t(x_j)-g_t(x_{j-1})|^{-1/8}} \bigg)}_{=: A_2} \\[.5em]
			& \; \times \underbrace{\bigg( \frac{\underset{r \notin \LI_{\alpha}^{j}\cup\{2\} }{\prod} |g_t(x_{b_r})-g_t(x_{a_r})|^{-1/8}}{|W_t-g_t(x_1)|^{-1/8} \underset{s \notin \{1,2,\ldots, m+1\}\cup\{j/2\} }{\prod} |g_t(x_{2s})-g_t(x_{2s-1})|^{-1/8}} \bigg)}_{=: A_3}.
		\end{align*}

\begin{enumerate}[leftmargin=*]
\item In $A_1$, there are equally many (namely,~$m$) factors in the denominator and in the numerator. 
Hence, we see from~\eqref{eqn::basicfact1} that $A_1\asymp 1$ in the limit $t\to T$.

\item  In $A_2$, we have $|g_t(x_j)-g_t(x_{j-1})| \to 0$ as $t\to T$. 
It remains to analyze $|g_t(x_b)-W_t|$ as $t\to T$. 
If $b=1$ or $b \ge j+1$, we have $|g_t(x_b)-W_t|\asymp 1$. If $3\le b \le j$, we have $|W_t-g_t(x_b)| \to 0$, but $A_2 \to 0$ 
due to~\eqref{eqn::basicfact2}. 
Thus, in both cases, we have $\smash{\underset{t\to T}{\lim}\; A_2 = 0}$ in the limit $t\to T$. 

\item Lastly, for $A_3$ the definition of the set $\LI_{\alpha}^{j}$ implies that
\begin{align*} 
\textnormal{either }	a_r=1\textnormal{  or }j+1\leq b_r\leq 2N,\qquad \textnormal{for all } r \notin \LI_{\alpha}^{j}\cup\{2\} .
\end{align*}
Thus, for all $r \notin \LI_{\alpha}^{j}\cup\{2\}$, we have $|g_t(x_{b_r})-g_t(x_{a_r})| \asymp 1$. 
Hence, $A_3\lesssim 1$ in the limit $t\to T$. 
\end{enumerate}
Combining the above three estimates, we obtain~\eqref{eqn::technical_evenj}. 

\item 
The case where $j=2N$ can be analyzed similarly. 
\qedhere
\end{itemize}
\end{proof}

\begin{lemma} \label{lem::bounds_remaining_term}
We have
\begin{align} \label{eqn::remaning_term_lb1}
	\LY_{\unnested}^{(N)}(x_1,\ldots,x_{2N})\geq \chi(x_{2r-1}, x_{2s-1}, x_{2s}, x_{2r})^{1/8}, \qquad \textnormal{for all } 1\leq r <  s \leq N. 
\end{align}
In particular, we have 
\begin{align} \label{eqn::remaning_term_lb2}
	\LY_{\unnested}^{(N)}(x_1,\ldots,x_{2N})\geq 1. 
\end{align}
\end{lemma}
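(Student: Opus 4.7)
The key observation is that, in our setup $x_1 < x_2 < \cdots < x_{2N}$, every cross-ratio appearing in the sum defining $\LY_{\unnested}^{(N)}$ is at least $1$. More precisely, I plan to verify by a direct algebraic computation that, for all $1 \leq r < s \leq N$,
\begin{align*}
\chi(x_{2r-1}, x_{2s-1}, x_{2s}, x_{2r})
\; = \; \frac{(x_{2s-1}-x_{2r-1})\,(x_{2s}-x_{2r})}{(x_{2s}-x_{2r-1})\,(x_{2s-1}-x_{2r})}
\; \geq \; 1.
\end{align*}
Indeed, subtracting the denominator from the numerator and expanding yields the product $(x_{2r}-x_{2r-1})(x_{2s}-x_{2s-1})>0$, so the inequality is strict. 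This is the only nontrivial input, and it is really just a short calculation.

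Once this is in hand, the bounds~\eqref{eqn::remaning_term_lb1} and~\eqref{eqn::remaning_term_lb2} follow by isolating two special configurations in the sum defining $\LY_{\unnested}^{(N)}$: namely $\bs{\sigma} = (+1,\ldots,+1)$ and $\bs{\sigma} = (-1,\ldots,-1)$. For both of these, every product $\sigma_{s'}\sigma_{t'}$ equals $+1$, so each contributes the same quantity $\prod_{1\leq s'<t'\leq N} \chi(x_{2s'-1},x_{2t'-1},x_{2t'},x_{2s'})^{1/4}$. Since every other term in the sum is strictly positive (each $\chi$ is positive), dropping those nonnegative terms gives
\begin{align*}
\big(\LY_{\unnested}^{(N)}(\bs{x})\big)^2
\; \geq \; 2 \prod_{1\leq s'<t'\leq N} \chi(x_{2s'-1},x_{2t'-1},x_{2t'},x_{2s'})^{1/4}.
\end{align*}

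Finally, because every factor on the right-hand side is at least $1$ (by the preliminary step), I can discard all of them except the one with $(s',t')=(r,s)$ to obtain
\begin{align*}
\big(\LY_{\unnested}^{(N)}(\bs{x})\big)^2
\; \geq \; 2 \, \chi(x_{2r-1},x_{2s-1},x_{2s},x_{2r})^{1/4}
\; \geq \; \chi(x_{2r-1},x_{2s-1},x_{2s},x_{2r})^{1/4},
\end{align*}
which gives~\eqref{eqn::remaning_term_lb1} upon taking the square root. Taking instead no surviving factor yields $\big(\LY_{\unnested}^{(N)}\big)^2 \geq 2 \geq 1$, giving~\eqref{eqn::remaning_term_lb2}. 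The main (and only) subtlety is the preliminary positivity check for the cross-ratios in this interleaved configuration of arguments, which is standard but easy to miscompute; everything else is just term-by-term domination.
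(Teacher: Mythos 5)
Your proof is correct, and it is genuinely simpler than the paper's argument. The paper proceeds by induction on $N$: the base case $N=2$ is checked directly, and in the inductive step one fixes some $t \notin \{r,s\}$, factors out the sum over $\hat{\sigma}_t = \pm 1$ as $\zeta(x) := x + 1/x \geq 2$, and bootstraps from the $(N-1)$-point bound. Your argument avoids induction entirely: you observe that the two "fully aligned" configurations $\bs{\sigma} = (+1,\dots,+1)$ and $\bs{\sigma} = (-1,\dots,-1)$ each contribute $\prod_{s'<t'} \chi(x_{2s'-1},x_{2t'-1},x_{2t'},x_{2s'})^{1/4}$ to $\big(\LY_{\unnested}^{(N)}\big)^2$, drop the remaining positive terms, and then use $\chi \geq 1$ to discard all but the $(r,s)$ factor. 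Both approaches hinge on the same elementary fact $\chi(x_{2r-1},x_{2s-1},x_{2s},x_{2r}) \geq 1$ for interleaved real arguments, and your algebraic check
\begin{align*}
(x_{2s-1}-x_{2r-1})(x_{2s}-x_{2r}) - (x_{2s}-x_{2r-1})(x_{2s-1}-x_{2r}) = (x_{2r}-x_{2r-1})(x_{2s}-x_{2s-1}) > 0
\end{align*}
is accurate (the paper uses this fact only to pass from~\eqref{eqn::remaning_term_lb1} to~\eqref{eqn::remaning_term_lb2}, whereas you use it at the final step of~\eqref{eqn::remaning_term_lb1} as well). Your route is shorter and arguably cleaner; the paper's inductive decomposition buys nothing extra here that I can see, since the two bounds you establish are exactly what the statement requires.
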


\begin{proof}
	Note that~\eqref{eqn::remaning_term_lb2} follows from~\eqref{eqn::remaning_term_lb1} because 
	$\chi(x_{2r-1}, x_{2s-1}, x_{2s}, x_{2r}) \ge 1$ holds for all $1\leq r <  s\leq N$.
It suffices to show~\eqref{eqn::remaning_term_lb1}. We proceed by induction on $N\geq 2$.	When $N=2$, we have 
	\begin{align*}
		\LY_{\unnested}^{(2)} = & \; \big(2 \, \chi(x_{1}, x_{3}, x_{4}, x_{2})^{1/4} + 2 \, \chi(x_{1}, x_{3}, x_{4}, x_{2})^{-1/4}\big)^{1/2} \\
		= & \; \big( 2 \, \chi(x_{1}, x_{3}, x_{4}, x_{2})^{-1/2} + 2\big)^{1/2} \; 
		\chi(x_{1}, x_{3}, x_{4}, x_{2})^{1/8} \;
		\geq  \; \chi(x_{1}, x_{3}, x_{4}, x_{2})^{1/8} .
	\end{align*}
This proves~\eqref{eqn::remaning_term_lb1} in the initial case $N=2$.
Now, assume that $N\geq 3$ and~\eqref{eqn::remaning_term_lb1} holds up to $N-1$. For any $1\leq  r <  s \leq N$, fix some $t \in \{1,2,\ldots, N\}\setminus \{r,s\}$. 
Defining the function $\zeta:(0,+\infty)\to (0,+\infty)$ as $\zeta(x):=x+1/x$,  we have 
\begin{align*}
\big(\LY_{\unnested}^{(N)}(x_1,\ldots,x_{2N})\big)^2 
= & \; \sum_{\bs{\sigma}\in\{\pm1\}^N} 
\prod_{1\leq u < v \leq N}\chi(x_{2u-1},x_{2v-1},x_{2v},x_{2u})^{\sigma_r\sigma_s/4} \\
= & \; \sum_{\hat{\bs{\sigma}} \in \{\pm1\}^{N-1}}\bigg(\prod_{\substack{1\leq u < v \leq N \\ u,v \neq t}}\chi(x_{2u-1},x_{2v-1},x_{2v},x_{2u})^{\hat{\sigma}_u \hat{\sigma}_v / 4}\bigg) \\
& \; \times \zeta\bigg(\bigg(\prod_{l < t}\chi(x_{2l-1},x_{2t-1},x_{2t},x_{2l})^{\hat{\sigma}_l/4}\bigg)\bigg(\prod_{l > t}\chi(x_{2t-1},x_{2l-1},x_{2l},x_{2t})^{\hat{\sigma}_l/4}\bigg)\bigg) \\
\geq & \;  2 \big(\LY_{\unnested}^{(N-1)}(x_1,\ldots,x_{2t-2},x_{2t+1},\ldots,x_{2N})\big)^2 \\
\geq & \; \chi(x_{2r-1},x_{2s-1},x_{2s},x_{2r})^{1/4} ,
\end{align*}
where we used the induction hypothesis on the last line,
and wrote $\bs{\sigma} = (\sigma_1,\sigma_2,\ldots,\sigma_N)\in\{\pm1\}^N$ and $\hat{\bs{\sigma}} = (\hat{\sigma}_1,\ldots,\hat{\sigma}_{t-1},\hat{\sigma}_{t+1},\ldots,\hat{\sigma}_{N}) \in \{\pm1\}^{N-1}$. 
This yields~\eqref{eqn::remaning_term_lb1} and completes the proof.
\end{proof}

\begin{proof}[Proof of Proposition~\ref{prop::mart_vanish_combined}] \
\begin{enumerate}
\item If $\eta(T)\in (x_j, x_{j+1})$, then the following estimate holds:
\begin{align*}
\frac{\LB_{\alpha}(g_t(x_1), W_t, g_t(x_3), \ldots, g_t(x_{2N}))}{\LF_{\unnested}^{(N)}(g_t(x_1), W_t, g_t(x_3), \ldots, g_t(x_{2N}))}
\; \lesssim & \; \; \frac{1}{\LY_{\unnested}^{(N)}(g_t(x_1), W_t, g_t(x_3), \ldots, g_t(x_{2N}))} 
&& \textnormal{[by Lem.~\ref{lem::technical_oddj}]} \\
\le & \; \; \frac{1}{\chi(g_t(x_1),g_t(x_{j}),g_t(x_{j+1}),W_t)^{1/8}}. && \textnormal{[by~\eqref{eqn::remaning_term_lb1}]}
\end{align*}
By assumption, $j$ is odd and $x_1<x_2<x_j<x_{j+1}$. 
Thus, if $\eta(T)\in (x_j, x_{j+1})$, then we have
\begin{align*}
\chi(g_t(x_1),g_t(x_{j}),g_t(x_{j+1}),W_t) 
\; = \; \frac{(g_t(x_j)-g_t(x_1))(g_t(x_{j+1})-W_t)}{(g_t(x_{j+1})-g_t(x_1))(g_t(x_j)-W_t)} 
\; \asymp \; \frac{1}{g_t(x_j)-W_t}
\quad \overset{t\to T}{\longrightarrow} \quad \infty . 
\end{align*}
This proves Item~\ref{item::mart_vanish1}. 

\item From Lemmas~\ref{lem::technical_evenj} and~\ref{lem::bounds_remaining_term}, 
we find that if $\eta(T)\in (x_j, x_{j+1})$, then
\begin{align*}
\frac{\LB_{\alpha}(g_t(x_1), W_t, g_t(x_3), \ldots, g_t(x_{2N}))}{\LF_{\unnested}^{(N)}(g_t(x_1), W_t, g_t(x_3), \ldots, g_t(x_{2N}))} \; \le \;  
\frac{\LB_{\alpha}(g_t(x_1), W_t, g_t(x_3), \ldots, g_t(x_{2N}))}{\LB_{\unnested}^{(N)}(g_t(x_1), W_t, g_t(x_3), \ldots, g_t(x_{2N}))}
\quad \overset{t\to T}{\longrightarrow} \quad 0 , 
\end{align*}
This proves Item~\ref{item::mart_vanish2}. 
\qedhere 
\end{enumerate}
\end{proof}

\section{Asymptotic properties of the Coulomb gas integrals $\coulombnew_{\beta}$}
\label{appendix_asy}
In this appendix, we assume that $\kappa \in (4,8)$.
Recall from~\eqref{eqn::coulombgasintegral} 
the function $\coulombnew_\beta \colon \chamber_{2N} \to \R$, 
\begin{align*}
\coulombnew_\beta (\bs{x}) :=  \; &
\bigg( \frac{\sqrt{q(\kappa)} \, \Gamma(2-8/\kappa)}{\Gamma(1-4/\kappa)^2} \bigg)^N
\landupint_{x_{a_1}}^{x_{b_1}} 
\cdots \landupint_{x_{a_N}}^{x_{b_N}}
f_\beta (\bs{x};u_1,\ldots,u_N) \, \ud u_1 \cdots \ud u_N ,
\end{align*}
where the integrand is given by~\eqref{eq: integrand},  
\begin{align*} 
f_\beta (\bs{x};u_1,\ldots,u_N) := \; &
\prod_{1\leq i<j\leq 2N}(x_{j}-x_{i})^{2/\kappa} 
\prod_{1\leq r<s\leq N}(u_{s}-u_{r})^{8/\kappa} 
\prod_{\substack{1\leq i\leq 2N \\ 1\leq r\leq N}}
(u_{r}-x_{i})^{-4/\kappa} , 
\end{align*} 
with its branch chosen real and positive on the set~\eqref{eq:: branch choice set}. 
The goal of this appendix is to derive the asymptotic property~\eqref{eqn::ASY} of $\coulombnew_\beta$
for the case where $\{j,j+1\}\notin \beta$ 
(Proposition~\ref{prop::non-neighbor asy})
via a direct calculation. To this end, it suffices to derive the following asymptotics (Proposition~\ref{pro::Vbeta_ASY2}) for 
\begin{align*}
\coulomb_{\beta} (\bs{x}) := 
\bigg( \frac{\sqrt{q(\kappa)} \, \Gamma(2-8/\kappa)}{\Gamma(1-4/\kappa)^2} \bigg)^{-N} \,
\coulombnew_\beta (\bs{x}) .
\end{align*}

\begin{proposition}\label{pro::Vbeta_ASY2}
Fix $\beta \in \LP_N$ with link endpoints ordered as in~\eqref{eqn::linkpatterns_ordering}.
Fix $j \in \{1,2, \ldots, 2N-1 \}$ such that $\{j,j+1\} \notin \beta$.	
Then, for all $\xi \in (x_{j-1}, x_{j+2})$, using the notation~\eqref{eqn::bs_notation}, we have
	\begin{align} \label{eqn::ASY2}
\lim_{x_j , x_{j+1} \to \xi} \frac{\coulomb_{\beta} (\bs{x})}{ (x_{j+1} - x_j)^{ -2h(\kappa) }} 
= \frac{\Gamma(1-4/\kappa)^2}{\sqrt{q(\kappa)} \, \Gamma(2-8/\kappa)}
\coulomb_{\wp_j(\beta)/\{j,j+1\}} (\bs{\ddot{x}}_j) .
\end{align}
\end{proposition}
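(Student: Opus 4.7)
My plan is to establish~\eqref{eqn::ASY2} by a direct contour analysis of the Pochhammer form of the integral. Using Lemma~\ref{lem::related_two_Gs}, one reduces to analyzing $\coulomb_\beta^\circ$, whose Pochhammer cycles $\acycle_1^\beta, \ldots, \acycle_N^\beta$ avoid the marked points and may be freely deformed in the twisted homology of the configuration space. Since $\{j, j+1\} \notin \beta$, there exist unique indices $r \neq s$ with $j \in \{a_r, b_r\}$ and $j+1 \in \{a_s, b_s\}$; denote the partners by $k_1$ and $k_2$, respectively. Only the cycles $\acycle_r^\beta$ and $\acycle_s^\beta$ are affected by the degeneration $x_j, x_{j+1} \to \xi$, so the task reduces to the asymptotics of the double integral over $(\acycle_r^\beta, \acycle_s^\beta)$, with the remaining $N-2$ integrations contributing a uniform smooth factor in the limit.

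After the local substitution $u_r = x_j + (x_{j+1}-x_j)\, v$ and tallying the scaling of each factor in $f$ --- namely $(x_{j+1}-x_j)^{-8/\kappa}$ from $(u_r-x_j)^{-4/\kappa}(u_r-x_{j+1})^{-4/\kappa}$, one power from $du_r$, and $(x_{j+1}-x_j)^{2/\kappa}$ from the pinched factor in the Vandermonde-type prefactor --- one obtains exactly the targeted singular factor $(x_{j+1}-x_j)^{-2h(\kappa)}$. A simple power-count rules out the ``double-local'' regime in which both $u_r, u_s$ approach $\xi$ simultaneously, as it carries an extra $(x_{j+1}-x_j)$ and is subleading; the leading asymptotic therefore comes from ``single-local'' configurations, with exactly one variable close to $\xi$. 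In this regime I would check that the $\xi$-dependent factors in the remaining global integrand cancel pairwise: the contributions $(\xi - x_i)^{4/\kappa}$ coming from the collapse of $(x_j - x_i)^{2/\kappa}(x_{j+1} - x_i)^{2/\kappa}$ cancel against $(u_r - x_i)^{-4/\kappa} \approx (\xi - x_i)^{-4/\kappa}$ for $i \neq j, j+1$, and $(u_{r''} - u_r)^{8/\kappa} \approx (u_{r''} - \xi)^{8/\kappa}$ cancels against $(u_{r''} - x_j)^{-4/\kappa}(u_{r''} - x_{j+1})^{-4/\kappa} \approx (u_{r''} - \xi)^{-8/\kappa}$ for $r'' \neq r$. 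The surviving global integrand is thus exactly $f_{\wp_j(\beta)/\{j,j+1\}}(\bs{\ddot x}_j; \bs{\dot u}_r)$, and in the degenerate limit the global cycle in $u_s$ is homotopic to a Pochhammer contour around the merged pair $\{x_{k_1}, x_{k_2}\}$ --- exactly the new link produced by the tying operation $\wp_j$.

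The main obstacle is the twisted-homology decomposition of the product cycle $\acycle_r^\beta \otimes \acycle_s^\beta$ in the degeneration limit: one must identify the ``local near $\xi$'' $\otimes$ ``global around $\{x_{k_1}, x_{k_2}\}$'' cycle that yields the leading contribution and compute the precise trigonometric coefficient with which it appears. Planarity of $\beta$ permits three combinatorial configurations for the relative order of $(k_1, j, j+1, k_2)$ --- parallel links and two kinds of nested pairs --- each of which requires its own contour-deformation argument, with careful tracking of the multi-valued branches of the integrand fixed by convention~\eqref{eq:: branch choice set}. Combining this trigonometric coefficient with the local $v$-integral (a standard Beta-type Pochhammer integral evaluating to $4\sin^2(4\pi/\kappa)\,\Gamma(1-4/\kappa)^2/\Gamma(2-8/\kappa)$) and with the conversion factors $(4\sin^2(4\pi/\kappa))^{\pm}$ from Lemma~\ref{lem::related_two_Gs} is expected to yield the claimed constant $\Gamma(1-4/\kappa)^2/(\sqrt{q(\kappa)}\,\Gamma(2-8/\kappa))$; the extra factor $1/\sqrt{q(\kappa)}$ relative to the neighbor-case asymptotic of Lemma~\ref{lem: neighbor asy} arises precisely from the nontrivial coefficient in this combinatorial-topological rearrangement, which is the technical heart of the proof.
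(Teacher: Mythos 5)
Your high-level picture is consistent with the result: the three planar configurations of $(k_1,j,j+1,k_2)$ do need separate treatment (they are the paper's Cases~\ref{item::caseA}--\ref{item::caseC}), the power count ruling out the double-local regime is correct, and the limit does have the ``one variable pinned to $\xi$, one variable sweeping a merged contour'' form. But the mechanism you propose for extracting the constant is wrong. In the non-neighbor case, neither $\acycle_r^\beta$ nor $\acycle_s^\beta$ winds around \emph{both} $x_j$ and $x_{j+1}$; each winds around one of them together with a macroscopic partner far from $\xi$. Consequently, after the local rescaling the relevant one-dimensional integral runs from $0$ out to a point of order $|x_{j+1}-x_j|^{-1}$, and there is no closed local Beta integral of the neighbor-case form $4\sin^2(4\pi/\kappa)\,\Gamma(1-4/\kappa)^2/\Gamma(2-8/\kappa)$. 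The constant in~\eqref{eqn::ASY2}, including the extra $1/\sqrt{q(\kappa)}$, emerges instead from the asymptotics of a hypergeometric function (compare Lemma~\ref{lem: HGE integral} and~\eqref{eq: HGF ASY}) arising from that half-line integral; it does not factor as a Beta integral times a trigonometric weight, which is the key structural point your sketch misidentifies.

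A second gap is the gluing step. After the degeneration, the cycle carrying $u_s$ does not become a Pochhammer contour around the merged pair $\{x_{k_1},x_{k_2}\}$; it becomes a contour around $\{\xi,x_{k_2}\}$ (or its analogue). The merged contour is only reconstituted by summing the two asymmetric single-local regimes --- $u_r$ intermediate with $u_s$ global, and vice versa --- each of which yields a half-integral (in Case~\ref{item::caseA}, $f_\beta^{(s)}(\xi)\int_{x_{a_r}}^{\xi}f_\beta^{(r)}$ and $f_\beta^{(r)}(\xi)\int_{\xi}^{x_{b_s}}f_\beta^{(s)}$), and then applying the exchange identity $f_\beta^{(s)}(x)f_\beta^{(r)}(y)=f_\beta^{(s)}(y)f_\beta^{(r)}(x)$ to combine them into a single integral over the new link's interval. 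The paper controls this by rescaling \emph{both} variables at scale $|x_{j+1}-x_j|$, partitioning $[0,1]^2$ into nine regions, and showing that all but three are negligible. This two-variable regional splitting and the hypergeometric identification are the technical heart, and neither appears in your plan. Finally, the paper works directly with the interval form $\coulomb_\beta$ rather than the Pochhammer form $\coulomb_\beta^\circ$; moving to the Pochhammer cycles as you propose would add rather than remove branch-tracking burden, because the pinching cycles near $\xi$ interact with each other nontrivially in the degeneration.
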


Proposition~\ref{pro::Vbeta_ASY2} can be proved via direct analysis. 
We consider three cases separately, according to the pairs of $j$ and of $j+1$ in $\beta$:
	\begin{enumerate}[label=(\Alph*):, ref=\Alph*]
		\item  \label{item::caseA} $\{a_{r},j\}\in\beta$ and $\{j+1,b_{s}\}\in\beta$ with $a_{r}<j<j+1<b_{s}$, 
		\item  \label{item::caseB} $\{a_{s},j\}\in\beta$ and $\{a_{r},j+1\}\in\beta$ with $a_{r}<a_{s}<j<j+1$, 
		\item  \label{item::caseC} $\{j,b_{r}\}\in\beta$ and $\{j+1,b_{s}\}\in\beta$ with $j<j+1<b_{s}<b_{r}$. 
	\end{enumerate}
In all three cases, by the ordering~\eqref{eqn::linkpatterns_ordering}, we have $r(j) = r < s = s(j)$ and $a_{r}<a_{s}$.  
	Supplementing the notation in~\eqref{eqn::bs_notation}, we write 
\begin{align*}
	\bs{u} =  \; & (u_1, \ldots, u_N)  \qquad \textnormal{and} \qquad
	\bs{\ddot{u}}_{r,s} =(u_1, \ldots, u_{r-1}, u_{r+1}, \ldots, u_{s-1}, u_{s+1}, \ldots, u_N) .
\end{align*}

As $j$, $r$, and $s$ will be fixed throughout, we omit the dependence on them in the notation for $\bs{\ddot{x}}$ and $\bs{\ddot{u}}$. 
Even though the points $x_1, \ldots, x_{2N}$ are allowed to move in this appendix, we always assume that they are ordered as $x_1 \leq \cdots \leq x_{2N}$ and only collide upon taking the limit $x_j, x_{j+1}\to \xi$.

\begin{proof}[Proof of Proposition~\ref{pro::Vbeta_ASY2}, Case~\ref{item::caseA}]
	Define $\beta_A := \beta \setminus (\{a_{r}, j\}\cup \{j+1, b_{s}\})$ (we do not relabel the indices here), and denote by 
	$\Gamma_{\beta_A}$ the integration contours in $\coulomb_{\beta}$ other than $(x_{a_r}, x_j)$, $(x_{j+1}, x_{b_s})$.
	Then, we have
	\begin{align} \label{eqn::IA_def}
		\coulomb_{\beta}(\bs{x}) 
		= 
		\int_{\Gamma_{\beta_A}} \landupint_{x_{a_{r}}}^{x_{j}} \landupint_{x_{j+1}}^{x_{b_{s}}} 
		\ud \bs{u} \, f_\beta(\bs{x}; \bs{u}) 
		= \; & \int_{\Gamma_{\beta_A}} \ud \bs{\ddot{u}} \, f_\beta(\bs{x}; \bs{\ddot{u}}) \;
		I_A(x_{a_r}, x_j, x_{j+1}, x_{b_s}) ,
	\end{align}
	where 
	\begin{align*}
		f_\beta(\bs{x}; \bs{\ddot{u}})
		= \; & 
		\prod_{1\leq i<j\leq 2N}(x_{j}-x_{i})^{2/\kappa} \, 
		\prod_{\substack{1\leq t<l\leq N \\ t,l \neq r,s}}(u_{l}-u_{t})^{8/\kappa} 
		\prod_{\substack{1\leq i\leq 2N \\ 1\leq t\leq N \\ t \neq r,s}}
		(u_{t}-x_{i})^{-4/\kappa} 
	\end{align*} 
	is a part of the integrand function~{\eqref{eq: integrand}} chosen to be real and positive on 
	\begin{align} \label{eq:: branch choice set smaller}
		\{ x_1 < \cdots < x_{2N} 
		\textnormal{ and } x_{a_t} < \Re(u_t) < x_{a_t+1} \quad \textnormal{ for all } t \neq r,s \} ,
	\end{align}
	and where $I_A(x_{a_r}, x_j, x_{j+1}, x_{b_s}) =: I_A$ is the integral
	\begin{align} \label{eqn::def_I_A}
		I_A := \landupint_{x_{a_{r}}}^{x_j} \ud u_{r} \, \frac{f_\beta^{(r)}(u_{r})}{|u_{r}-x_j|^{4/\kappa} |u_{r}-x_{j+1}|^{4/\kappa}}
		\landupint_{x_{j+1}}^{x_{b_{s}}} \ud u_{s} \,
		\frac{(u_{s}-u_{r})^{8/\kappa} \, f_\beta^{(s)}(u_{s})}{|u_{s}-x_j|^{4/\kappa} |u_{s}-x_{j+1}|^{4/\kappa}},
	\end{align}
	with $x_{a_{r}} < \Re(u_{r}) < x_j < x_{j+1} < \Re(u_{s}) < x_{b_{s}}$, 
	where the branch of $(u_{s}-u_{r})^{8/\kappa}$ is chosen to be positive when $\Re(u_{r}) < \Re(u_{s})$, 
	and $f_\beta^{(r)}$ is the multivalued function
	\begin{align*}
		f_\beta^{(r)}(y) = f_\beta^{(r)}(y;\bs{\ddot{x}};\bs{\ddot{u}})  
		:= \prod_{t \neq r,s} (y - u_t)^{8/\kappa} 
		\prod_{l\neq j,j+1}( y - x_l )^{-4/\kappa} ,
	\end{align*}
	whose branch is chosen to be positive when $ x_{a_r} < \Re(y) < x_{a_r+1}$, or more precisely, on
	\begin{align} \label{eqn::branchchoice_natural_r}
		\{ x_1 < \cdots < x_{2N} 
		\textnormal{ and } x_{a_r} < \Re(y) < x_{a_r+1}
		\textnormal{ and } x_{a_t} < \Re(u_t) < x_{a_t+1} \quad \textnormal{ for all } t \neq r,s \} ,
	\end{align}
	and $f_\beta^{(s)}$ is the multivalued function
	\begin{align*} 
		f_\beta^{(s)}(y) = f_\beta^{(s)}(y;\bs{\ddot{x}};\bs{\ddot{u}}) 
		:= \prod_{t \neq r,s} (y - u_t)^{8/\kappa} 
		\prod_{l\neq j,j+1}( y - x_l )^{-4/\kappa} ,
	\end{align*}
	whose branch is chosen to be positive when $ x_{a_s} < \Re(y) < x_{a_s+1}$, or more precisely, on
	\begin{align} \label{eqn::branchchoice_natural_s}
		\{ x_1 < \cdots < x_{2N} 
		\textnormal{ and } x_{a_s} < \Re(y) < x_{a_s+1}
		\textnormal{ and } x_{a_t} < \Re(u_t) < x_{a_t+1} \quad \textnormal{ for all } t \neq r,s \}  .
	\end{align}
Lemma~\ref{lem::Vbeta_ASY2_aux-caseA_aux} (proven below) implies that 
	\begin{align} \label{eqn::convergent_I_A}
	\; \lim_{x_j, x_{j+1}\to \xi} 
	\frac{I_A(x_{a_r}, x_j, x_{j+1}, x_{b_s})
	}{(x_{j+1}-x_j)^{1 - 8/\kappa}}
	= \frac{\Gamma(1-4/\kappa)^2}{\sqrt{q(\kappa)} \, \Gamma(2-8/\kappa)} \,  f_\beta^{(s)}(\xi) \, \landupint_{x_{a_{r}}}^{x_{b_{s}}} \ud y \, f_\beta^{(r)}(y) .
\end{align}
We thus obtain the asserted formula~\eqref{eqn::ASY2} by combining~\eqref{eqn::IA_def} with~\eqref{eqn::convergent_I_A}:
\begin{align*}
	\; & \lim_{x_j, x_{j+1}\to \xi} 
	\frac{\coulomb_{\beta}(\bs{x})}{(x_{j+1}-x_j)^{-2h(\kappa)}} \\
	= \; & \lim_{x_j,x_{j+1}\to\xi} 
	(x_{j+1}-x_j)^{6/\kappa-1}
	\int_{\Gamma_{\beta_A}} \landupint_{x_{a_{r}}}^{x_{j}} \landupint_{x_{j+1}}^{x_{b_{s}}}
	\ud \bs{u} \, f_\beta(\bs{x}; \bs{u})
	 \\
	= \; & 
	\lim_{x_j,x_{j+1}\to\xi} 
	(x_{j+1}-x_j)^{6/\kappa-1}
	\int_{\Gamma_{\beta_A}} \ud \bs{\ddot{u}} \, f_\beta(\bs{x}; \bs{\ddot{u}}) \;
	I_A(x_{a_r}, x_j, x_{j+1}, x_{b_s})
	&& \textnormal{[by~\eqref{eqn::IA_def}]}
	\\
	= \; &  \frac{\Gamma(1-4/\kappa)^2}{\sqrt{q(\kappa)} \, \Gamma(2-8/\kappa)} \,
	\coulomb_{\wp_j(\beta)/\{j,j+1\}}(\bs{\ddot{x}}_j) ,
		&& \textnormal{[by~\eqref{eqn::convergent_I_A}]}
\end{align*}
after carefully collecting the phase factors (and recalling that $\xi \in (x_{j-1}, x_{j+2})$ and that $f_\beta(\bs{x}; \bs{\ddot{u}})$ is real and positive on~\eqref{eq:: branch choice set smaller},
$\smash{f_\beta^{(r)}}$ is real and positive on~\eqref{eqn::branchchoice_natural_r}, and
$\smash{f_\beta^{(s)}}$ is real and positive on~\eqref{eqn::branchchoice_natural_s}). 
\end{proof}

In order to show the remaining identity~\eqref{eqn::convergent_I_A}, we first record an auxiliary lemma. 
Let $\hF(a,b,c;z)$ be the hypergeometric function~\cite[Eq.~(15.3.1)]{Abramowitz-Stegun:Handbook} defined as
\begin{align*}
	\hF(a,b,c;z) 
	:= \; & \frac{\Gamma(c)}{\Gamma(b) \Gamma(c-b)} \, \int_0^1 t^{b-1} (1 - t)^{c-b-1} (1-zt)^{-a} \, \ud t \\
	= \; & \frac{\Gamma(c)}{\Gamma(b) \Gamma(c-b)} \,  z^{1-c}  \, \int_0^z
	t^{b-1} (z - t)^{c-b-1} (1-t)^{-a} \, \ud t , 
	\qquad \Re(c) > \Re(b) > 0 , \; z \in \C \setminus [1,\infty) .
\end{align*}
Recall the asymptotics (cf.~\cite[Eq.~(15.3.7)]{Abramowitz-Stegun:Handbook} and note that $\hF(a,b,c;0) = 1$)
\begin{align} \label{eq: HGF ASY}
	\hF(a,b,c;z)
	\; \sim \;
	\frac{\Gamma(c) \Gamma(b-a)}{\Gamma(b) \Gamma(c-a)} \, (-z)^{-a} +
	\frac{\Gamma(c) \Gamma(a-b)}{\Gamma(a) \Gamma(c-b)} \, (-z)^{-b}  , \qquad z \to - \infty .
\end{align}

\begin{lemma} \label{lem: HGE integral}
	Let $\kappa > 4$, $\lambda > 0$, $\nu < 1$, and $\mu < \frac{1}{\lambda}$. Then, we have
	\begin{align*}
		\int_{\mu \lambda}^{\nu} \frac{\ud u}{u^{4/\kappa} (u+\lambda)^{4/\kappa}} 
		= \; & \frac{\kappa \, \lambda^{-4/\kappa}}{\kappa-4}  \, 
		\bigg( \nu^{1-4/\kappa} \, 
		\, \hF \Big(\frac{4}{\kappa}, 1-\frac{4}{\kappa}, 2-\frac{4}{\kappa}; -\frac{\nu}{\lambda} \Big) 
		- (\mu \lambda)^{1-4/\kappa} \, 
		\, \hF \Big(\frac{4}{\kappa}, 1-\frac{4}{\kappa}, 2-\frac{4}{\kappa}; -\mu \Big) \bigg) .
	\end{align*}
\end{lemma}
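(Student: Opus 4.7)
The plan is to compute the integral explicitly via an Euler integral representation of $\hF$. First I would introduce the one-sided antiderivative
\[
F(Y) \; := \; \int_{0}^{Y} u^{-4/\kappa}\,(u+\lambda)^{-4/\kappa}\,\ud u ,
\]
so that the desired quantity equals $F(\nu) - F(\mu\lambda)$. The hypotheses $\kappa > 4$ and $\lambda > 0$ guarantee integrability at $u=0$, while $\nu < 1$ and $\mu < 1/\lambda$ will ensure that the arguments $-\nu/\lambda$ and $-\mu$ of the hypergeometric function at which we evaluate avoid the branch cut $[1,\infty)$.

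Next, for each $Y \in \{\nu,\mu\lambda\}$ I would perform the linear substitution $u = Y t$, which turns $F(Y)$ into
\[
F(Y) \; = \; Y^{1-4/\kappa}\,\lambda^{-4/\kappa} \int_{0}^{1} t^{-4/\kappa}\,\Big(1 + \tfrac{Y}{\lambda}\, t\Big)^{-4/\kappa}\ud t .
\]
Matching the resulting integrand against the first Euler representation of $\hF$ given right before the lemma, I would pick $a = 4/\kappa$, $b = 1-4/\kappa$, $c = b+1 = 2-4/\kappa$ (so that the factor $(1-t)^{c-b-1}$ is trivial), and $z = -Y/\lambda$. With these choices the normalization constant $\Gamma(c)/[\Gamma(b)\Gamma(c-b)]$ collapses to $1 - 4/\kappa = (\kappa-4)/\kappa$, yielding
\[
F(Y) \; = \; \frac{\kappa}{\kappa - 4}\, Y^{1-4/\kappa}\, \lambda^{-4/\kappa}\, \hF\Big(\frac{4}{\kappa},\, 1-\frac{4}{\kappa},\, 2-\frac{4}{\kappa};\, -\frac{Y}{\lambda}\Big) .
\]

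Finally, I would take the difference $F(\nu) - F(\mu\lambda)$, observing that for $Y = \mu\lambda$ the hypergeometric argument simplifies as $-Y/\lambda = -\mu$, to obtain the claimed identity. The computation is essentially a one-line manipulation once the correct values of $a,b,c,z$ are chosen, so there is no serious obstacle; the only point requiring a moment's care is verifying that the parameter conditions stated in the lemma keep us inside the domain on which the Euler representation is valid, but this is immediate from $\kappa > 4$ together with $\nu < 1$ and $\mu\lambda < 1$.
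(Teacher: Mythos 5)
Your proof is correct and takes essentially the same approach as the paper: match the integral against an Euler representation of $\hF$ with the parameter choices $a=4/\kappa$, $b=1-4/\kappa$, $c=2-4/\kappa$ (so the $(1-t)^{c-b-1}$ factor disappears), then use the difference $F(\nu)-F(\mu\lambda)$. The only cosmetic difference is that you apply the rescaling $u = Yt$ to the first Euler form, whereas the paper states the substitution $u=-t\lambda$ applied to the second Euler form; since the two Euler representations differ precisely by such a rescaling, this is the same computation.
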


\begin{proof}
	This follows by considering the hypergeometric function with $b=1-4/\kappa$, $a=4/\kappa$, $c=2-4/\kappa > 0$:
	with the change of variables $u = -t \lambda$, we have
	\begin{align*}
		\int_0^z \frac{\ud u}{u^{4/\kappa} (u+\lambda)^{4/\kappa}} 
		= \; & \frac{\kappa}{\kappa-4}  \, \lambda^{-4/\kappa} \, 
		\, z^{1-4/\kappa} \, 
		\, \hF \Big(\frac{4}{\kappa}, 1-\frac{4}{\kappa}, 2-\frac{4}{\kappa}; -\frac{z}{\lambda} \Big) ,
	\end{align*}
	using also the functional equation $\frac{\Gamma(\nu+1)}{\Gamma(\nu)} = \nu$ to simplify the Gamma functions in the prefactor: 
	\begin{align*}
		\frac{\kappa}{\kappa-4} = \frac{\Gamma(1-\frac{4}{\kappa})}{\Gamma(2-\frac{4}{\kappa})} .
	\end{align*}
This implies the asserted identity. 
\end{proof}

\begin{lemma} \label{lem::Vbeta_ASY2_aux-caseA_aux}
	For $I_A=I_A(x_{a_r},x_j,x_{j+1},x_{b_r})$ defined in~\eqref{eqn::def_I_A}, we have the convergence result~\eqref{eqn::convergent_I_A}.
\end{lemma}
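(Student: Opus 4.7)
The plan is to extract the singular $\epsilon^{1-8/\kappa}$ scaling of $I_A$, with $\epsilon := x_{j+1}-x_j$, from two localization regimes separately: one in which $u_s$ is within $O(\epsilon)$ of $x_{j+1}$ while $u_r$ ranges over $(x_{a_r},\xi)$, and a symmetric one in which $u_r$ is within $O(\epsilon)$ of $x_j$ while $u_s$ ranges over $(\xi,x_{b_s})$. These two regimes will produce, respectively, the two halves $\landupint_{x_{a_r}}^{\xi}$ and $\landupint_{\xi}^{x_{b_s}}$ of the fused contour on the right-hand side of~\eqref{eqn::convergent_I_A}.

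For fixed $u_r$ in a compact subset of $(x_{a_r},\xi)$, I would first substitute $u_s = x_{j+1} + \epsilon v$ on the near piece $u_s \in (x_{j+1},\xi+\delta)$ and apply dominated convergence, using integrability of $v^{-4/\kappa}(1+v)^{-4/\kappa}$ on $(0,\infty)$ for $\kappa \in (4,8)$, to obtain
\begin{align*}
\epsilon^{8/\kappa-1}\landupint_{x_{j+1}}^{x_{b_s}} \frac{(u_s-u_r)^{8/\kappa}\, f_\beta^{(s)}(u_s)\, du_s}{|u_s-x_j|^{4/\kappa}|u_s-x_{j+1}|^{4/\kappa}}
\; \overset{\epsilon \to 0}{\longrightarrow} \;
(\xi-u_r)^{8/\kappa}\, f_\beta^{(s)}(\xi)\, B(1-4/\kappa,\, 8/\kappa-1) ,
\end{align*}
where the far piece $u_s \in (\xi+\delta,x_{b_s})$ is $O(1)$ and hence $o(\epsilon^{1-8/\kappa})$ since $1-8/\kappa<0$. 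The reflection formula $\Gamma(z)\Gamma(1-z)=\pi/\sin(\pi z)$, the double-angle identity $\sin(8\pi/\kappa)=2\sin(4\pi/\kappa)\cos(4\pi/\kappa)$, and $\sqrt{q(\kappa)}=-2\cos(4\pi/\kappa)$ (valid for $\kappa\in(4,8)$) then identify the Beta value with the asserted prefactor $\Gamma(1-4/\kappa)^2/[\sqrt{q(\kappa)}\,\Gamma(2-8/\kappa)]$. Equivalently, Lemma~\ref{lem: HGE integral} combined with the $-\infty$ asymptotic~\eqref{eq: HGF ASY} of $\hF$ yields the same leading coefficient by exact computation.

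For the outer $u_r$-integration I would split the contour at $\xi-\eta$ for a small $\eta>0$. On the far piece $u_r\in (x_{a_r},\xi-\eta)$, the inner-integral asymptotic above applies uniformly, and the singular denominators combine as $|u_r-x_j|^{-4/\kappa}|u_r-x_{j+1}|^{-4/\kappa}\to(\xi-u_r)^{-8/\kappa}$, exactly cancelling the $(\xi-u_r)^{8/\kappa}$ from the inner limit; dominated convergence then gives the \emph{Region I} contribution $B(1-4/\kappa,8/\kappa-1)\, f_\beta^{(s)}(\xi)\landupint_{x_{a_r}}^{\xi} f_\beta^{(r)}(y)\,dy$. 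On the near piece $u_r \in (\xi-\eta, x_j)$, the symmetric substitution $u_r = x_j-\epsilon w$ with a parallel analysis of the $u_s$-integration (now with $u_s$ free on $(\xi,x_{b_s})$) produces the \emph{Region II} contribution $B(1-4/\kappa,8/\kappa-1)\, f_\beta^{(r)}(\xi)\landupint_{\xi}^{x_{b_s}} f_\beta^{(s)}(u_s)\,du_s$; the joint-zoom regime where both $u_r$ and $u_s$ are within $O(\epsilon)$ of $\xi$ scales as $\epsilon^{2-8/\kappa}=o(\epsilon^{1-8/\kappa})$ and is subleading.

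To merge Regions~I and~II into the single contour $\landupint_{x_{a_r}}^{x_{b_s}} f_\beta^{(r)}(y)\,dy$ I would use the structural observation that $f_\beta^{(r)}$ and $f_\beta^{(s)}$ are branches of one and the same multivalued analytic function on $\HH$ — with identical branch-point set $\{x_l \colon l\neq j,j+1\}\cup\{u_t \colon t\neq r,s\}$ and identical exponents — so the ratio $f_\beta^{(r)}(y)/f_\beta^{(s)}(y)$ is a constant $c = c(\beta)\in\C^\times$ throughout $\HH$. Hence $f_\beta^{(r)}(\xi)=c\, f_\beta^{(s)}(\xi)$ and $\landupint_\xi^{x_{b_s}} f_\beta^{(r)}(y)\,dy = c\landupint_\xi^{x_{b_s}} f_\beta^{(s)}(y)\,dy$, so Region~II's contribution equals $B f_\beta^{(s)}(\xi)\landupint_\xi^{x_{b_s}} f_\beta^{(r)}(y)\,dy$, and summation with Region~I yields~\eqref{eqn::convergent_I_A}. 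The main obstacle will be producing a uniform-in-$\epsilon$ integrable dominator for $J(u_r)/\epsilon^{1-8/\kappa}$ along the full $u_r$-contour, especially near the endpoint $u_r = x_j$; the natural bound $|J(u_r)|/\epsilon^{1-8/\kappa}\lesssim |\xi-u_r|^{8/\kappa}$ uniformly in $\epsilon$, combined with $|u_r-x_j|^{-4/\kappa}|u_r-x_{j+1}|^{-4/\kappa}\lesssim |\xi-u_r|^{-8/\kappa}$, makes the full $u_r$-integrand uniformly bounded on compact subsets of $(x_{a_r},\xi)$, so the Region~I dominated-convergence step goes through, and an analogous bound handles Region~II after the $u_r = x_j - \epsilon w$ change of variables.
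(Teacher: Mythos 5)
Your approach is correct and in substance the same as the paper's: localize the integral near the coalescing pair, extract the $\epsilon^{1-8/\kappa}$ scale via a Beta-type zoom, and merge the two one-sided contributions using the fact that $f_\beta^{(r)}$ and $f_\beta^{(s)}$ differ only by a constant phase. The paper's Eq.~\eqref{eq: relationhrhs}, $f_\beta^{(s)}(x) f_\beta^{(r)}(y) = f_\beta^{(s)}(y) f_\beta^{(r)}(x)$, is exactly your constant-ratio observation in product form, and your Gamma/reflection identification of $B(1-4/\kappa,\,8/\kappa-1)$ with the prefactor $\Gamma(1-4/\kappa)^2/[\sqrt{q(\kappa)}\,\Gamma(2-8/\kappa)]$ matches the paper's computation (which passes through Lemma~\ref{lem: HGE integral} and the $z\to-\infty$ asymptotics~\eqref{eq: HGF ASY} of ${}_2\mathrm{F}_1$, an equivalent route to the same Beta value). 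Your identification of the joint-zoom regime as contributing $\epsilon^{2-8/\kappa}=o(\epsilon^{1-8/\kappa})$ is also correct and appears in the paper's bound for the region $R_{1,1}$.

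Where you substantially underspecify is the dominated convergence justification, which is where the paper spends essentially all its effort. Your bound $|J(u_r)|/\epsilon^{1-8/\kappa}\lesssim|\xi-u_r|^{8/\kappa}$ is only valid when $u_r$ is bounded away from $\xi$; it degenerates as $u_r$ enters a window of width $O(\epsilon)$ around $x_j$ (there $u_s-u_r$ is comparable to $\epsilon$, not to $\xi-u_r$). Moreover your decomposition conflates two scales: the cut at $\xi-\eta$ (fixed $\eta$) and the zoom $u_r=x_j-\epsilon w$ (scale $\epsilon$) do not produce a clean partition when $\epsilon\ll\eta$, and the intermediate region $\xi-\eta<u_r<x_j-O(\epsilon)$ must itself be controlled and sent to zero as $\eta\to 0$. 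The paper handles exactly this by mapping to the unit square, introducing two cutoff parameters $c_1$ (scale $\epsilon$) and $c_2$ (scale $1$), and giving explicit estimates for nine regions $R_{i,j}$, taking $x_j,x_{j+1}\to\xi$ first, then $c_2\to 0$, then $c_1\to 0$. Your sketch is sound at the level of which regions carry the leading contribution, but the uniform-in-$\epsilon$ integrable dominators you promise are precisely the nontrivial estimates you would need to write out; they do not follow from the bounds as you have stated them. So: right route, right algebra, but the genuinely hard part of the lemma — the layered cutoff argument that makes the localization rigorous — remains to be filled in.
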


\begin{proof}
	Let us make some preparations before evaluating the limit. 
	\begin{itemize}[leftmargin=*]
		\item  First, note that for any fixed $\bs{\ddot{x}} \in \chamber_{2N-2}$ and $\bs{\ddot{u}} \in \Gamma_{\beta_A}$, we have
		\begin{align} \label{eq: relationhrhs}
			f_\beta^{(s)}(x) \, f_\beta^{(r)}(y) = f_\beta^{(s)}(y) \, f_\beta^{(r)}(x) ,
		\end{align}
		for all $x, y \notin \{ x_1, \ldots, x_{j-1}, x_{j+2}, \ldots, x_{2N}, u_1, \ldots, u_{r-1}, u_{r+1}, \ldots, u_{s-1}, u_{s+1}, \ldots, u_N \}$ such that $x \neq y$, since the phase factors from the exchange of $x$ and $y$ in the product cancel out.  

		\item Second, after making the changes of variables  $u=\frac{x_j-u_{r}}{x_j-x_{a_{r}}}$ and $v=\frac{u_{s}-x_{j+1}}{x_{b_{s}}-x_{j+1}}$ in the integral $I_A$, we obtain 
		\begin{align*} 
			I_A = & \; 
			\landupint_0^1 \ud u \, \frac{f_\beta^{(r)}(x_j-(x_j-x_{a_{r}})u)}{\big|u \big(u+\frac{x_{j+1}-x_j}{x_{j}-x_{a_{r}}}\big)\big|^{4/\kappa}} \;
			\landupint_0^1 \ud v  \,
			\frac{f_\beta^{(s)}((x_{b_{s}}-x_{j+1})v+x_{j+1})}{\big|v\big(v+\frac{x_{j+1}-x_j}{x_{b_{s}}-x_{j+1}}\big)\big|^{4/\kappa}} \; 
			\kfunc(u, v, x_{a_{r}}, x_j, x_{j+1}, x_{b_{s}}) ,
		\end{align*}
		where
		\begin{align*} 
			\kfunc(u, v, x_{a_{r}}, x_j, x_{j+1}, x_{b_{s}})
			:= \; & \frac{\big( x_{j+1} - x_j + u \, (x_j-x_{a_{r}}) + v \, (x_{b_{s}}-x_{j+1})\big)^{8/\kappa}}{| x_{b_{s}}-x_{j+1} |^{-1+8/\kappa} \, | x_j-x_{a_{r}} |^{-1+8/\kappa}} \\
			= \; & \frac{\big( u \, (x_j-x_{a_{r}}) + v \, (x_{b_{s}}-x_{j+1}) \big)^{8/\kappa}}{| x_{b_{s}}-x_{j+1} |^{-1+8/\kappa} \, | x_j-x_{a_{r}} |^{-1+8/\kappa}} + \OO( |x_{j+1} - x_j| ) ,
			\qquad |x_{j+1} - x_j| \to 0 .
		\end{align*}
	\item 	Third, we note that 
	\begin{align*}
		\; & \big| x_{j+1}-x_j\big |^{8/\kappa} \bigg| 
		\landupint_0^1 \ud u \, \frac{f_\beta^{(r)}(x_j-(x_j-x_{a_{r}})u)}{\big|u \big(u+\frac{x_{j+1}-x_j}{x_{j}-x_{a_{r}}}\big)\big|^{4/\kappa}} \;
		\landupint_0^1 \ud v  \,
		\frac{f_\beta^{(s)}((x_{b_{s}}-x_{j+1})v+x_{j+1})}{\big|v\big(v+\frac{x_{j+1}-x_j}{x_{b_{s}}-x_{j+1}}\big)\big|^{4/\kappa}} \bigg| \\
		\leq \; & 
		\int_0^1 \ud u \, \big|f_\beta^{(r)}(x_j-(x_j-x_{a_{r}})u)\big| \, 
		\Big| \frac{x_j-x_{a_{r}}}{u} \Big|^{4/\kappa} 
		\Big| \frac{ x_{j+1}-x_j}{(x_j-x_{a_{r}}) u + x_{j+1} - x_j} \Big|^{4/\kappa} \\
		\; & \times 
		\int_0^1  \ud v  \, \big|f_\beta^{(s)}((x_{b_{s}}-x_{j+1})v+x_{j+1}) \big| \, 
		\Big| \frac{x_{b_{s}}-x_{j+1}}{v} \Big|^{4/\kappa} 
		\Big| \frac{x_{j+1}-x_j}{(x_{b_{s}}-x_{j+1}) v + x_{j+1} - x_j} \Big|^{4/\kappa} ,
	\end{align*}
	which remains bounded as $|x_{j+1} - x_j| \to 0$ 
	(the singularities of order $4/\kappa$ are integrable since $\kappa > 4$).
\end{itemize}
	Hence, we see that
	\begin{align} 
		\label{eq: limit of IA}
		\; & \lim_{x_j, x_{j+1}\to \xi} 
		\frac{I_A(x_{a_r}, x_j, x_{j+1}, x_{b_s})
		}{|x_{j+1}-x_j|^{1 - 8/\kappa}} \\
		\nonumber
		= \; & \lim_{x_j, x_{j+1}\to \xi}  
		\landupint_0^1 \ud u \, \frac{f_\beta^{(r)}(x_j-(x_j-x_{a_{r}})u)}{\big|u \big(u+\frac{x_{j+1}-x_j}{x_{j}-x_{a_{r}}}\big)\big|^{4/\kappa}} \;
		\landupint_0^1 \ud v  \,
		\frac{f_\beta^{(s)}((x_{b_{s}}-x_{j+1})v+x_{j+1})}{\big|v\big(v+\frac{x_{j+1}-x_j}{x_{b_{s}}-x_{j+1}}\big)\big|^{4/\kappa}} 
		\; \tilde{\kfunc}(u, v, x_{a_{r}}, x_j, x_{j+1}, x_{b_{s}}) ,
	\end{align}
	where
	\begin{align*}
		\tilde{\kfunc}(u, v, x_{a_{r}}, x_j, x_{j+1}, x_{b_{s}})
		= \; & 
		(x_{j+1}-x_j)^{8/\kappa - 1} \,
		\frac{\big( u \, (x_j-x_{a_{r}}) + v \, (x_{b_{s}}-x_{j+1}) \big)^{8/\kappa}}{| x_{b_{s}}-x_{j+1} |^{-1+8/\kappa} \, | x_j-x_{a_{r}} |^{-1+8/\kappa}} .
	\end{align*}

The evaluation of~\eqref{eq: limit of IA} involves several estimates. 
	To this end, for each $\epsilon>0$ and $c_1 > 0$, we choose $c_2 \in (0,1)$ small enough such that there exist constants $M_1, M_2 \in (0,\infty)$ such that 
	\begin{align*}
		&\begin{cases}
			|f_\beta^{(r)}(x)|\le M_1 , \\[.2em]
			|f_\beta^{(r)}(x) - f_\beta^{(r)}(\xi)|\le\epsilon ,
		\end{cases}
		 &&\textnormal{for } x \in \big[ \xi-c_2(\xi-x_{a_{r}}),\xi+3c_2(\xi-x_{a_{r}}) \big] , \\
		&\begin{cases}
			|f_\beta^{(s)}(x)|\le M_2 , \\[.2em]
			|f_\beta^{(s)}(x) - f_\beta^{(s)}(\xi)|\le\epsilon ,
		\end{cases}
		 &&\textnormal{for } x \in \big[ \xi-c_2(x_{b_{s}}-\xi),\xi+3c_2(x_{b_{s}}-\xi) \big] .
	\end{align*}
	Since $x_j,x_{j+1}\to\xi$, without loss of generality we may
	suppose furthermore that 
	\begin{align*}
		x_j , x_{j+1} \in ( \xi - \delta , \xi + \delta ) ,
		\qquad \textnormal{where} \qquad 
		\delta \leq \min \bigg\{ \frac{c_2 \, (\xi - x_{a_{r}})}{1 + 2 c_1} , \frac{c_2 \, (x_{b_{s}}-\xi)}{1 + 2 c_1} \bigg\}
	\end{align*}
	Then, we have
	\begin{align*}
		c_1 \frac{x_{j+1}-x_j}{x_{j}-x_{a_{r}}}\le c_2 
		\qquad \textnormal{and} \qquad
		c_1 \frac{x_{j+1}-x_j}{x_{b_{s}}-x_{j+1}}\le c_2 .
	\end{align*}
	We divide the integration over $(u,v)\in[0,1]\times [0,1]$ into the following regions:
	\begin{align*}
		R_{1,1} := & \; \Big\{ (u,v) \textnormal{ such that } 
		u \in \Big[0,c_1\frac{x_{j+1}-x_j}{x_{j}-x_{a_{r}}} \Big] 
		\textnormal{ and } 
		v \in \Big[ 0,c_1 \frac{x_{j+1}-x_j}{x_{b_{s}}-x_{j+1}} \Big] 
		\Big\} , \\
		R_{1,2} := & \; \Big\{ (u,v) \textnormal{ such that } 
		u \in \Big[0,c_1\frac{x_{j+1}-x_j}{x_{j}-x_{a_{r}}} \Big] 
		\textnormal{ and } 
		v \in \Big[c_1 \frac{x_{j+1}-x_j}{x_{b_{s}}-x_{j+1}},c_2 \Big] 
		\Big\} , \\
		R_{1,3} := & \; \Big\{ (u,v) \textnormal{ such that } 
		u \in \Big[0,c_1\frac{x_{j+1}-x_j}{x_{j}-x_{a_{r}}} \Big] 
		\textnormal{ and } 
		v \in [c_2,1]
		\Big\} , \\
		R_{2,1} := & \; \Big\{ (u,v) \textnormal{ such that } 
		u \in \Big[c_1\frac{x_{j+1}-x_j}{x_{j}-x_{a_{r}}},c_2 \Big] 
		\textnormal{ and } 
		v \in \Big[0,c_1 \frac{x_{j+1}-x_j}{x_{b_{s}}-x_{j+1}} \Big] 
		\Big\} , \\
		R_{2,2} := & \; \Big\{ (u,v) \textnormal{ such that } 
		u \in \Big[c_1\frac{x_{j+1}-x_j}{x_{j}-x_{a_{r}}},c_2\Big] 
		\textnormal{ and } 
		v \in \Big[c_1 \frac{x_{j+1}-x_j}{x_{b_{s}}-x_{j+1}},c_2 \Big]
		\Big\} , \\
		R_{2,3} := & \; \Big\{ (u,v) \textnormal{ such that } 
		u \in \Big[c_1\frac{x_{j+1}-x_j}{x_{j}-x_{a_{r}}},c_2\Big] 
		\textnormal{ and } 
		v \in [c_2,1]
		\Big\} , \\
		R_{3,1} := & \; \Big\{ (u,v) \textnormal{ such that } 
		u \in [c_2,1]
		\textnormal{ and } 
		v \in \Big[ 0,c_1 \frac{x_{j+1}-x_j}{x_{b_{s}}-x_{j+1}} \Big]
		\Big\} , \\
		R_{3,2} := & \; \Big\{ (u,v) \textnormal{ such that } 
		u \in [c_2,1]
		\textnormal{ and } 
		v \in \Big[c_1 \frac{x_{j+1}-x_j}{x_{b_{s}}-x_{j+1}},c_2 \Big]
		\Big\} , \\
		R_{3,3} := & \; \Big\{ (u,v) \textnormal{ such that } 
		u \in [c_2,1]
		\textnormal{ and } 
		v \in [c_2,1]
		\Big\} .
	\end{align*}
	We evaluate the contribution of these integrals by first taking the limit $x_j,x_{j+1}\to\xi$, then taking the limit $c_2 \to 0$, and finally taking the limit $c_1\to 0$:
	\begin{enumerate}[leftmargin=*]
		\item In the limit $x_j,x_{j+1}\to\xi$, the negligible regions are $R_{1,1}$ and $R_{3,3}$:
		\begin{itemize}[leftmargin=*]
			\item The integral over $R_{1,1}$ can be bounded as
			\begin{align*}
				\; & \bigg| \int_{R_{1,1}} \ud u \, \ud v \, 
				\frac{f_\beta^{(r)}(x_j-(x_j-x_{a_{r}})u) }{\big|u \big(u+\frac{x_{j+1}-x_j}{x_{j}-x_{a_{r}}}\big)\big|^{4/\kappa}} \;
				\frac{f_\beta^{(s)}((x_{b_{s}}-x_{j+1})v+x_{j+1}) }{\big|v\big(v+\frac{x_{j+1}-x_j}{x_{b_{s}}-x_{j+1}}\big)\big|^{4/\kappa}}
				\; \tilde{\kfunc}(u, v, x_{a_{r}}, x_j, x_{j+1}, x_{b_{s}}) 
				\bigg| \\
				\leq \; & 2^{8/\kappa} \, c_1^{8/\kappa} \, M_1 \, M_2  \, 
				\frac{|x_{j+1}-x_j|^{16/\kappa - 1}}{| x_{b_{s}}-x_{j+1} |^{-1+8/\kappa} \, | x_j-x_{a_{r}} |^{-1+8/\kappa}} 
				\; \bigg| \frac{x_{j+1}-x_j}{x_{j}-x_{a_{r}}} \bigg|^{1-8/\kappa}
				\; \bigg| \frac{x_{j+1}-x_j}{x_{b_{s}}-x_{j+1}} \bigg|^{1-8/\kappa} \\
				\; & \times 
				\int_0^{c_1} \frac{\ud u}{ |u|^{4/\kappa} |u+1|^{4/\kappa}} 
				\int_0^{c_1} \frac{\ud v}{ |v|^{4/\kappa} |v+1|^{4/\kappa}} \\
				\leq \; & 2^{8/\kappa} \, c_1^{8/\kappa} \, M_1 \, M_2  \, 
				|x_{j+1}-x_j| \;
				\int_0^{c_1} \frac{\ud u}{ |u|^{4/\kappa} |u+1|^{4/\kappa}} 
				\int_0^{c_1} \frac{\ud v}{ |v|^{4/\kappa} |v+1|^{4/\kappa}} 
				\qquad \overset{x_j, x_{j+1}\to \xi}{\longrightarrow} \qquad 0 .
			\end{align*}
			
			\item The integral over $R_{3,3}$ can be bounded as
			\begin{align*}
				\; & \bigg| \int_{R_{3,3}} \ud u \, \ud v \, 
				\frac{f_\beta^{(r)}(x_j-(x_j-x_{a_{r}})u) }{\big|u \big(u+\frac{x_{j+1}-x_j}{x_{j}-x_{a_{r}}}\big)\big|^{4/\kappa}} \;
				\frac{ f_\beta^{(s)}((x_{b_{s}}-x_{j+1})v+x_{j+1}) }{\big|v\big(v+\frac{x_{j+1}-x_j}{x_{b_{s}}-x_{j+1}}\big)\big|^{4/\kappa}}
				\; \tilde{\kfunc}(u, v, x_{a_{r}}, x_j, x_{j+1}, x_{b_{s}}) 
				\bigg| \\
				\leq \; & c_2^{-16/\kappa}
				|x_{j+1}-x_j|^{8/\kappa - 1} \,
				\frac{|  (x_j-x_{a_{r}}) + (x_{b_{s}}-x_{j+1}) |^{8/\kappa}}{| x_{b_{s}}-x_{j+1} |^{-1+8/\kappa} \, | x_j-x_{a_{r}} |^{-1+8/\kappa}}   \\
				\; & \times 
				\int_{0}^{1} \ud u \, |f_\beta^{(r)}(x_j-(x_j-x_{a_{r}})u)| 
				\int_{0}^{1} \ud v \, |f_\beta^{(s)}((x_{b_{s}}-x_{j+1})v+x_{j+1})|  
				\qquad \overset{x_j, x_{j+1}\to \xi}{\longrightarrow} \qquad 0 .
			\end{align*}
		\end{itemize}
	\item Furthermore, the integrals over the regions $R_{1,3}$, $R_{3,1}$, $R_{1,2}$, and $R_{2,1}$ tend to zero after first taking the limit $x_j,x_{j+1}\to\xi$ and then taking the limit $c_1 \to 0$:
	\begin{itemize}[leftmargin=*]
		\item The integral over $R_{1,3} \cup R_{1,2}$ can be bounded as
		\begin{align*}
			\; & \bigg| 
			\underset{R_{1,3} \cup R_{1,2}}{\int} \ud u \, \ud v \, 
			\frac{f_\beta^{(r)}(x_j-(x_j-x_{a_{r}})u) }{\big|u \big(u+\frac{x_{j+1}-x_j}{x_{j}-x_{a_{r}}}\big)\big|^{4/\kappa}} \;
			\frac{ f_\beta^{(s)}((x_{b_{s}}-x_{j+1})v+x_{j+1}) }{\big|v\big(v+\frac{x_{j+1}-x_j}{x_{b_{s}}-x_{j+1}}\big)\big|^{4/\kappa}}
			\; \tilde{\kfunc}(u, v, x_{a_{r}}, x_j, x_{j+1}, x_{b_{s}}) 
			\bigg| \\
			\leq \; & M_1 \,
			\frac{|x_{j+1}-x_j|^{8/\kappa - 1}}{| x_{b_{s}}-x_{j+1} |^{-1+8/\kappa} \, | x_j-x_{a_{r}} |^{-1+8/\kappa}} 
			\; \bigg| \frac{x_{j+1}-x_j}{x_{j}-x_{a_{r}}} \bigg|^{1-8/\kappa}
			\; | (x_{j+1}-x_j) + (x_{b_{s}}-x_{j+1}) |^{8/\kappa} \\
			\; & \times 
			\int_0^{c_1} \frac{\ud u}{ |u|^{4/\kappa} |u+1|^{4/\kappa}} 
			\int_{0}^{1} \ud v \, |f_\beta^{(s)}((x_{b_{s}}-x_{j+1})v+x_{j+1})| 
			\; \frac{| v |^{8/\kappa}}{\big|v\big(v+\frac{x_{j+1}-x_j}{x_{b_{s}}-x_{j+1}}\big)\big|^{4/\kappa}} \\
			\leq \; & M_1 \, 
			\frac{| (x_{j+1}-x_j) + \, (x_{b_{s}}-x_{j+1}) |^{8/\kappa}}{| x_{b_{s}}-x_{j+1} |^{-1+8/\kappa}} \\
			\; & \times 
			\int_0^{c_1} \frac{\ud u}{ |u|^{4/\kappa} |u+1|^{4/\kappa}} 
			\int_{0}^{1} \ud v \, |f_\beta^{(s)}((x_{b_{s}}-x_{j+1})v+x_{j+1})| \\
			\overset{x_j, x_{j+1}\to \xi}{\longrightarrow}  \; &  \qquad 
			M_1 \, |x_{b_{s}}-\xi| \;
			\int_0^{c_1} \frac{\ud u}{ |u|^{4/\kappa} |u+1|^{4/\kappa}} 
			\int_{0}^{1} \ud v \, |f_\beta^{(s)}((x_{b_{s}}-\xi)v+\xi)| 
\qquad \overset{c_1 \to 0}{\longrightarrow} \qquad 0 ,			
		\end{align*}
		because the integrals converge for each $\kappa > 4$.
		
		\item Very similarly, the integral over the region $R_{2,1} \cup R_{3,1} $ also tends to zero after first taking the limit $x_j,x_{j+1}\to \xi$ and then taking the limit $c_1\to 0$. 
	\end{itemize}
\item 	In contrast, the regions $R_{3,2}$, $R_{2,3}$, and $R_{2,2}$  do contribute to the limit $x_j,x_{j+1}\to\xi$.
To evaluate their contribution, it is useful to further split $R_{2,2}$ into the two regions
\begin{align*}
	R_{2,2} = R_{2,2}^+ \cup R_{2,2}^-
	:= \big\{ (u,v) \in R_{2,2} \colon |u| \leq |v| \big\}
	\cup  \big\{ (u,v) \in R_{2,2} \colon |v| \leq |u| \big\} ,
\end{align*}
and to evaluate the integrals over the two regions 
$R_{2,2}^+ \cup R_{2,3}$ and $R_{2,2}^- \cup R_{3,2}$ separately.
By symmetry, it suffices to consider the integral over $R_{2,2}^+ \cup R_{2,3}$. 
\begin{itemize}[leftmargin=*]
	\item 	 First, we show that $f_\beta^{(r)}(x_j-(x_j-x_{a_{r}})u)$ can be replaced by $f_\beta^{(r)}(\xi)$ when evaluating the limit of the integral over $R_{2,2}^+ \cup R_{2,3}$:
	\begin{align*}
		\bigg| \underset{R_{2,2}^+ \cup R_{2,3}}{\int} \; &  \ud u \, \ud v \, 
		\frac{ \big( f_\beta^{(r)}(x_j-(x_j-x_{a_{r}})u) - f_\beta^{(r)}(\xi) \big)  }{\big|u \big(u+\frac{x_{j+1}-x_j}{x_{j}-x_{a_{r}}}\big)\big|^{4/\kappa}}
		\; \frac{ f_\beta^{(s)}((x_{b_{s}}-x_{j+1})v+x_{j+1}) }{\big|v\big(v+\frac{x_{j+1}-x_j}{x_{b_{s}}-x_{j+1}}\big)\big|^{4/\kappa}}
		\; \tilde{\kfunc}(u, v, x_{a_{r}}, x_j, x_{j+1}, x_{b_{s}}) 
		\bigg| \\
		\leq \; & \epsilon \, 
		|x_{j+1}-x_j|^{8/\kappa - 1} \, 
		\frac{| (x_j-x_{a_{r}})+ (x_{b_{s}}-x_{j+1}) |^{8/\kappa}}{| x_{b_{s}}-x_{j+1} |^{-1+8/\kappa} \, | x_j-x_{a_{r}} |^{-1+8/\kappa}}  \\
		\; & \times 
		\underset{R_{2,2}^+ \cup R_{2,3}}{\int} \ud u \, \ud v \,
		\frac{ |v|^{8/\kappa} \, | f_\beta^{(s)}((x_{b_{s}}-x_{j+1})v+x_{j+1}) | }{\big|v\big(v+\frac{x_{j+1}-x_j}{x_{b_{s}}-x_{j+1}}\big)\big|^{4/\kappa}} \;
		\frac{ 1 }{\big|u \big(u+\frac{x_{j+1}-x_j}{x_{j}-x_{a_{r}}}\big)\big|^{4/\kappa}} \\
		\leq \; & \epsilon \, 
		|x_{j+1}-x_j|^{8/\kappa - 1} \, 
		\frac{| (x_j-x_{a_{r}})+ (x_{b_{s}}-x_{j+1}) |^{8/\kappa}}{| x_{b_{s}}-x_{j+1} |^{-1+8/\kappa} \, | x_j-x_{a_{r}} |^{-1+8/\kappa}}  \\
		\; & \times 
		\int_0^1 \ud v \, | f_\beta^{(s)}((x_{b_{s}}-x_{j+1})v+x_{j+1}) |
		\int_{c_1\frac{x_{j+1}-x_j}{x_{j}-x_{a_{r}}}}^{c_2}
		\frac{ \ud u }{|u|^{8/\kappa}} \\
		\leq \; & \epsilon \, 
		|x_{j+1}-x_j|^{8/\kappa - 1} \, 
		\frac{| (x_j-x_{a_{r}})+ (x_{b_{s}}-x_{j+1}) |^{8/\kappa}}{| x_{b_{s}}-x_{j+1} |^{-1+8/\kappa} \, | x_j-x_{a_{r}} |^{-1+8/\kappa}}  \\
		\; & \times 
		\frac{\kappa}{\kappa-8} \;
		\bigg( c_2^{1-8/\kappa}
		- \Big( c_1\frac{x_{j+1}-x_j}{x_{j}-x_{a_{r}}} \Big)^{1-8/\kappa}
		\bigg) 
		\int_0^1 \ud v \, | f_\beta^{(s)}((x_{b_{s}}-x_{j+1})v+x_{j+1}) |  \\
		\overset{x_j, x_{j+1}\to \xi}{\longrightarrow} \; & \qquad
		\epsilon \, c_1^{1-8/\kappa} \, \frac{\kappa}{8-\kappa} \; 
		\frac{| x_{b_{s}} - x_{a_{r}} |^{8/\kappa}}{| x_{b_{s}}-\xi |^{-1+8/\kappa}} 
		\int_0^1 \ud v \, | f_\beta^{(s)}((x_{b_{s}}-\xi)v+\xi) | 
		\qquad \overset{c_2 \to 0}{\longrightarrow} 
		\qquad 0,
	\end{align*}
since we can let $\epsilon\to 0$ as $c_2\to 0$. 
	\item 	Next, we show that $\tilde{\kfunc}(u, v, x_{a_{r}}, x_j, x_{j+1}, x_{b_{s}})$ can be replaced by $\smash{|x_{j+1}-x_j|^{8/\kappa - 1} \,
		\frac{| x_{b_{s}}-x_{j+1} |}{| x_j-x_{a_{r}} |^{-1+8/\kappa}} \, |v|^{8/\kappa}}$ when evaluating the limit of the integral over $R_{2,2}^+ \cup R_{2,3}$. To verify this, 
	we write
	\begin{align*}
		R_{2,2}^+ \cup R_{2,3}
		= \; & (R_{2,2}^+ \cup R_{2,3})^- \cup (R_{2,2}^+ \cup R_{2,3})^+ , \\
		(R_{2,2}^+ \cup R_{2,3})^- := \; & \{ (u,v) \in R_{2,2} \cup R_{2,3} \colon |u| \leq |v| < c_3 \} , \\
		(R_{2,2}^+ \cup R_{2,3})^+ := \; & \{ (u,v) \in R_{2,2} \cup R_{2,3} \colon |u| \leq |v| \textnormal{ and } |v| \geq c_3 \}  ,
	\end{align*}
	where $c_3 := \frac{2c_2}{1 + \frac{c_2}{1 + 2c_1} }$. Note that, since $c_1 \frac{x_{j+1}-x_j}{x_{b_{s}}-x_{j+1}} \leq c_2 \leq 1$, we have
	\begin{align} \label{eq: choice of c3}
		c_1 \frac{x_{j+1}-x_j}{x_{b_{s}}-x_{j+1}} \leq \frac{2c_2}{1 + \frac{c_2}{1 + 2c_1} } = c_3 ,
	\end{align}
	and since $|f_\beta^{(s)}(x)|\le M_2$ for $x\in[\xi-c_2(x_{b_{s}}-\xi),\xi+3c_2(x_{b_{s}}-\xi)]$, we have
	\begin{align} \label{eq: c3 gives bound for h}
		|f_\beta^{(s)}((x_{b_{s}}-x_{j+1})v+x_{j+1})| \leq M_2
		\qquad \textnormal{for } |v| \in \Big[c_1 \frac{x_{j+1}-x_j}{x_{b_{s}}-x_{j+1}}, c_3 \Big] . 
	\end{align}
	On the one hand, for the integral over $(R_{2,2}^+ \cup R_{2,3})^+$, we find
	\begin{align}
		\nonumber 
		\hspace*{-15mm}
		\bigg| \underset{(R_{2,2}^+ \cup R_{2,3})^+}{\int} \; & \ud u \, \ud v \, 
		\frac{ f_\beta^{(s)}((x_{b_{s}}-x_{j+1})v+x_{j+1}) }{\big|v\big(v+\frac{x_{j+1}-x_j}{x_{b_{s}}-x_{j+1}}\big)\big|^{4/\kappa}} \\
		\nonumber
		\; &\qquad\qquad\qquad\qquad\qquad \times 
		\frac{ \big( \tilde{\kfunc}(u, v, x_{a_{r}}, x_j, x_{j+1}, x_{b_{s}}) - |x_{j+1}-x_j|^{8/\kappa - 1} \,
			\frac{| x_{b_{s}}-x_{j+1} |}{| x_j-x_{a_{r}} |^{-1+8/\kappa}} \, |v|^{8/\kappa} \big) }{\big|u \big(u+\frac{x_{j+1}-x_j}{x_{j}-x_{a_{r}}}\big)\big|^{4/\kappa}} 
		\bigg| \\
		\nonumber
		\leq \; & |x_{j+1}-x_j|^{8/\kappa - 1} \, 
		\int_{c_3}^1 \ud v \, \frac{ |v|^{8/\kappa} \, |f_\beta^{(s)}((x_{b_{s}}-x_{j+1})v+x_{j+1})| }{\big|v\big(v+\frac{x_{j+1}-x_j}{x_{b_{s}}-x_{j+1}}\big)\big|^{4/\kappa}} \\
		\nonumber
		\; & \times 
		\frac{\big| | (c_2/v) \, (x_j-x_{a_{r}}) +  (x_{b_{s}}-x_{j+1}) |^{8/\kappa}
			- | x_{b_{s}}-x_{j+1} |^{8/\kappa} \big|}{| x_{b_{s}}-x_{j+1} |^{-1+8/\kappa} \, | x_j-x_{a_{r}} |^{-1+8/\kappa}} \,
		\int_{c_1\frac{x_{j+1}-x_j}{x_{j}-x_{a_{r}}}}^{c_2}  
		\frac{ \ud u }{\big|u \big(u+\frac{x_{j+1}-x_j}{x_{j}-x_{a_{r}}}\big)\big|^{4/\kappa}} \\
		\nonumber
		\leq \; & |x_{j+1}-x_j|^{8/\kappa - 1} \, 
		\int_{c_3}^1 \ud v \, |f_\beta^{(s)}((x_{b_{s}}-x_{j+1})v+x_{j+1})| \\
		\nonumber
		\; & \times 
		\frac{\big| | (c_2/v) \, (x_j-x_{a_{r}}) +  (x_{b_{s}}-x_{j+1}) |^{8/\kappa}
			- | x_{b_{s}}-x_{j+1} |^{8/\kappa} \big|}{| x_{b_{s}}-x_{j+1} |^{-1+8/\kappa} \, | x_j-x_{a_{r}} |^{-1+8/\kappa}} \,
		\int_{c_1\frac{x_{j+1}-x_j}{x_{j}-x_{a_{r}}}}^{c_2}  
		\frac{ \ud u }{|u|^{8/\kappa}} \\
		\nonumber
		\leq \; & \frac{\kappa}{\kappa-8} \, |x_{j+1}-x_j|^{8/\kappa - 1} \, 
		\bigg( c_2^{1-8/\kappa}
		- \Big( c_1\frac{x_{j+1}-x_j}{x_{j}-x_{a_{r}}} \Big)^{1-8/\kappa}
		\bigg) \\
		\nonumber
		\; & \times 
		\int_{c_3}^1 \ud v \, |f_\beta^{(s)}((x_{b_{s}}-x_{j+1})v+x_{j+1})| \,
		\frac{\big| | (c_2/v) \, (x_j-x_{a_{r}}) +  (x_{b_{s}}-x_{j+1}) |^{8/\kappa}
			- | x_{b_{s}}-x_{j+1} |^{8/\kappa} \big|}{| x_{b_{s}}-x_{j+1} |^{-1+8/\kappa} \, | x_j-x_{a_{r}} |^{-1+8/\kappa}} \\
		\nonumber
		\overset{x_j, x_{j+1}\to \xi}{\longrightarrow} \; & \qquad
		\frac{\kappa}{8-\kappa} \, c_1^{1-8/\kappa}
		| x_{b_{s}}-\xi |^{1-8/\kappa} \\
		\nonumber
		\; & \times 
		\int_{c_3}^1 \ud v \, |f_\beta^{(s)}((x_{b_{s}}-\xi)v+\xi)| \,
		\big| | (c_2/v) \, (\xi-x_{a_{r}}) +  (x_{b_{s}}-\xi) |^{8/\kappa}
		- | x_{b_{s}}-\xi |^{8/\kappa} \big| \\
		\label{eq: ktilde upper bound 1}
		\overset{c_2 \to 0}{\longrightarrow} \quad \; & \qquad 0 ,
	\end{align} 
	after applying the reverse Fatou lemma as $c_2 \to 0$ 
	(note also that $c_3 \to 0$ along with $c_2 \to 0$ by our choice~\eqref{eq: choice of c3} of $c_3$) 
	to the functions
	\begin{align*}
		\; & |f_\beta^{(s)}((x_{b_{s}}-\xi)v+\xi)| \,
		\big| | (c_2/v) \, (\xi-x_{a_{r}}) +  (x_{b_{s}}-\xi) |^{8/\kappa}
		- | x_{b_{s}}-\xi |^{8/\kappa} \big| \\
		\leq \; & |f_\beta^{(s)}((x_{b_{s}}-\xi)v+\xi)| \,
		\big( | (c_2/c_3) \, (\xi-x_{a_{r}}) +  (x_{b_{s}}-\xi) |^{8/\kappa}
		+ | x_{b_{s}}-\xi |^{8/\kappa} \big) \\
		\leq \; & |f_\beta^{(s)}((x_{b_{s}}-\xi)v+\xi)| \,
		\Big( \big| \frac{1}{2} \, \Big( 1 + \frac{c_2}{1 + 2c_1} \big) \, (\xi-x_{a_{r}}) +  (x_{b_{s}}-\xi) \big|^{8/\kappa}
		+ | x_{b_{s}}-\xi |^{8/\kappa} \Big) ,
	\end{align*}
	bounded by the non-negative integrable function on the last line. On the other hand, for the integral over $(R_{2,2}^+ \cup R_{2,3})^-$, we find using~\eqref{eq: c3 gives bound for h} that
	\begin{align}
		\nonumber 
		\hspace*{-15mm}
		\bigg| \underset{(R_{2,2}^+ \cup R_{2,3})^-}{\int} \; & \ud u \, \ud v \, 
		\frac{ f_\beta^{(s)}((x_{b_{s}}-x_{j+1})v+x_{j+1}) }{\big|v\big(v+\frac{x_{j+1}-x_j}{x_{b_{s}}-x_{j+1}}\big)\big|^{4/\kappa}} \\
		\nonumber
		\; &\qquad\qquad\qquad\qquad\qquad \times 
		\frac{ \big( \tilde{\kfunc}(u, v, x_{a_{r}}, x_j, x_{j+1}, x_{b_{s}}) - |x_{j+1}-x_j|^{8/\kappa - 1} \,
			\frac{| x_{b_{s}}-x_{j+1} |}{| x_j-x_{a_{r}} |^{-1+8/\kappa}} \, |v|^{8/\kappa} \big) }{\big|u \big(u+\frac{x_{j+1}-x_j}{x_{j}-x_{a_{r}}}\big)\big|^{4/\kappa}} 
		\bigg| \\
		\nonumber
		\leq \; &  |x_{j+1}-x_j|^{8/\kappa - 1} \, 
		\frac{\big| | (x_j-x_{a_{r}}) +  (x_{b_{s}}-x_{j+1}) |^{8/\kappa}
			- | x_{b_{s}}-x_{j+1} |^{8/\kappa} \big|}{| x_{b_{s}}-x_{j+1} |^{-1+8/\kappa} \, | x_j-x_{a_{r}} |^{-1+8/\kappa}} \\
		\nonumber
		\; & \times 
		\int_{c_1 \frac{x_{j+1}-x_j}{x_{b_{s}}-x_{j+1}}}^{c_3} \ud v \, \frac{ |v|^{8/\kappa} \, |f_\beta^{(s)}((x_{b_{s}}-x_{j+1})v+x_{j+1})| }{\big|v\big(v+\frac{x_{j+1}-x_j}{x_{b_{s}}-x_{j+1}}\big)\big|^{4/\kappa}}
		\int_{c_1\frac{x_{j+1}-x_j}{x_{j}-x_{a_{r}}}}^{v}  
		\frac{ \ud u }{\big|u \big(u+\frac{x_{j+1}-x_j}{x_{j}-x_{a_{r}}}\big)\big|^{4/\kappa}} \\
		\nonumber
		\leq \; &  |x_{j+1}-x_j|^{8/\kappa - 1} \, 
		\frac{\big| | (x_j-x_{a_{r}}) +  (x_{b_{s}}-x_{j+1}) |^{8/\kappa}
			- | x_{b_{s}}-x_{j+1} |^{8/\kappa} \big|}{| x_{b_{s}}-x_{j+1} |^{-1+8/\kappa} \, | x_j-x_{a_{r}} |^{-1+8/\kappa}}  \\
		\nonumber
		\; & \times 
		\int_{c_1 \frac{x_{j+1}-x_j}{x_{b_{s}}-x_{j+1}}}^{c_3} \ud v \, \frac{ |v|^{8/\kappa} \, |((x_{b_{s}}-x_{j+1})v+x_{j+1})| }{\big|v\big(v+\frac{x_{j+1}-x_j}{x_{b_{s}}-x_{j+1}}\big)\big|^{4/\kappa}}
		\int_{c_1\frac{x_{j+1}-x_j}{x_{j}-x_{a_{r}}}}^{v}  
		\frac{ \ud u }{|u|^{8/\kappa}} \\
		\nonumber
		\leq \; & \frac{\kappa}{\kappa-8} \, 
		|x_{j+1}-x_j|^{8/\kappa - 1} \, 
		\frac{\big| | (x_j-x_{a_{r}}) +  (x_{b_{s}}-x_{j+1}) |^{8/\kappa}
			- | x_{b_{s}}-x_{j+1} |^{8/\kappa} \big|}{| x_{b_{s}}-x_{j+1} |^{-1+8/\kappa} \, | x_j-x_{a_{r}} |^{-1+8/\kappa}}  \\
		\nonumber
		\; & \times 
		\int_{c_1 \frac{x_{j+1}-x_j}{x_{b_{s}}-x_{j+1}}}^{c_3} \ud v \, \frac{ |v|^{8/\kappa} \, |f_\beta^{(s)}((x_{b_{s}}-x_{j+1})v+x_{j+1})| }{\big|v\big(v+\frac{x_{j+1}-x_j}{x_{b_{s}}-x_{j+1}}\big)\big|^{4/\kappa}}
		\bigg( v^{1-8/\kappa}
		- \Big( c_1\frac{x_{j+1}-x_j}{x_{j}-x_{a_{r}}} \Big)^{1-8/\kappa}
		\bigg) \\
		\nonumber
		\leq \; & \frac{\kappa}{8-\kappa}  \, M_2 \,
		|x_{j+1}-x_j|^{8/\kappa - 1} \, 
		\frac{\big| | (x_j-x_{a_{r}}) +  (x_{b_{s}}-x_{j+1}) |^{8/\kappa}
			- | x_{b_{s}}-x_{j+1} |^{8/\kappa} \big|}{| x_{b_{s}}-x_{j+1} |^{-1+8/\kappa} \, | x_j-x_{a_{r}} |^{-1+8/\kappa}}  \\
		\nonumber
		\; & \times 
		\bigg(
		\Big( c_1\frac{x_{j+1}-x_j}{x_{j}-x_{a_{r}}} \Big)^{1-8/\kappa}
		\int_{c_1 \frac{x_{j+1}-x_j}{x_{b_{s}}-x_{j+1}}}^{c_3} \ud v
		\; - \; \int_{c_1 \frac{x_{j+1}-x_j}{x_{b_{s}}-x_{j+1}}}^{c_3}  \frac{ |v|\, \ud v}{\big|v\big(v+\frac{x_{j+1}-x_j}{x_{b_{s}}-x_{j+1}}\big)\big|^{4/\kappa}} \bigg) \\
		\nonumber
		= \; & \frac{\kappa}{8-\kappa}  \, M_2 \,
		|x_{j+1}-x_j|^{8/\kappa - 1} \, 
		\frac{\big| | (x_j-x_{a_{r}}) +  (x_{b_{s}}-x_{j+1}) |^{8/\kappa}
			- | x_{b_{s}}-x_{j+1} |^{8/\kappa} \big|}{| x_{b_{s}}-x_{j+1} |^{-1+8/\kappa} \, | x_j-x_{a_{r}} |^{-1+8/\kappa}}  \\
		\nonumber
		\; & \times 
		\bigg(
		\Big( c_1\frac{x_{j+1}-x_j}{x_{j}-x_{a_{r}}} \Big)^{1-8/\kappa}
		\Big( c_3 - c_1 \frac{x_{j+1}-x_j}{x_{b_{s}}-x_{j+1}} \Big)
		\; - \; \int_{c_1 \frac{x_{j+1}-x_j}{x_{b_{s}}-x_{j+1}}}^{c_3}  \frac{ |v|\, \ud v}{\big|v\big(v+\frac{x_{j+1}-x_j}{x_{b_{s}}-x_{j+1}}\big)\big|^{4/\kappa}} \bigg) \\
		\nonumber
		\overset{x_j, x_{j+1}\to \xi}{\longrightarrow} \; & \qquad 
		\frac{\kappa}{8-\kappa}  \, M_2 \, c_1^{1-8/\kappa} \, c_3 \,
		\frac{\big| | x_{b_{s}} - x_{a_{r}}  |^{8/\kappa}
			- | x_{b_{s}}-\xi |^{8/\kappa} \big|}{| x_{b_{s}}-\xi |^{-1+8/\kappa}}  \\
		\label{eq: ktilde upper bound 2}
		\overset{c_2 \to 0}{\longrightarrow} \quad \; & \qquad 0 ,
	\end{align}
	where we also used~\eqref{eq: c3 gives bound for h} to bound $|f_\beta^{(s)}|$ 
	(note again that $c_3 \to 0$ along with $c_2 \to 0$ by~\eqref{eq: choice of c3}).

	In conclusion, by combining~(\ref{eq: ktilde upper bound 1},~\ref{eq: ktilde upper bound 2}), we see that $\tilde{\kfunc}(u, v, x_{a_{r}}, x_j, x_{j+1}, x_{b_{s}})$ can be replaced by $\smash{|x_{j+1}-x_j|^{8/\kappa - 1} \,
		\frac{| x_{b_{s}}-x_{j+1} |}{| x_j-x_{a_{r}} |^{-1+8/\kappa}} \, |v|^{8/\kappa}}$ when evaluating the limit of the integral over $R_{2,2}^+ \cup R_{2,3}$. 
	\item 	Third, by using Lemma~\ref{lem: HGE integral} with $0 < \lambda := \frac{x_{j+1}-x_j}{x_{j}-x_{a_{r}}}$, and $0 < \mu :=  c_1 < \frac{1}{\lambda}$, and $\nu := |v| \wedge c_2 < 1$ to evaluate the integral over $u$ in terms of the hypergeometric function $\hF(a,b,c;z)$,
	and then using the asymptotics~\eqref{eq: HGF ASY} of $\hF$ to take the limit $x_j, x_{j+1}\to \xi$, thereafter the limit $c_2 \to 0$, and finally the limit $c_1\to 0$, we find that
	\begin{align*}
		\; & \lim_{c_1 \to 0} \; \lim_{c_2\to 0} \; \lim_{x_j, x_{j+1}\to \xi} \; 
		\underset{R_{2,2}^+ \cup R_{2,3}}{\int} \ud u \, \ud v \, 
		\frac{ f_\beta^{(s)}((x_{b_{s}}-x_{j+1})v+x_{j+1}) }{\big|v\big(v+\frac{x_{j+1}-x_j}{x_{b_{s}}-x_{j+1}}\big)\big|^{4/\kappa}} \;
		\frac{ f_\beta^{(r)}(x_j-(x_j-x_{a_{r}})u) }{\big|u \big(u+\frac{x_{j+1}-x_j}{x_{j}-x_{a_{r}}}\big)\big|^{4/\kappa}} \, \\
&\quad\quad\quad\quad\quad\quad\quad\quad\quad\quad\quad\quad\quad\quad\quad\quad\quad\quad\quad\quad\times\tilde{\kfunc}(u, v, x_{a_{r}}, x_j, x_{j+1}, x_{b_{s}}) \\		 
		= \; & f_\beta^{(r)}(\xi) \, \lim_{c_1 \to 0} \; \lim_{c_2\to 0} \; \lim_{x_j, x_{j+1}\to \xi} \,
		|x_{j+1}-x_j|^{8/\kappa - 1} \,
		\frac{| x_{b_{s}}-x_{j+1} |}{| x_j-x_{a_{r}} |^{-1+8/\kappa}} \\
		\; & \times 
		\landupint_{c_1 \frac{x_{j+1}-x_j}{x_{b_{s}}-x_{j+1}}}^1 \ud v \,
		\frac{ |v|^{8/\kappa} \, f_\beta^{(s)}((x_{b_{s}}-x_{j+1})v+x_{j+1}) }{\big|v\big(v+\frac{x_{j+1}-x_j}{x_{b_{s}}-x_{j+1}}\big)\big|^{4/\kappa}} \;
		\int_{c_1\frac{x_{j+1}-x_j}{x_{j}-x_{a_{r}}}}^{|v| \wedge c_2}  
		\frac{ \ud u }{\big|u \big(u+\frac{x_{j+1}-x_j}{x_{j}-x_{a_{r}}}\big)\big|^{4/\kappa}} \\
		= \; & f_\beta^{(r)}(\xi) \, \lim_{c_1 \to 0} \;  \lim_{c_2\to 0} \; \lim_{x_j, x_{j+1}\to \xi} \,
		|x_{j+1}-x_j|^{8/\kappa - 1} \,
		\frac{| x_{b_{s}}-x_{j+1} |}{| x_j-x_{a_{r}} |^{-1+8/\kappa}} \\
		\; & \times 
		\landupint_{c_1 \frac{x_{j+1}-x_j}{x_{b_{s}}-x_{j+1}}}^1 \ud v \,
		\frac{ |v|^{8/\kappa} \, f_\beta^{(s)}((x_{b_{s}}-x_{j+1})v+x_{j+1}) }{\big|v\big(v+\frac{x_{j+1}-x_j}{x_{b_{s}}-x_{j+1}}\big)\big|^{4/\kappa}} \\ 
		\; & \times 
		\frac{\kappa}{\kappa-4} \, 
		\Big( \frac{x_{j+1}-x_j}{x_{j}-x_{a_{r}}}  \Big)^{-4/\kappa} \, 
		\bigg( (|v| \wedge c_2)^{1-4/\kappa} \, 
		\hF \Big(\frac{4}{\kappa}, 1-\frac{4}{\kappa}, 2-\frac{4}{\kappa}; -\frac{(|v| \wedge c_2) \, (x_{j}-x_{a_{r}})}{x_{j+1}-x_j} \Big) \\
		\; & \qquad\qquad\qquad\qquad\qquad\qquad\qquad\qquad\qquad
		- \Big( c_1 \,\frac{x_{j+1}-x_j}{x_{j}-x_{a_{r}}} \Big)^{1-4/\kappa} \, \hF \Big(\frac{4}{\kappa}, 1-\frac{4}{\kappa}, 2-\frac{4}{\kappa}; -c_1 \Big) \bigg) \\ 
		= \; & \frac{\kappa}{\kappa-4}  \,
		\frac{\Gamma(2-\frac{4}{\kappa}) \Gamma(\frac{8}{\kappa}-1)}{\Gamma(\frac{4}{\kappa}) \Gamma(1)} \,
		f_\beta^{(r)}(\xi) \, (x_{b_{s}}-\xi) \,
		\landupint_{0}^1 \ud v \, f_\beta^{(s)}((x_{b_{s}}-\xi)v+\xi) \\
		= \; & \frac{\kappa}{\kappa-4}  \,
		\frac{\Gamma(2-\frac{4}{\kappa}) \Gamma(\frac{8}{\kappa}-1)}{\Gamma(\frac{4}{\kappa}) \Gamma(1)} \,
		f_\beta^{(r)}(\xi) \, \landupint_{\xi}^{x_{b_{s}}} \ud y \, f_\beta^{(s)}(y) ,
	\end{align*}
	where we also made the change of variables $y = (x_{b_{s}}-\xi)v+\xi$ to obtain the last line.
\end{itemize}

	The contribution of the integral over $R_{2,2}^- \cup R_{3,2}$ can be evaluated similarly by exchanging the roles of $u$ and $v$, and the result is
\begin{align}
\nonumber 
	\; & \lim_{c_1 \to 0} \;  \lim_{c_2\to 0} \; \lim_{x_j, x_{j+1}\to \xi} \; 
	\underset{R_{2,2}^- \cup R_{3,2}}{\int} \ud u \, \ud v \, 
	\frac{ f_\beta^{(s)}((x_{b_{s}}-x_{j+1})v+x_{j+1}) }{\big|v\big(v+\frac{x_{j+1}-x_j}{x_{b_{s}}-x_{j+1}}\big)\big|^{4/\kappa}} \;
	\frac{ f_\beta^{(r)}(x_j-(x_j-x_{a_{r}})u) }{\big|u \big(u+\frac{x_{j+1}-x_j}{x_{j}-x_{a_{r}}}\big)\big|^{4/\kappa}} \, 
	\\
	\nonumber
	&\quad\quad\quad\quad\quad\quad\quad\quad\quad\quad\quad\quad\quad\quad\quad\quad\quad\quad\quad\quad\quad\times\tilde{\kfunc}(u, v, x_{a_{r}}, x_j, x_{j+1}, x_{b_{s}}) \\
			\label{eqn::R22minus} 
	= \; & \frac{\kappa}{\kappa-4}  \,
	\frac{\Gamma(2-\frac{4}{\kappa}) \Gamma(\frac{8}{\kappa}-1)}{\Gamma(\frac{4}{\kappa}) \Gamma(1)} \,
	f_\beta^{(s)}(\xi) \, \landupint_{x_{a_{r}}}^{\xi} \ud y \, f_\beta^{(r)}(y) .
\end{align}
	\end{enumerate}

Collecting all contributions, we finally obtain 
\begin{align*}
	\; & \lim_{x_j, x_{j+1}\to \xi} 
	\frac{I_A(x_{a_r}, x_j, x_{j+1}, x_{b_s})
	}{|x_{j+1}-x_j|^{1 - 8/\kappa}} \\
	= \; & \lim_{x_j, x_{j+1}\to \xi}  
	\landupint_0^1 \ud u \, \frac{f_\beta^{(r)}(x_j-(x_j-x_{a_{r}})u)}{\big|u \big(u+\frac{x_{j+1}-x_j}{x_{j}-x_{a_{r}}}\big)\big|^{4/\kappa}} \;
	\landupint_0^1 \ud v  \,
	\frac{f_\beta^{(s)}((x_{b_{s}}-x_{j+1})v+x_{j+1})}{\big|v\big(v+\frac{x_{j+1}-x_j}{x_{b_{s}}-x_{j+1}}\big)\big|^{4/\kappa}} 
	\; \tilde{\kfunc}(u, v, x_{a_{r}}, x_j, x_{j+1}, x_{b_{s}}) \\
	= \; & \frac{\kappa}{\kappa-4} \,
	\frac{\Gamma(2-\frac{4}{\kappa}) \Gamma(\frac{8}{\kappa}-1)}{\Gamma(\frac{4}{\kappa})} 
	\bigg( f_\beta^{(r)}(\xi) \, \landupint_{\xi}^{x_{b_{s}}} \ud y \, f_\beta^{(s)}(y) 
	+ f_\beta^{(s)}(\xi) \, \landupint_{x_{a_{r}}}^{\xi} \ud y \, f_\beta^{(r)}(y) 
	\bigg) 
	\qquad\qquad\;\;
\textnormal{[by~(\ref{eqn::R22minus})]}	
	\\
	= \; & \frac{\kappa}{\kappa-4} \,
	\frac{\Gamma(2-\frac{4}{\kappa}) \Gamma(\frac{8}{\kappa}-1)}{\Gamma(\frac{4}{\kappa})} \, f_\beta^{(s)}(\xi) \, \landupint_{x_{a_{r}}}^{x_{b_{s}}} \ud y \, f_\beta^{(r)}(y) .
	\qquad\qquad\qquad\qquad\qquad\qquad\qquad\qquad\, \textnormal{[by~\eqref{eq: relationhrhs}]}
\end{align*} 
Using also the functional equation $\Gamma(1-\nu)\Gamma(\nu)=\frac{\pi}{\sin(\pi \nu)}$, we find the multiplicative constant 
\begin{align*}
\frac{\kappa}{\kappa-4} \,
\frac{\Gamma(2-\frac{4}{\kappa}) \Gamma(\frac{8}{\kappa}-1)}{\Gamma(\frac{4}{\kappa})} \,=\frac{\Gamma(1-4/\kappa)^2}{\sqrt{q(\kappa)} \, \Gamma(2-8/\kappa)} .
\end{align*}
This completes the proof.
\end{proof}

\begin{proof}[Proof of Proposition~\ref{pro::Vbeta_ASY2}, Case~\ref{item::caseB}]
	Define $\beta_B := \beta \setminus (\{a_{s}, j\}\cup \{a_{r},j+1\})$ (we do not relabel the indices here), and denote by 
	$\Gamma_{\beta_B}$ the integration contours in $\coulomb_{\beta}$ other than $(x_{a_s}, x_j)$, $(x_{a_r},x_{j+1})$.
	Then, we have
	\begin{align} \label{eqn::IB_def}
		\coulomb_{\beta}(x_1, \ldots, x_{2N}) 
		= 
		\int_{\Gamma_{\beta_B}} \landupint_{x_{a_{s}}}^{x_{j}} \landupint_{x_{a_{r}}}^{x_{j+1}} 
		\ud \bs{u} \, f_\beta(\bs{x}; \bs{u}) 
		= \; & \int_{\Gamma_{\beta_B}} \ud \bs{\ddot{u}} \, f_\beta(\bs{x}; \bs{\ddot{u}}) \;
		I_B(x_{a_r}, x_{a_{s}}, x_j, x_{j+1}) ,
	\end{align}
	where, as in the proof of Case~\ref{item::caseA}, 
	$f_\beta(\bs{x}; \bs{\ddot{u}})$ 
	is a part of the integrand function~{\eqref{eq: integrand}} chosen to be real and positive on~\eqref{eq:: branch choice set smaller}, 
	and where $I_B(x_{a_r}, x_{a_{s}}, x_j, x_{j+1}) =: I_B$ is the integral 
	\begin{align*}
		I_B := \landupint_{x_{a_{s}}}^{x_j} \ud u_{s} \,
		\frac{f_\beta^{(s)}(u_{s})}{|u_{s}-x_j|^{4/\kappa} |u_{s}-x_{j+1}|^{4/\kappa}} 
		\landupint_{x_{a_{r}}}^{x_{j+1}} \ud u_{r} \,
		\frac{(u_{s}-u_{r})^{8/\kappa} \, f_\beta^{(r)}(u_{r})}{|u_{r}-x_j|^{4/\kappa} |u_{r}-x_{j+1}|^{4/\kappa}},
	\end{align*}
	with $x_{a_{s}} < \Re(u_{s}) < x_j  < x_{a_{r}} < \Re(u_{r}) < x_{j+1}$, 
	where the branch of $(u_{s}-u_{r})^{8/\kappa}$ is chosen to be positive when $\Re(u_{r}) < \Re(u_{s})$, 
	and, as before, 
	$f_\beta^{(r)}$ and $f_\beta^{(s)}$ are the multivalued functions
	with branch choices~\eqref{eqn::branchchoice_natural_r} and~\eqref{eqn::branchchoice_natural_s}, respectively.
Note that for any fixed $\bs{\ddot{x}} \in \chamber_{2N-2}$ and $\bs{\ddot{u}} \in \Gamma_{\beta_B}$, we have
	\begin{align*} 
		f_\beta^{(s)}(x) \, f_\beta^{(r)}(y) = f_\beta^{(s)}(y) \, f_\beta^{(r)}(x) ,
	\end{align*}
	for all $x, y \notin \{ x_1, \ldots, x_{j-1}, x_{j+2}, \ldots, x_{2N}, u_1, \ldots, u_{r-1}, u_{r+1}, \ldots, u_{s-1}, u_{s+1}, \ldots, u_N \}$ such that $x \neq y$, since the phase factors from the exchange of $x$ and $y$ in the product cancel out.

We proceed similarly as in the proof of Case~\ref{item::caseA}. 	
After making the changes of variables  
	$w=-\frac{x_{j+1}-u_{r}}{x_{j+1}-x_{a_{r}}}$ in the first integral
	and $u=\frac{x_j-u_{s}}{x_j-x_{a_{s}}}$ in the second integral, we obtain 
	\begin{align*}
		I_B = & \; 
		\landupint_0^1 \ud u \, \frac{f_\beta^{(s)}(x_j-(x_j-x_{a_{s}})u)}{\big|u \big(u + \frac{x_{j+1}-x_j}{x_j-x_{a_{s}}}\big)\big|^{4/\kappa}} \;
		\landupint_{-1}^0 \ud w  \,
		\frac{f_\beta^{(r)}(x_{j+1} + (x_{j+1}-x_{a_{r}})w)}{\big|w\big(w + \frac{x_{j+1} - x_j}{x_{j+1}-x_{a_{r}}}\big)\big|^{4/\kappa}} \; 
		\kfunc(u, w, x_{a_r}, x_{a_{s}}, x_j, x_{j+1}) ,
	\end{align*}
	where
	\begin{align*} 
		\kfunc(u, v, x_{a_r}, x_{a_{s}}, x_j, x_{j+1})
		:= \; & \frac{\big( x_{j+1} - x_j + u \, (x_j-x_{a_{s}}) + w \, (x_{j+1} - x_{a_{r}})\big)^{8/\kappa}}{| x_{a_{r}}-x_{j+1} |^{-1+8/\kappa} \, | x_j-x_{a_{s}} |^{-1+8/\kappa}} \\
		\nonumber
		= \; & \frac{\big( u \, (x_j-x_{a_{s}}) + w \, (x_{j+1} - x_{a_{r}}) \big)^{8/\kappa}}{| x_{a_{r}}-x_{j+1} |^{-1+8/\kappa} \, | x_j-x_{a_{s}} |^{-1+8/\kappa}} + \OO( |x_{j+1} - x_j| ) ,
		\qquad |x_{j+1} - x_j| \to 0 .
	\end{align*}
	This integral has a similar form as for $I_A$ defined in~\eqref{eqn::def_I_A}, except for the following changes:
	\begin{itemize}
		\item $x_{a_{r}}$ in $I_B$ plays the role of $x_{b_{s}}$ in $I_A$;
		
		\item $x_{a_{s}}$ in $I_B$ plays the role of $x_{a_{r}}$ in $I_A$; 
		
		\item in $I_B$, we have $x_{j+1}-x_{a_{r}} > 0$, while in $I_A$, we have $x_{b_{s}}-x_{j+1} > 0$;
		
		\item we integrate in $I_B$ the variable $w \in (-1,0)$, while in $I_A$ the corresponding variable is $v \in (0,1)$.
	\end{itemize}
	Nevertheless, this only affects the estimates slightly, so with similar estimates as in the proof of Case~\ref{item::caseA},
	one can show that
	\begin{align} \label{eqn::convergence_I_B}
		\lim_{x_j, x_{j+1}\to \xi} 
		\frac{I_B(x_{a_r}, x_{a_{s}}, x_j, x_{j+1})
		}{|x_{j+1}-x_j|^{1 - 8/\kappa}} 
		= \; & \frac{\Gamma(1-4/\kappa)^2}{\sqrt{q(\kappa)} \, \Gamma(2-8/\kappa)} \, f_\beta^{(s)}(\xi) \, \landupint_{x_{a_{r}}}^{x_{a_{s}}} \ud y \, f_\beta^{(r)}(y).
	\end{align}
	We then conclude from~\eqref{eqn::IB_def} and~\eqref{eqn::convergence_I_B} that~\eqref{eqn::ASY2} holds:
	\begin{align*}
		\; & \lim_{x_j, x_{j+1}\to \xi} 
		\frac{\coulomb_{\beta}(\bs{x})}{(x_{j+1}-x_j)^{ -2h(\kappa)}} \\
		= \; & \lim_{x_j,x_{j+1}\to\xi} 
		(x_{j+1}-x_j)^{6/\kappa-1}
		\int_{\Gamma_{\beta_B}} \landupint_{x_{a_{r}}}^{x_{j}} \landupint_{x_{a_{s}}}^{x_{j+1}}
		\ud \bs{u} \, f_\beta(\bs{x}; \bs{u})
		 \\
		= \; & 
		\lim_{x_j,x_{j+1}\to\xi} 
		(x_{j+1}-x_j)^{6/\kappa-1}
		\int_{\Gamma_{\beta_B}} \ud \bs{\ddot{u}} \, f_\beta(\bs{x}; \bs{\ddot{u}}) \;
		I_B(x_{a_r}, x_{a_{s}}, x_j, x_{j+1})
		&& \textnormal{[by~\eqref{eqn::IB_def}]}
		\\
		= \; & \frac{\Gamma(1-4/\kappa)^2}{\sqrt{q(\kappa)} \, \Gamma(2-8/\kappa)} \, 
		\coulomb_{\wp_j(\beta)/\{j,j+1\}}(\bs{\ddot{x}}_j) ,
				&& \textnormal{[by~\eqref{eqn::convergence_I_B}]}
	\end{align*}
	after carefully collecting the phase factors (and recalling that $\xi \in (x_{j-1}, x_{j+2})$ and that $f_\beta(\bs{x}; \bs{\ddot{u}})$ is real and positive on~\eqref{eq:: branch choice set smaller},
	$f_\beta^{(r)}$ is real and positive on~\eqref{eqn::branchchoice_natural_r}, and
	$f_\beta^{(s)}$ is real and positive on~\eqref{eqn::branchchoice_natural_s}).
\end{proof}

\begin{proof}[Proof of Proposition~\ref{pro::Vbeta_ASY2}, Case~\ref{item::caseC}]
This symmetric to Case~\ref{item::caseB} and can be proven very similarly. 
\end{proof}

\newpage


\newcommand{\etalchar}[1]{$^{#1}$}

\end{document}